\documentclass{amsart}
\usepackage{mathtools}
\usepackage{enumerate}
\usepackage{amssymb}
\usepackage{amsmath}
\usepackage{amsthm}
\usepackage{enumitem}
\usepackage{mathrsfs}
\usepackage{hyperref}
\usepackage{todonotes}
\hypersetup{
           breaklinks=true,   
           colorlinks=true,   
        }
\numberwithin{equation}{section}
\theoremstyle{plain}
\newtheorem{thm}[equation]{Theorem}
\newtheorem{cor}[equation]{Corollary}
\newtheorem{prop}[equation]{Proposition}
\newtheorem{lem}[equation]{Lemma}
\newtheorem*{claim*}{Claim}
\theoremstyle{definition}
\newtheorem{defn}[equation]{Definition}
\newtheorem{setup}[equation]{Setup}
\newtheorem{asmp}[equation]{Assumption}
\theoremstyle{remark}
\newtheorem{remark}[equation]{Remark}
\newcommand{\delb}{\sqrt{-1}\partial\bar{\partial}}
\newcommand{\relmiddle}[1]{\mathrel{}\middle#1\mathrel{}}
\usepackage{xparse}
\usepackage{bbm}
\usepackage{comment}
\author[R.\ Murakami]{Rei Murakami}
\address{Department of Mathematics, Osaka Metropolitan University, 3-3-138, Sugimoto, Sumiyoshi-ku, Osaka, 558-8585, Japan}
\email{reimurakami66@gmail.com, reimurakami@omu.ac.jp}

\begin{document}

\title[Weak solutions of the gMA/dHYM equations: boundary cases]{Weak solutions of the generalized Monge-Ampère equation and the supercritical deformed Hermitian-Yang-Mills equation: boundary cases}

\begin{abstract}
    We prove the existence and uniqueness of weak solutions for the generalized Monge-Ampère equation and the supercritical deformed Hermitian-Yang-Mills equation in cohomology classes lying on the boundary of the solvable region. Moreover, we prove that the associated geometric flows converge to the weak solutions in the sense of currents. The proof combines viscosity-theoretic and pluripotential-theoretic techniques.
\end{abstract}

\maketitle

\section{Introduction}\label{sec:intro}
Partial differential equations play a central role in Kähler geometry. 
A basic example is the complex Monge–Ampère (MA) equation, which governs the Ricci curvature and underlies the theory of Kähler–Einstein metrics. Other important examples include the $J$-equation, introduced in \cite{Don,XChen} in connection with constant scalar curvature Kähler metrics, and the deformed Hermitian–Yang–Mills (dHYM) equation (also called the Leung–Yau–Zaslow equation), which arises in mirror symmetry \cite{LYZ}.

Recent progress shows that the solvability of a broad class of fully nonlinear elliptic equations in Kähler geometry is characterized by the existence of a $\mathcal{C}$-subsolution, which is a positivity condition on differential forms \cite{SW,FLM,DK,Sun,Sze,GS,CJY,Pin,Lina,Linb,FYZ2}. 
Moreover, this condition admits a numerical characterization in terms of the positivity of intersection numbers \cite{GChen,Song,Datar-Pingali,CLT,FM}, similarly to the Demailly-Păun characterization of Kählerness \cite{DP}. 

Given this characterization, it is natural to ask what happens when the positivity conditions fail. In \cite{DMS}, the authors conjectured the existence and uniqueness of a canonical weak solution for the $J$-equation and the dHYM equation even in the absence of positivity conditions \cite[Conjectures 1.5 and 1.12]{DMS}. Furthermore, in dimension two, they confirmed this conjecture using the weak solution theory of the MA equation \cite{BEGZ}.

In this paper, we confirm the conjecture in the boundary case (see Assumptions \ref{asmp:bdryMA} and \ref{asmp:bdrydHYM}) in a slightly weaker form (see Remrak \ref{rem:main} \ref{item:DMS}), by extending the pluripotential framework of \cite{BEGZ}. 
The corresponding existence and uniqueness result for the $J$-equation
(with $\omega$ and $\chi$ Kähler forms, unlike in Setup \ref{setup})
was announced in \cite{DMS}.
To the best of the author's knowledge, a detailed proof has not yet appeared in the literature.

Motivated by \cite{Datar-Pingali,FM}, we consider the generalized Monge-Ampère (gMA) equation \eqref{eq:gMA}, an equation that generalizes both the MA equation and the $J$-equation. Our setting is as follows. 
\begin{setup}\label{setup}
    Let $X$ be an $n$-dimensional compact Kähler manifold and $\omega$ a closed real semipositive $(1,1)$-form which is strictly positive outside a pluripolar set. Let $[\chi]$ be a nef cohomology class. Let $c_k\ge0$ be constants for $k=1,\dots,n-1$ and $c_0$ be a continuous function such that either 
    \begin{equation}\label{eq:MA}
        \sum_{k=1}^{n-1} c_k=0,\ c_0>0
    \end{equation} or 
    \begin{equation}\label{eq:gMAtop}
        \sum_{k=1}^{n-1} c_k>0,\ \int_Xc_0\omega^n\ge0
    \end{equation} 
    holds and
    \begin{equation*}
        \int_X[\chi]^n-\sum_{k=1}^{n-1}c_k[\chi]^{k}\wedge[\omega]^{n-k}=\int_Xc_0\omega^n,\quad c_0>-\varepsilon_{2.1}
    \end{equation*}
    holds, where $\varepsilon_{2.1}$ is the constant in Proposition \ref{prop:properties}. 
    Remark that the case \eqref{eq:MA} corresponds to the MA equation.
\end{setup}

The following is the assumption of the boundary case.

\begin{asmp}[boundary case]\label{asmp:bdryMA}
    Suppose that there exist sequences of Kähler classes $[\chi_i]$, Kähler forms $\omega_i$, constants $c_{k,i}\ge0$, where $k=1,\dots, n-1$, that satisfy 
        \begin{align}\label{eq:boundary}
            \chi_i\ge\chi , \quad [\chi_i]\to[\chi],\quad \omega_i\searrow \omega,\quad c_{k,i}\searrow c_k\ \text{ as }\ i\to\infty,
        \end{align}
    and that for any $p$-dimensional subvariety $V$ and any $p=1,\dots, n$, we have
        \begin{equation}\label{eq:gMAposi}
            \int_V\frac{n!}{p!}[\chi_i]^p-\sum_{k=1}^{n-1}c_{k,i}\frac{k!}{(k-n+p)!}[\chi_i]^{k-n+p}\wedge[\omega_i]^{n-k}>0.
        \end{equation}
\end{asmp}

Note that \eqref{eq:gMAposi} is equivalent to the existence of a solution $\psi_i$ of the gMA equation in $[\chi_i]$, by \cite{Datar-Pingali,FM}. The main theorem is a consequence of the analysis of a subsequential limit of $\psi_i$:

\begin{thm}\label{thm:main}
    Assume Setup \ref{setup} and Assumption \ref{asmp:bdryMA}.
    Then, there exists a unique $\chi$-psh function $\psi$ with $\sup \psi=0$ such that
    \begin{equation}\label{eq:gMA}
    \begin{cases}
        \langle\chi_\psi^n\rangle=\sum_{k=0}^{n-1}c_k\langle\chi_\psi^k\wedge\omega^{n-k}\rangle,
        \\
        \frac{n!}{p!}\langle\chi_\psi^p\rangle-\sum_{k=n-p}^{n-1}c_k\frac{k!}{k-n+p!}\langle\chi_\psi^{k-n+p}\wedge\omega^{n-k}\rangle\ge0 \ \text{for any} \ p=1,\dots n-1, 
    \end{cases}
    \end{equation}
    where $\langle\cdot\rangle$ denotes the nonpluripolar product. 
\end{thm}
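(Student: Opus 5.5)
Existence will come from a limit of the smooth solutions in the solvable region. By Assumption \ref{asmp:bdryMA} and \cite{Datar-Pingali,FM}, for each $i$ there is a smooth $\chi_i$-psh $\psi_i$ with $\sup\psi_i=0$ solving the gMA equation
\[
\chi_{i,\psi_i}^n=\sum_{k=0}^{n-1}c_{k,i}\chi_{i,\psi_i}^k\wedge\omega_i^{n-k},
\]
where $c_{0,i}$ is a function converging to $c_0$ that is chosen to fix the cohomological normalization. Because the solution is smooth and satisfies the cone condition, it also satisfies the strict inequalities
\[
\tfrac{n!}{p!}\chi_{i,\psi_i}^p-\sum_k c_{k,i}\tfrac{k!}{(k-n+p)!}\chi_{i,\psi_i}^{k-n+p}\wedge\omega_i^{n-k}>0
\]
pointwise for $p<n$. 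Since $\chi_i\ge\chi$ and $[\chi_i]\to[\chi]$, we can write $\chi_i=\chi+\theta_i$ with $\theta_i\ge0$ and $\theta_i\to0$ modulo $dd^c$ of smooth potentials. Absorbing these potentials, the $\psi_i$ become quasi-psh with respect to forms decreasing to $\chi$. By compactness of normalized quasi-psh functions, a subsequence converges in $L^1$ to a $\chi$-psh $\psi$ with $\sup\psi=0$.

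The key step is passing the equation and the inequalities to the limit. Convergence in $L^1$ alone does not pass nonpluripolar products to the limit, so I will use the viscosity side: the smooth solutions are viscosity solutions, and I would aim to prove a uniform bound. First I would try a uniform $L^\infty$ (or at least "model-type") estimate relative to an envelope $V_\chi$, obtained through a comparison argument with the $\mathcal C$-subsolution structure. Using the gMA structure, I would then rewrite the equation as a Monge–Ampère inequality of the form $\chi_{\psi_i}^n\le C\,\omega_i^n+\dots$ with densities uniformly controlled, presumably via Proposition \ref{prop:properties}. This would give an $L^p$ density bound and hence Kołodziej/BEGZ-type uniform control, so that $\psi_i\to\psi$ in capacity. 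Convergence in capacity then lets me pass to the limit in all mixed nonpluripolar products $\langle\chi_\psi^j\wedge\omega^{n-j}\rangle$, at least weakly on the locus $\{\omega>0\}$. Since $\omega$ is positive off a pluripolar set, this is enough. The inequalities for $p<n$ are closed under weak limits, and the equality follows by combining the limiting inequality "$\ge$" or "$\le$" with total-mass considerations: the masses of nonpluripolar products are bounded by the cohomological intersections, and the constraint $\int_X[\chi]^n-\sum c_k[\chi]^k[\omega]^{n-k}=\int c_0\omega^n$ forces equality. This step is the main obstacle. The difficulty is controlling the mass loss of $\langle\chi_\psi^n\rangle$, i.e. showing that $\psi$ has full mass in the appropriate sense. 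I would try to show this by comparing with the nonpluripolar mass of the limit of the $\chi_{i,\psi_i}^k\wedge\omega_i^{n-k}$ terms and using monotonicity of $c_{k,i},\omega_i$.

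For uniqueness, I would reduce to a comparison principle. Given two solutions $\psi,\phi$, I would consider the set $\{\psi<\phi\}$. The pointwise operator $F(A)=$ the largest root condition on eigenvalues of $\chi_u$ relative to $\omega$ is concave and elliptic on the cone defined by the inequalities for $p<n$. Using the BEGZ-type comparison $\int_{\{\psi<\phi\}}\langle\chi_\phi^n\rangle\le\int_{\{\psi<\phi\}}\langle\chi_\psi^n\rangle$ for full mass functions, together with the mixed versions for the terms with $\langle\chi^k\wedge\omega^{n-k}\rangle$, I would deduce that $\psi-\phi$ is constant. Then $\sup=0$ gives $\psi=\phi$. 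This step requires establishing that any weak solution has full mass, i.e. $\int\langle\chi_\psi^n\rangle=\int[\chi]^n$, which I would derive from the first equation integrated, together with the inequalities, iteratively in $p$.
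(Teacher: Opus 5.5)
\textbf{Existence.} Your starting point (smooth solutions $\psi_i$ in $[\chi_i]$ and an $L^1$-limit) is the paper's. The step that carries your existence argument, however, is not available.

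\begin{itemize}
\item In the boundary case there is no uniform $\mathcal{C}$-subsolution.
\item The equation does not control the density of $\chi_{i,\psi_i}^n$, because its right-hand side contains $\chi_{i,\psi_i}^k\wedge\omega_i^{n-k}$. Already for the $J$-equation, in terms of the eigenvalues $\lambda$ of $\omega^{-1}\chi_\psi$ the equation reads $\sum_j\lambda_j^{-1}=\mathrm{const}$. So the $\lambda_j$ are bounded below but not above, and $\chi_\psi^n/\omega^n=\prod_j\lambda_j$ is unbounded.
\end{itemize}

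Hence there is no uniform $L^p$ density bound, no uniform $L^\infty$ estimate, and no convergence in capacity to be had. The paper deliberately avoids such estimates (cf.\ Remark \ref{rem:main}). Without them, your claim that the inequalities for $p<n$ are ``closed under weak limits'' is false: nonpluripolar products are not continuous under $L^1$-convergence of potentials.

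The missing idea is to pass to the limit only the \emph{subsolution} property, in the viscosity sense.
\begin{itemize}
\item Mollify local potentials against a constant-coefficient $\omega_0\le\omega\le\omega_i$. Monotonicity and convexity of the sublevel sets of $P$ and $Q$ (Proposition \ref{prop:properties}) then give $P_{i,\omega_0}(\delb u_{i,\delta})\le1$ and $Q_{i,c_{0,0},\omega_0}(\delb u_{i,\delta})\le1$.
\item Apply the stability result, Proposition \ref{propCIL}, first as $i\to\infty$ and then as $\delta\to0$ (Proposition \ref{prop:vsub}).
\item Convert $P_\omega(\chi_\psi)\le_v1$ and $Q_{c_0,\omega}(\chi_\psi)\le_v1$ into $T^p_\omega(\chi_\psi)\ge0$ for $p\le n$. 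This uses truncation $\sup\{u,av-C\}$, sup-convolution, Bedford--Taylor continuity for bounded potentials, and plurifine locality (Lemma \ref{lem:visctopp}).
\end{itemize}

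\textbf{Equality.} Your equality step also has a gap. The bound $\int_X\langle\chi_\psi^n\rangle\le\int_X[\chi]^n$ is not enough, because the subtracted terms lose mass too. Let $m_j\ge0$ be the mass defect of $\langle\chi_\psi^j\wedge\omega^{n-j}\rangle$ relative to $[\chi]^j\cdot[\omega]^{n-j}$. Then Setup \ref{setup} gives $\int_XT^n_\omega(\chi_\psi)=\sum_{k=1}^{n-1}c_km_k-m_n$, so what you need is $m_n\ge\sum_kc_km_k$.

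This is the paper's Proposition \ref{prop:superineq}: $\int_XT^n_\omega\le0$ for every degenerate $\mathcal{C}$-subsolution. It is proved by interpolating $(1-s)\varphi+s\psi_i$ and keeping $T^{n-1}_{i,\omega}\ge0$ along the segment by convexity of the sublevel sets of $P$. The monotonicity Lemma \ref{lem:ppineq} then shows $s\mapsto\int_XT^n_{i,\omega}$ is nondecreasing.

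\textbf{Uniqueness.} The same computation defeats your uniqueness plan.
\begin{itemize}
\item Integrating the equation gives only $m_n=\sum_kc_km_k$, not full mass. The paper stresses that arguments through $\mathcal{E}$ are unavailable.
\item A comparison principle on $\{\psi<\phi\}$ does not by itself give uniqueness up to constants for an equation without a zeroth-order term. This is why Dinew's argument is needed even for Monge--Amp\`ere.
\item Term-by-term comparison inequalities cannot simply be added, since the $\langle\chi^k\wedge\omega^{n-k}\rangle$ enter $T^n_\omega$ with a minus sign.
\end{itemize}

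The paper instead runs Dinew's scheme.
\begin{itemize}
\item It perturbs $c_0$ by a lower semicontinuous $f_1$ with $\int_Xf_1\omega^n=0$ and $f_1>\varepsilon$ on an open set containing $\{\varphi_1<\varphi_2\}$. This is why existence is proved for lsc data (Theorem \ref{thm:existence}).
\item It uses the viscosity/pluripotential equivalence (Proposition \ref{prop:viscpp}) to take maxima and convex combinations of solutions, together with Lemmas \ref{lem:uni} and \ref{lem:solineq}.
\item It proves the strict inequality of Lemma \ref{lem:BMineq} from convexity of $Q$ and the local smooth approximation of Lemma \ref{lem:app}.
\item Summing over $\{\varphi_1<\varphi_2\}$ and $\{\varphi_2<\varphi_1\}$ then yields a contradiction.
\end{itemize}
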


\begin{remark}\label{rem:main}
    \begin{enumerate}[label={\upshape(\roman*)}]
        \item The theorem in the case \eqref{eq:MA} recovers the nef and big case of the MA equation, treated in \cite{BEGZ}, with a strictly positive continuous density. 
        In \cite{BEGZ}, more generally, the authors established the corresponding result in big classes for the MA equation with an arbitrary nonpluripolar full mass measure. 
        \item The regularity assumption on $c_0$ in our theorem is related to the use of viscosity-theoretic arguments in \ref{step:subsol} below. 
        The existence part remains valid when $c_0$ is merely bounded and lower semicontinuous (Theorem \ref{thm:existence}). Continuity is only used in the uniqueness argument through the regularization procedure in Lemma \ref{lem:app}.
        \item Under additional conditions, existence results in special cases were proved by \cite{FL,DMS,Sun23,To,GS}. Those approaches rely on global $L^\infty$ estimates, whereas our argument is based on pluripotential and viscosity methods.
        \item\label{item:DMS} For the $J$-equation, which is given by $\langle
        \chi_\psi^n\rangle=c\langle\omega\wedge\chi_\psi^{n-1}\rangle$, the conjecture in \cite[Conjecture 1.2]{DMS} only requires $\chi_\psi$ to be a Kähler current for the uniquenss.
        We do not know if a Kähler current which satisfies $\langle
        \chi_\psi^n\rangle=c\langle\omega\wedge\chi_\psi^{n-1}\rangle$ necessarily satisfies the second condition of \eqref{eq:gMA}.
    \end{enumerate}
\end{remark}

Here, we explain the strategy of the proof. Although the result covers the case of the MA equation and the proof philosophy is the same, the strategy (especially \ref{step:subsol}) is different since the right-hand side of the gMA equation varies, whereas that of the MA equation is fixed. For example, we cannot use the argument that relies on the full MA class $\mathcal{E}$. 
The proof is broken down into three steps. 

\begin{enumerate}[label={Step \arabic*}]
    \item\label{step:subsol} \textit{Subsolution.} We prove that an $L^1$-limit of $\psi_i$, solutions of approximate gMA equations, is a subsolution of \eqref{eq:gMA}, i.e. satisfies the inequality ``$\ge$''. The argument follows the strategy developed in \cite{Mur}, which was motivated by \cite[Proposition 21]{CS17}. The key is the convolution argument with the convexity of sublevel sets of the operator for the gMA equation.
    
    \item\label{step:sol} \textit{From subsolution to solution.} We observe that a subsolution is a solution. For the MA equation, the converse inequality in terms of the MA mass \cite[Proposition 1.20]{BEGZ} implies the equality, hence a solution. We establish the same type of mass inequality for degenerate $\mathcal{C}$-subsolutions (Proposition \ref{prop:superineq}). By this inequality, a subsolution is a solution.

    \item\label{step:uni} \textit{Uniqueness.} We follow the strategy of \cite[Section 3.3]{BEGZ}, which was based on the solutions of the MA equation with different measures of the same mass. We use the solutions of the gMA equation with different $c_0$ while fixing $\int_Xc_0\omega^n$. One of the key ingredients in \cite[Section 3.3]{BEGZ} was the log-concave property of the nonpluripolar product (Proposition 1.11 there). We use the convexity of the operator of the equation and the regularization arguments (Lemma \ref{lem:app}), to obtain the desired inequality (Lemma \ref{lem:BMineq}).
\end{enumerate}

We also study the weak convergence of the mixed Hessian flow in the setting of Setup \ref{setup} and Assumption \ref{asmp:bdryMA}, assuming that $\omega$ and $\chi$ are Kähler and that $c_0$ is a nonnegative constant.
The mixed Hessian flow is defined by
\begin{equation*}
    \begin{cases}
        \dot{\varphi_t}=1-\sum_{k=0}^{n-1}c_k\frac{\chi_t^k\wedge\omega^{n-k}}{\chi_t^n},\\
        \varphi_t|_{t=0}=\varphi_0,
    \end{cases}
\end{equation*}
where $\chi_t:=\chi+\delb \varphi_t>0$ and $\varphi_0$ is a $\chi$-psh function. The long-time existence of this flow is proved in \cite[Proposition 13]{CS17}.

\begin{thm}\label{thm:Hessflow}
    Let $(X,\omega)$ be an $n$-dimensional compact Kähler manifold and $\chi$ a Kähler form. Let $c_k\ge0$ be constants for $k=0,1,\dots,n-1$ satisfying the conditions in Setup \ref{setup}.  Assume that we have
    $$\int_V\frac{n!}{p!}[\chi]^p-\sum_{k=1}^{n-1}c_k\frac{k!}{(k-n+p)!}[\chi]^{k-n+p}\wedge[\omega]^{n-k}\ge0$$
    for any $p$-dimensional subvariety $V$, where $p=1,2,\dots,{n-1}$.
    Then, the mixed Hessian flow converges to the weak solution of the gMA equation in the sense of currents.
\end{thm}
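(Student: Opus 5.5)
The plan is to combine the long-time existence of the flow \cite[Proposition 13]{CS17} with the weak solution theory of Theorem \ref{thm:main}, and to proceed in the order monotonicity, compactness, limit identification. First I will normalize. Since $\chi$ is Kähler and the classes are only nef-type on the boundary, I will apply Theorem \ref{thm:main} to $[\chi]$ with the approximating data $\chi_i=\chi+\epsilon_i\omega$, $\omega_i=\omega$ and $c_{k,i}=c_k$ (or slightly perturbed $c_{k,i}\searrow c_k$). This produces the unique weak solution $\psi$ with $\sup\psi=0$. The inequality assumed in the statement, plus the strict gain from $\epsilon_i$, should give \eqref{eq:gMAposi} for the perturbed classes.

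Second, I will derive the basic flow identities. The functional
\[
\mathcal{J}(\varphi)=\frac{1}{n+1}\sum_j \int\varphi\,\chi_\varphi^j\wedge\chi^{n-j}-\sum_k c_k(\cdots)
\]
is the energy whose Euler–Lagrange equation is the gMA equation. Using the cohomological identity in Setup \ref{setup}, I expect $\int_X\dot\varphi_t\,\chi_t^n=0$. I also expect the flow to be the gradient flow of $\mathcal{J}$, so $\mathcal{J}(\varphi_t)$ is monotone.

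Third, I will establish compactness. Set $\tilde\varphi_t=\varphi_t-\sup\varphi_t$. Then $\{\tilde\varphi_t\}$ is relatively compact in $L^1$ among $\chi$-psh functions. The mean-value normalization and a uniform lower bound on $\mathcal{J}$ (bounded below by coercivity on the nef boundary, as in \cite{CS17}) should give
\[
\int_0^\infty\!\!\int_X\dot\varphi_t^2\,\chi_t^n\,dt<\infty,
\]
and hence a sequence $t_j\to\infty$ along which
\[
\int_X\Bigl|\chi_{t_j}^n-\sum_k c_k\chi_{t_j}^k\wedge\omega^{n-k}\Bigr|\to0 .
\]

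Fourth, I will identify the limit. Take any $L^1$-subsequential limit $\varphi_\infty$ of $\tilde\varphi_{t_j}$. Each $\varphi_t$ is a smooth $\mathcal{C}$-subsolution, which makes $\chi_t$ positive. The pointwise inequality $\frac{n!}{p!}\chi_t^p-\sum c_k\ldots\ge0$ then holds for smooth $\chi_t$ satisfying the flow's cone condition. Moreover, $1-\dot\varphi_t$ is asymptotically $1$ in $L^2(\chi_t^n)$. Together these mean $\varphi_{t_j}$ play the role of the approximate solutions $\psi_i$ in \ref{step:subsol}. I will run the same viscosity/convolution argument to show $\varphi_\infty$ is a subsolution of \eqref{eq:gMA}, and then invoke Proposition \ref{prop:superineq} to upgrade it to a solution. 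The uniqueness part of Theorem \ref{thm:main} then gives $\varphi_\infty=\psi$. Since every subsequential limit agrees, the whole family converges. To pass from the sequence $t_j$ to all $t$, I will use monotonicity of $\mathcal{J}$ together with the same argument applied to any sequence.

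I expect the main obstacle to be \ref{step:subsol} for the flow. The error term $\dot\varphi_t$ is only $L^2$-small, not uniformly small, so the convolution argument from \cite{Mur} must tolerate an error of the form $(1-\dot\varphi)\chi_t^n$. My first attempt will be to use Chebyshev to get smallness of $\dot\varphi_{t_j}$ off a set of small $\chi_{t_j}^n$-mass, and to control that set via the capacity-type estimates used in the proof of Theorem \ref{thm:existence}.
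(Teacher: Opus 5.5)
Your overall architecture (decay of $\dot\varphi_t$, viscosity subsolution in the limit, Proposition \ref{prop:superineq}, then uniqueness) matches the paper's. However, the two steps that carry the weight do not go through as you state them.

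\emph{Energy step.} Coercivity on the nef boundary does not give a uniform lower bound for $\mathcal{J}$. On the boundary of the solvable region $\mathcal{J}$ is not coercive, and the paper neither proves nor uses any lower bound. It proves only sublinear decay, as follows.
For $\varepsilon>0$ it adds $\varepsilon\,\omega^n/\chi^n$ and the constant $a_\varepsilon$. This multiplies the leading term by $1+a_\varepsilon$, so your nonstrict hypothesis becomes strict (because $\int_V[\chi]^p>0$). Then \cite{FM,Datar-Pingali} give a smooth solution $\varphi_\varepsilon$, the perturbed flow converges to it, and $\mathcal{J}_\varepsilon\ge\mathcal{J}_\varepsilon(\varphi_\varepsilon)$.
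The maximum-principle bound $|\dot\varphi_t|\le C$ (Lemma \ref{lem:bound}) gives $\mathcal{J}_\varepsilon(\varphi_t)-\mathcal{J}(\varphi_t)\le C\varepsilon t+C$ along the original flow. Hence $\mathcal{J}(\varphi_t)\ge -C\varepsilon t-C_\varepsilon$.
This becomes $\|\dot\varphi_t\|_{L^2(\chi_t)}\to0$ only through the convexity of $\mathcal{J}$ along the flow (Lemma \ref{lem:convexalongflow}, via the integration by parts in Lemma \ref{lem:F}). Concretely, $t\mapsto\int_X\dot\varphi_t^2\chi_t^n$ is nonincreasing, so $\lim_t\mathcal{J}(\varphi_t)/t=-\lim_t\|\dot\varphi_t\|^2_{L^2(\chi_t)}$.
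The same monotonicity is what gives smallness along \emph{every} sequence $t_j\to\infty$. Monotonicity of $\mathcal{J}$ alone, or even $\int_0^\infty\|\dot\varphi_t\|^2dt<\infty$, controls only some sequence. So your final passage from $t_j$ to all $t$ is not justified as written.
Relatedly, your Step 1 perturbation $[\chi]+\epsilon_i[\omega]$ does not obviously yield the strict inequality \eqref{eq:gMAposi}, since the negative terms grow too. Taking $c_{k,i}\searrow c_k$ from above makes that inequality worse, not better.

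\emph{Identification step.} The $\chi_t$ are not $\mathcal{C}$-subsolutions. The flow starts from an arbitrary K\"ahler $\chi_0$ and does not preserve the cone condition. Moreover, in a genuine boundary case $[\chi]$ contains no smooth $\mathcal{C}$-subsolution at all: one would produce a smooth solution by \cite{FM,Datar-Pingali}, forcing all the inequalities to be strict.
So the $\chi_{t_j}$ cannot replace the $\psi_i$ of Proposition \ref{prop:vsub}, whose proof pushes the pointwise bound $P<1$ through the convolution.
The missing idea is that, because $c_0\ge0$ is constant, the operator $Q_{c_0,I_n}$ itself (not only its sublevel sets) is convex and decreasing on positive matrices. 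Jensen then gives $Q_{c_0,\omega_0}(\delb u_{j,\delta})\le 1+C_\delta\|Q_{c_0,\omega}(\chi_{t_j})-1\|_{L^2(\omega)}$.
This removes the ``$L^2$ versus uniform'' obstacle you flag, with no Chebyshev or capacity argument. Converting $L^2(\chi_t^n)$ to $L^2(\omega^n)$ uses $\chi_t^n\ge c\,\omega^n$, which again comes from $|\dot\varphi_t|\le C$.
This yields $Q_{c_0,\omega}(\chi_\infty)\le_v1$. The cone condition $P_\omega(\chi_\infty)\le_v1$ is then recovered only in the limit, by sending an eigenvalue of an upper test to infinity (using $c_0\ge0$). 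Only after that do Lemma \ref{lem:visctopp}, Proposition \ref{prop:superineq} and uniqueness apply.
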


\begin{remark}
    In dimension two, \cite{Mur} proved the theorem. Moreover, in the two-dimensional semipositive case \cite{FLSW} and in a certain semipositive condition in general dimension \cite{Sun23b}, they proved the same result by establishing global $L^\infty$ estimates. They further proved smooth convergence on a Zariski open subset.
\end{remark}
The proof of Theorem \ref{thm:Hessflow} follows the strategy of \cite{Mur}. That is, using the convexity of an energy functional along the flow, we first prove that the time derivative of the flow converges to zero in $L^2$. Then, the arguments from \ref{step:subsol} imply that a weak limit is a subsolution. Then, \ref{step:sol} and \ref{step:uni} imply the desired result.

With the same strategy, we establish similar results for the dHYM equation \eqref{eq:dHYM} with the supercritical phase, that is, $\theta\in(0,\pi)$. Our setup for the dHYM equation is as follows. 

\begin{setup}\label{setup2}
    Let $X$ be an $n$-dimensional compact Kähler manifold and $\omega$ a closed real semipositive $(1,1)$-form which is strictly positive outside a pluripolar set. Let $[\alpha]$ be a real $(1,1)$ cohomology class. Let $\theta\in(0,\pi)$ be a constant such that 
    \begin{equation*}
        \int_X \mathrm{Re}(\alpha+\sqrt{-1}\omega)^n-\cot\theta\mathrm{Im}(\alpha+\sqrt{-1}\omega)^n=0.
    \end{equation*}
\end{setup}
\begin{asmp}[boundary case]\label{asmp:bdrydHYM}
    Suppose that there exist sequences of real $(1,1)$ cohomology classes $[\alpha_i]$, Kähler forms $\omega_i$, and constants $\theta_i\in(0,\pi)$ that satisfy
    \begin{align}
    &\alpha_i\ge\alpha, \quad [\alpha_i]\to[\alpha],\quad \omega_i\searrow \omega,\quad \theta_i\searrow\theta\ \text{ as }\ i\to\infty,
    \end{align}
    and that the triple $(X,[\alpha_i],[\omega_i])$ is stable along a test family with a phase $\theta_i$ in the sense of \cite[Theorem 1.3]{CLT}.
\end{asmp}

By \cite[Theorem 1.3]{CLT} and \cite{Linb}, the stability of a triple $(X,[\alpha_i],[\omega_i])$ along a test family is equivalent to the existence of a solution of the twisted dHYM equation in $[\alpha_i]$. Analogously to Theorem \ref{thm:main}, we obtain the following:

\begin{thm}\label{thm:maindHYM}
    Assume Setup \ref{setup2} and Assumption \ref{asmp:bdrydHYM}. Then, there exists a unique quasi-psh function $\psi$ with $\sup\psi=0$ such that
    \begin{equation}\label{eq:dHYM}
        \begin{cases}
            \mathrm{Re}\langle(\alpha_\psi+\sqrt{-1}\omega)^n\rangle-\cot\theta\mathrm{Im}\langle(\alpha_\psi+\sqrt{-1}\omega)^n\rangle=0,\\
            \alpha_\psi\in\bar{\Gamma}_{\omega,\theta,\Theta},
        \end{cases}
    \end{equation}
    where $\langle\cdot\rangle$ denotes the nonpluripolar product, the constant $\Theta$ is any constant such that $0<\theta<\Theta<\pi$, and the definition of $\bar{\Gamma}_{\omega,\theta,\Theta}$ is given in Definition \ref{def:dhymgammabar}.
\end{thm}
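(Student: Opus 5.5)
We follow the same three-step scheme as for Theorem \ref{thm:main}, transported to the dHYM operator through the angle formulation. Write the dHYM operator in terms of the eigenvalues $\lambda_j$ of $\alpha$ with respect to $\omega$ as $\Theta_\omega(\alpha)=\sum_j \operatorname{arccot}\lambda_j$. The first line of \eqref{eq:dHYM} is then the statement that the phase equals $\theta$. In the supercritical range $\theta\in(0,\pi)$, the set $\{\lambda:\sum_j\operatorname{arccot}\lambda_j\le\theta\}$ is convex. This is the analogue of the convexity of sublevel sets of the gMA operator, and it is the structural input we will use throughout.

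\textbf{Step 1 (approximate solutions and a subsolution limit).} By Assumption \ref{asmp:bdrydHYM} and \cite[Theorem 1.3]{CLT}, \cite{Linb}, each $[\alpha_i]$ carries a smooth solution $\psi_i$ of the twisted dHYM equation with respect to $\omega_i$ and phase $\theta_i$. We normalize $\sup\psi_i=0$. Since $\alpha_i\ge\alpha$ and $[\alpha_i]\to[\alpha]$, we may write $\alpha_i=\alpha+\eta_i+\delb f_i$ with $\eta_i\to0$ smooth. The functions $\psi_i+f_i$ are then uniformly quasi-psh, because the phase bound $\Theta<\pi$ forces every $\lambda_j>\cot\Theta$, so $\alpha_{\psi_i}\ge\cot\Theta\,\omega_i$ up to controlled errors. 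By compactness we extract an $L^1$-limit $\psi$.

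Following \cite{Mur} and \cite[Proposition 21]{CS17}, we convolve locally and use the convexity of the sublevel set, together with $\omega_i\searrow\omega$ and $\theta_i\searrow\theta$. This gives the viscosity-type inequality ``phase $\le\theta$'' in the appropriate sense. It yields $\alpha_\psi\in\bar\Gamma_{\omega,\theta,\Theta}$ and the inequality $\mathrm{Re}\langle(\alpha_\psi+\sqrt{-1}\omega)^n\rangle-\cot\theta\,\mathrm{Im}\langle\cdots\rangle\ge0$, once the nonpluripolar products are taken on the ample locus via the regularization of Lemma \ref{lem:app}.

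\textbf{Step 2 (subsolution to solution).} We need the reverse mass inequality, the dHYM analogue of Proposition \ref{prop:superineq}. The idea is to expand $\mathrm{Re}-\cot\theta\,\mathrm{Im}$ of $(\alpha+\sqrt{-1}\omega)^n$ as a polynomial in mixed products $\alpha^k\wedge\omega^{n-k}$. The nonpluripolar masses of the mixed products satisfy $\int\langle\alpha_\psi^k\wedge\omega^{n-k}\rangle\le[\alpha]^k\cdot[\omega]^{n-k}$, which holds when $\alpha_\psi$ is bounded below by a multiple of $\omega$ as guaranteed by $\bar\Gamma$. Applying this with the signs dictated by $\bar\Gamma_{\omega,\theta,\Theta}$, the total integral of the left-hand side is at most $0$ by Setup \ref{setup2}. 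Combined with the pointwise inequality $\ge0$, the measure vanishes.

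This is where we expect the main obstacle. Unlike the gMA case, the coefficients in the expansion have mixed signs, so the termwise monotonicity of masses does not immediately give the inequality in the right direction. To handle this we would change coordinates by $\alpha\mapsto\alpha-\cot\Theta'\,\omega$ for suitable $\Theta'$ (a rotation of the phase). The aim is to rewrite the equation as a gMA-type equation with nonnegative coefficients on the relevant cone, in the spirit of \cite{Datar-Pingali,FM}, and then invoke Proposition \ref{prop:superineq} directly. Any remaining lower-order terms would be controlled by the $\bar\Gamma$ condition.

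\textbf{Step 3 (uniqueness).} We mimic \cite[Section 3.3]{BEGZ} and Lemma \ref{lem:BMineq}. Given two solutions $\psi,\psi'$, we solve perturbed equations in which the constant right-hand side is replaced by a continuous density of the same total integral, concentrated on $\{\psi<\psi'\}$. Existence for these follows from Steps 1–2 with a varying twist. The convexity of the operator on $\bar\Gamma$, combined with regularization (Lemma \ref{lem:app}), gives a mixed inequality of Brunn–Minkowski type. The comparison principle for nonpluripolar products then forces $\psi-\psi'$ to be constant, and the normalization $\sup=0$ makes it zero. Independence from the choice of $\Theta$ follows because every solution lies in $\bar\Gamma$ for every admissible $\Theta$ and the limit is unique.
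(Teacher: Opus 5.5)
Your Steps 1 and 3 follow the paper's outline. Step 2, which you correctly flag as the crux, has a genuine gap. Step 3 inherits it, because both the twisted solutions you need there and the analogues of Lemmas \ref{lem:uni} and \ref{lem:solineq} rest on the same mass inequality.

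The proposed rotation cannot work on the whole supercritical range. Put $\lambda_j=\mu_j+c$, i.e.\ $\chi=\alpha-c\,\omega$. Already for $n=4$ the equation becomes
\[
S_4(\mu)=(\cot\theta-c)\,S_3(\mu)+(1+2c\cot\theta-c^2)\,S_2(\mu)+h(c)\,S_1(\mu)+\mathrm{const},\qquad h(c)=-c^3+3c^2\cot\theta+3c-\cot\theta .
\]
The first two coefficients are nonnegative only for $c\in[\cot\theta-\sqrt{1+\cot^2\theta},\,\cot\theta]$. On that interval $h'(c)=3(1+2c\cot\theta-c^2)\ge0$, so $h(c)\le h(\cot\theta)=2\cot\theta\,(1+\cot^2\theta)<0$ for every $\theta\in(\pi/2,\pi)$. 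Hence no shift produces a gMA equation with $c_1,\dots,c_{n-1}\ge0$, and Proposition \ref{prop:superineq} cannot be invoked. This is exactly why the paper regards supercritical dHYM as an inverse $\sigma_k$ equation with possibly negative coefficients and treats it separately.

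The termwise bound you start from is also unavailable. For $\alpha_\psi\ge-A\omega$ and $k\ge2$, write $\beta=\alpha_\psi+A\omega\ge0$ and expand $(\beta-A\omega)^k$. The mass defects of the $\langle\beta^j\rangle$ then enter with alternating signs, so $\int_X\langle\alpha_\psi^k\wedge\omega^{n-k}\rangle\le[\alpha]^k\cdot[\omega]^{n-k}$ can fail.

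The missing idea is the device of Proposition \ref{prop:dhymsuperineq}, which never expands anything.
\begin{itemize}
\item One joins $\varphi$ to the smooth approximate solution $\psi_i$ by $\varphi_{i,s}=(1-s)\varphi+s\psi_i$.
\item Convexity of $P$, together with \eqref{eq:CL} to pass from $\omega_i$ to $\omega$, keeps each $\alpha_{i,\varphi_{i,s}}$ a degenerate supercritical $\mathcal{C}$-subsolution up to $\varepsilon$.
\item By Lemma \ref{lem:dhymvisctopp}, the whole $(n-1,n-1)$-current $T^{n-1}_{i,\omega}(\alpha_{i,\varphi_{i,s}})$ is therefore positive, even though its expansion in mixed products has mixed signs.
\item The $s$-derivative of $\int_X T^n_{i,\omega}(\alpha_{i,\varphi_{i,s}})$ is a positive multiple of $\int_X\langle(\alpha_{i,\psi_i}-\alpha_{i,\varphi})\wedge T^{n-1}_{i,\omega}(\alpha_{i,\varphi_{i,s}})\rangle$. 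This is $\ge0$ by the analogue of Lemma \ref{lem:ppineq}: only one factor varies, the other is a fixed positive current, and $\varphi\le\psi_i+C$.
\item Hence $\int_X T^n_\omega(\alpha_\varphi)\le\lim_i\int_X T^n_{i,\omega}(\alpha_{i,\psi_i})=0$ by Setup \ref{setup2} and Assumption \ref{asmp:bdrydHYM}.
\end{itemize}

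There are also two smaller points in Step 1. First, the angle operators are not monotone in $\omega$, since $\alpha$ may have negative eigenvalues. Freezing $\omega$ to constant-coefficient forms in the convolution step therefore needs \eqref{eq:CL} rather than the monotonicity used for gMA. Second, the passage from viscosity to nonpluripolar inequalities goes through the truncations $\sup\{u,av-C\}$ and plurifine locality, as in Lemma \ref{lem:dhymvisctopp}, not through Lemma \ref{lem:app}.
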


\begin{remark}
        We can also establish the same result for the twisted dHYM equation, except for the uniqueness in dimension 3, since the convexity of the twisted dHYM equation with a twist of a non-constant function in dimension less than 4 is not known \cite[Proposition 5.6]{GChen} (the uniqueness in dimensions 1 and 2 follows from that of the MA equation).
\end{remark}

The dHYM flow, introduced in \cite{FYZ}, is defined by
\begin{equation*}
    \begin{cases}
        \dot{\varphi_t}=\cot\theta_\omega(\alpha_t)-\cot\theta,\\
        \varphi_t|_{t=0}=\varphi_0,
    \end{cases}
\end{equation*}
where $\alpha_t:=\alpha+\delb \varphi_t$ and the initial data $\varphi_0$ satisfies $0<\theta_\omega(\alpha_0)<\pi$. Here, the function $\theta_\omega(\alpha)$ is defined as  $\theta_\omega(\alpha):=\sum_i \operatorname{arccot}\lambda_i$, where $\lambda_i$'s are eigenvalues of $\omega^{-1}\alpha$.
The long-time existence of this flow is proved in \cite[Theorem 1.2]{FYZ} and the estimate $0<\inf\theta_\omega(\alpha_0)\le\theta_\omega(\alpha_t)\le\sup\theta_\omega(\alpha_0)<\pi$ was established in \cite[Lemma 3.2]{FYZ}.
Analogously to Theorem \ref{thm:Hessflow}, we prove the following:

\begin{thm}\label{thm:dHYMflow}
    Let $(X,\omega)$ be an $n$-dimensional compact Kähler manifold and $[\alpha]$ be a real $(1,1)$ cohomology class. Assume Setup \ref{setup2} and Assumption \ref{asmp:bdrydHYM} with $\omega_i=\omega$ and $\alpha_i=\alpha$ for any $i$. 
    Then, the dHYM flow converges to the weak solution of the dHYM equation in the sense of currents.
\end{thm}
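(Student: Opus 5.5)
We follow the strategy used for Theorem \ref{thm:Hessflow}, with the dHYM flow in place of the mixed Hessian flow. Throughout, $\omega$ is Kähler, so by \cite[Lemma 3.2]{FYZ} the angle $\theta_\omega(\alpha_t)$ stays in a compact subinterval of $(0,\pi)$ along the flow. We normalize $\tilde\varphi_t:=\varphi_t-\sup_X\varphi_t$. Since $\alpha_t$ lies in a fixed cone determined by the angle bound, $\alpha+\delb\tilde\varphi_t\ge -C\omega$ uniformly in $t$. Hence $\{\tilde\varphi_t\}$ is a family of $(\alpha+C\omega)$-psh functions with $\sup=0$. By Hartogs compactness, every sequence $t_j\to\infty$ has a subsequence converging in $L^1$ to some quasi-psh $\psi$ with $\sup\psi=0$. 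Because the weak solution of Theorem \ref{thm:maindHYM} is unique, it suffices to show that every such limit $\psi$ solves \eqref{eq:dHYM}. Then the whole flow converges in the sense of currents.

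\textbf{Step A: decay of the time derivative.} We use the dHYM (Kempf--Ness type) energy functional $\mathcal{J}$, whose first variation is $\int_X \dot\varphi\,(\mathrm{Re}-\cot\theta\,\mathrm{Im})(\alpha_\varphi+\sqrt{-1}\omega)^n$ up to a positive factor. Along the flow,
\[
\tfrac{d}{dt}\mathcal{J}(\varphi_t)=-\int_X(\cot\theta_\omega(\alpha_t)-\cot\theta)^2\,\rho_t\,\omega^n,
\]
where $\rho_t$ is bounded above and below by positive constants, thanks to the angle bounds. Stationarity of the flow, combined with convexity of $\mathcal{J}$ along the flow (as in \cite{Mur} for the Hessian case), makes $t\mapsto\int_X\dot\varphi_t^2\rho_t\omega^n$ nonincreasing. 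We must also show that $\mathcal{J}(\varphi_t)$ is bounded below. For this we use the existence of solutions in the nearby stable data $\theta_i\searrow\theta$ from Assumption \ref{asmp:bdrydHYM}: these provide subsolution-type comparison functions, and the boundedness of $\mathcal{J}$ from below follows as in the Hessian case. We expect this lower bound to be the main obstacle, since we are not in the stable case and $\mathcal{J}$ may fail to be proper. Our first approach is to compare with $\mathcal{J}_{\theta_i}$, which is bounded below because a solution exists for $\theta_i$, and then let $i\to\infty$ using that $\cot\theta_i\to\cot\theta$ and that $\mathcal{J}_\theta-\mathcal{J}_{\theta_i}$ is controlled linearly by $\cot\theta-\cot\theta_i$ times an $\mathrm{Im}$-energy term. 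These two facts together give $\|\cot\theta_\omega(\alpha_t)-\cot\theta\|_{L^2}\to0$.

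\textbf{Step B: subsolution.} Once $\dot\varphi_t\to0$ in $L^2$, we apply the convolution and viscosity argument of \ref{step:subsol} to the dHYM operator. Its sublevel sets $\{\theta_\omega(\cdot)\le\theta+\delta\}$ are convex in the supercritical range, and $\omega$ is fixed. From this we conclude that the $L^1$-limit $\psi$ satisfies $\alpha_\psi\in\bar\Gamma_{\omega,\theta,\Theta}$ together with the inequality ``$\ge$'' in the first line of \eqref{eq:dHYM}, in the nonpluripolar sense. The $L^2$-smallness replaces the exact approximate equations used in the proof of Theorem \ref{thm:maindHYM}. In the viscosity argument, this is handled by discarding sets of small measure: we pass to a subsequence along which $\dot\varphi_{t_j}\to0$ almost everywhere.

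\textbf{Step C: conclusion.} By the dHYM analogue of the mass inequality for degenerate $\mathcal{C}$-subsolutions (the counterpart of Proposition \ref{prop:superineq} used in \ref{step:sol}), the total mass of the subsolution is at most the cohomological quantity, which vanishes by Setup \ref{setup2}. The nonnegative measure in the first line of \eqref{eq:dHYM} therefore has zero total mass, so it vanishes, and $\psi$ is a weak solution. Uniqueness from Theorem \ref{thm:maindHYM} identifies all subsequential limits, so $\tilde\varphi_t$ converges. Finally, since $\alpha_t-\alpha$ is determined by $\tilde\varphi_t$, the currents $\alpha_t$ converge to $\alpha_\psi$.
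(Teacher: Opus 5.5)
Your overall architecture matches the paper's: convexity of $\mathcal{J}$ along the flow gives $L^2$-decay of $\cot\theta_\omega(\alpha_t)-\cot\theta$ (Lemma \ref{lem:dhymconv}); the convolution/viscosity argument gives a subsolution; Lemma \ref{lem:dhymvisctopp} and Proposition \ref{prop:dhymsuperineq} upgrade it to a solution; and uniqueness removes the subsequence. Step A, however, does not work as written. You aim for a \emph{uniform} lower bound on $\mathcal{J}(\varphi_t)$ by comparison with $\mathcal{J}_{\theta_i}$, and that comparison cannot give one.

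With $\alpha_i=\alpha$ and $\omega_i=\omega$, Setup \ref{setup2} pins the phase to $\theta$. So the untwisted phase-$\theta_i$ equation has no solution in $[\alpha]$. Moreover, the untwisted $\mathcal{J}_{\theta_i}$ is not bounded below at all: adding a constant $c$ to $\varphi$ changes it by $c(\cot\theta-\cot\theta_i)\int_X\mathrm{Im}(\alpha+\sqrt{-1}\omega)^n$ (up to your normalization factor), which is unbounded in $c$. What Assumption \ref{asmp:bdrydHYM} actually yields, through \cite{Linb} and \cite[Proposition 5.5]{GChen}, is this: for each small $\varepsilon>0$, a solution of a twisted equation with an extra term $\varepsilon\,\omega^n$, with the constant readjusted so the cohomological identity persists. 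It is the corresponding twisted functional $\mathcal{J}_\varepsilon$ that is bounded below.

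Along the dHYM flow your $\mathrm{Im}$-energy term is in fact harmless, because $\int_X\dot\varphi_t\,\mathrm{Im}(\alpha_t+\sqrt{-1}\omega)^n=\int_X(\mathrm{Re}-\cot\theta\,\mathrm{Im})(\alpha_t+\sqrt{-1}\omega)^n=0$. The twist, however, contributes $\varepsilon\int_X\dot\varphi_t\,\omega^n$ per unit time, which is only $O(\varepsilon)$ since $|\dot\varphi_t|\le C$. So the comparison gives only $\mathcal{J}(\varphi_t)\ge -C\varepsilon t-C_\varepsilon$, with $C_\varepsilon$ not uniform in $\varepsilon$. Sending $\varepsilon\to0$ (your $i\to\infty$) at the level of the functional then produces nothing.

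The missing observation is that this sublinear bound is already enough, because of the monotonicity you get from convexity (Lemma \ref{lem:dhymconvexalongflow}). Put $g(t):=\int_X\dot\varphi_t^2\,\mathrm{Im}(\alpha_t+\sqrt{-1}\omega)^n$. It is nonincreasing, and $\int_0^tg\,ds$ equals $\mathcal{J}(\varphi_0)-\mathcal{J}(\varphi_t)$ up to a positive factor. Hence $t\,g(t)\le\int_0^tg\,ds\le C\varepsilon t+C'_\varepsilon$, so $\limsup_{t\to\infty}g(t)\le C\varepsilon$ for every $\varepsilon>0$. This is exactly the paper's route (Lemma \ref{lem:conv}, \cite[Proposition 3.3]{Mur}), phrased there as $\lim_t\mathcal{J}(\varphi_t)/t=\lim_t\frac{d}{dt}\mathcal{J}(\varphi_t)\ge-C\varepsilon$.

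Two smaller points. First, the angle bounds give $\mathrm{Im}(\alpha_t+\sqrt{-1}\omega)^n\ge c\,\omega^n$ but no upper bound on your $\rho_t$, since there is no $C^2$ control in the boundary case; only the lower bound is needed to pass to $L^2(\omega)$. Second, your small-measure (Egorov-type) treatment in Step B differs from the estimate used in the proof of Theorem \ref{thm:Hessflow}, but it is sound. The angle bound keeps the eigenvalues of $\omega^{-1}\alpha_t$ uniformly bounded below, so by monotonicity you can replace the bad-set contribution to the mollified form by that uniform lower bound, at a cost that vanishes with the measure of the bad set.
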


\begin{remark}
    In dimension two, \cite{Mur} proved the theorem. Moreover, in the two-dimensional semipositive case \cite{FYZ}, they proved the same result by establishing global $L^\infty$ estimates. They further proved smooth convergence on a Zariski open subset.
\end{remark}

Although we expect our strategy to extend to more general complex Hessian equations, there are several difficulties. 
For example, the general inverse $\sigma_k$ equation was studied in \cite{Lina,Linb}. It takes the form \eqref{eq:gMA} with possibly negative coefficients $c_i$, and in particular unifies the gMA equation and the supercritical dHYM equation. 
In \cite{Lina}, the author proved that these equations have convex sublevel sets under the strict $\Upsilon$-stability condition. However, it is not known whether a property analogous to \eqref{eq:CL} holds. Such a property would imply the existence of weak solutions via the convolution argument. The uniqueness problem appears substantially more difficult, since our uniqueness argument relies on the convexity of the operator.

For the complex Hessian quotient equation $\alpha^k\wedge\omega^{n-k}=\sum_{i=0}^{k-1}c_i\alpha^i\wedge\omega^{n-i}$, although the convexity of the operator is well-known when $c_i\ge0$, we do not know if the convexity of sublevel sets is preserved when $c_0$ can be slightly negative. We also do not know whether a property similar to \eqref{eq:CL} holds. The absence of quasi-plurisubharmonicity might be a difficulty as well. 

Note that \cite{CX25a,CX25b} studied viscosity solutions for very general complex Hessian equations of the type cosidered in \cite{Sze}. Also, pluripotential solutions for the complex $m$-Hessian equation, defined by $\alpha^m\wedge\omega^{n-m}=c_0\omega^m$, have been studied extensively (see \cite{KN,PSWZ} for recent results). 

The paper is organized as follows. We study the gMA equation in Section \ref{sec:gMA} and the dHYM equation in Section \ref{sec:dHYM}. In subsection \ref{subsec:pre}, we define viscosity subsolutions. In subsection \ref{sec:existence}, we prove the existence part of Theorem \ref{thm:main} (Theorem \ref{thm:existence}). In subsection \ref{subsec:uniqueness}, we prove the uniqueness part of Theorem \ref{thm:main}. In subsection \ref{subsec:mixedHessianflow}, we prove Theorem \ref{thm:Hessflow}. Section \ref{sec:dHYM} is organized in the same way as Section \ref{sec:gMA}. The proofs in Section \ref{sec:dHYM} are mostly the same as those in Section \ref{sec:gMA}, so we only point out the differences.

\subsection*{Acknowledgements}
The author thanks Satoshi Jinnouchi for many discussions. The author also thanks Chao-Ming Lin for kindly answering his questions.

\section{The Generalized Monge-Ampère equation}\label{sec:gMA}
\subsection{Preliminaries}\label{subsec:pre}
We follow the notation in \cite{CS17}. For a vector $\lambda=(\lambda_1,\dots,\lambda_n)\in\mathbb{R}^n$, denote by 
\begin{equation*}
    S_k(\lambda):=\sum_{1\le i_1<i_2<\dots<i_k\le n}\lambda_{i_1}\lambda_{i_2}\dots\lambda_{i_k},
\end{equation*}
the $k$-th elementary symmetric function. Define $S_0=1$ and $S_{-1}=0$. For distinct indices $i_1,\dots, i_\ell$, we define
\begin{equation*}
    S_{k:i_1,\dots,i_\ell}(\lambda):=S_k(\lambda)|_{\lambda_{i_1}=\dots=\lambda_{i_\ell}=0},
\end{equation*}
while  if $i_1,\dots, i_\ell$ are not distinct, then $S_{k:i_1,\dots,i_\ell}(\lambda):=0$.
Denote the set of $n\times n$ Hermitian matrices by $\operatorname{Herm}(\mathbb{C}^n)$ and let $A\in\operatorname{Herm}(\mathbb{C}^n)$. Let $\{c_k\}_{k=1}^{n-1}$ be nonnegative constants and $c_{0,0}$ a constant. 
For $\ell=1,\dots,n-1$, we define
\begin{align*}
    P^\ell_{I_n}(A)&:=\max_{i_1,\dots,i_\ell\in\{1,\dots,n\}} \left(\sum_{k=1}^{n-1} c_k\frac{1}{{n\choose k}}\frac{S_{k-\ell;i_1,\dots i_\ell}(\lambda)}{S_{n-\ell;i_1,\dots i_\ell}(\lambda)}\right),\\
    Q_{c_{0,0},I_n}(A)&:=Q_{c_{0,0},I_n}(A) := \sum_{k=1}^{n-1}c_k\frac{1}{{n\choose k}}\frac{S_k(\lambda)}{S_n(\lambda)}+c_{0,0}\frac{1}{S_n(\lambda)},
\end{align*}
where $\lambda$ is the eigenvalue vector of $A$. We particularly denote $P^1_{I_n}$ by $P_{I_n}$.
Let us define 
\begin{equation*}
    \Gamma_{\ge0}:=\{A\in\operatorname{Herm}(\mathbb{C}^n)\mid A \text{ is semipositive}\},\quad \bar{\Gamma}:=\{A\in \Gamma_{\ge0}\mid P_{I_n}(A)\le1 \}.
\end{equation*}
The important properties of $P^\ell$ and $Q$ are the following:

\begin{prop}\label{prop:properties}
    There exists a constant $\varepsilon_{2.1}>0$, depending on $\{c_k\}_{k=1}^{n-1}$, such that if a constant $c_{0,0}$ satisfies $c_{0,0}>-\varepsilon_{2.1}$, the following hold:
    \begin{enumerate}[label={\upshape(\roman*)}]
        \item\label{item:monotonicity} $P^\ell_{I_n}$ decreases in $\Gamma_{\ge0}$ and $Q_{c_{0,0},I_n}$ decreases in $\overline{\Gamma}$. That is, if $A\ge B\in\Gamma_{\ge0}$, then $P^\ell_{I_n}(A)\le P^\ell_{I_n}(B)$, and if $A\ge B\in\bar{\Gamma}$, then $Q_{c_{0,0},I_n}(A)\le Q_{c_{0,0},I_n}(B)$.
        \item $P^\ell_{I_n}$ is convex on $\Gamma_{\ge0}$ and $Q_{c_{0,0},I_n}$ is convex on $\bar{\Gamma}$.
        \item\label{item:sublevel} The sublevel sets $\{P^\ell_{I_n}\le1\}\cap\Gamma_{\ge0}$ and $\{Q_{c_{0,0},I_n}\le1\}\cap\bar{\Gamma}$ are convex.
    \end{enumerate}
\end{prop}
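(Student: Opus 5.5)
We reduce everything to statements about symmetric functions of the eigenvalue vector $\lambda$ of $A$. For $\lambda$ in the open positive cone, write
\[
f_{i_1\dots i_\ell}(\lambda)=\sum_{k=1}^{n-1} c_k\binom{n}{k}^{-1}\frac{S_{k-\ell;i_1,\dots,i_\ell}(\lambda)}{S_{n-\ell;i_1,\dots,i_\ell}(\lambda)} .
\]
Since $S_{n-\ell;i_1,\dots,i_\ell}$ is the product of the remaining $n-\ell$ eigenvalues, each ratio $S_{k-\ell;I}/S_{n-\ell;I}$ equals the elementary symmetric polynomial $S_{n-k;I}$ evaluated at the reciprocals $\mu_j=1/\lambda_j$, $j\notin I$. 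Thus $f_I$ is a nonnegative combination of elementary symmetric polynomials in $\mu=1/\lambda$, and similarly
\[
Q=\sum_k c_k\binom{n}{k}^{-1}S_{n-k}(\mu)+c_{0,0}S_n(\mu).
\]
Two standard facts then do most of the work.
\begin{enumerate}[label=(\alph*)]
\item Each $S_m(\mu)$ is decreasing in $\lambda$.
\item Each $S_m(1/\lambda)$ is convex on the positive cone, since $\lambda\mapsto S_m(1/\lambda)$ is a sum of $\prod_{j\in J}\lambda_j^{-1}$, and each such product is log-convex, hence convex.
\end{enumerate}
By the standard Davis/Lewis principle, a symmetric convex function of the eigenvalues is a convex function of the Hermitian matrix. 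Likewise, a symmetric function decreasing in each $\lambda_j$ is monotone in the Loewner order, by Weyl's monotonicity of eigenvalues.

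For $P^\ell$, the maximum over index sets makes it a symmetric function of $\lambda$. The maximum of convex functions is convex, and the maximum of decreasing functions is decreasing. So (i) and (ii) hold for $P^\ell$ on the interior of $\Gamma_{\ge0}$. They extend to $\Gamma_{\ge0}$ with values in $(-\infty,\infty]$ by lower semicontinuity, with the interpretation $1/0=\infty$. For (iii), sublevel sets of a convex function are convex.

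The only genuine difficulty is $Q$ when $c_{0,0}<0$. Then $Q$ contains the term $c_{0,0}S_n(\mu)=c_{0,0}\prod_j\mu_j$, which is concave and increasing. We must show that on $\bar\Gamma=\{P_{I_n}\le1\}$ this term is dominated by the positive ones once $|c_{0,0}|<\varepsilon_{2.1}$.

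\textbf{Monotonicity of $Q$.} Compute
\[
\partial_{\lambda_j}Q=-\mu_j\Big(\sum_k c_k\binom{n}{k}^{-1}S_{n-k}(\mu)-S_{n-k;j}(\mu)\Big)-c_{0,0}\,\mu_j S_n(\mu).
\]
The bracket equals $\mu_j\sum_k c_k\binom{n}{k}^{-1}S_{n-k-1;j}(\mu)$. The condition $P_{I_n}\le1$ says, in terms of $\mu$, that $\sum_k c_k\binom{n}{k}^{-1}S_{n-k;j}(\mu)\cdot\lambda_j$-type expressions are bounded. More precisely, $f_j\le1$ gives $\sum_k c_k\binom{n}{k}^{-1}S_{n-k;j}(\mu)\le1$. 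Together with $\sum c_k>0$, this bounds the products $\prod_{i\ne j}\mu_i$ by a constant $C(c)$ times the positive bracket term. Hence the $c_{0,0}$ term is absorbed for $|c_{0,0}|$ small.

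\textbf{Convexity of $Q$.} We compare Hessians. The Hessian of $\prod_j\mu_j=\exp(-\sum\log\lambda_j)$ is $S_n(\mu)\,(\mu\mu^T+\operatorname{diag}(\mu^2))$. The Hessian of the positive part $\sum c_k\binom{n}{k}^{-1}S_{n-k}(\mu)$ is bounded below by a similar expression with $S_{n-1;j}(\mu)$-weights. Using the bound from $\bar\Gamma$ again, we aim to show
\[
\text{Hess(positive part)}\ge \varepsilon^{-1}\,\text{Hess}(S_n(\mu))\quad\text{on }\bar\Gamma .
\]
Convexity of $Q$ on the eigenvalue level then follows. This requires that $\bar\Gamma$ itself be convex, which holds by the $P$ case. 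We then transfer to matrices via the symmetric-function principle restricted to the convex symmetric set $\bar\Gamma$. If the direct Hessian comparison is messy, a fallback is to treat $c_{0,0}<0$ by writing $Q$ as a perturbation and using that $\log S_n(\mu)$ is convex. Convexity of the sublevel set $\{Q\le1\}\cap\bar\Gamma$ then follows from convexity of $Q$ on the convex set $\bar\Gamma$.

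I expect this uniform Hessian domination on $\bar\Gamma$ — including near the boundary, where eigenvalues tend to zero — to be the main obstacle.
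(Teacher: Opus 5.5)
Your treatment of $P^\ell_{I_n}$ is correct, and it differs from the paper, whose proof consists only of citations: \cite[Lemmas 7 and 8]{CS17} for $P^\ell$ and \cite[Lemma 2.2]{Datar-Pingali} for $Q$. Rewriting each $f_I$ as a nonnegative combination of $S_m(1/\lambda)$, using log-convexity of $\prod_{j\in J}\lambda_j^{-1}$, and transferring to matrices via Davis--Lewis and Weyl monotonicity is a clean self-contained argument.

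The problem is $Q_{c_{0,0},I_n}$ with $c_{0,0}<0$, the only part of the proposition not immediate from your (a) and (b). There the proposal contains no proof. The monotonicity step asserts, without argument, that $f_j\le 1$ bounds $\prod_{i\ne j}\mu_i$ by the positive terms. The convexity step is only announced (``we aim to show''). The fallback does not help: $c_{0,0}S_n(\mu)$ is concave for $c_{0,0}<0$, and convexity of $\log S_n(\mu)$ says nothing about whether the positive part's Hessian absorbs it, which is exactly the domination left unproved. As written, this is a gap.

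The gap closes with one uniform inequality on $\bar\Gamma$, which serves both steps. Let $K$ be the largest index with $c_K>0$ and put $a_K=c_K\binom{n}{K}^{-1}$. This needs $\sum_k c_k>0$: if all $c_k=0$, then $Q=c_{0,0}S_n(\mu)$ is increasing and concave for $c_{0,0}<0$, so no $\varepsilon_{2.1}>0$ exists and one must take $c_{0,0}\ge 0$.

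Fix $j$ and list the $\mu_i$, $i\ne j$, as $\nu_1\ge\dots\ge\nu_{n-1}$. Then $f_j\le 1$ gives $a_K\nu_1\cdots\nu_{n-K}\le a_KS_{n-K;j}(\mu)\le 1$. Hence $\nu_i\le\nu_{n-K}\le a_K^{-1/(n-K)}$ for $i\ge n-K$, and so
\[
S_{n-1;j}(\mu)=\nu_1\cdots\nu_{n-1}\le a_K^{-K/(n-K)}\,\nu_1\cdots\nu_{n-K-1}\le a_K^{-K/(n-K)}\,S_{n-K-1;j}(\mu)\qquad\text{on }\bar\Gamma .
\]

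For monotonicity, since $-\partial_{\lambda_j}Q=\mu_j^2\bigl(\sum_k a_kS_{n-k-1;j}(\mu)+c_{0,0}S_{n-1;j}(\mu)\bigr)$, the inequality gives $\partial_{\lambda_j}Q\le 0$ for $|c_{0,0}|\le a_K^{n/(n-K)}$. For the matrix statement, note also that $\bar\Gamma$ is upward closed because $P_{I_n}$ is decreasing, so the coordinatewise path from $\lambda(B)$ to $\lambda(A)$ stays in $\bar\Gamma$.

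For convexity, do three things: discard the rank-one parts of the Hessians of the positive terms; bound $(\sum_i\mu_i\xi_i)^2\le n\sum_i\mu_i^2\xi_i^2$ in the Hessian of $S_n(\mu)$; and use $\sum_{J\ni i,\,|J|=n-K}\prod_{l\in J}\mu_l=\mu_iS_{n-K-1;i}(\mu)$ together with $S_n(\mu)=\mu_iS_{n-1;i}(\mu)$. This gives
\[
D^2Q(\xi,\xi)\ge\sum_i\mu_i^3\xi_i^2\Bigl(a_KS_{n-K-1;i}(\mu)-(n+1)|c_{0,0}|\,S_{n-1;i}(\mu)\Bigr)\ge 0
\]
on $\bar\Gamma$ once $|c_{0,0}|\le a_K^{n/(n-K)}/(n+1)$.

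The constant does not depend on the point, so nothing degenerates as eigenvalues tend to zero inside $\bar\Gamma$; the obstacle you anticipated disappears. With this inserted, your route is a self-contained proof with an explicit $\varepsilon_{2.1}=a_K^{n/(n-K)}/(n+1)$, which the paper's citation-only proof does not provide.
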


\begin{proof}
    The first and second items for $P^\ell_{I_n}$ follow from \cite[Lemmas 7 and 8]{CS17}, since $c_k\ge0$. Those for $Q_{c_{0,0},I_n}$ are proved in \cite[Lemma 2.2 (3) and (5)]{Datar-Pingali}. The third item follows directly from the second item. 
\end{proof}

For an $n$-dimensional Kähler manifold $(X,\omega)$ and a closed real $(1,1)$-form $\chi$,
we define
\begin{equation}\label{eq:PQ}
    P^\ell_\omega(\chi)(x) := P^\ell_{I_n}((\omega^{-1}\chi)(x)), \quad
Q_{c_0,\omega}(\chi) (x) := Q_{c_0(x),I_n}((\omega^{-1}\chi)(x)).
\end{equation}
We particularly denote $P^1_\omega$ by $P_\omega$.
Remark that the gMA equation,
\begin{equation*}
    \chi^n=\sum_{k=0}^n c_k\chi^k\wedge\omega^{n-k},
\end{equation*}
is equivalent to $Q_{c_0,\omega}(\chi)=1$.
A Kähler metric $\chi$ is called a $\mathcal{C}$-subsolution to the gMA equation $Q_{c_0,\omega}(\chi)=1$ if $P_{\omega}(\chi)<1$, which is equivalent to
\begin{equation*}
    n\chi^{n-1}-\sum_{k=1}^{n-1}c_k k\chi^{k-1}\wedge\omega^{n-k}>0.
\end{equation*}
We give the following definition of the degenerate $\mathcal{C}$-subsolution and the viscosity subsolutions on K{\"a}hler manifolds.
The definition has appeared in many places (\cite{EGZ,CS17,DDT,CX25a,CX25b} to name a few).

\begin{defn}\label{defn:vsub}
    Let $(X,\omega)$ be a K{\"a}hler manifold. 
    We say that a closed positive $(1,1)$-current $\chi$ is a degenerate $\mathcal{C}$-subsolution (resp. a viscosity subsolution), denoted by $P_\omega(\chi)\le_v1$ (resp. $P_\omega(\chi)\le_v1$ and $Q_{c_0,\omega}(\chi)\le_v1$), if for any point $x\in X$ and any coordinate neighborhood $U$ of $x$, we have  
    $$P_\omega(\delb q)(x)\le 1\ \bigg( \text{resp. } P_\omega(\delb q)(x)\le 1 \text{ and } Q_{c_0,\omega}(\delb q)\le1\bigg),$$
    for any upper test $q$ of $\varphi$ at $x \in U$ (i.e.,\,a function $q$ is $C^2$-function defined on a neighborhood $U$ of $x$ and satisfies $q\ge\varphi$ with $q(x)=\varphi(x)$),
    where $\varphi$ is an upper semicontinuous local potential of $\chi$.
\end{defn}

\begin{remark}\label{rem:visc}
    \begin{enumerate}[label={\upshape(\roman*)}]
        \item A smooth closed real $(1,1)$-form $\chi$ is called a solution of the gMA equation if
        \begin{equation*}
            Q_{c_0,\omega}(\chi)=1,\ P_\omega(\chi)<1, \text{ and } \chi>0.
        \end{equation*}
        In this sense,  a solution is a viscosity subsolution by Proposition \ref{prop:properties} \ref{item:monotonicity}.
        \item We will see that a closed positive $(1,1)$-current $\chi$ is a degenerate $\mathcal{C}$-subsolution if and only if $\chi\in\overline{\Gamma}_\omega$, where $\overline{\Gamma}_\omega$ is defined in Definition \ref{def:gammabar}. These are also equivalent to the condition $T^p_\omega(\chi)\ge0$ for $p=1,\dots,n-1$, where $(p,p)$-current $T^p_\omega$ is defined in Definition \ref{def:llcurrent}. 
        \item\label{item:monovisc} Since any upper test $q$ of $\varphi$ at $x\in U$ as in Definition \ref{defn:vsub} is psh as in the proof of \cite[Proposition 1.3]{EGZ}, letting $\lambda_i\to\infty$, where $\lambda_i$ is an eigenvalue of $\lambda_i$ of $\delb q$ at $x\in U$, the monotonicity of $P^\ell$ implies the following: If $P_\omega(\chi)\le_v1$, then $P^\ell_\omega(\chi)\le_v1$ for any $\ell=1,\dots,n$.
    \end{enumerate}
\end{remark}

\subsection{Existence}\label{sec:existence}
In this subsection, we prove the existence part of Theorem \ref{thm:main}. More precisely, we prove the existence with $c_0$ being only bounded and lower semicontinuous, since we need this version later in the proof of uniqueness to solve the gMA equation with a lower semicontinuous function $f_1$ given in \eqref{eq:f_1}.

\begin{thm}\label{thm:existence}
    Assume Setup \ref{setup}, with $c_0$ being only bounded and lower semicontinuous, and Assumption \ref{asmp:bdryMA}.
    Then, there exists a $\chi$-psh function $\psi$ with $\sup \psi=0$ such that
    \begin{equation}\label{eq:lscgMA}
    \begin{cases}
        \langle\chi_\psi^n\rangle=\sum_{k=0}^{n-1}c_k\langle\chi_\psi^k\wedge\omega^{n-k}\rangle,
        \\
        \frac{n!}{p!}\langle\chi_\psi^p\rangle-\sum_{k=n-p}^{n-1}c_k\frac{k!}{k-n+p!}\langle\chi_\psi^{k-n+p}\wedge\omega^{n-k}\rangle\ge0 \ \text{for any} \ p=1,\dots n-1, 
    \end{cases}
    \end{equation}
    where $\langle\cdot\rangle$ denotes the nonpluripolar product.
\end{thm}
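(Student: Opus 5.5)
We follow the three-step strategy outlined in the introduction, restricted to existence (Steps 1 and 2). For each $i$, Assumption \ref{asmp:bdryMA} together with \cite{Datar-Pingali,FM} yields a smooth solution $\psi_i$ of the gMA equation in $[\chi_i]$ with respect to $\omega_i$ and coefficients $c_{k,i}$. The right-hand side $c_{0,i}$ is chosen as a continuous (or smooth) function with $c_{0,i}\nearrow c_0$; this is possible since $c_0$ is lower semicontinuous and bounded. We then add a constant correction so that the cohomological constraint $\int_X[\chi_i]^n-\sum c_{k,i}[\chi_i]^k[\omega_i]^{n-k}=\int c_{0,i}\omega_i^n$ holds. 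The correction tends to $0$, and $c_{0,i}>-\varepsilon_{2.1}$ for large $i$. Writing $\chi_i=\chi+\theta_i$ with $\theta_i\ge0$ and $[\theta_i]\to0$, we normalize $\sup\psi_i=0$. Since $\chi_i+\delb\psi_i>0$, the $\psi_i$ are $\chi_i$-psh, and compactness of quasi-psh functions gives a subsequence converging in $L^1$ to a $\chi$-psh function $\psi$ with $\sup\psi=0$. We may use $\chi_i\ge\chi$ and, if needed, Hartogs-type upper envelopes.

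\textbf{Step 1 (subsolution).} We show that $\chi_\psi$ is a viscosity subsolution in the sense of Definition \ref{defn:vsub}: $P_\omega(\chi_\psi)\le_v1$ and $Q_{c_0,\omega}(\chi_\psi)\le_v1$. Locally we write $\chi_\psi=\delb(\rho+\psi)$ and take convolutions $u_{i,\delta}=(\rho_i+\psi_i)*\eta_\delta$. The smooth forms $\chi_i+\delb\psi_i$ satisfy $P_{\omega_i}<1$ and $Q_{c_{0,i},\omega_i}=1$ pointwise. Because $\omega_i\ge\omega$ and $P,Q$ are decreasing in $\omega$ (as the $c_k$ are nonnegative), these inequalities persist after replacing $\omega_i$ by $\omega$ up to errors that vanish as $i\to\infty$. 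The convexity of the sublevel sets, Proposition \ref{prop:properties}\ref{item:sublevel}, then passes the inequalities to the averages $\delb u_{i,\delta}$. Here the lower semicontinuity of $c_0$ handles the varying $c_0(x)$ under averaging: $\sup_{B_\delta(x)}Q_{c_0}$ is controlled because $c_0(y)\ge c_0(x)-o(1)$, and the monotonicity of $Q$ in $c_{0,0}$ goes in the right direction. Letting $i\to\infty$ and then $\delta\to0$, the viscosity inequalities are stable under the resulting convergence, arguing as in \cite{Mur} and \cite[Proposition 21]{CS17}, with a small perturbation of the upper test $q$ by $\epsilon|z-x|^2$. We then convert the viscosity inequalities into pluripotential ones: on the ample locus, and in the sense of nonpluripolar products, we obtain $\langle\chi_\psi^n\rangle\ge\sum c_k\langle\chi_\psi^k\wedge\omega^{n-k}\rangle$ together with the positivity inequalities of \eqref{eq:lscgMA}. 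To do this, we approximate $\psi$ by $\max(\psi,-j)$, which remains a viscosity subsolution after an adjustment since the $\max$ of subsolutions of a degenerate elliptic operator with convex sublevel sets is again a subsolution. Bounded viscosity subsolutions then give pluripotential inequalities, as in \cite{EGZ}, via smooth approximation by convolution, as above, on the plurifine open sets $\{\psi>-j\}$.

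\textbf{Step 2 (equality).} For degenerate $\mathcal{C}$-subsolutions we prove the mass inequality (Proposition \ref{prop:superineq}). It reads $\int_X\langle\chi_\psi^n\rangle-\sum c_k\langle\chi_\psi^k\wedge\omega^{n-k}\rangle\le\int_X[\chi]^n-\sum c_k[\chi]^k[\omega]^{n-k}=\int c_0\omega^n$. The idea is to write the left side as a telescoping combination of the currents $T^p_\omega$, which are positive by Step 1, and apply the monotonicity of nonpluripolar masses under singularity type (\cite[Proposition 1.20]{BEGZ} and its mixed version) to each positive mixed term. Combined with the pointwise inequality ``$\ge$'' from Step 1, the two measures have the same total mass and one dominates the other, so equality holds as measures. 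This gives \eqref{eq:lscgMA}.

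The main obstacle is Step 2. The mass inequality involves differences of nonpluripolar products with signs, so monotonicity cannot be applied term by term. The decomposition via the $T^p_\omega$ must be arranged so that only positive combinations are compared, and the conversion from viscosity to pluripotential inequalities in Step 1 for unbounded $\psi$ is also delicate.
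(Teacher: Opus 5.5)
\textbf{The gap is in Step 2.} Your Step 1 follows the paper's route (Proposition \ref{prop:vsub}, then Lemma \ref{lem:visctopp}), apart from some details:
\begin{itemize}
\item $P_\omega(\chi)$ is \emph{increasing} in $\omega$, which is why passing from $\omega_i$ to $\omega\le\omega_i$ costs nothing.
\item The averaging only commutes with $\omega^{-1}$ once $\omega$ is frozen to a constant-coefficient form $\omega_0\le\omega$.
\item The truncation is $\max(u,av-C)$ with $a\omega$ a subsolution, followed by sup-convolution before mollifying.
\end{itemize}

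Step 2 is the heart of the existence proof, and you leave it open. The only positivity you invoke is that of $T^p_\omega(\chi_\psi)$ at the single current $\chi_\psi$, and that cannot yield the mass inequality. To reach the cohomological number you must compare with some $\chi_\phi$ of full mass.
\begin{itemize}
\item The Taylor expansion $T^n_\omega(Y)=\sum_{p=0}^{n}\frac{1}{(n-p)!}(Y-X)^{n-p}\wedge T^p_\omega(X)$ at $X=\chi_\psi$, already for smooth forms, produces unsigned powers $(\chi_\phi-\chi_\psi)^m$ with $m\ge2$.
\item The telescoping form $T^n_\omega(\chi_\phi)-T^n_\omega(\chi_\psi)=\int_0^1(\chi_\phi-\chi_\psi)\wedge T^{n-1}_\omega(\chi_{(1-s)\psi+s\phi})\,ds$ has a single difference factor. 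But its $(n-1,n-1)$-currents are only known to be positive through convexity of the sublevel sets of $P_\omega$, i.e.\ when $\chi_\phi$ is itself a degenerate $\mathcal{C}$-subsolution.
\end{itemize}
In the boundary class $[\chi]$ no full-mass subsolution is available a priori; a potential with minimal singularities has no reason to satisfy $P_\omega\le_v1$. Moreover, \cite[Proposition 1.20]{BEGZ} and its mixed versions concern nonpluripolar products of closed positive $(1,1)$-currents. $T^{n-1}_\omega$ is a signed combination of such products that is positive only as a whole, so monotonicity cannot be applied to its individual terms.

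\textbf{The missing idea} is to make the comparison in the perturbed classes, where the smooth solutions $\psi_i$ of the approximate equations serve as the full-mass subsolution. The paper sets $\varphi_{i,s}:=(1-s)\psi+s\psi_i$. By convexity of the sublevel sets of $P_{i,\omega}$, this is a degenerate $\mathcal{C}$-subsolution for the data $(\chi_i,c_{k,i})$, so $T^{n-1}_{i,\omega}(\chi_{i,\varphi_{i,s}})\ge0$ and
\[
\frac{d}{ds}\int_XT^n_{i,\omega}(\chi_{i,\varphi_{i,s}})=\int_X\big\langle(\chi_{i,\psi_i}-\chi_{i,\psi})\wedge T^{n-1}_{i,\omega}(\chi_{i,\varphi_{i,s}})\big\rangle\ge0.
\]
The sign comes from a monotonicity statement in which only one $(1,1)$-factor varies, from the more singular $\psi$ to the bounded $\psi_i$, against a \emph{fixed} positive current of the form $T^{n-1}_{i,\omega}(\cdot)$. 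This is Lemma \ref{lem:ppineq}, proved along the lines of \cite{DDL} using plurifine locality.

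Integrating in $s$ gives $\int_XT^n_{i,\omega}(\chi_{i,\psi})\le\int_XT^n_{i,\omega}(\chi_{i,\psi_i})$, which is cohomological and tends to $0$. Multilinearity of the nonpluripolar product then lets $i\to\infty$, giving $\int_XT^n_\omega(\chi_\psi)\le0$. With this in place, your closing argument (a nonnegative measure with nonpositive total mass vanishes) finishes exactly as in the paper.
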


As explained in Section \ref{sec:intro}, we first construct a viscosity subsolution. 
Then, we show in Lemma \ref{lem:visctopp} that a viscosity subsolution is a pluripotential subsolution (we prove that these are equivalent in Proposition \ref{prop:viscpp} in the next subsection). Finally, we establish the converse mass inequality (Proposition \ref{prop:superineq}), which allows us to conclude that any subsolution is, in fact, a solution.

In Setup \ref{setup} and Assumption \ref{asmp:bdryMA}, by \cite{FM} (\cite{Datar-Pingali} for the projective case), 
we have a solution $\psi_i$ of the gMA equation
\begin{equation}\label{eq:appgMA}
    \begin{cases}
        \chi^n_{i,\psi_i}=\sum_{k=0}^{n-1}c_{k,i}\chi_{i,\psi_i}^k \wedge\omega_i^{n-k},\\
        n\chi_{i,\psi_i}^{n-1}-\sum_{k=1}^{n-1}c_{k,i} k\chi^{k-1}_{i,\psi_i}\wedge\omega_i^{n-k}>0 ,\quad \chi_{i,\psi_i}>0,\\
        \sup \psi_i=0,
    \end{cases}
    \end{equation}
    where $\chi_{i,\psi_i}:=\chi_i+\delb\psi_i$ and $c_{0,i}$ is a smooth function such that 
    \begin{equation}\label{eq:topnum}
        c_{0,i}>-\varepsilon_{2.1},\ \lim_{i\to\infty} c_{0,i}=c_0,\ \int_X\chi^n_i-\sum_{k=0}^{n-1}c_{k,i}\chi_i^k \wedge\omega_i^{n-k}=0.
    \end{equation}

    Hereafter, when we replace $c_k$ with $c_{k,i}$, we simply add the subindex $i$. For example, we denote by $Q_{i,c_{0,i},\omega}$ the operator defined by replacing $c_k$ with $c_{k,i}$ in the definition \eqref{eq:PQ}. 
    By weak compactness, we have an $L^1$-limit $\psi_\infty$ of $\psi_i$ with $\sup{\psi_\infty}=0$ (if we take a subsequence, which we still denote by $\psi_i$). To prove that $\chi_\infty$ is a viscosity subsolution, where $\chi_\infty=\chi+\delb\psi_\infty$, the following proposition on the limit behavior in the viscosity theory is essential:

\begin{prop}[{\cite[Proposition 4.3]{CIL}}]\label{propCIL}
    Let $\Omega \in \mathbb{C}^n$ be a domain, $v$ an upper semicontinous function on $\Omega$, $z \in \Omega$, and $q$ an upper test of $v$ at $z$. Suppose also that $u_n$ is a sequence of upper semicontinuous functions on $\Omega$ such that
    \begin{enumerate}[label={\upshape(\roman*)}]
        \item\label{itemCIL1} there exists $x_n \in \Omega$ such that $(x_n,u_n(x_n)) \rightarrow (z,v(z))$,
        \item\label{itemCIL2} if $z_n \in \Omega$ and $z_n \rightarrow x\in\Omega$, then $\limsup_{n\rightarrow\infty}u_n(z_n)\le v(x)$.
    \end{enumerate}
    Then there exist $\hat{x}_n\in \Omega$ and upper tests of $u_n$ at $\hat{x}_n$ denoted by $q_n$ such that
    $$(\hat{x}_n,u_n(\hat{x}_n),Dq_n(\hat{x}_n),D^2q_n(\hat{x}_n))\rightarrow(z,v(z),Dq(z),D^2q(z)).$$
\end{prop}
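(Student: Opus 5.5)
The plan is the standard ``strict maximum plus perturbation'' argument from viscosity theory. First I make the touching strict. Replace $q$ by
\[
q_\delta(x):=q(x)+|x-z|^4 .
\]
The quartic term vanishes at $z$ together with its first and second derivatives. Hence $q_\delta(z)=v(z)$, $Dq_\delta(z)=Dq(z)$ and $D^2q_\delta(z)=D^2q(z)$. On a small closed ball $\bar B=\bar B(z,r)\subset\Omega$ on which $q\ge v$, we have $v-q_\delta<0$ away from $z$ and $v-q_\delta=0$ at $z$, so $v-q_\delta$ has a strict maximum at $z$ on $\bar B$.

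Next, since each $u_n$ is upper semicontinuous and $q_\delta$ is continuous, $u_n-q_\delta$ attains its maximum on the compact set $\bar B$ at some point $\hat x_n$. I claim $\hat x_n\to z$ and $u_n(\hat x_n)\to v(z)$.

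By \ref{itemCIL1}, $x_n\to z$, so $x_n\in\bar B$ for large $n$, and therefore
\[
(u_n-q_\delta)(\hat x_n)\ge (u_n-q_\delta)(x_n)\to v(z)-q_\delta(z)=0 .
\]
Let $x\in\bar B$ be any subsequential limit of $\hat x_n$. Applying \ref{itemCIL2} along that subsequence and using the continuity of $q_\delta$ gives
\[
0\le\limsup_n\,(u_n-q_\delta)(\hat x_n)\le v(x)-q_\delta(x).
\]
Since $z$ is a strict maximum of $v-q_\delta$ with value $0$, this forces $x=z$. Every subsequence therefore converges to $z$, so $\hat x_n\to z$.

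For the values, the inequality above together with $q_\delta(\hat x_n)\to q_\delta(z)=v(z)$ gives $\liminf u_n(\hat x_n)\ge v(z)$. Applying \ref{itemCIL2} to $z_n=\hat x_n$ gives $\limsup u_n(\hat x_n)\le v(z)$. Hence $u_n(\hat x_n)\to v(z)$.

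Now I construct the upper tests. For $n$ large, $\hat x_n$ lies in the open ball $B(z,r)$. Set
\[
q_n(x):=q_\delta(x)+\bigl(u_n(\hat x_n)-q_\delta(\hat x_n)\bigr),
\]
which is $C^2$ on $B(z,r)$. Because $\hat x_n$ maximizes $u_n-q_\delta$ on $\bar B$, we get $q_n\ge u_n$ on $B(z,r)$ with equality at $\hat x_n$. So $q_n$ is an upper test of $u_n$ at $\hat x_n$.

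Finally, $Dq_n=Dq_\delta$ and $D^2q_n=D^2q_\delta$ are continuous, and $\hat x_n\to z$. Therefore
\[
Dq_n(\hat x_n)\to Dq_\delta(z)=Dq(z),\qquad D^2q_n(\hat x_n)\to D^2q(z).
\]
Combined with $\hat x_n\to z$ and $u_n(\hat x_n)\to v(z)$, this gives the stated convergence of $(\hat x_n,u_n(\hat x_n),Dq_n(\hat x_n),D^2q_n(\hat x_n))$. For the finitely many small $n$ where $\hat x_n$ might sit on $\partial B$, the choice of $\hat x_n$ and $q_n$ is irrelevant to the limit, so any choice works.

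The only delicate point is making the maximum strict. Without the $|x-z|^4$ perturbation, the limit points of $\hat x_n$ could drift to other maximizers of $v-q$, and the derivative convergence would fail. The quartic power is chosen precisely so that the perturbation does not change $Dq(z)$ or $D^2q(z)$.
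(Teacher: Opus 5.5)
The paper gives no proof of its own here and simply cites \cite[Proposition 4.3]{CIL}. Your argument is correct and is essentially the standard proof from that reference: make the contact strict with $|x-z|^4$, maximize $u_n-q_\delta$ over a compact ball, and use (i), (ii) and strictness to force $\hat x_n\to z$ and $u_n(\hat x_n)\to v(z)$.

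Two small remarks. Applying (ii) along a subsequence is justified by padding the sequence with the constant point $x$ off the subsequence. For the finitely many small $n$, an upper test at $\hat x_n$ need not exist at all (e.g.\ if $u_n(\hat x_n)=-\infty$ or $\hat x_n\in\partial B$). So the conclusion should be read for $n$ large, which is all that is ever used.
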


\begin{prop}\label{prop:vsub}
    We have $P_\omega(\chi_\infty)\le_v 1$ and $Q_{c_0,\omega}(\chi_\infty)\le_v 1$ in $\{\omega>0\}$.
\end{prop}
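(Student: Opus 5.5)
We plan to use the convolution argument of \cite{Mur}, which goes back to \cite[Proposition 21]{CS17}. The reason is that $L^1$ convergence $\psi_i\to\psi_\infty$ does not let us apply Proposition \ref{propCIL} to $u_i=\psi_i$ directly. Hartogs' lemma gives condition \ref{itemCIL2}, but condition \ref{itemCIL1} (points $x_i$ with $\psi_i(x_i)\to\psi_\infty(z)$) may fail. We therefore regularize first, then pass to the limit $i\to\infty$ for fixed regularization, and only afterwards remove the regularization.

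\textbf{Setup near a point.} Fix $x\in\{\omega>0\}$ and $\eta>0$. Choose holomorphic coordinates on a small ball $B$ around $x$ such that $\omega(x)=I$ and $(1-\eta)I\le\omega\le(1+\eta)I$ on $B$. Write $\chi=\delb g$ on $B$ with $g$ smooth. Since $\chi_i\ge\chi$ and $[\chi_i]\to[\chi]$, we can write $\chi_i=\chi+\theta_i+\delb h_i$, where $\theta_i\to0$ smoothly; choosing local primitives, $\chi_i=\delb g_i$ with $g_i\to g$ in $C^2(B)$, possibly after absorbing the $h_i$ into $\psi_i$. We then set $\varphi_i:=g_i+\psi_i$ and $\varphi_\infty:=g+\psi_\infty$. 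These are psh on $B$ and $\varphi_i\to\varphi_\infty$ in $L^1_{loc}$.

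\textbf{Step 1: convolution and convexity.} Let $\rho_\delta$ be a standard mollifier. The form $\delb(\varphi_i*\rho_\delta)(y)$ is an average of the matrices $\chi_{i,\psi_i}(y-w)$ over $w$.
\begin{itemize}
\item At each point $y-w$, (\ref{eq:appgMA}) gives $P_{i,\omega_i}(\chi_{i,\psi_i})<1$ and $Q_{i,c_{0,i},\omega_i}(\chi_{i,\psi_i})=1$.
\item Using $\omega_i\ge\omega$ and the pinching of $\omega$ against $I$, together with the monotonicity in Proposition \ref{prop:properties} \ref{item:monotonicity} (after rescaling the matrix by $(1\pm\eta)$), each such matrix lies in $\{P_{i,I_n}\le 1+C\eta\}$.
\item The function $Q$ is increasing in $c_{0,0}$ because $S_n>0$. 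So, with $m_i:=\inf_B c_{0,i}$, each matrix also lies in $\{Q_{i,m_i,I_n}\le 1+C\eta\}$.
\end{itemize}
By Proposition \ref{prop:properties} \ref{item:sublevel} these sublevel sets are convex. (More precisely, we use convexity of $P$ and $Q$ themselves, so that the bounds $1+C\eta$ survive averaging.) Hence the averaged matrix $\delb(\varphi_i*\rho_\delta)(y)$ satisfies the same bounds for $y$ in a slightly smaller ball $B'$.

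\textbf{Step 2: the limit $i\to\infty$ at fixed $\delta$.} For fixed $\delta$, $\varphi_i*\rho_\delta\to\varphi_\infty*\rho_\delta$ in $C^2(B')$. We also have $c_{k,i}\to c_k$ and $\liminf m_i\ge\inf_B c_0$. Continuity of $P_{I_n}$ and $Q$ on positive definite matrices then gives the bounds $P_{I_n}\le1+C\eta$ and $Q_{\inf_Bc_0,I_n}\le1+C\eta$ for $\delb(\varphi_\infty*\rho_\delta)$ on $B'$. To stay inside the positive definite region, we first add $\epsilon|z|^2$ and let $\epsilon\to0$ at the end.

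\textbf{Step 3: removing the regularization.} As $\delta\searrow0$, the functions $u_\delta:=\varphi_\infty*\rho_\delta$ decrease to $\varphi_\infty$. Therefore condition \ref{itemCIL1} holds with $x_\delta=x$, and condition \ref{itemCIL2} holds since the sequence is decreasing and $\varphi_\infty$ is upper semicontinuous. Given an upper test $q$ of $\varphi_\infty$ at $x$, Proposition \ref{propCIL} gives points $\hat x_\delta\to x$ and smooth $u_\delta$ with $D^2u_\delta(\hat x_\delta)\to D^2q(x)$. Taking the limit, and then letting $\eta\to0$ and shrinking $B$, we get:
\begin{itemize}
\item $P_\omega(\delb q)(x)\le1$, using the lower semicontinuity of $c_0$ so that $\inf_Bc_0\to c_0(x)$;
\item $Q_{c_0,\omega}(\delb q)(x)\le1$, by the same limit.
\end{itemize}
Here we use that the upper test $q$ is psh, by Remark \ref{rem:visc} \ref{item:monovisc}.

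\textbf{Main obstacle.} The hardest part is the continuity of $Q$ at the limit matrix $\delb q(x)$, which may be degenerate: $S_n$ can vanish and $Q$ is not finite there. I would handle this by replacing $q$ with $q+\epsilon|z-x|^2$. This is still an upper test after a tiny perturbation of the contact point, which the CIL machinery tolerates. Its Hessian is positive definite, so the limits in Steps 2 and 3 are taken among positive definite matrices. The condition for $q$ itself then follows by monotonicity (Proposition \ref{prop:properties} \ref{item:monotonicity}) as $\epsilon\to0$, interpreting $Q=+\infty$ when $S_n=0$ and $c_{0}(x)>0$. A secondary point to check is that the error from the non-constant $\omega$ is genuinely $O(\eta)$ uniformly in $i$; this follows from $\omega_i\ge\omega$ and the monotonicity, provided the matrices stay in $\bar\Gamma$.
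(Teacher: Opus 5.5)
Your proposal is correct and follows essentially the same route as the paper's proof. Both arguments:
\begin{enumerate}
\item mollify the approximate solutions against a constant-coefficient comparison form lying below $\omega$ (your $(1-\eta)I$, the paper's $\omega_0\le\omega$);
\item use the monotonicity and convexity properties of $P$ and $Q$ to keep the mollified Hessians in the sublevel sets;
\item let $i\to\infty$ at fixed $\delta$;
\item remove the mollification with Proposition~\ref{propCIL} applied to the decreasing family $u_{\infty,\delta}$;
\item freeze the metric at the point and use the lower semicontinuity of $c_0$.
\end{enumerate}

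Two small corrections. First, working directly with $(1-\eta)I$ gives the bound exactly $1$. This keeps the matrices in $\bar{\Gamma}$, where $Q$ is monotone and has convex sublevel sets, and so removes the $1+C\eta$ bookkeeping you flag. Second, Proposition~\ref{propCIL} produces upper tests $q_\delta$ of $u_\delta$ at $\hat x_\delta$ with $D^2q_\delta(\hat x_\delta)\to D^2q(x)$; it does not give $D^2u_\delta(\hat x_\delta)\to D^2q(x)$. The bounds therefore pass to $\delb q_\delta(\hat x_\delta)\ge\delb u_\delta(\hat x_\delta)$ via Proposition~\ref{prop:properties}~\ref{item:monotonicity}. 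Also, $q+\epsilon|z-x|^2$ is already an upper test at $x$ itself, so no perturbation of the contact point is needed.
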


\begin{proof}[Proof of Proposition \ref{prop:vsub}]
    Following the proof of \cite[Proposition 21]{CS17}, we first prove $P_\omega(\chi_\infty)\le_v1$.
    Take a coordinate neighborhood $U\subset\{\omega>0\}$ of $p\in\{\omega>0\}$. Let $\omega_0$ be a K{\"a}hler form on some neighborhood $U_0 \subset U$ with constant coefficients such that $\omega_0 \le \omega$. Denote by $F_i$ an upper semicontinuous local potential of $\chi_i$ on $U_0$. Take a standard mollifier $\rho$, i.e.,\,a function $\rho$ satisfies $$\rho\ge0, \ \rho(w)=0 \text{ for $|w|>1$},\ \text{and } \int\rho(w)dw=1.$$ Define $u_{i}=F_i+\psi_i$ and denote by $u_{i,\delta}$ the regularization of $u_i$, i.e.,
    \begin{equation*}
        u_{i,\delta}(z)=\int_{\mathbb{C}^n} u_i(z-\delta w)\rho(w)dw.
    \end{equation*}
    The function $u_{i,\delta}$ is defined on $U_{0,\delta} =\{z\in U_0\mid d(z,\partial U_0)> \delta \}.$
    Note that we have
    \begin{equation*}
        \begin{split}
        P_{i,\omega_0}(\delb u_{i,\delta})(z)
        &=P_{i,I_n}\left(\omega^{-1}_0(z) \int_{\mathbb{C}^n}\delb u_i(z-\delta w)\rho(w)dw
        \right)\\
        &=P_{i,I_n}\left( \int_{\mathbb{C}^n}(\omega^{-1}_0 \chi_{i,\psi_i})(z-\delta w)\rho(w)dw
        \right)\\
        &\le P_{i,I_n}\left( \int_{\mathbb{C}^n}(\omega^{-1}_i \chi_{i,\psi_i})(z-\delta w)\rho(w)dw
        \right)\\
        &\le 1,
        \end{split}
    \end{equation*}
    In the second equality, we used the fact that $\omega_0$ has constant coefficients. In the first inequality, we used that $\omega_0\le \omega\le\omega_i$ and the monotonicity of $P_{i,I_n}$ as in Proposition \ref{prop:properties} \ref{item:monotonicity}. In the second inequality, we used the convexity of the sublevel set of $P_{i,I_n}$ as in Proposition \ref{prop:properties} \ref{item:sublevel}. We claim that $P_{\omega_0}(\delb u_{\infty,\delta})\le_v 1$ on $U_{0,\delta}$.
    Indeed, note that $u_{i,\delta}$ converges to $u_{\infty,\delta}$ uniformly in $U_{0,\delta}$. Therefore, a sequence $\{ u_{i,\delta}\}$ and the limit $u_{\infty,\delta}$ satisfy the conditions in Proposition \ref{propCIL}. Choose a point $z \in U_{0,\delta}$ and an upper test $q$ of $u_{\infty,\delta}$ at $z$. By Proposition \ref{propCIL}, there exist points $\hat{x}_i$ and upper tests $q_i$ of $u_{i,\delta}$ at them such that the assertion satisfies. It implies 
    $$P_{i,\omega_0}(\delb q_i)(\hat{x}_i) \rightarrow P_{\omega_0}(\delb q)(z) \le 1,$$
    which yields $P_{\omega_0}(\delb u_{\infty,\delta})\le_v 1$ on $U_{0,\delta}$.
    Next, we let $\delta$ tend to zero. Since $u_\infty = F + \psi_\infty$ is subharmonic, where $F$ is an upper semicontinuous local potential of $\chi$ on $U_0$, a sequence $\{ u_{\infty,\delta} \}$ is decreasing. This property implies that the sequence satisfies the conditions in Proposition \ref{propCIL}. 
    Indeed, condition \ref{itemCIL1} is obvious by taking the same point as a sequence. For condition \ref{itemCIL2}, note that for any fixed $\delta_0>0$, the function $u_{\infty,\delta_0}$ is upper semicontinuous, so for any $\delta < \delta_0$ and $z_\delta \in V \subset\subset U_0$ satisfying $z_\delta \rightarrow x \in V$,
    $$\limsup{u_{\infty,\delta}}(z_\delta)\le \limsup{u_{\infty,\delta_0}}(z_\delta) \le u_{\infty,\delta_0}(x).$$
    Letting $\delta_0$ tend to zero, we get condition \ref{itemCIL2}. Therefore we can apply Proposition \ref{propCIL} again to get
    $$P_{\omega_0}(\delb u_\infty) \le_v 1 \ \text{on $U_0$}.$$
    Especially at point $p$, for any upper test $q$ of $u_\infty$ at $p$, we have
    $$P_{\omega_0}(\delb q)(p) \le 1.$$
    By taking a sequence of K{\"a}hler forms $\{\omega_{0,j}\}$ with constant coefficients such that $\omega_{0,j} \le \omega$ and $\omega_{0,j}(p) \rightarrow \omega(p)$ (by taking $U_0$ smaller and smaller), we get
    $$P_\omega(\delb q)(p) \le 1.$$
    
    Next, we prove $Q_\omega(\chi_\infty)\le_v1$ in the same way. Since we have $P_{i,\omega_0}(\delb u_i)\le1$, by the monotonicity of $P_{i,\omega_0}$ on $\Gamma_{\ge0}$ and the monotonicity of $Q_{i,c_{0,0},I_n}$ on $\bar{\Gamma}$,  
    we see that for $z \in U_{0,\delta}$,
    \begin{equation}\label{eq:Qconv}
        \begin{split}
        Q_{i,c_{0,0},\omega_0}(\delb u_{i,\delta})(z)
        &=Q_{i,c_{0,0},I_n}\left(\omega^{-1}_0(z) \int_{\mathbb{C}^n}\delb u_i(z-\delta w)\rho(w)dw
        \right)\\
        &=Q_{i,c_{0,0},I_n}\left( \int_{\mathbb{C}^n}(\omega^{-1}_0 \chi_{i,\psi_i})(z-\delta w)\rho(w)dw
        \right)\\
        &\le Q_{i,c_{0,i},I_n}\left( \int_{\mathbb{C}^n}(\omega^{-1}_0 \chi_{i,\psi_i})(z-\delta w)\rho(w)dw
        \right)\\
        &\le Q_{i,c_{0,i},I_n}\left( \int_{\mathbb{C}^n}(\omega^{-1}_i \chi_{i,\psi_i})(z-\delta w)\rho(w)dw
        \right)\\
        &\le 1,
        \end{split}
    \end{equation}
    where $c_{0,0}$ is a constant such that $c_{0,i}\ge c_{0,0}>-\varepsilon_{2.1}$ for any large $i$ on $U_0$.
    In the first inequality, we used that $c_{0,i}\ge c_{0,0}$ and $\omega_0^{-1}\cdot\chi_{i,\psi_i}$ is positive. In the second inequality, we used $P_{i,I_n}(\omega_0^{-1}\cdot\chi_{i,\psi_i})\le P_{i,I_n}(\omega_i^{-1}\cdot\chi_{i,\psi_i})<1$ and the monotonicity of $Q_{i,c_{0,i},I_n}$ in $\bar{\Gamma}_i$. In the last inequality, we used the convexity of the sublevel set of $Q_i$ in $\bar{\Gamma}_i$ as in Proposition \ref{prop:properties} \ref{item:sublevel}. By the same argument as above, we have $Q_{c_{0,0},\omega}(\chi_\infty)(p)\le_v 1$. Finally, since $\liminf_{i\to\infty}\liminf_{z\to p} c_{0,i}(z)=c_0(p)$ by lower semicontinuity and the convergence $c_{0,i}\to c_0$, we obtain $Q_{c_0,\omega}(\chi_\infty)(p)\le_v1$. 
\end{proof}

We note that the arguments in the proof clarify that the condition on the positivity of currents related to \cite[Definition 3.3]{GChen} and \cite[Definition 1]{Datar-Pingali} implies the condition of the degenerate $\mathcal{C}$-subsolution (Definition \ref{defn:vsub}). Namely, if a closed positive $(1,1)$-current $\chi_\varphi$ is in $\bar{\Gamma}^\ell_\omega$, defined below, then $P^\ell_\omega(\chi_\varphi)\le_v 1$. Similarly, we can see that if a closed positive $(1,1)$-current $\chi_\varphi$ is in $\bar{\Gamma}'_\omega$, defined below, then $Q_{c_0,\omega}(\chi)\le_v 1$ in addition to $P_{\omega}(\chi_\varphi)\le_v1$.
Here, the sets $\bar{\Gamma}^\ell_\omega$ and $\bar{\Gamma}'_\omega$ are defined as follows:
\begin{defn}\label{def:gammabar}
    Let $\varphi$ be a $\chi$-psh function.
    \begin{enumerate}[label={\upshape(\roman*)}]
        \item\label{item:gammal} A closed real positive $(1,1)$-current $\chi_\varphi$ is in $\bar{\Gamma}^\ell_\omega$ if for any open subset $U_0$ in any coordinate neighborhood and any K{\"a}hler form $\omega_0$ on $U_0$ with constant coefficients such that $0<\omega_0 \le \omega$, we have
        \begin{equation}\label{eq:conv}
            P^\ell_{\omega_0}(\delb u_\delta) \le 1
        \end{equation} 
        for any $\delta>0$, where $u$ is an upper semicontinuous local potential of $\chi_\varphi$ and $u_\delta$ is its regularization via convolution. We particularly denote $\bar{\Gamma}^1_\omega$ by $\bar{\Gamma}_\omega$.
        \item\label{item:gamma'} A closed real positive $(1,1)$-current $\chi_\varphi$ is in $\bar{\Gamma}'_\omega$ if $\chi_\varphi\in\bar{\Gamma}_\omega$ and 
        $$Q_{c_{0,0},\omega_0}(\delb u_\delta) \le 1$$ 
        where $c_{0,0}$ is a constant which satisfies $c_0>c_{0,0}$ in $U_0$ and the other notations are the same as \eqref{eq:conv}.
    \end{enumerate}
\end{defn}

Using the argument in \cite[Proposition 2.11]{Mur}, which is the case for the $J$-equation and $\ell=1$, we can prove that these conditions $\chi_\varphi\in\bar{\Gamma}^\ell_\omega$ and $P^\ell_\omega(\chi_\varphi)\le_v 1$ are equivalent. Similarly, we can prove that conditions ``$\chi_\varphi\in\bar{\Gamma}'_\omega$'' and ``$Q_{c_0,\omega}(\chi_\varphi)\le_v1$ and $P_\omega(\chi_\varphi)\le_v 1$'' are equivalent.
Since we use the same argument to prove Lemma \ref{lem:visctopp} below, we record it here.
    \begin{prop}\label{prop:equivalence}
    Let $\varphi$ be a $\chi$-psh function.
        \begin{enumerate}[label={\upshape(\roman*)}]
            \item\label{item:eqgammal} A closed positive  $(1,1)$-current $\chi_\varphi$ satisfies $P^\ell_\omega(\chi_\varphi)\le_v 1$ in $\{\omega>0\}$ if and only if $\chi_\varphi \in \bar{\Gamma}^\ell_\omega$ in $\{\omega>0\}$.
            \item\label{item:eqgamma'} Suppose that $P_\omega(\chi_\varphi)\le_v1$. Then, a closed positive  $(1,1)$-current $\chi_\varphi$ satisfies $Q_{c_0,\omega}(\chi_\varphi)\le_v 1$ in $\{\omega>0\}$ if and only if $\chi_\varphi \in \bar{\Gamma}'_\omega$ in $\{\omega>0\}$.
        \end{enumerate}
    \end{prop}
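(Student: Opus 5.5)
The plan is to reduce both equivalences to a statement about \emph{linear} constant-coefficient inequalities. For this I use the convexity and monotonicity in Proposition \ref{prop:properties}.

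\textbf{Reduction to half-spaces.} Fix $U_0$ and a constant-coefficient Kähler form $\omega_0\le\omega$ on $U_0$. Consider
\[
K_\ell:=\{A\in\Gamma_{\ge0}\mid P^\ell_{I_n}(A)\le1\}.
\]
By Proposition \ref{prop:properties} \ref{item:sublevel}, $K_\ell$ is convex. It is closed, since $P^\ell$ is lower semicontinuous where $S_{n-\ell}$ degenerates (I will check this via the convention $c/0=+\infty$). By Proposition \ref{prop:properties} \ref{item:monotonicity}, it is upward closed: $A\in K_\ell$ and $B\ge A$ imply $B\in K_\ell$. A closed, convex, upward closed set in $\operatorname{Herm}(\mathbb{C}^n)$ is an intersection of half-spaces $\{A\mid \operatorname{tr}(BA)\ge b\}$ with $B\ge0$. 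Indeed, any supporting functional must be nonnegative on the recession cone, which contains $\Gamma_{\ge0}$.

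The same holds for $K'=\{A\in\bar\Gamma\mid Q_{c_{0,0},I_n}(A)\le1\}$. Here $\bar\Gamma$ is itself upward closed and convex, and $Q_{c_{0,0}}$ is monotone and convex on $\bar\Gamma$. For a local constant $c_{0,0}<c_0$ on $U_0$, we have $Q_{c_{0,0}}\le Q_{c_0}$ on positive matrices. This lets me pass between the variable $c_0$ and constants, exactly as in \eqref{eq:Qconv}, finally letting $c_{0,0}\uparrow c_0(p)$ by shrinking $U_0$.

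\textbf{($\Rightarrow$).} Let $u$ be a usc local potential with $P^\ell_\omega(\chi_\varphi)\le_v1$. Since $\omega_0\le\omega$, the monotonicity gives $P^\ell_{\omega_0}(\delb q)(x)\le P^\ell_\omega(\delb q)(x)\le1$ for every upper test $q$. Thus $\omega_0^{-1}\delb q(x)\in K_\ell$, and hence $\operatorname{tr}(B\,\omega_0^{-1}\delb q(x))\ge b$ for each half-space. This says that $u$ is a viscosity subsolution of a linear constant-coefficient degenerate elliptic inequality $L_Bu\ge b$. The function $u$ is psh, and $L_B$ has constant coefficients. By the argument of \cite[Proposition 1.3]{EGZ} (viscosity and distributional subsolutions agree for such operators), $L_Bu\ge b$ holds in the sense of distributions. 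Convolution preserves this, because $\omega_0$ and $B$ are constant. Hence $\omega_0^{-1}\delb u_\delta\in K_\ell$ pointwise for every half-space, i.e. $P^\ell_{\omega_0}(\delb u_\delta)\le1$, which is \eqref{eq:conv}. The $Q$ case is identical with $K'$; the hypothesis $P_\omega\le_v1$ is what places the Hessians in $\bar\Gamma$.

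\textbf{($\Leftarrow$).} This direction copies the second half of the proof of Proposition \ref{prop:vsub}. The functions $u_\delta$ are smooth with $P^\ell_{\omega_0}(\delb u_\delta)\le1$ pointwise, so they are viscosity subsolutions. They decrease to $u$ as $\delta\to0$ and satisfy the hypotheses of Proposition \ref{propCIL}. That proposition then transfers the inequality to every upper test $q$ of $u$ at $p$, using closedness of $K_\ell$ (respectively $K'$). Finally, choosing $\omega_{0,j}\le\omega$ with $\omega_{0,j}(p)\to\omega(p)$ gives $P^\ell_\omega(\delb q)(p)\le1$. For $Q$, letting in addition $c_{0,0}\uparrow c_0(p)$ gives $Q_{c_0,\omega}(\delb q)(p)\le1$.

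\textbf{Main obstacle.} I expect the hardest step to be the passage from viscosity to distributional subsolutions for $L_B$ when $B$ is merely semipositive, so that $L_B$ is degenerate. My first attempt would be to replace $B$ by $B+\epsilon I$, which is strictly elliptic. The inequality $L_{B+\epsilon I}u\ge b$ holds in the viscosity sense because $\delb q(x)\ge0$ for upper tests of psh functions, by Remark \ref{rem:visc} \ref{item:monovisc}. After a linear change of coordinates this becomes a Laplacian inequality, for which the viscosity–distribution equivalence is classical. Then I let $\epsilon\to0$ in the distributional inequality. A secondary check is the closedness of $K_\ell$ and $K'$ at singular matrices, needed for the limiting arguments.
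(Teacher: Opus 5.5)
Your proof is correct. The ($\Leftarrow$) direction is the same as the paper's: the mollifications decrease to $u$, you apply Proposition \ref{propCIL}, and then let $\omega_{0,j}\to\omega(p)$. Your ($\Rightarrow$) direction, however, takes a genuinely different route.

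\textbf{The paper's route.} It first truncates to the bounded subsolution $u^C=\sup\{u,av-C\}$ and takes its sup-convolution $u^{C,\varepsilon}$, which remains a subsolution by Ishii's result since $\omega_0$ has constant coefficients. Semiconvexity and Aleksandrov's theorem then give a.e.\ Hessians with nonnegative singular part. Monotonicity and convexity of the sublevel set carry the inequality through the mollifier. Finally, $\varepsilon$ and $C$ are removed by two more applications of Proposition \ref{propCIL}.

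\textbf{Your route.} You dualize. Writing the closed, convex, upward-closed set $K_\ell$ (or $K'$) as an intersection of half-spaces with semipositive normals turns the nonlinear condition into a family of linear constant-coefficient inequalities. For these, viscosity and distributional subsolutions agree. Your $\epsilon$-perturbation correctly handles degenerate normals: it uses that upper tests of psh functions have semipositive Hessian, and that $\delb u$ is a locally finite measure, so the extra term vanishes as $\epsilon\to0$. Mollification commutes with each such operator, so every half-space inequality holds pointwise for $u_\delta$ at every point of $U_{0,\delta}$.

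\textbf{Comparison.} Your argument avoids truncation, sup-convolution, Aleksandrov's theorem and the double limit. It also makes plain that only closedness, convexity and monotonicity of the sublevel set are used. The paper's route is the standard viscosity machinery, and it yields the bounded approximants $(u^C)_\delta$ reused in Lemma \ref{lem:visctopp}. Your argument gives these as well, applied to $u^C$, which is itself a viscosity subsolution.

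\textbf{Minor points.}
\begin{enumerate}
\item Write the pairing as $\operatorname{tr}(B\,\omega_0^{-1/2}\delb q\,\omega_0^{-1/2})$, so it is between Hermitian matrices.
\item The closedness of $K_\ell$ does hold with your convention. Indeed, $S_{k-\ell;I}(\lambda)/S_{n-\ell;I}(\lambda)$ is the degree-$(n-k)$ elementary symmetric function of the reciprocals $1/\lambda_j$, $j\notin I$, which is lower semicontinuous with values in $[0,\infty]$.
\item For $K'$, closedness follows from the bound $S_n\ge c>0$ on $\bar{\Gamma}$ when some $c_k>0$, as in the paper's Newton--Maclaurin argument. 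In the Monge--Amp\`ere case it is immediate.
\end{enumerate}
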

    \begin{proof}
        We first prove \ref{item:eqgammal} following the arguments in \cite[Proposition 1.11 and Theorem 1.9]{EGZ}. Since the ``if'' part is proved by the arguments above, we prove the ``only if'' part. Assume $P^\ell_\omega(\chi_\varphi)\le_v 1$ in $\{\omega>0\}$.
        Take an open subset $U_0$ in a coordinate neighborhood in $\{\omega>0\}$ and a K{\"a}hler form $\omega_0$ with constant coefficients on $U_0$ such that $0<\omega_0\le\omega$. Note that an upper test of a psh function is psh as in the proof of \cite[Proposition 1.3]{EGZ}. Thus, we have $P^\ell_{\omega_0}(\chi_\varphi)\le_v 1$ on $U_0$. Let $u$ and $v$ be upper semicontinuous local potentials of $\chi_\varphi$ and $\omega$ respectively. Define $u^C:=\sup\{u,av-C\}$, where $C$ is a positive number and $a$ is a real number large enough to satisfy $P^\ell_\omega(a\omega)\le 1$. Since the supremum of a finite number of viscosity subsolutions is a viscosity subsolution, we have $P^\ell_{\omega_0}(
        \delb u^C)\le_v 1$ on $U_0$. Denote by $u^{C,\varepsilon}$ the sup convolution of $u^C$, i.e.
        $$u^{C,\varepsilon}(x) = \sup \left\{ u^C(y)-\frac{1}{2\varepsilon^2}|x-y|^2 \, \relmiddle{|} y \in U_0 \right\}$$
        for $x\in (U_0)_{A_C \varepsilon}$, where $(U_0)_{A_C \varepsilon}:=\{x\in U_0\mid \mathrm{dist}(x,\partial U_0)>A_C \varepsilon\}$ and $A_C$ is a real number such that $A_C^2>2\, \mathrm{osc}_{U_0} \, u^C$ which is well defined since $u^C$ is bounded. By this definition of $A_C$, the supremum is attained in $B(x, A_C \varepsilon)$.
        Since $\omega_0$ has constant coefficients, from the proof of \cite[Proposition 4.2]{Ishii}, we see that a function $u^{C,\varepsilon}$ satisfies $P^\ell_{\omega_0}(\delb u^{C,\varepsilon})\le_v 1$ on $(U_0)_{A_C\varepsilon}$.
        Note that $u^{C,\varepsilon}$ is semiconvex, since the supremum of semiconvex functions is semiconvex. By Aleksandrov's theorem, $u^{C,\varepsilon}$ is twice differentiable almost everywhere. Moreover, the singular part of its distributional Hessian is nonnegative. Thus, by the convexity of the sublevel sets of $P^\ell_{\omega_0}$ as in Proposition \ref{prop:properties} \ref{item:sublevel} and the monotonicity of $P^\ell_{\omega_0}$, for $z \in (U_0)_{A_C\varepsilon+\delta}$, we have
        \begin{equation}\label{eq:supconv}
        \begin{aligned}
        P^\ell_{\omega_0}(\delb(u^{C,\varepsilon})_\delta) (z)&\le P^\ell_{\omega_0}\left(\int_{B_1} (\delb u^{C,\varepsilon})(z-\delta w)\rho(w)dw\right)\\
        &\le 1.
        \end{aligned}
        \end{equation}
        Since a sequence $\{u^{C,\varepsilon}\}$ decreases to $u^C$, the Lebesgue theorem implies that $u^{C,\varepsilon}$ converges to $u^C$ in $L^1$. Therefore, a sequence $\{(u^{C,\varepsilon})_\delta\}$ decreases to $(u^C)_\delta$. By the same arguments as above, Proposition \ref{propCIL} confirms $P^\ell_{\omega_0}(\delb(u^C)_\delta) \le 1$ in $(U_0)_\delta$. By applying the same arguments to a sequence $\{(u^C)_\delta\}_C$, as $C$ tends to $\infty$, we get $P^\ell_{\omega_0}(\delb u_\delta)\le 1$, which means $\chi_\varphi\in \overline{\Gamma}_\omega$.

        To prove \ref{item:eqgamma'}, since $c_0$ is bounded, note that we can choose $a$ large enough to satisfy $Q_{c_0,\omega}(a\omega)\le1$ in addition to $P_\omega(a\omega)\le1$. Thus, using the convexity of sublevel sets of $Q_{c_0,\omega}$ as in Proposition \ref{prop:properties} \ref{item:sublevel} and the monotonicity of $Q_{c_0,\omega}$, the same argument yields the desired result.
    \end{proof}

Next, we prove that a viscosity subsolution is a pluripotential subsolution (Lemma \ref{lem:visctopp}). Later in Proposition \ref{prop:viscpp}, we prove that they are equivalent.

\begin{defn}\label{def:llcurrent}
    For a $\chi$-psh function $\varphi$, we define a $(p,p)$-current $T^p_\omega(\chi_\varphi)$ by
    \begin{align*}
        T^p_\omega(\chi_\varphi)&:=\frac{n!}{p!}\langle\chi_\varphi^p\rangle-\sum_{k=0}^{n-1}c_k\frac{k!}{(k-n+p)!}\langle\chi_\varphi^{k-n+p}\wedge\omega^{n-k}\rangle,\\
    \end{align*}
    where $\langle\cdot\rangle$ denotes the nonpluripolar product.
    We also denote by $T^p_{i,\omega}$ the current defined by replacing $c_k$ with $c_{k,i}$ in the definitions of $T^p_\omega$ above.
\end{defn}

\begin{lem}\label{lem:visctopp}
    Let $\varphi$ be a $\chi$-psh function.
    \begin{enumerate}[label={\upshape(\roman*)}]
        \item\label{item:visctoppn-1} 
        If $P^\ell_{\omega}(\chi_\varphi)\le_v 1$ in $\{\omega>0\}$, then $T^p_\omega(\chi_\varphi)\ge0$ in $X$ for all $p\le n-\ell$.
        \item\label{item:visctoppn} 
        Suppose $P_{\omega}(\chi_\varphi)\le_v 1$. If $Q_{c_0,\omega}(\chi_\varphi)\le_v 1$ in $\{\omega>0\}$, then $T^n_\omega(\chi_\varphi)\ge0$ in $X$.
    \end{enumerate}
\end{lem}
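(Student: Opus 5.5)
We plan to pass from the viscosity condition to a pointwise inequality for smooth approximants, and then to the nonpluripolar products by a local, plurifine argument. The first step is to invoke Proposition \ref{prop:equivalence}. It shows that $P^\ell_\omega(\chi_\varphi)\le_v1$ gives $\chi_\varphi\in\bar\Gamma^\ell_\omega$. Hence on any small coordinate ball $U_0\subset\{\omega>0\}$, and for any constant-coefficient Kähler form $\omega_0\le\omega$, the smooth psh functions $u_\delta$ satisfy $P^\ell_{\omega_0}(\delb u_\delta)\le1$ pointwise. In the situation of \ref{item:visctoppn} we additionally get $Q_{c_{0,0},\omega_0}(\delb u_\delta)\le1$.

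For smooth forms, a pointwise linear algebra computation (the one behind the $\mathcal C$-subsolution/numerical characterization, cf.\ \cite{Datar-Pingali,FM}) converts these bounds into $(n-p,n-p)$-positivity. Concretely, $P^\ell_{I_n}(A)\le1$ with $A\ge0$ implies
\[
\frac{n!}{p!}A^p-\sum_k c_k\frac{k!}{(k-n+p)!}A^{k-n+p}\wedge I^{n-k}\ge0
\]
for $p\le n-\ell$. The case $p=n-\ell$ is the definition after restricting to $\ell$-codimensional coordinate subspaces. Smaller $p$ follow via Remark \ref{rem:visc} \ref{item:monovisc}, since $P^{\ell'}\le1$ for $\ell'\ge\ell$. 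Similarly, $Q\le1$ gives the top-degree inequality. One subtlety is that one needs positivity as a form, i.e.\ after pairing with all positive $(n-p,n-p)$ test forms, not only eigen-subspaces. I would handle this by the standard fact that for forms diagonalized simultaneously with $I$ the coefficients are exactly the restricted quotients, together with the concavity/Gårding-type argument used in \cite{GChen}.

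Thus $T^p_{\omega_0}(\delb u_\delta)\ge0$ on $U_0$ (with $c_{0,0}$ in place of $c_0$ when $p=n$). Next we let $\delta\to0$. The key tool is the plurifine locality of nonpluripolar products together with the truncations $u^C=\max(u,av-C)$ used in Proposition \ref{prop:equivalence}. The functions $u^C$ are bounded and also lie in $\bar\Gamma^\ell$, by the argument of that proposition. For bounded psh functions, Bedford–Taylor continuity along decreasing sequences gives that the mixed products $(\delb u^C_\delta)^j\wedge\omega_0^{n-j}$ converge weakly to $(\delb u^C)^j\wedge\omega_0^{n-j}$. Hence $T^p_{\omega_0}(\delb u^C)\ge0$ as currents on $U_0$. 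On the plurifine open set $\{u>av-C\}$ these products coincide with those of $u$. Letting $C\to\infty$, the sets exhaust $U_0$ minus a pluripolar set, and we obtain $T^p_{\omega_0}(\chi_\varphi)\ge0$ in the nonpluripolar sense on $U_0$.

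The remaining steps upgrade $\omega_0$ to $\omega$ and extend from $\{\omega>0\}$ to $X$. For the first, we let $\omega_0\nearrow\omega$ locally, with $\omega_0(p)\to\omega(p)$ on shrinking neighborhoods, and use continuity of the coefficients. I expect this to be the main obstacle: replacing $\omega_0$ by a larger form does not obviously preserve the inequality because of the minus signs. Instead, I would exploit that $T^p$ is a polynomial in $\omega$ with smooth coefficients multiplying fixed positive currents $\langle\chi_\varphi^j\rangle$, so one can approximate $\omega$ by constant-coefficient forms from below and above with errors $O(\epsilon)\langle\chi_\varphi^j\rangle\wedge\omega^{n-j}$, which vanish as $\epsilon\to0$. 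Then $c_{0,0}\nearrow c_0$ by lower semicontinuity gives the top-degree case. Finally, $X\setminus\{\omega>0\}$ is pluripolar and nonpluripolar products put no mass there, so $T^p_\omega(\chi_\varphi)\ge0$ on all of $X$.
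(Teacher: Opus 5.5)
Your proposal is correct and follows essentially the same route as the paper. Both arguments apply Proposition \ref{prop:equivalence} to the truncations $u^C$, use Bedford--Taylor continuity as $\delta\to0$, then plurifine locality and $C\to\infty$, then approximate $\omega$ from below by piecewise constant-coefficient forms, and finally discard the pluripolar set $X\setminus\{\omega>0\}$. The pointwise positivity step needs no G{\aa}rding-type argument (the paper simply asserts it): after simultaneous diagonalization, $T^p_{\omega_0}$ is a combination of the strongly positive forms $\bigwedge_{j\in J}\sqrt{-1}\,dz_j\wedge d\bar z_j$, and its coefficients are exactly your restricted-quotient inequalities, so it is automatically positive.
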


\begin{proof}
    The strategy follows that of \cite[Proposition 1.11, Theorem 1.9, and Corollary 2.6]{EGZ}. The key is the locality of the non-pluripolar product in the plurifine topology. Due to this property, it sufficies to consider the local version with bounded psh functions. This point was overlooked in the proof of \cite[Lemma 2.13]{Mur}.
    
    First, we prove \ref{item:visctoppn-1}. Suppose that a $\chi$-psh function $\varphi$ satisfies $P^\ell_\omega(\chi_\varphi)\le_v 1$ in $\{\omega>0\}$. With the same notation as in the proof of Proposition \ref{prop:equivalence}, the proof of Proposition \ref{prop:equivalence} concludes that we have $P^\ell_{\omega_0}(\delb (u^C)_\delta)\le1$ which is equivalent to $T^{n-\ell}_{\omega_0}(\delb (u^C)_\delta)\ge0$ in $(U_0)_\delta$. Since $u^C$ is bounded, letting $\delta\to0$, the inequality converges to $T^{n-\ell}_{\omega_0}(\delb u^C)\ge0$ on $U_0$ by \cite[Theorem 2.1]{BT}. In particular, by the locality in the plurifine topology of the nonpluripolar product, we have 
    \begin{equation*}
        \mathbbm{1}_{\{u>av-C\}}T_{\omega_0}^{n-\ell}(\delb u)=\mathbbm{1}_{\{u>av-C\}} T^{n-\ell}_{\omega_0}(\delb u^C)\ge0
    \end{equation*}
    on $U_0$. Letting $C\to\infty$, by the definition of the nonpluripolar product, we obtain $T^{n-\ell}_{\omega_0}(\chi_\varphi)\ge0$ on $U_0$.
    By approximating $\omega$ from below with Kähler forms with coefficients being simple functions $\sum_j\mathbbm{1}_{U_{0,j}}\omega_{0,j}$, we further obtain $T^{n-\ell}_\omega(\chi_\varphi)\ge0$ in $\{\omega>0\}$. Then, since the nonpluripolar product puts no mass on $X\setminus
    \{\omega>0\}$, which is a pluripolar set by assumption, we conclude that $T^{n-\ell}_\omega(\chi_\varphi)\ge0$ in $X$. Since $P^\ell_\omega(\chi_\varphi)\le_v1$ implies $P^j_\omega(\chi_\varphi)\le_v1$ for any $j\ge \ell$ as in Remark \ref{rem:visc} \ref{item:monovisc}, the same argument implies $T^p_\omega(\chi_\varphi)\ge0$ in $X$ for $p\le n-\ell$.

    The proof of \ref{item:visctoppn} is completely the same as above, using the arguments in the proof of Proposition \ref{prop:equivalence} \ref{item:eqgamma'}. 
\end{proof}

Finally, the following inequality implies that a pluripotential subsolution is indeed a solution.

\begin{prop}\label{prop:superineq}
    If a $\chi$-psh function $\varphi$ satisfies $P_\omega(\chi_\varphi)\le_v 1$ in $\{\omega>0\}$, then we have
    $$\int_X T^n_\omega(\chi_\varphi)\le0.$$
\end{prop}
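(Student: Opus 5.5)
The plan is to imitate the proof of the monotonicity of nonpluripolar masses \cite[Proposition 1.20]{BEGZ}: truncate $\varphi$ by a reference potential and use the plurifine locality of the nonpluripolar product. The difficulty is that $T^n_\omega$ has negative terms, so the leftover mass on the truncation region is not automatically nonnegative. The degenerate $\mathcal{C}$-subsolution condition is what should control that sign.

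First, I would reduce to a Kähler setting. Since $\chi_i\ge\chi$, $\varphi$ is $\chi_i$-psh. By Remark~\ref{rem:visc}~\ref{item:monovisc} and the monotonicity in Proposition~\ref{prop:properties}~\ref{item:monotonicity} (using $\omega\le\omega_i$ and $c_{k,i}\ge c_k$ appropriately), $\chi_i+\delb\varphi$ is still a degenerate $\mathcal{C}$-subsolution for the $i$-th data. The cohomological quantity
\[
\int_X[\chi_i]^n-\sum_{k\ge1}c_{k,i}[\chi_i]^k\wedge[\omega_i]^{n-k}
\]
converges to $\int_X c_0\omega^n$ by Setup~\ref{setup} and \eqref{eq:topnum}. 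It therefore suffices to prove the inequality for the $i$-th data with an error tending to $0$, and then let $i\to\infty$. For the nonpluripolar terms $\langle\chi_{i,\varphi}^k\wedge\omega_i^{n-k}\rangle$, I would use multilinearity in the smooth forms $\chi_i-\chi$ and $\omega_i-\omega$, which tend to $0$ in a suitable sense, and monotone convergence.

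Second, I would truncate. Let $\psi_i$ be the solution of \eqref{eq:appgMA}, and set $\varphi_C:=\max(\varphi,\psi_i-C)$, a bounded $\chi_i$-psh function. For bounded potentials, Bedford--Taylor theory gives cohomological masses:
\[
\int_X T^n_{i,\omega_i}(\chi_{i,\varphi_C})=\int_X T^n_{i,\omega_i}(\chi_{i,\psi_i})=0 .
\]
By plurifine locality, $T^n_{i}(\chi_{i,\varphi_C})=T^n_i(\chi_{i,\varphi})$ on $\{\varphi>\psi_i-C\}$. Letting $C\to\infty$, the set $\{\varphi>\psi_i-C\}$ increases to $X$ minus a pluripolar set, so
\[
\int_X T^n_i(\chi_{i,\varphi})=-\lim_{C\to\infty}\int_{\{\varphi\le\psi_i-C\}}T^n_i(\chi_{i,\varphi_C}).
\]
The claim then reduces to showing $T^n_i(\chi_{i,\varphi_C})\ge0$ on $\{\varphi\le\psi_i-C\}$, or at least that its negative part there has mass $o(1)$. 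On the plurifine open set $\{\varphi<\psi_i-C\}$ the current equals $T^n_i(\chi_{i,\psi_i})=0$, so only the contact set $\{\varphi=\psi_i-C\}$ remains.

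Third, I expect the contact set to be the main obstacle. My first attempt would be a linearization. Along the segment $v_t=(1-t)u+tw$ of two bounded subsolutions, $\frac{d}{dt}T^n_i$ equals $n\,\delb(w-u)\wedge S^{n-1}_t$, where
\[
S^{n-1}_t:=n\chi_{v_t}^{n-1}-\sum_k c_{k,i}k\,\chi_{v_t}^{k-1}\wedge\omega_i^{n-k}
\]
is a positive current by Lemma~\ref{lem:visctopp}~\ref{item:visctoppn-1} (with $\ell=1$, $p=n-1$) and convexity of $\bar\Gamma$. I would then use the Demailly-type inequality $\mathbbm{1}_{\{u\ge w\}}\delb\max(u,w)\wedge S\ge\mathbbm{1}_{\{u\ge w\}}\delb u\wedge S$ for positive closed $S$ on the contact set. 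This would give $\mathbbm{1}_{\{\varphi=\psi_i-C\}}T^n_i(\chi_{i,\varphi_C})\ge\mathbbm{1}_{\{\varphi=\psi_i-C\}}T^n_i(\chi_{i,\psi_i})=0$, with $S$ closed since it is built from closed forms.

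If this direct comparison fails, the fallback is to regularize locally, as in Proposition~\ref{prop:equivalence}: pass to the smooth functions $(u^{C,\varepsilon})_\delta$, where the pointwise inequality on the contact set follows from monotonicity of $Q$ on $\bar\Gamma$. I would then pass to the limit using \cite[Theorem 2.1]{BT}.
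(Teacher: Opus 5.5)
Your Step~1 has the monotonicity backwards. $P_{I_n}$ is decreasing on $\Gamma_{\ge0}$ (Proposition~\ref{prop:properties}~\ref{item:monotonicity}) and increasing in the coefficients $c_k$. So $\omega\le\omega_i$ and $c_{k,i}\ge c_k$ both make the $i$-th operator \emph{larger}: $P_{i,\omega_i}(A)\ge P_\omega(A)$ for $A\ge0$. Only $\chi_i\ge\chi$ works in your favour, and nothing quantifies that gain (the constant sequence $\chi_i=\chi$ is not excluded). Hence $P_{i,\omega_i}(\chi_{i,\varphi})\le_v1$ does not follow. Without it, neither $\max(\varphi,\psi_i-C)$ nor the segment $v_t$ is known to be a subsolution, and the positivity of $S^{n-1}_t$ in Step~3 is lost.

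The repair is to change only $\chi$: use $\chi_i$ but keep $\omega$ and $c_k$. Then $P_\omega(\chi_{i,\varphi})\le_v1$ because $\chi_i\ge\chi$, and $P_\omega(\chi_{i,\psi_i})\le P_{i,\omega_i}(\chi_{i,\psi_i})<1$. You lose the pointwise identity $T^n(\chi_{i,\psi_i})=0$ used in Step~2, but you do not need it. Your Step~3 argument gives
$\mathbbm{1}_{\{\varphi\le\psi_i-C\}}T^n_\omega(\chi_{i,\varphi_C})\ge\mathbbm{1}_{\{\varphi\le\psi_i-C\}}T^n_\omega(\chi_{i,\psi_i})$
on the whole set $\{\varphi\le\psi_i-C\}=\{\varphi_C=\psi_i-C\}$. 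Since $T^n_\omega(\chi_{i,\psi_i})$ has bounded density and $\{\varphi\le\psi_i-C\}$ decreases to a pluripolar set, your truncation identity yields $\int_XT^n_\omega(\chi_{i,\varphi})\le\int_XT^n_\omega(\chi_{i,\psi_i})$. The right-hand side is cohomological and tends to $0$ by Setup~\ref{setup}. The paper also keeps $\omega$ rather than $\omega_i$ at this point. Its text asserts $c_{k,i}\le c_k$, which conflicts with $c_{k,i}\searrow c_k$, so keeping $c_k$ is the safe choice there too.

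With this repair your argument is correct, but it is organized differently from the paper's. The paper interpolates globally with $\varphi_{i,s}=(1-s)\varphi+s\psi_i$, where $\varphi$ is unbounded. It differentiates $s\mapsto\int_XT^n_{i,\omega}(\chi_{i,\varphi_{i,s}})$ using multilinearity of the nonpluripolar product. The sign then comes from Lemma~\ref{lem:ppineq}: $\int_X\langle\chi_{i,\cdot}\wedge T^{n-1}_{i,\omega}(\chi_{i,\psi})\rangle$ is monotone under $\varphi_1\le\varphi_2+C$, and that lemma is itself proved by truncation.

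You instead truncate first, so every product is a Bedford--Taylor product of bounded potentials and mass invariance is immediate. You linearize only on $\{\varphi\le\psi_i-C\}$. There Demailly's inequality against the fixed closed positive current $S^{n-1}_t$ is valid: approximate by $\max(\psi_i-C,\varphi-\epsilon)$ in the single slot, and use plurifine locality together with positivity of $\delb(\cdot)\wedge S^{n-1}_t$. So your regularization fallback is unnecessary. The cost is one extra viscosity fact: the maximum of two subsolutions of $P_\omega\le1$ is again a subsolution. Both proofs share the same core: positivity of $T^{n-1}$ along a segment of subsolutions (convexity of $\{P\le1\}$ plus Lemma~\ref{lem:visctopp}), combined with a comparison of masses against it.
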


\begin{proof}
    Define $\varphi_{i,s}:=(1-s)\varphi+s\psi_i$, where $\psi_i$ is a solution of \eqref{eq:appgMA}. Note that since $\chi\le\chi_i, \ c_{k,i}\le c_k$ and an upper test of a psh function is psh as in the proof of \cite[Proposition 1.3]{EGZ}, we have $P_{i,\omega}(\chi_{i,\varphi})\le_v 1$. On the other hand, we have $P_{i,\omega}(\chi_{i,\psi_i})\le 1$ by \eqref{eq:appgMA}. Thus, by the convexity of the sublevel sets of $P_{i,\omega}$, we obtain $P_{i,\omega}(\chi_{i,\varphi_{i,s}})\le_v 1$ (see, e.g., the proof of \cite[Theorem 5.8]{CC}). By Proposition \ref{prop:equivalence}, we see that $T^{n-1}_{i,\omega}(\chi_{i,\varphi_{i,s}})$ is positive. Using the multilinearity of the nonpluripolar product and Lemma \ref{lem:ppineq} below, we have the inequality
    \begin{equation*}
        \begin{split}
            \frac{d}{ds}\int_X T^n_{i,\omega}(\chi_{i,\varphi_{i,s}})
            =\int_X \Big\langle(\chi_{i,\psi_i}-\chi_{i,\varphi})\wedge T^{n-1}_{i,\omega}(\chi_{i,\varphi_{i,s}})\Big\rangle\ge 0.
        \end{split}
    \end{equation*}
    Consequently, we obtain
    $$\int_X T^n_{i,\omega}(\chi_{i,\varphi})\le
    \int_X T^n_{i,\omega}(\chi_{i,\psi_i})=\int_X\chi_i^n-\sum_{k=1}^n c_{k,i}\chi_i^k\wedge\omega^{n-k}.$$
    Note that the right-hand side converges to zero by \eqref{eq:topnum} and \eqref{eq:boundary}. Thus, letting $i\to\infty$, the multilinearity of the nonpluripolar product implies the assertion.
\end{proof}

\begin{lem}\label{lem:ppineq}
    Let $\psi$ be a $\chi_i$-psh function such that $T^{n-1}_{i,\omega}(\chi_{i,\psi})$ is positive. If $\chi_i$-psh functions $\varphi_1,\varphi_2$ satisfy $\varphi_1\le\varphi_2+C$ for some constant $C$, then we have
    $$\int_X\left\langle\chi_{i,\varphi_1}\wedge T^{n-1}_{i,\omega}(\chi_{i,\psi})\right\rangle\le\int_X\left\langle\chi_{i,\varphi_2}\wedge T^{n-1}_{i,\omega}(\chi_{i,\psi})\right\rangle.$$
\end{lem}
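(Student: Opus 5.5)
This is a monotonicity statement for nonpluripolar masses, of the same type as \cite[Theorem 1.16]{BEGZ} and the Witt Nyström/Darvas–Di Nezza–Lu monotonicity theorem: a less singular potential gives at least as much nonpluripolar mass. The plan is to expand $T^{n-1}_{i,\omega}(\chi_{i,\psi})$ by multilinearity. Then
\[
\left\langle \chi_{i,\varphi_j}\wedge T^{n-1}_{i,\omega}(\chi_{i,\psi})\right\rangle
\]
becomes a finite linear combination of mixed products $\langle\chi_{i,\varphi_j}\wedge\chi_{i,\psi}^{m}\wedge\omega^{n-1-m}\rangle$. Since the coefficients have mixed signs, we cannot compare termwise. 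Instead we keep $T:=T^{n-1}_{i,\omega}(\chi_{i,\psi})$ as a single positive $(n-1,n-1)$-current, and we exploit that it is built from closed positive currents through the nonpluripolar product.

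\textbf{Step 1: reduce to the comparison $\varphi_1^{C'}:=\max(\varphi_1,\varphi_2-C')$ versus $\varphi_2$.} Set $\varphi^t:=\max(\varphi_1,\varphi_2-t)$ for $t\ge C$ large. This is a $\chi_i$-psh function with the same singularity type as $\varphi_2$ (because $\varphi_1\le\varphi_2+C$), and $\varphi^t$ equals $\varphi_1$ on the plurifine open set $\{\varphi_1>\varphi_2-t\}$. By plurifine locality applied to each mixed term, and then to the combination,
\[
\mathbbm 1_{\{\varphi_1>\varphi_2-t\}}\langle\chi_{i,\varphi^t}\wedge T\rangle=\mathbbm 1_{\{\varphi_1>\varphi_2-t\}}\langle\chi_{i,\varphi_1}\wedge T\rangle .
\]
Here we use that $T$ is positive, so the product $\langle\chi_{i,\varphi_1}\wedge T\rangle$ is a positive measure. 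As $t\to\infty$ these sets increase to the complement of the pluripolar set $\{\varphi_1=-\infty\}$, which carries no nonpluripolar mass. Monotone convergence then gives
\[
\int_X\langle\chi_{i,\varphi_1}\wedge T\rangle=\lim_t\int_{\{\varphi_1>\varphi_2-t\}}\langle\chi_{i,\varphi^t}\wedge T\rangle\le\liminf_t\int_X\langle\chi_{i,\varphi^t}\wedge T\rangle .
\]
Positivity of $\langle\chi_{i,\varphi^t}\wedge T\rangle$ itself is needed for this inequality. It follows by the local argument of Lemma \ref{lem:visctopp}: locally $\chi_{i,\varphi^t}$ has a psh potential, and wedging the positive smooth approximants of $T$ with a positive $(1,1)$-form stays positive.

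\textbf{Step 2: equal singularity type gives equal mass.} It remains to show that $\int_X\langle\chi_{i,\varphi}\wedge T\rangle$ is the same for all $\varphi$ with the same singularity type as $\varphi_2$. By multilinearity this reduces to equality of each mixed term $\int_X\langle\chi_{i,\varphi}\wedge\chi_{i,\psi}^m\wedge\omega^{n-1-m}\rangle$, and the sign problem disappears because the claim is now an equality. Each such equality is the mixed-product version of \cite[Theorem 1.16]{BEGZ} (or Darvas–Di Nezza–Lu monotonicity): potentials of equal singularity type, with the other slots fixed, have equal total nonpluripolar mass. Since $\chi_i$ is Kähler, one can also argue directly. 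Integrate by parts for $\varphi$ and $\varphi'$ with $|\varphi-\varphi'|$ bounded, after truncating $\psi$ to $\max(\psi,-k)$ and using plurifine locality as $k\to\infty$.

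\textbf{Main obstacle.} The hard part is Step 2 for unbounded $\psi$. Justifying the integration by parts $\int\langle dd^c(\varphi-\varphi')\wedge S\rangle=0$, with $S$ a nonpluripolar product of unbounded potentials, requires the BEGZ/Lu–Darvas machinery. Concretely, $\int\langle\chi_{i,\varphi}\wedge S\rangle$ is invariant under bounded perturbation of $\varphi$ when $S=\langle\chi_{i,\psi}^m\rangle\wedge\omega^{n-1-m}$. I would try to cite it directly from \cite{BEGZ}, using that $\omega\le C\chi_i$ lets us treat $\omega$ as a smooth semipositive form.
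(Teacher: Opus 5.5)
Your proposal is correct and follows the paper's route: you compare $\varphi_1$ with $\max\{\varphi_1,\varphi_2-t\}$, get equal total mass for potentials of the same singularity type from the Darvas--Di Nezza--Lu result (the paper cites \cite[Proposition 2.1]{DDL}), and pass $t\to\infty$ using locality together with positivity of the product against $T$. Your use of exact plurifine locality on $\{\varphi_1>\varphi_2-t\}$ with continuity from below is a slightly cleaner version of the paper's cutoff-function and Bedford--Taylor convergence step. One correction: justify the positivity of $\langle\chi_{i,\varphi^t}\wedge T\rangle$ by a quasi-continuous cutoff argument, as with the paper's $f^{C,\varepsilon}$, rather than by ``positive smooth approximants of $T$'', because once $\psi$ is truncated the Bedford--Taylor version of $T$ is only known to be positive on $\{\psi>-k\}$.
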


\begin{proof}
    We follow the arguments by \cite[Section 2]{DDL}. Define $\varphi_{1,2,k}:=\sup\{\varphi_1,\varphi_2-k\}$. Since $\varphi_{1,2,k}-\varphi_2$ is globally bounded, by \cite[Proposition 2.1]{DDL}, we see that
    $$\int_X\left\langle\chi_{i,\varphi_{1,2,k}}\wedge T^{n-1}_{i,\omega}(\chi_{i,\psi})\right\rangle
    =\int_X\left\langle\chi_{i,\varphi_2}\wedge T^{n-1}_{i,\omega}(\chi_{i,\psi})\right\rangle.$$
    Fix $C>0$ and $\varepsilon>0$. Let us define
    \begin{align*}
        &f_{\varphi,k}^{C,\varepsilon}:=\frac{\sup\{\varphi_{1,2,k}-\psi_i+C,0\}}{\sup\{\varphi_{1,2,k}-\psi_i+C,0\}+\varepsilon}, \quad f_\varphi^{C,\varepsilon}:=\frac{\sup\{\varphi_1-\psi_i+C,0\}}{\sup\{\varphi_1-\psi_i+C,0\}+\varepsilon},\\
        &f_{\psi}^{C,\varepsilon}:=\frac{\sup\{\psi-\psi_i+C,0\}}{\sup\{\psi-\psi_i+C,0\}+\varepsilon}, 
    \end{align*}
    and $$\varphi_{1,2,k}^C:=\sup\{\varphi_{1,2,k},\psi_i-C\},\quad
    \varphi_1^C:=\sup\{\varphi_1,\psi_i-C\}, \quad\psi^C:=\sup\{\psi,\psi_i-C\}.$$  
    By the locality of the nonpluripolar product, we have
    $$f_{k}^{C,\varepsilon}\left\langle\chi_{i,\varphi_{1,2,k}}\wedge T^{n-1}_{i,\omega}(\chi_{i,\psi})\right\rangle=f_{k}^{C,\varepsilon}\left\langle\chi_{i,\varphi_{1,2,k}^C}\wedge T^{n-1}_{i,\omega}(\chi_{i,\psi^C})\right\rangle,$$
    where $f_{k}^{C,\varepsilon}=f_{\varphi,k}^{C,\varepsilon}f_{\psi}^{C,\varepsilon}$. Since $\varphi_{1}^C$ is bounded, we have
    $$f_{k}^{C,\varepsilon}\left\langle\chi_{i,\varphi_{1,2,k}^C}\wedge T^{n-1}_{i,\omega}(\chi_{i,\psi^C})\right\rangle\to f^{C,\varepsilon}\left\langle\chi_{i,\varphi_{1}^C}\wedge T^{n-1}_{i,\omega}(\chi_{i,\psi^C})\right\rangle$$
    as $k\to\infty$, where $f^{C,\varepsilon}:=f_{\varphi}^{C,\varepsilon}f_\psi^{C,\varepsilon}$. In particular, we have 
    \begin{equation*}
    \begin{split}
    &\varliminf_{k\to\infty}\int_X\left\langle\chi_{i,\varphi_{1,2,k}}\wedge T^{n-1}_{i,\omega}(\chi_{i,\psi})\right\rangle\\
    =&\varliminf_{k\to\infty}\lim_{C\to\infty}\int_X \mathbbm{1}_{\{\varphi_{1,2,k}^C >\psi_i-C\}\cap\{\psi>\psi_i-C\}}\left\langle\chi_{i,\varphi_{1,2,k}^C}\wedge T^{n-1}_{i,\omega}(\chi_{i,\psi^C})\right\rangle\\
    \ge&
    \varliminf_{k\to\infty}\int_X f_{k}^{C,\varepsilon}\left\langle\chi_{i,\varphi_{1,2,k}^C}\wedge T^{n-1}_{i,\omega}(\chi_{i,\psi^C})\right\rangle
    \ge
    \int_Xf^{C,\varepsilon}\left\langle\chi_{i,\varphi_{1}^C}\wedge T^{n-1}_{i,\omega}(\chi_{i,\psi^C})\right\rangle,
    \end{split}
    \end{equation*}
    where the first inequality follows from the positivity of $T^{n-1}_{i,\omega}(\chi_{i,\psi^C})$ and the inequality $1\ge f_k^{C,\varepsilon}$ on $\{\varphi_{1,2,k}^C >\psi_i-C\}\cap\{\psi>\psi_i-C\}$. 
    Letting $\varepsilon$ approach zero and $C$ approach $\infty$, we obtain the desired result.
\end{proof}

We summarize the proof of the existence part of Theorem \ref{thm:main} here.

\begin{proof}[Proof of Theorem \ref{thm:existence}]
    By Setup \ref{setup} and Assumption \ref{asmp:bdryMA}, we have the solutions $\psi_i$ of approximate gMA equations \eqref{eq:appgMA}. By taking a subsequence, we obtain a weak limit $\psi_\infty$ of $\{\psi_i\}$. By Proposition \ref{prop:vsub} and Lemma \ref{lem:visctopp}, a closed positive current $\chi_\infty:=\chi+\delb\psi_\infty$ satisfies $P_\omega(\chi_\infty)\le_v1$ in $\{\omega>0\}$ and $T^n_{\omega}(\chi_\infty)\ge0$ in $X$. 
    By Proposition \ref{prop:superineq}, the total mass of $T^n_\omega(\chi_\infty)$ equals zero. Thus, we obtain $T^n_\omega(\chi_\infty)=0$ and $T^p_\omega(\chi_\infty)\ge0$ for $p=1,\dots, n-1$, which are \eqref{eq:lscgMA}. 
\end{proof}

\subsection{Uniqueness}\label{subsec:uniqueness}

In this subsection, we prove the uniqueness part of Theorem \ref{thm:main}. The arguments are based on the Monge-Ampère case \cite[Section 3]{BEGZ}, \cite{Dinew}.

First, we establish the equivalence between viscosity and pluripotential subsolutions (Proposition \ref{prop:viscpp}). With this equivalence, we can use the viscosity theoretic procedure (e.g. taking the supremum or the convex combination) without violating the property of pluripotential subsolutions. This fact will be used in the proof of uniqueness. We do not know whether the fact can be verified directly without using the viscosity method (cf. \cite[Corollary 1.10]{GZ} for the MA equation). 

To prove equivalence, the key is the local smooth approximation property (Lemma \ref{lem:app} below), which leads to the maximum principle (Corollary \ref{cor:comp}). After establishing the maximum principle,  the proof of equivalence is mostly the same as that of \cite[Propositions 1.5 and 1.11]{EGZ}.
The second item of Lemma \ref{lem:app} below will be used only later in the proof of Lemma \ref{lem:BMineq}, a key inequality for uniqueness.

\begin{lem}\label{lem:app}
    Let $\Omega$ and $\Omega'$ be bounded domains in $\mathbb{C}^n$ such that $\Omega'\subset\subset\Omega$, and $\omega$ be a Kähler form in $\Omega$. \begin{enumerate}[label={\upshape(\roman*)}]
        \item\label{item:appP} Suppose that $\varphi$ is a bounded psh function in $\Omega$ such that $P^\ell_{\omega}(\delb \varphi)\le_v1$ in $\Omega$. Then, for any $d>1$, there exists a decreasing sequence of smooth psh functions $\{\varphi_a\}_{a\in\mathbb{N}}$ in $\Omega'$ such that $\varphi_a\searrow\varphi$ and $P^\ell_\omega(\delb \varphi_a)\le d$ in $\Omega'$.
        \item\label{item:appQ} Suppose that $\varphi$ is a bounded psh function in $\Omega$ such that $P_{\omega}(\delb \varphi)\le_v1$ and $Q_{c_0,\omega}(\delb \varphi)\le_v1$ in $\Omega$, where $c_0>-\varepsilon_{2.1}$ is a continuous function. Then, for any $d>1$, there exists a decreasing sequence of smooth psh functions $\{\varphi_a\}_{a\in\mathbb{N}}$ in $\Omega'$ such that $\varphi_a\searrow\varphi$, $P_\omega(\delb \varphi_a)\le d$, and $Q_{c_0,\omega}(\delb \varphi_a)\le_v d$ in $\Omega'$.
    \end{enumerate}
\end{lem}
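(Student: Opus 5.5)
The plan is to build $\varphi_a$ from sup-convolution followed by mollification, using the same ingredients as in the proof of Proposition \ref{prop:equivalence}. The non-constant coefficients of $\omega$, and of $c_0$ in \ref{item:appQ}, are handled by freezing them on small balls; this costs an arbitrarily small loss, which is absorbed into $d>1$.

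\emph{Local constant-coefficient step.} Since $\varphi$ is bounded, no truncation $u^C$ is needed. Fix $\eta>0$. Continuity of $\omega$ on $\bar\Omega'$, and of $c_0$ in \ref{item:appQ}, gives a radius $r>0$ with the following property. For every $x_0\in\bar\Omega'$ there is a constant-coefficient Kähler form $\omega_{x_0}$ on $B(x_0,2r)$ with $\omega_{x_0}\le\omega$ there and $\omega\le(1+\eta)\omega_{x_0}$, and there is a constant $c_{0,x_0}\le c_0$ on $B(x_0,2r)$ with $c_0-c_{0,x_0}\le\eta$.

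Monotonicity (Proposition \ref{prop:properties} \ref{item:monotonicity}) and the fact that upper tests are psh show that $\varphi$ is a viscosity subsolution for the frozen operators $P^\ell_{\omega_{x_0}}$ and $Q_{c_{0,x_0},\omega_{x_0}}$ on $B(x_0,2r)$. The proof of Proposition \ref{prop:equivalence} (sup-convolution, Aleksandrov, convexity of sublevel sets) then gives $P^\ell_{\omega_{x_0}}(\delb(\varphi^\varepsilon)_\delta)\le1$ on $B(x_0,r)$ for small $\varepsilon,\delta$, and likewise for $Q$. Here $\varphi^\varepsilon$ is the sup-convolution on a slightly larger domain and $(\cdot)_\delta$ is mollification.

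The key point is that $\varphi^\varepsilon$ and its mollification are \emph{global} objects on $\Omega'$: they do not depend on $x_0$. Only the frozen forms are local. So $\varphi_{\varepsilon,\delta}:=(\varphi^\varepsilon)_\delta$ is a smooth psh function on $\Omega'$, with $\varepsilon,\delta$ small depending only on $\operatorname{dist}(\Omega',\partial\Omega)$ and $r$. At every point it satisfies the frozen inequality for the form centred at that point.

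\emph{Comparison back to $\omega$.} The matrix $\omega^{-1}\delb\varphi_{\varepsilon,\delta}$ dominates $(1+\eta)^{-1}\omega_{x_0}^{-1}\delb\varphi_{\varepsilon,\delta}$. By monotonicity and the homogeneity of $P^\ell$ (each term $S_{k-\ell}/S_{n-\ell}$ has degree $k-n<0$), this gives $P^\ell_\omega\le(1+\eta)^{n}P^\ell_{\omega_{x_0}}\le(1+\eta)^n$. For $Q$ the same scaling applies to the $c_k$ terms and to the $c_0/S_n$ term. The change from $c_{0,x_0}$ to $c_0$ costs at most $\eta/S_n(\lambda)$.

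This last term is the main obstacle: $S_n$ may be small where the Hessian degenerates. I would first try to handle it by adding a tiny strictly psh term. Replace $\varphi_{\varepsilon,\delta}$ by $\varphi_{\varepsilon,\delta}+\tau|z|^2$, which has $S_n\ge c\tau^n$. Monotonicity keeps the $P$ and $Q$ bounds, and the error becomes $\le\eta/(c\tau^n)$. Choose $\tau$ first and $\eta$ (hence $r$) afterwards, so that the total is $\le d$. Alternatively, since $c_0>-\varepsilon_{2.1}$ and the sublevel sets are convex, bound $c_0/S_n$ using $Q_{c_{0,x_0}}\le1$ together with $P\le1$.

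\emph{Decreasing sequence.} It remains to arrange $\varphi_a\searrow\varphi$. Sup-convolutions decrease to $\varphi$ as $\varepsilon\to0$, and mollifications of psh functions decrease as $\delta\to0$. So $\varphi_{\varepsilon,\delta}\to\varphi$ pointwise and decreasingly along a diagonal choice, with $\delta\ll\varepsilon$.

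Monotonicity in $a$ is then enforced by the standard trick. Take $\varphi_a:=\varphi_{\varepsilon_a,\delta_a}+\tau_a|z|^2+2^{-a}$, with $\tau_a\searrow0$ compatible with the choice of $\eta_a$ above, and with parameters chosen so that $\varphi_{\varepsilon_{a+1},\delta_{a+1}}\le\varphi_{\varepsilon_a,\delta_a}+2^{-a-1}$. The sup-convolution can be replaced by a regularized maximum at every scale. By Proposition \ref{propCIL}-type stability, and since the sublevel sets are convex and the bounds are preserved under maxima of subsolutions, this keeps the bounds $\le d$. The constant shifts and the $|z|^2$ term only improve the bounds, by monotonicity.
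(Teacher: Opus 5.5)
Your argument is correct, but it takes a different route from the paper's.

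\textbf{The paper's route.} It follows the gluing scheme of \cite{GChen} and \cite{Datar-Pingali} (after \cite{BK}):
\begin{enumerate}[label={\upshape(\alph*)}]
\item It covers $\Omega'$ by finitely many balls on which $\omega_0^i\le\omega\le(1+\varepsilon)\omega_0^i$.
\item On each ball it mollifies $\psi_\varepsilon-\varepsilon|z-x_i|^2$; these functions lie in $\bar{\Gamma}^\ell_{\omega_0^i}$ by Proposition \ref{prop:equivalence}.
\item It glues the pieces by a regularized maximum.
\item For \ref{item:appQ}, it controls the $c_0$-error through \eqref{eq:masspositivity}, $(\delb\psi^i_{\varepsilon,\delta})^n\ge c\,\omega^n$, with $c$ independent of all parameters. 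This bound comes from $P\le1$ via Newton--Maclaurin, or from $Q\le1$ and $c_0>0$ in case \eqref{eq:MA}.
\end{enumerate}

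\textbf{Your route.} You observe that on a flat domain no gluing is needed. A single global convolution is translation invariant, so the frozen-coefficient inequality can be checked at each point on a ball centred there. You also replace the Newton--Maclaurin bound by the cruder $S_n\ge c\tau^n$ coming from the added term $\tau|z|^2$. The only price is the order of quantifiers: $\eta$ must be chosen after $\tau$.

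\textbf{What each buys.} Your route is more elementary. Monotonicity in $a$ only requires choosing $\varepsilon_a,\delta_a,\tau_a$ decreasing, because the freezing radius enters only the verification of the inequality, not the definition of $\varphi_a$. The paper's route gives a parameter-free mass lower bound, which is reused in \eqref{eq:masspositivity'} and in Lemma \ref{lem:BMineq}. Its gluing template is also the one that carries over to manifolds.

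\textbf{Steps you can drop.}
\begin{enumerate}[label={\upshape(\alph*)}]
\item The sup-convolution is superfluous: Proposition \ref{prop:equivalence} already gives the frozen bounds for the mollification $\varphi_\delta$ itself.
\item The $2^{-a}$ shift is unnecessary, since $(\varphi^\varepsilon)_\delta$ is already decreasing in both $\varepsilon$ and $\delta$.
\item The closing remark about regularized maxima is unnecessary for the same reason.
\end{enumerate}

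\textbf{One imprecision.} When $c_{0,x_0}<0$, you cannot pull the factor $(1+\eta)^n$ through the $c_0$-term. The comparison then produces an additional error $\bigl((1+\eta)^{n-1}-1\bigr)|c_{0,x_0}|\,\omega_{x_0}^n/(\delb\varphi_a)^n$. This is again of order $\eta/S_n$, so your $\tau$-device absorbs it as well.
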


\begin{proof}
   First, we prove \ref{item:appP}. This essentially follows from the simple case of the arguments of \cite[Proposition 4.1]{Datar-Pingali} and \cite[Section 4]{GChen}, based on \cite{BK}. Define $$\psi_\varepsilon:=\varphi+\varepsilon(\vert z\vert^2+C),$$
   where $C$ is a large constant defined later. Then, we see that $\psi_\varepsilon\in\operatorname{PSH}(-\varepsilon\omega_\mathrm{Euc})$ for any $0<\varepsilon<1$, and $(-\varepsilon\omega_{\mathrm{Euc}})_{\psi_\varepsilon}\in\bar{\Gamma}^\ell_\omega$ by Proposition \ref{prop:equivalence}, where $\omega_\mathrm{Euc}:=\sum \sqrt{-1} dz^i\wedge d\bar{z}^i$.
   As in the proof of \cite[Proposition 4.1]{Datar-Pingali}, we cover $\Omega'$ with a finite number of coordinate balls $B^i_r$ centered at $x_i$ of radius $r\ll1$ (with respect to $\omega_\mathrm{Euc}$) so small that we have 
   \begin{equation}\label{eq:constomega}
       \omega_0^i\le\omega\le(1+\varepsilon)\omega_0^i\ \text { on } B^i_{2r}:=B_{2r}(x_i),
   \end{equation}
   where $\omega_0^i$ is a Kähler form on $B^i_{2r}$ with constant coefficients. Note that we have $\delb\vert z-x_i\vert^2=\omega_\mathrm{Euc}$ on $B^i_{2r}$, that is, the notation in \cite{Datar-Pingali} corresponds to $\varphi^i_0=\vert z-x_i\vert^2$. In addition, the function $\psi_\varepsilon-\varepsilon\vert z-x_i\vert^2$ is bounded, since $\varphi$ is bounded, and decreases by choosing $C\ge 4r^2$ as $\varepsilon\searrow0$. 
   Therefore, adapting the arguments of \cite[Proposition 4.1]{Datar-Pingali} (especially \cite[Proposition 4.1 (3)]{GChen}), we obtain a sequence of smooth psh functions $\{\Psi_{\varepsilon,\delta}\}_\delta$, given by the regularized maximum of 
   \begin{equation}\label{eq:psidelta}
       (\psi_\varepsilon-\varepsilon\vert z-x_i\vert^2)_{\delta}
   \end{equation}
   running $i$, where $(\cdot)_{\delta}$ denotes regularization by convolution as in the proof of Proposition \ref{prop:vsub}. Note that the function in \eqref{eq:psidelta} is in $\bar{\Gamma}_{\omega_0}^\ell$. By the arguments of the regularized maximum in \cite[p561--p562]{GChen}, the functions $\Psi_{\varepsilon,\delta}$ satisfy
   \begin{equation}\label{eq:appPineq}
       \begin{aligned}
           \frac{n!}{p!}(\delb\Psi_{\varepsilon,\delta})^p-\sum_{k=1}^{n-1}\frac{c_k}{(1+\varepsilon)^{n-k}}\frac{k!}{(k-n+p)!}(\delb\Psi_{\varepsilon,\delta})^{k-n+p}\wedge\omega^{n-k}\ge0,
       \end{aligned}
   \end{equation}
   where $p:=n-\ell$.
   Since $(1+\varepsilon)^p\ge(1+\varepsilon)^j$ for $j\le p$ and $c_k\ge0$, the above inequality implies that 
   \begin{equation}\label{eq:appPineq'}
       P^\ell_\omega(\delb\Psi_{\varepsilon,\delta})\le\left({1+\varepsilon}\right)^p\le d,
   \end{equation}
   if $\varepsilon$ is small enough.
   Then, the sequence $\varphi_a:=\Psi_{\varepsilon_a,\delta_a}$ decreases to $\varphi$ as $\varepsilon_a\searrow0$ and $\delta_a\searrow0$.

   The same arguments also yield \ref{item:appQ}. Indeed, we first take a finer cover of $\Omega'$ so that on each $B^i_{2r}$ we have a constant $c_{0,0}^i$ such that
   \begin{equation}\label{eq:c_00^i}
       -\varepsilon_{2.1}<c_{0,0}^i\le c_0\le (1+\varepsilon')c_{0,0}^i
   \end{equation}
   in addition to \eqref{eq:constomega}, where $\varepsilon'$ will be chosen later. Denote the function \eqref{eq:psidelta} by $\psi_{\varepsilon,\delta}^i$, which is in $\bar{\Gamma}'_{\omega_0}$ by assumption and Proposition \ref{prop:equivalence}. Using \eqref{eq:constomega} and \eqref{eq:c_00^i}, we see that
   \begin{equation}\label{eq:appQineq}
       (\delb\psi^i_{\varepsilon,\delta})^n-\sum_{k=1}^{n-1}\frac{c_k}{(1+\varepsilon)^{n-k}}(\delb\psi^i_{\varepsilon,\delta})^{k}\wedge\omega^{n-k}-\frac{c_0}{1+\varepsilon'}(\omega_0^i)^n\ge0.
   \end{equation}
   To handle the last term, we claim that
   \begin{equation}\label{eq:masspositivity}
        (\delb \psi^i_{\varepsilon,\delta})^n\ge c\omega^n
    \end{equation}
    for some constant $c>0$, independent of $i,\varepsilon,\delta$. Indeed, if $c_k=0$ for all $k=1,\dots,n-1$, then inequalities $Q_{c_{0,0}^i,\omega_0}(\delb\psi_{\varepsilon,\delta}^i)\le 1$, \eqref{eq:MA}, \eqref{eq:constomega}, and \eqref{eq:c_00^i} imply the claim. If 
    $c_k\neq0$ for some $k$, then since $P_{\omega_0}(\delb \psi^i_{\varepsilon,\delta})\le 1$, we have
    \begin{equation*}
         S_{n-1;i}(\lambda)\ge\sum_{k=1}^{n-1}c_k \frac{S_{k-1;i}(\lambda)}{{n\choose k}}\ge\sum_{k=1}^{n-1}c_k \frac{(S_{n-1;i}(\lambda))^{\frac{k-1}{n-1}}}{{n\choose k}},
    \end{equation*}
    where $\lambda$ is the eigenvalue vector of $\delb \psi^i_{\varepsilon,\delta}\cdot\omega_0^{-1}$. Here, the second inequality follows from the Newton-Maclaurin inequality \cite[Lemma 2.10]{Spr}. 
    In particular, we get $S_{n-1;i}(\lambda)>c$ for some constant $c>0$. Using the Newton-Maclaurin inequality again, we have $S_1(\lambda)\ge S_{1;i}(\lambda)>c^{1/{((n-1)}}$. Therefore, we have $S_n(\lambda)=\sum_i\lambda_i S_{n-1;i}(\lambda)/n>c^{n/(n-1)}/n$, which implies \eqref{eq:masspositivity} by \eqref{eq:constomega}. 
    Fix a constant $d'$ such that $1<d'<d$.
   Then, by \eqref{eq:masspositivity}, we have an estimate
    $$\left\vert \frac{c_0}{(1+\varepsilon')}(\omega_0^i)^n-c_0\omega^n\right\vert\le\vert c_0\vert\vert(\omega^i_0)^n-\omega^n\vert+\left(1-\frac{1}{1+\varepsilon'}\right)\omega^n\le (d'-1)(\delb\psi^i_{\varepsilon,\delta})^n$$
   if $\varepsilon$ and $\varepsilon'$ is small enough. 
   Applying this inequality to \eqref{eq:appQineq} and choosing $\varepsilon$ small enough to satisfy $d'(1+\varepsilon)^{n-1}<d$, by the arguments of the regularized maximum in \cite[p561--562]{GChen} with the convexity of the sublevel sets and the monotonicity of $Q_{c_0,\omega}$, we obtain the desired result.
\end{proof}

Using the approximation \ref{item:appP}, we obtain an inequality analogous to the comparison principle for the Monge-Ampère product \cite[Theorem 4.1]{BT}.

\begin{lem}\label{lem:comp}
    Let $\Omega$ and $\Omega'$ be bounded domains in $\mathbb{C}^n$ such that $\Omega'\subset\subset\Omega$, and $\omega$ be a Kähler form in $\Omega$. Suppose that $u_0,u_1$ are bounded psh functions on $\Omega$ such that $P^{\ell}_\omega(\delb u_0)\le_v1$ and $P^{\ell}_\omega(\delb u_1)\le_v1$ on $\Omega$ and $\liminf_{z\to\partial\Omega'} (u_1-u_0)\ge 0$. Then, we have
    \begin{equation}
        \int_{\{u_1<u_0\}\cap\Omega'}\omega^{n-p}\wedge T^p_\omega(\delb u_0)\le\int_{\{u_1<u_0\}\cap\Omega'}\omega^{n-p}\wedge T^p_\omega(\delb u_1),
    \end{equation}
    where $p:=n-\ell+1$ and $T^P_\omega$ is the $(p,p)$-current defined in Definition \ref{def:llcurrent}.
\end{lem}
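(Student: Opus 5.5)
We follow the Bedford--Taylor proof of the comparison principle \cite[Theorem 4.1]{BT}: first reduce to smooth potentials via Lemma \ref{lem:app} \ref{item:appP}, then pass to the limit using continuity of Monge--Ampère type products along decreasing sequences of bounded psh functions.

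\textbf{Reduction to the positive-measure statement.} Put $p=n-\ell+1$ and
$$\mu_j:=\omega^{n-p}\wedge T^p_\omega(\delb u_j).$$
The hypothesis $P^\ell_\omega(\delb u_j)\le_v1$ and Lemma \ref{lem:visctopp} give $T^{n-\ell}_\omega(\delb u_j)\ge0$; this is the $(p-1)$-level positivity. Since $T^p_\omega$ is polynomial in $\delb u$ with coefficients wedged against powers of $\omega$, we have the formal derivative identity
$$\frac{d}{ds}T^p_\omega(\delb u_s)=\delb\dot u_s\wedge T^{p-1}_\omega(\delb u_s),$$
with the normalizations of Definition \ref{def:llcurrent} (up to a harmless constant factor $p$). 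This is the key structural identity: $T^{p-1}_\omega$ plays the role that $(\delb u)^{n-1}$ plays for the Monge--Ampère operator.

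\textbf{Smooth case.} Assume first $u_0,u_1$ smooth with $P^\ell_\omega\le d$. Replace $u_1$ by $u_1+\eta$ with $\eta>0$, so that $U:=\{u_1<u_0\}\cap\Omega'\subset\subset\Omega'$, and set $v:=\max(u_1,u_0)$. Then $v=u_1$ near $\partial U$, and $v=u_0$ on $U$ up to its boundary. Take the segment $u_s=(1-s)u_1+sv$. By convexity of the sublevel sets (Proposition \ref{prop:properties} \ref{item:sublevel}), each $u_s$ still satisfies $P^\ell_\omega\le_v d$, so $T^{p-1}_{\omega,d}(\delb u_s)\ge0$; here the subscript $d$ means the coefficients are rescaled by powers of $d$. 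Stokes' theorem on $U$ then gives
$$\int_U \omega^{n-p}\wedge T^p(\delb v)-\int_U \omega^{n-p}\wedge T^p(\delb u_1)=\int_0^1\!\!\int_U \omega^{n-p}\wedge\delb(v-u_1)\wedge T^{p-1}(\delb u_s)\,ds.$$
Since $v-u_1\ge0$ vanishes near $\partial U$ and $\omega$ is closed, the right side is $\int\delb(v-u_1)\wedge(\text{closed form})$ with compact support, and it vanishes. On the open set $U$ we have $T^p(\delb v)=T^p(\delb u_0)$. Working with $\{u_1<u_0\}$ open, this yields equality there; the inequality in the statement then comes from the strict/nonstrict boundary behaviour, handled as in \cite{BT} by replacing $u_1$ by $u_1+\eta$ and intersecting with smaller sets. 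We also need that $T^p_{\omega,d}$ is nonnegative on the closed set where the maxima coincide; this follows from the positivity above.

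\textbf{Passage to the limit (main obstacle).} Use Lemma \ref{lem:app} \ref{item:appP} with $d_a\searrow1$ to get smooth $u_{j,a}\searrow u_j$. Bedford--Taylor continuity for decreasing bounded psh sequences, applied to each mixed term $(\delb u)^m\wedge\omega^{n-m}$, gives weak convergence of $\mu_{j,a}$ to $\mu_j$. The sets $\{u_{1,a}<u_{0,a}\}$ vary with $a$, so the limit must be taken as in \cite[Theorem 4.1]{BT}: use the quasicontinuity of $u_0,u_1$ to replace them by continuous functions off a set of small capacity, and use the fact that $\mu_j$, as a combination of bounded-potential products, puts small mass on sets of small capacity. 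The delicate point is that $\mu_j$ is a signed combination. Sign control should come from writing $T^p=\delb u\wedge T^{p-1}+(\text{positive }c_{n-p}\text{-terms})$, or from applying the capacity estimate to each positive monomial separately. Finally let $\eta\to0$ and $d_a\to1$; the coefficients rescaled by $d_a$ converge to the original ones.
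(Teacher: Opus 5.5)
There is a genuine gap in your smooth case, and it sits exactly where the sign of the inequality has to come from.

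The function $v-u_1=\max(u_0-u_1,0)$ does not vanish near $\partial U$ from inside $U$; it only vanishes outside $U$. Hence $\int_{\Omega'}\delb(v-u_1)\wedge\Theta_s=0$ for the closed weight $\Theta_s:=\omega^{n-p}\wedge T^{p-1}(\delb u_s)$. But the integral over the open set $U$ omits the mass this current carries on the contact set $\{u_0=u_1\}$, so it is not zero. If it were, you would get $\int_U\mu_0=\int_U\mu_1$. That is false already for the Monge--Amp\`ere operator: take $u_0=|z|^2$, $u_1=2|z|^2-\tfrac12$ on the unit ball.

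Your fallback, that $T^p_{\omega,d}$ is nonnegative on the contact set ``by the positivity above'', is not available either. The hypothesis $P^\ell_\omega\le_v1$ only gives $T^q_\omega\ge0$ for $q\le n-\ell=p-1$. Given this hypothesis, $T^p_\omega\ge0$ is equivalent to the strictly stronger condition $P^{\ell-1}_\omega\le_v1$ (or $Q_{c_0,\omega}\le_v1$ when $\ell=1$), cf.\ Proposition \ref{prop:viscpp}.

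The sign must instead come from pairing the positive weight $\Theta_s$ (with your $d$-rescaled coefficients) with $u_0-u_1$, which is positive on $U$ and zero on $\partial U$. This gives $\int_U\delb(u_0-u_1)\wedge\Theta_s\le0$, i.e.\ the \emph{linear} Bedford--Taylor comparison $\int_{\{u_1<u_0\}}\delb u_0\wedge\Theta\le\int_{\{u_1<u_0\}}\delb u_1\wedge\Theta$ for a closed positive $\Theta$.

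This is how the paper proceeds, and it does so in the opposite order from yours.
\begin{enumerate}
\item It first differentiates $s\mapsto\int_{\{u_1<u_0\}}\omega^{n-p}\wedge T^p_\omega(\delb u_s)$ along $u_s=(1-s)u_0+su_1$ for the bounded functions themselves. This reduces the lemma to the linear inequality \eqref{eq:lincomp'} with weight $T^{p-1}_\omega(\delb u_s)\ge0$.
\item Only then does it approximate via Lemma \ref{lem:app}. There $P^\ell_\omega\le d$ makes $\omega^{n-p}\wedge\big(T^{p-1}_\omega(\delb u_{s,a,b})+(d-1)(\delb u_{s,a,b})^{p-1}\big)$ a smooth closed semipositive form, so \cite[Proposition 3.1]{BT'} applies.
\end{enumerate}
Linearizing first also dissolves your ``main obstacle''. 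Every measure passed to the limit is positive, so the open/closed semicontinuity and capacity arguments of \cite[Theorem 4.1]{BT} go through verbatim.

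In your order, you must pass to the limit in the signed measures $\mu_{j,a}$ over the moving sets $\{u_{1,a}<u_{0,a}\}$. Treating the monomials separately does not give a one-sided inequality: their coefficients have both signs, so the semicontinuity on open versus closed sets points in opposite directions. You would need an additional device, for instance working on $\{u_1<u_0-\eta\}$ for the all but countably many $\eta$ whose level sets carry no mass for any monomial. The proposal does not provide this.
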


\begin{proof}
    We follow the arguments in \cite[Theorem 4.1]{BT} with some modifications.
    Let $u_s:=(1-s)u_0+su_1$. Then, since
    \begin{equation*}
        \begin{aligned}
            &\frac{d}{ds}\int_{\{u_1<u_0\}}\omega^{n-p}\wedge T^p_\omega(\delb u_s)\\
            =&\int_{\{u_1<u_0\}}\omega^{n-p}\wedge\big(\delb (u_1-u_0)\big)\wedge T^{p-1}_\omega(\delb u_s),
        \end{aligned}
    \end{equation*}
    it suffices to prove
    \begin{equation}\label{eq:lincomp'}
        \begin{aligned}
            &\int_{\{u_1<u_0\}}\omega^{n-p}\wedge\delb u_0\wedge T^{p-1}_\omega(\delb u_s)\\
            \le&\int_{\{u_1<u_0\}}\omega^{n-p}\wedge\delb u_1\wedge T^{p-1}_\omega(\delb u_s).
        \end{aligned}
    \end{equation}
    
    By Lemma \ref{lem:app}, for any $d>1$, we have sequences of smooth psh functions $\{u_{0,a}\}$ and $\{u_{1,b}\}$ decreasing to $u_0$ and $u_1$, respectively, such that $P^{\ell}_\omega(\delb u_{0,a})\le d$ and $P^{\ell}_\omega(\delb u_{1,b})\le d$. Moreover, we can assume that $\liminf_{z\to\partial\Omega'}(u_1-u_0)\ge 2\delta>0$. Otherwise, replace $u_1$ by $u_1+2\delta$ and then let $\delta\to0$. Thus, there is an open set $\Omega''\subset\subset\Omega'$ such that $u_1(z)>u_0(z)+\delta$ for $z\in\Omega'\setminus\Omega''$. Hence, we can assume $u_{1,b}>u_{0,a}$ on $\partial\Omega''$. Now, we will focus on obtaining \eqref{eq:lincomp'} on $\Omega''$, since letting $\Omega''\to\Omega'$ then yields the same result in $\Omega$.
    Define $u_{s,a,b}:=(1-s)u_{0,a}+su_{1,b}$. 
    We first prove that 
    \begin{equation}\label{eq:lincomp}
        \begin{aligned}
            &\int_{\{u_{1,a}<u_{0,b}\}}\omega^{n-p}\wedge\delb u_{0,a}\wedge \left(T^{p-1}_{\omega}(\delb u_{s,a,b})+(d-1)(\delb u_{s,a,b})^{p-1}\right)\\
            \le&\int_{\{u_{1,a}<u_{0,b}\}}\omega^{n-p}\wedge\delb u_{1,b}\wedge \left(T^{p-1}_{\omega}(\delb u_{s,a,b})+(d-1)(\delb u_{s,a,b})^{p-1}\right).
        \end{aligned}
    \end{equation}
    Note that by the convexity of the sublevel set of $P^{\ell}$, we have $P^\ell_\omega(\delb u_{s,a,b})\le d$, which is equivalent to 
    $$T^{p-1}_\omega(\delb u_{s,a,b})+(d-1)(\delb u_{s,a,b})^{p-1}\ge0.$$ 
    Then, the arguments in the proof of \cite[Proposition 3.1]{BT'} replacing $\theta$ there with the semipositive $(n-1,n-1)$-form
    $$\omega^{n-p}\wedge T^{p-1}_\omega(\delb u_{s,a,b})+(d-1)(\delb u_{s,a,b})^{p-1}$$
    yield \eqref{eq:lincomp}. Applying the arguments in the proof of \cite[Theorem 4.1]{BT} as $a,b\to\infty$ and then letting $d\to1$ yield the desired inequality \eqref{eq:lincomp'}. Here, when applying the arguments of \cite[Theorem 4.1]{BT}, remark that we have
    $$\int_E \bigwedge_{i=1}^3(\delb\psi_i)^{j_i}\le \int_E \Big(\delb \Big(\sum_{i=1}^3\psi_i\Big)\Big)^n\le C \operatorname{cap}(E)$$
    for any set of bounded psh functions $\{\psi_i\}_{i=1}^3$ and set of nonnegative integers $\{j_i\}_{i=1}^3$ that satisfies $\sum_ij_i=n$,
    where $C$ is a constant depending on $\operatorname{osc}\psi_i$.
\end{proof}

\begin{cor}\label{cor:comp}
    Let $\Omega$ and $\Omega'$ be bounded domains in $\mathbb{C}^n$ such that $\Omega'\subset\subset\Omega$, and $\omega$ be a Kähler form in $\Omega$. Suppose that $u_0,u_1$ are bounded psh functions in $\Omega$ such that 
    \begin{equation*}
        \begin{cases}
            P^{\ell}_\omega(\delb u_i)\le_v1 \ \text{ in }\Omega \ \text{ for } i=1,2,\\
            T^p_\omega(\delb u_1)\le T^p_\omega(\delb u_0) \ \text{ in }\Omega',\\
            \liminf_{z\to\partial\Omega'} (u_1-u_0)\ge 0,
        \end{cases}
    \end{equation*}
    where $p=n-\ell+1$. Then, we have $u_0\le u_1$ in $\Omega'$.
\end{cor}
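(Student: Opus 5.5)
We argue as in the classical derivation of the domination principle from the Bedford--Taylor comparison principle \cite[Corollary 4.5]{BT}, with Lemma \ref{lem:comp} replacing \cite[Theorem 4.1]{BT}. Suppose, for contradiction, that the set $\{u_1<u_0\}\cap\Omega'$ is nonempty.

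\textbf{Perturbation.} Pick a bounded smooth strictly psh function $\rho\le 0$ on $\Omega$, for instance $\rho=|z|^2-M$ with $M$ large. For small $\eta>0$ and a suitable small constant $\gamma>0$, set
$$v:=(1-\eta)u_0+\eta\rho+\gamma .$$

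\textbf{Step 1: $v$ stays a subsolution.} The function $\rho$ may fail to satisfy $P^\ell_\omega(\delb\rho)\le 1$ as it stands. We therefore take $\rho=A(|z|^2-M)$ with $A$ so large that $P^\ell_\omega(\delb\rho)\le 1$, using the monotonicity in Proposition \ref{prop:properties}. By the convexity of the sublevel sets, $v$ then satisfies $P^\ell_\omega(\delb v)\le_v 1$; this is the convex-combination argument used in the proof of Proposition \ref{prop:superineq}.

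\textbf{Step 2: choice of constants.} The set $\{u_1<v\}\cap\Omega'$ is nonempty for a suitable choice of $\gamma$ and small $\eta$, because $\{u_1<u_0\}$ has positive Lebesgue measure. This holds since psh functions agreeing a.e. agree everywhere, and $u_1<u_0$ at some point forces $u_1<u_0-\tau$ on a set of positive measure. The boundary condition $\liminf_{\partial\Omega'}(u_1-v)\ge 0$ is kept as follows. Since $u_0$ is bounded, $(1-\eta)u_0+\eta\rho\le u_0+C\eta$, so we take $\gamma=-C\eta$ plus a small margin. Alternatively we avoid the constant altogether, setting $v=(1-\eta)u_0+\eta\rho$ with $\rho\le -\sup|u_0|$, so that $v\le u_0$ everywhere.

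\textbf{Step 3: mass comparison.} Apply Lemma \ref{lem:comp} to the pair $(v,u_1)$, which gives
$$\int_{\{u_1<v\}\cap\Omega'}\omega^{n-p}\wedge T^p_\omega(\delb v)\le\int_{\{u_1<v\}\cap\Omega'}\omega^{n-p}\wedge T^p_\omega(\delb u_1).$$
Expand $T^p_\omega(\delb v)$ by multilinearity and use the convexity of $T^p$ along convex combinations, which is the convexity of the operator. This yields the lower bound
$$T^p_\omega(\delb v)\ge(1-\eta)^{p}\,T^p_\omega(\delb u_0)+\eta^p c\,\omega^p .$$
The cross terms are nonnegative because $T^{p-1}$ of both functions is positive by Lemma \ref{lem:visctopp}. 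The term $\eta^p c\,\omega^p$ comes from the strict positivity of $\delb\rho$ together with the positivity of $T^p_\omega(\delb\rho)$ when $A$ is large. Combining with the hypothesis $T^p_\omega(\delb u_1)\le T^p_\omega(\delb u_0)$ and the fact that $\{u_1<v\}\subset\{u_1<u_0\}$, we obtain
$$\eta^pc\int_{\{u_1<v\}\cap\Omega'}\omega^n\le\big(1-(1-\eta)^p\big)\int_{\{u_1<v\}\cap\Omega'}\omega^{n-p}\wedge T^p_\omega(\delb u_0).$$

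\textbf{Step 4: conclusion, and the main obstacle.} This inequality is not yet contradictory: the left side is of order $\eta^p$ while the right side is of order $\eta$. To fix this I would instead use $v=u_0+\eta\rho$ in place of the convex combination. The condition $P^\ell_\omega(\delb v)\le_v 1$ is then preserved by monotonicity, since adding a psh function only increases $\delb$. With this choice, $T^p_\omega(\delb v)\ge T^p_\omega(\delb u_0)+\eta^p\,\omega^n$-type terms, using the monotone expansion of $T^p$ in the added positive form $\eta\,\delb\rho$; all lower-order terms are $\langle\cdot\rangle$-positive by Lemma \ref{lem:visctopp}. This gives $\int_{\{u_1<v\}}\omega^n=0$, hence the set has measure zero, a contradiction. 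So $u_0\le u_1+\eta\sup|\rho|$ for every $\eta$, and letting $\eta\to 0$ gives $u_0\le u_1$.

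The key technical point, and the main obstacle, is justifying this monotone expansion of $T^p$ under the addition of a positive form, namely that the derivative of $T^p$ is controlled by $T^{p-1}\ge0$.
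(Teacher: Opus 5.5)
Your Step 4 argument is the paper's proof: apply Lemma \ref{lem:comp} to $u_0+\eta\rho$ (with $\rho\le 0$ bounded and strictly psh) and $u_1$, then expand
$T^p_\omega(\delb(u_0+\eta\rho))=\sum_{i=0}^{p}\frac{\eta^i}{i!}\,T^{p-i}_\omega(\delb u_0)\wedge(\delb\rho)^i$.
The intermediate terms are nonnegative by Lemma \ref{lem:visctopp}, so $\{u_1<u_0+\eta\rho\}\cap\Omega'$ is Lebesgue-null; the sub-mean-value property upgrades this to everywhere, and then you let $\eta\to0$. The ``main obstacle'' you flag is just this polynomial identity, i.e.\ the derivative formula $\frac{d}{ds}T^p=\beta\wedge T^{p-1}$ already used in the proof of Lemma \ref{lem:comp}, and the convex-combination detour in Steps 1--3 can be discarded.
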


\begin{remark}
    The viscosity version of the corollary is proved in \cite[Theorem 22]{DDT}.
\end{remark}

\begin{proof}[Proof of Corollary \ref{cor:comp}]
    We follow the arguments in \cite[Corollary 4.4]{BT} with some modifications. Let $\psi\le0$ be a bounded strictly psh function in $\Omega$ and $S:=\{u_1<u_0+\varepsilon\psi\}\cap\Omega'$, where $\varepsilon>0$ is chosen later. By Lemma \ref{lem:comp}, we have
    \begin{equation*}
        \begin{aligned}
            &\int_S \omega^{n-p}\wedge T^p_\omega(\delb (u_0+\varepsilon\psi))\le\int_S \omega^{n-p}\wedge T^p_\omega(\delb u_1)\\
            \le&\int_{S}\omega^{n-p}\wedge T^p_\omega(\delb u_0).      
        \end{aligned}
    \end{equation*}
    Moreover, by the definition of $T^p_\omega$ (Definition \ref{def:llcurrent}), the left-hand side of the first line can be calculated as
    \begin{align*}
        &\int_S \omega^{n-p}\wedge T^p_\omega(\delb (u_0+\varepsilon\psi))=\sum_{i=0}^p\frac{\varepsilon^i}{i!}\int_S \omega^{n-p}\wedge T^{p-i}_\omega(\delb u_0)\wedge(\delb\psi)^i\\
        \ge&\int_S \omega^{n-p}\wedge T^p_\omega(\delb u_0)+\frac{\varepsilon^p}{p!}\int_S\omega^{n-p}\wedge(\delb \psi)^p,
    \end{align*}
    where the inequality follows since $T^{p-i}_\omega(\delb u_0)$ is positive by Lemma \ref{lem:visctopp}. Since $\psi$ is strictly psh, we conclude that $S$ has a Lebesgue measure zero. Since $u_0,u_1$ are subharmonic, using the upper semicontinuity and sub-mean value inequality, we obtain $S=\emptyset$. Letting $\varepsilon\searrow0$ yields the desired claim.
\end{proof}

\begin{prop}\label{prop:viscpp}
    Let $\varphi$ be a $\chi$-psh function.
    \begin{enumerate}[label={\upshape(\roman*)}]
        \item\label{item:posin-1} 
        $P^\ell_{\omega}(\chi_\varphi)\le_v 1$ in $\{\omega>0\}$ if and only if $T^p_\omega(\chi_\varphi)$ is positive in $X$ for all $p\le n-\ell$.
        \item\label{item:posin} 
        Suppose $P_{\omega}(\chi_\varphi)\le_v 1$. Then, $Q_{c_0,\omega}(\chi_\varphi)\le_v 1$ in $\{\omega>0\}$ if and only if $T^n_\omega(\chi_\varphi)$ is positive in $X$.
    \end{enumerate}
\end{prop}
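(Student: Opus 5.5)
The ``only if'' directions are exactly Lemma \ref{lem:visctopp}, so the work is in the converse. I would follow the strategy of \cite[Propositions 1.5 and 1.11]{EGZ}, with Corollary \ref{cor:comp} playing the role of the Bedford--Taylor comparison principle.

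For \ref{item:posin-1}, I argue by induction on $\ell$ from $\ell=n$ downward. For $\ell=n$ the condition $P^n_\omega\le 1$ is automatic for psh tests, which is the $T^0$ case. So assume $T^p_\omega(\chi_\varphi)\ge0$ for $p\le n-\ell$, and inductively that $P^{\ell+1}_\omega(\chi_\varphi)\le_v1$. Suppose for contradiction that $P^\ell_\omega(\chi_\varphi)\le_v1$ fails at some $x_0\in\{\omega>0\}$. Then there is a $C^2$ upper test $q$ at $x_0$ with $P^\ell_\omega(\delb q)(x_0)>1$. By Remark \ref{rem:visc} \ref{item:monovisc} and the inductive hypothesis, $q$ is psh and satisfies $P^{\ell+1}_\omega(\delb q)\le1$ near $x_0$, after slightly perturbing $q$. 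Replacing $q$ by $q+\eta|z-x_0|^2$ is harmless, since a strict quadratic term only increases $\delb q$ and hence decreases $P$. Then:
\begin{enumerate}[label=(\alph*)]
\item Shrinking the neighborhood $B$, we may assume $P^\ell_\omega(\delb q)>1$ on $B$, which means $T^{n-\ell}_\omega(\delb q)<0$ pointwise there.
\item The competitor $q_\eta:=q+\eta|z-x_0|^2-\eta r^2/2$ satisfies $q_\eta>\varphi$ near $\partial B_r$, while $q_\eta(x_0)<\varphi(x_0)$.
\end{enumerate}

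Working locally with bounded potentials (replacing the potential $u$ by $u^C=\max(u,av-C)$ as in Proposition \ref{prop:equivalence}, and using plurifine locality to keep the measure inequality on $\{u>av-C\}$), I apply Corollary \ref{cor:comp} with $\ell+1$ in place of $\ell$, so that $p=n-\ell$. Take $u_0=u$, which satisfies $P^{\ell+1}_\omega\le_v1$ by induction, and $u_1=q_\eta$. The hypothesis $T^{n-\ell}_\omega(\delb q_\eta)\le T^{n-\ell}_\omega(\delb u)$ holds: the left side is negative for $\eta$ small, and the right side is $\ge0$ by assumption. The boundary condition also holds. The corollary then gives $u\le q_\eta$ on $B_r$, contradicting the situation at $x_0$.

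One subtlety is that Corollary \ref{cor:comp} needs $T^{p-i}$ positivity of $u_0$ only, which comes from Lemma \ref{lem:visctopp} applied at level $\ell+1$. Another is that $T^p_\omega(\delb u_1)$ may be negative, which is fine since the corollary's proof only integrates inequalities on $S$. There is also a bookkeeping step: a strict inequality $T^{n-\ell}<0$ at level $\ell$ needs to be translated into the operator $P^\ell$ with the max over indices. This is handled by choosing $q$ diagonal at $x_0$ and using the equivalence, for smooth forms in $\Gamma_{\ge0}$ with $P^{\ell+1}\le1$, between $P^\ell\le1$ and $T^{n-\ell}\ge0$ (cf.\ \cite{GChen}).

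Part \ref{item:posin} is the same argument at the top level $p=n$. Since $P_\omega\le_v1$ is given, Corollary \ref{cor:comp} is applied with $\ell=1$, with $c_0$ frozen near $x_0$ by continuity, and $Q_{c_0,\omega}(\delb q)(x_0)>1$ is turned into $T^n_\omega(\delb q)<0$ on a small ball.

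I expect the main obstacle to be verifying the hypotheses of Corollary \ref{cor:comp} when $u$ is unbounded, i.e.\ the passage to $u^C$. The point is that positivity of the nonpluripolar $T^p_\omega(\chi_\varphi)$ must yield the needed inequality for the Bedford--Taylor product of $u^C$ on the relevant set, and this holds because $q_\eta$ is bounded below near $x_0$ so that $\{u>av-C\}$ contains the comparison region for $C$ large.
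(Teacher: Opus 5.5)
Your outline follows the paper's proof. The steps match: induction on $\ell$ with $P^{\ell+1}_\omega(\chi_\varphi)\le_v1$ as hypothesis, an upper test $q$ with $P^\ell_\omega(\delb q)(x_0)>1$, and the quadratic perturbation $q_\eta$. You also use the truncation $u^C=\sup\{u,av-C\}$ with plurifine locality on a comparison set forced into $\{u>av-C\}$ because $q_\eta$ is bounded. Finally you apply Lemma \ref{lem:comp} at level $p=n-\ell$ together with the argument, not the statement, of Corollary \ref{cor:comp}. One difference: the paper starts at $\ell=n-1$ and treats it directly, since positivity of the $(1,1)$-current $T^1_\omega(\chi_\varphi)$ is a pointwise lower bound on $\chi_\varphi$ that passes to upper tests. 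That avoids invoking comparison at $p=1$, and it sidesteps the problem below at the first step.

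However, your step (a) is false as stated, and your bookkeeping remark does not repair it. Take coordinates at $x_0$ with $\omega$ the identity and $\delb q$ diagonal. Then $T^{n-\ell}_\omega(\delb q)$ is diagonal, and its coefficient on $\bigwedge_{j\notin J}\sqrt{-1}dz_j\wedge d\bar z_j$ is a positive multiple of $1-R_J(\lambda)$. Here $R_J$ is the quantity maximized over $|J|=\ell$ in $P^\ell$. So $P^\ell>1$ makes only one coefficient negative. The equivalence $P^\ell\le1\Leftrightarrow T^{n-\ell}\ge0$ tells you only that $T^{n-\ell}_\omega(\delb q)$ is not positive, not that it is negative.

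For example, take $n=3$, $c_1=0$, $c_2=3$, and $\omega$-eigenvalues $(\tfrac32,\tfrac32,100)$ for $\delb q(x_0)$. Then $P^2_\omega=\tfrac23$ and $P^1_\omega=\tfrac43$ there, yet $\omega\wedge T^2_\omega(\delb q)>0$. Consequently neither the hypothesis $T^{n-\ell}_\omega(\delb q_\eta)\le T^{n-\ell}_\omega(\delb u^C)$ nor the integrated inequality against $\omega^{\ell}=\omega^{n-p}$ used in the corollary's argument follows. The paper's proof asserts the same inequality ("$T^p_\omega(\delb u)\ge T^p_\omega(\delb q_\varepsilon)$ in $S$") without justification, so you have reproduced it faithfully, but it still needs repair.

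The repair is to change the test form. Let $I$ attain the maximum in $P^\ell_\omega(\delb q)(x_0)$. Run Lemma \ref{lem:comp} and the argument of Corollary \ref{cor:comp} with $\omega^{n-p}$ replaced by the closed, constant-coefficient, strongly positive form $\beta_I:=\bigwedge_{i\in I}\sqrt{-1}dz_i\wedge d\bar z_i$. Then:
\begin{itemize}
\item $\beta_I\wedge T^{n-\ell}_\omega(\delb q_\eta)<0$ on a small ball, by continuity;
\item $\beta_I\wedge T^{n-\ell}_\omega(\chi_\varphi)\ge0$, because $T^{n-\ell}_\omega(\chi_\varphi)$ is a positive current;
\item every positivity used in those proofs persists: $\beta_I\wedge(T^{p-1}_\omega(\delb u_{s,a,b})+(d-1)(\delb u_{s,a,b})^{p-1})\ge0$, $\beta_I\wedge T^{p-i}_\omega(\delb u_0)\wedge(\delb\psi)^i\ge0$, and $\beta_I\wedge(\delb\psi)^p$ is a positive multiple of Lebesgue measure.
\end{itemize}
Part (ii) is unaffected. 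There $T^n_\omega$ has top degree, so $Q_{c_0,\omega}(\delb q)>1$ really does give $T^n_\omega(\delb q)<0$.
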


\begin{proof}
    Since we already proved the part ``only if'' in Lemma \ref{lem:visctopp}, it suffices to prove the converse implication.
    
    We prove \ref{item:posin-1}, following the arguments in the proofs of \cite[Propositions 1.5 and 1.11]{EGZ} with some modifications. The proof of \ref{item:posin} is the same, so we omit it here. Suppose that $T^p_\omega(\chi_\varphi)\ge0$ in $X$ for all $p\le n-\ell$. 
    We use induction in $\ell$ to prove $P^\ell_\omega(\chi_\varphi)\le_v1$ in $\{\omega>0\}$. In the case of $\ell=n-1$, the assumption $T^1_\omega(\chi_\varphi)\ge0$ is equivalent to saying that a local upper semicontinous potential of $\chi_\varphi-c_{n-1}\omega$ is psh. This property implies $\chi_\varphi-c_{n-1}\omega\ge0$ in the viscosity sense (see, e.g. the proof of \cite[Proposition 1.3]{EGZ}), which is the desired claim.
    Now, assume $P^{\ell-1}_\omega(\chi_\varphi)\le_v1$ and suppose that $P^\ell_\omega(\chi_\varphi)\le_v1$ does not hold. Then, there exist a point $p\in\{\omega>0\}$ and an upper test $q$ of an upper semicontinuous local potential $u$ of $\chi_\varphi$ at $p$ such that $P^\ell_\omega(\delb q)(p)>1$ in $U$, a neighborhood of $p$. In particular, we have $u(p)=q(p)>-\infty$.
    Then, we can choose $\varepsilon>0$ and $r>0$ so small that 
    $$q_\varepsilon:=q+\varepsilon\vert z\vert^2-\frac{\varepsilon r^2}{2}$$
    satisfies $P^\ell_\omega(\delb q_\varepsilon)>1$ in $B:=B_{r}(p)$, a ball centered at $p$ with radius $r$. Then, we choose $\varepsilon'$ such that $u(p)+\varepsilon'v(p)>q_\varepsilon(p)$, where $v$ is a nonpositive local upper semicontinuous potential of $\omega$. 
    Furthermore, we choose $C$ large enough to satisfy
    \begin{align}
        &\label{eq:inB} q_\varepsilon\ge av-C+\varepsilon'v \text{ in } B,\\
        &\label{eq:atp} u(p)>av(p)-C,\\
        &\label{eq:bdry}q_\varepsilon> av-C \text{ on }\partial B,
    \end{align}
    where $a$ is a constant that satisfies $P^{\ell-1}_\omega(a\omega)\le1$.
    Then, by \eqref{eq:atp}, we have $p\in S:=\{q_\varepsilon<u^C+\varepsilon'v\}$, where $u^C:=\sup\{u,av-C\}$. By \eqref{eq:inB}, we have $S\subset\{u>av-C\}$. In particular, we have $$T^p_\omega(\delb u^C)=T^p_\omega(\delb u)\ge T^p_\omega(\delb q_\varepsilon) \text{ in } S.$$ By \eqref{eq:bdry} and $q_\varepsilon-u\ge \varepsilon r^2/4$ near $\partial B$, we have $\liminf_{z\to\partial B}(q_\varepsilon-u^C-\varepsilon'v)\ge0$. Therefore, we see that 
    \begin{equation*}
        \begin{aligned}
            &\int_S \omega^{n-p}\wedge T^p_\omega(\delb (u^C+\varepsilon' v))\le\int_S \omega^{n-p}\wedge T^p_\omega(\delb q_\varepsilon)\\
            \le&\int_{S}\omega^{n-p}\wedge T^p_\omega(\delb u^C),      
        \end{aligned}
    \end{equation*}
    where the first inequality follows from Lemma \ref{lem:comp}, since $P^{\ell-1}_\omega(\delb q_\varepsilon)\le_v1$ and $P^{\ell-1}_\omega(\delb u^C)\le_v1$. By the same argument as in the proof of Corollary \ref{cor:comp}, we conclude that $S=\emptyset$, which is a contradiction to $p\in S$. 
\end{proof}

Now, we are on the path to prove the uniqueness. The proof is motivated by that of the MA equation as in \cite[Section 3]{BEGZ} and \cite{Dinew}. To follow their arguments, we need two lemmas.

\begin{lem}\label{lem:uni}
     Suppose that $\varphi_1$ and $\varphi_2$ are solutions of \eqref{eq:gMA}. Then, for $i=1,2$, we have
     $$\left\langle\chi_{\varphi_1}\wedge T^{n-1}_\omega(\chi_{\varphi_i})\right\rangle=\left\langle\chi_{\varphi_2}\wedge T^{n-1}_\omega(\chi_{\varphi_i})\right\rangle.$$
\end{lem}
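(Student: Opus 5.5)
The plan is to consider the segment $\varphi_s:=(1-s)\varphi_i+s\varphi_j$ for $s\in[0,1]$, where $\{i,j\}=\{1,2\}$, and show that $T^n_\omega(\chi_{\varphi_s})$ vanishes identically as a measure for every $s$. Since this is a polynomial in $s$ with measure coefficients, its $s$-derivative at $s=0$ then gives the claim.

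\emph{Step 1: the segment consists of viscosity subsolutions.} Since $\varphi_1,\varphi_2$ solve \eqref{eq:gMA}, we have $T^p_\omega(\chi_{\varphi_k})\ge0$ for $p\le n-1$ and $T^n_\omega(\chi_{\varphi_k})=0$. By Proposition \ref{prop:viscpp}, both satisfy $P_\omega(\chi_{\varphi_k})\le_v1$ and $Q_{c_0,\omega}(\chi_{\varphi_k})\le_v1$ in $\{\omega>0\}$; this is where continuity of $c_0$ enters. By the convexity of the sublevel sets of $P_\omega$ and of $Q_{c_0,\omega}$ on $\bar\Gamma$ (Proposition \ref{prop:properties}), together with the viscosity argument already cited in the proof of Proposition \ref{prop:superineq} (\cite[Theorem 5.8]{CC}), we get $P_\omega(\chi_{\varphi_s})\le_v1$ and $Q_{c_0,\omega}(\chi_{\varphi_s})\le_v1$. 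Applying Lemma \ref{lem:visctopp} \ref{item:visctoppn} then gives $T^n_\omega(\chi_{\varphi_s})\ge0$ on $X$. I expect this step to be the main technical point. The sup-convolution and local-approximation machinery of Proposition \ref{prop:equivalence} and Lemma \ref{lem:app} must be run for the convex combination, using that $Q$ is convex on $\bar\Gamma$ and that the combination stays in $\bar\Gamma$ by the $P$-condition.

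\emph{Step 2: vanishing for all $s$.} Proposition \ref{prop:superineq} applies to $\varphi_s$, since $P_\omega(\chi_{\varphi_s})\le_v1$, and gives $\int_XT^n_\omega(\chi_{\varphi_s})\le0$. Combined with positivity from Step 1, this yields $T^n_\omega(\chi_{\varphi_s})=0$ as a measure for every $s\in[0,1]$.

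\emph{Step 3: differentiate.} By multilinearity of the nonpluripolar product (\cite[Proposition 1.4]{BEGZ}), $T^n_\omega(\chi_{\varphi_s})$ expands as a polynomial in $s$ with measure-valued coefficients. Writing $\chi_{\varphi_s}=\chi_{\varphi_i}+s(\chi_{\varphi_j}-\chi_{\varphi_i})$, the coefficient of $s$ is
\[
\big\langle(\chi_{\varphi_j}-\chi_{\varphi_i})\wedge T^{n-1}_\omega(\chi_{\varphi_i})\big\rangle,
\]
by the same computation as in the proof of Proposition \ref{prop:superineq}, using $\frac{d}{d\chi}T^n=T^{n-1}$ coefficientwise. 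A polynomial vanishing on $[0,1]$ has all coefficients zero. Evaluating each coefficient measure on an arbitrary Borel set therefore shows that this linear coefficient vanishes, which is exactly $\langle\chi_{\varphi_1}\wedge T^{n-1}_\omega(\chi_{\varphi_i})\rangle=\langle\chi_{\varphi_2}\wedge T^{n-1}_\omega(\chi_{\varphi_i})\rangle$.

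For the expansion one only needs that mixed nonpluripolar products of the currents $\chi_{\varphi_1},\chi_{\varphi_2},\omega$ are well defined and multilinear, which holds in the quasi-psh setting without any finite-energy assumption.
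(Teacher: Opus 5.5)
Your proposal is correct and is essentially the paper's proof. The paper likewise gets $P_\omega\le_v1$ and $Q_{c_0,\omega}\le_v1$ for both solutions from Propositions \ref{prop:viscpp} and \ref{prop:equivalence}, passes to the segment by convexity of the sublevel sets (citing the proof of \cite[Theorem 5.8]{CC}), obtains $T^n_\omega(\chi_{\varphi_s})=0$ for all $s\in[0,1]$ from Lemma \ref{lem:visctopp} and Proposition \ref{prop:superineq}, and differentiates at $s=0$ and $s=1$ of a single segment, which is equivalent to your swapping of $i$ and $j$.

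A minor simplification of your Step 1: no sup-convolution needs to be rerun. Convolution commutes with convex combinations and $\omega_0$ has constant coefficients, so membership in $\bar{\Gamma}_\omega$ and $\bar{\Gamma}'_\omega$ (Definition \ref{def:gammabar}) passes to $(1-s)\chi_{\varphi_1}+s\chi_{\varphi_2}$ directly by Proposition \ref{prop:properties} \ref{item:sublevel}. Proposition \ref{prop:equivalence} then translates this back into the viscosity inequalities.
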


\begin{proof}
    By Proposition \ref{prop:viscpp} and Proposition \ref{prop:equivalence}, we have $P_\omega(\chi_{\varphi_i})\le_v1$ and $Q_{c_0,\omega}(\chi_{\varphi_i})\le_v 1$ for $i=1,2$. By the convexity of the sublevel sets of $P_\omega$ and $Q_{c_0,\omega}$, we have $P_\omega((1-s)\chi_{\varphi_1}+s\chi_{\varphi_2})\le_v 1$ and $Q_{c_0,\omega}((1-s)\chi_{\varphi_1}+s\chi_{\varphi_2})\le_v 1$ for any $s\in[0,1]$ (see, e.g., the proof of \cite[Theorem 5.8]{CC}). Then, interpreting this property in terms of the nonpluripolar product by Lemma \ref{lem:visctopp} and using Proposition \ref{prop:superineq}, we see that 
    $T^n_\omega((1-s)\chi_{\varphi_1}+s\chi_{\varphi_2})=0$. By differentiating this equation in $s$ at $s=0$ and $s=1$, we obtain the desired result.
\end{proof}

\begin{lem}\label{lem:solineq}
    Let $\chi_\infty:=\chi+\delb\varphi_\infty$ be a solution of \eqref{eq:gMA}. Then, for any $\chi$-psh function $\varphi$ such that $P_\omega(\chi_\varphi)\le_v 1$, we have
    $$\int_X\big\langle\chi_\varphi\wedge T^{n-1}_\omega(\chi_\infty)\big\rangle\le \int_X\big\langle \chi_\infty\wedge T^{n-1}_\omega(\chi_\infty)\big\rangle.$$
\end{lem}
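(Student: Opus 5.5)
Throughout, $\chi_\infty$ is a solution of \eqref{eq:gMA} and $\varphi$ is a $\chi$-psh function with $P_\omega(\chi_\varphi)\le_v1$. The plan is to mimic the proofs of Proposition \ref{prop:superineq} and Lemma \ref{lem:uni}, using the convex path from $\chi_\infty$ toward $\chi_\varphi$.

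\textbf{Step 1 (the path stays admissible).} For $s\in[0,1]$ set $\varphi_s:=(1-s)\varphi_\infty+s\varphi$. By Proposition \ref{prop:viscpp}, $P_\omega(\chi_\infty)\le_v1$. Since the sublevel sets of $P_\omega$ are convex (Proposition \ref{prop:properties} \ref{item:sublevel}; cf. the proof of \cite[Theorem 5.8]{CC}), we get $P_\omega(\chi_{\varphi_s})\le_v1$ for every $s$. By Proposition \ref{prop:superineq},
\[
f(s):=\int_X T^n_\omega(\chi_{\varphi_s})\le 0 .
\]
On the other hand $f(0)=0$, because $\chi_\infty$ solves the equation.

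\textbf{Step 2 (first variation).} By multilinearity of the nonpluripolar product, $f$ is a polynomial in $s$. Its derivative at $s=0$ is
\[
f'(0)=\int_X\big\langle(\chi_\varphi-\chi_\infty)\wedge T^{n-1}_\omega(\chi_\infty)\big\rangle ,
\]
where, as in Proposition \ref{prop:superineq}, the normalizing constants in $T^p_\omega$ are set up so that the derivative of $T^n_\omega$ produces exactly $T^{n-1}_\omega$. Since $f\le0$ on $[0,1]$ and $f(0)=0$, we have $f'(0)\le0$. Expanding the two terms of $f'(0)$ separately gives exactly the claimed inequality.

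\textbf{Main obstacle: making the expansion rigorous.} The potentials $\varphi$ and $\varphi_\infty$ may have different singularities, so writing $f$ as a polynomial requires care. Multilinearity of the nonpluripolar product holds for unbounded potentials, and each mixed term $\langle\chi_\varphi^{j}\wedge\chi_\infty^{m}\wedge\omega^{n-j-m}\rangle$ has finite total mass bounded by cohomological quantities. Hence the integrals make sense and $f$ is genuinely a polynomial.

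Splitting $f'(0)$ into two pieces requires each piece to be finite, which holds. It is also legitimate to use $T^{n-1}_\omega(\chi_\infty)\ge0$ (Lemma \ref{lem:visctopp}), but this is not even needed, since only the sign of $f'(0)$ enters. If one wants to avoid relying on mass bounds, a fallback is to argue as in Proposition \ref{prop:superineq}: pass to the classes $[\chi_i]$, run the same comparison there with the monotonicity of Lemma \ref{lem:ppineq}, and let $i\to\infty$. The first route should suffice.
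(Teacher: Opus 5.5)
Your proposal is correct and follows essentially the same route as the paper. Both arguments take the convex path $\varphi_s=(1-s)\varphi_\infty+s\varphi$, use Proposition \ref{prop:superineq} to get $\int_X T^n_\omega(\chi_{\varphi_s})\le 0=\int_X T^n_\omega(\chi_\infty)$, and differentiate at $s=0$. Your extra justifications (admissibility of the path via Proposition \ref{prop:viscpp} and convexity of sublevel sets, and polynomial dependence on $s$ via multilinearity of the nonpluripolar product) only make explicit what the paper leaves implicit, and the fallback is not needed.
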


\begin{proof}
    Define $\varphi_s:=(1-s)\varphi_\infty+s\varphi$. Note that
    \begin{equation*}
        \frac{d}{ds}\Bigg\vert_{s=0}\int_X T^n_\omega(\chi_{\varphi_s})=\int_X \big\langle(\chi_\varphi-\chi_\infty)\wedge T^{n-1}_\omega(\chi_{\varphi_\infty})\big\rangle.
    \end{equation*}
    Since $T^n_\omega(\chi_\infty)=0$ and $P_{\omega}(\chi_{\varphi_s})\le_v1$, by Proposition \ref{prop:superineq}, we have that the left-hand side is nonpositive, which concludes the desired result.
\end{proof}

\begin{proof}[Proof of the uniqueness part of Theorem \ref{thm:main}]
    We follow the arguments by Dinew \cite{Dinew} (see also \cite[Section 3.3]{BEGZ}). Suppose that $\varphi_1$ and $\varphi_2$ are solutions of the gMA equation \eqref{eq:gMA}. We fix $i=1$ or 2.
    
    First, suppose that there exists a constant $a$ such that $\varphi_1=\varphi_2+a$ almost everywhere with respect to the measure $\langle\chi_{\varphi_i}\wedge T^{n-1}_\omega(\chi_{\varphi_i})\rangle$. Note that we have 
    \begin{equation}\label{eq:masspositivity'}
        \langle\chi_{\varphi_i}^n\rangle\ge c\omega^n
    \end{equation}
    for some constant $c>0$. Indeed, if $c_k=0$ for all $k=1,\dots,n-1$, then $\langle\chi_{\varphi_i}^n\rangle=c_0\omega^n$, and \eqref{eq:MA} implies \eqref{eq:masspositivity'}. If $c_k\neq0$ for some $k$, by the arguments of \eqref{eq:masspositivity}, we have $S_n(\lambda)>c^{n/(n-1)}/n$
    for any upper test $q$ at $p\in\{\omega>0\}$, where $\lambda$ is the eigenvalue vector of $\delb q$ at $p$. 
    By the same argument of Lemma \ref{lem:visctopp} or \cite[Corollary 2.6]{EGZ}, this implies \eqref{eq:masspositivity'}.
    Then, since
    \begin{equation*}
        \begin{aligned}
            \langle\chi_{\varphi_i}\wedge T^{n-1}_\omega(\chi_{\varphi_i})\rangle
            =n\langle\chi_{\varphi_i}^n\rangle-\sum_{k=1}^{n-1}c_k k\langle\chi_{\varphi_i}^k\wedge\omega^{n-k}\rangle\ge\langle\chi_{\varphi_i}^n\rangle,
        \end{aligned}
    \end{equation*}
    we conclude that $\varphi_1=\varphi_2+a$ almost everywhere with respect to Lebesgue measure, which is the desired result.
    
    Next, suppose that there is no constant $a$ such that $\varphi_1=\varphi_2+a$ almost everywhere with respect to the measure $\langle\chi_{\varphi_i}\wedge T^{n-1}_\omega(\chi_{\varphi_i})\rangle$. Then, as in the proof of \cite[Corollary 1.10]{GZ}, since the function
    $$f:\mathbb{R}\ni t\mapsto \langle\chi_{\varphi_i}\wedge T^{n-1}_\omega(\chi_{\varphi_i})\rangle(\{\varphi_1<\varphi_2+t\})\in\mathbb{R}^+$$
    is left continuous, monotone increasing, we can find $a\in\mathbb{R}$ such that  
    \begin{equation}\label{eq:mass}
    \begin{split}
        &0<\left\langle\chi_{\varphi_i}\wedge T^{n-1}_\omega(\chi_{\varphi_i})\right\rangle(\{\varphi_1<\varphi_2+a\})<v:=\int_X\left\langle\chi_{\varphi_i}\wedge T^{n-1}_\omega(\chi_{\varphi_i})\right\rangle,\\
        &\left\langle\chi_{\varphi_i}\wedge T^{n-1}_\omega(\chi_{\varphi_i})\right\rangle(\{\varphi_1=\varphi_2+a\})=0.
    \end{split}
    \end{equation}
    Without loss of generality, we replace $\varphi_2$ by $\varphi_2+a$.
    Since if $\varphi_1=\varphi_2$ almost everywhere with respect to $\omega^n$ we are done, by changing $a$ if necessary, we can further assume that
    \begin{equation}\label{eq:lebesguemass}
        \begin{split}
        &0<\omega^n(\{\varphi_1<\varphi_2\})<v':=\int_X\omega^n,\\
        &\omega
        ^n(\{\varphi_1=\varphi_2\})=0.
    \end{split}
    \end{equation}
    We first claim that there exists a bounded lower semicontinuous function $f_1$ such that 
    \begin{equation}\label{eq:f_1}
        \begin{cases}
            f_1>\varepsilon>0 \text{ on } U\supset \{\varphi_1<\varphi_2\}, \\
            c_0+f_1>-\varepsilon_{2.1} \text{ in } X,  \\
            \int_X f_1 \omega^n=0,
        \end{cases}
    \end{equation}
    where $U$ is an open subset containing $\{\varphi_1<\varphi_2\}$ such that $\omega^n(U)<v'$ and $\varepsilon$ is a small constant chosen below. Indeed, the existence of such $U$ can be seen as follows. Cover $X$ by a finite number of coordinate balls $\{B^i\}^N_{i=1}$. In each ball $B^i$, by quasicontinuity of psh functions \cite[Theorem 3.5]{BT}, there exists an open subset $E^i$ with $\operatorname{cap}(E^i)<\delta$ such that $\varphi_1$ and $\varphi_2$ are continuous in $B^i\setminus E^i$. Note that by the definition of the capacity, we have $\omega^n(E^i)\le C \operatorname{cap}(E^i)<C\delta$. Then, we have $\{\varphi_1<\varphi_2\}\cap (B^i\setminus E^i)= U^i\setminus E^i$ for some open subset $U^i\subset B^i$. Running this argument for all $i$, we see that $\omega^n(\cup_i (U^i\setminus E^i))\le\omega^n(\{\varphi_1<\varphi_2\})<v'$, where the last inequality is \eqref{eq:lebesguemass}. Hence, by defining $U:=\cup_i (U^i\cup E^i)$, we have $U\supset\{\varphi_1<\varphi_2\}$ and 
    $$\omega^n(U)\le\omega^n(\cup_i (U^i\setminus E^i))+\sum_{i=1}^N\omega^n(E^i)<\omega^n(\{\varphi_1<\varphi_2\})+CN\delta<v',$$
    where the last inequality follows by taking $\delta>0$ small enough. Due to $\omega^n(U)<v'$, if $\varepsilon$ is small enough, we can define $f_1$ as $f_1:=\varepsilon'>\varepsilon$ on $U$ and $f_1:=-\varepsilon''<0$ on $X\setminus U$, which satisfies \eqref{eq:f_1}.
    By the existence of a solution of the gMA equation with a bounded lower semicontinuous function (Theorem \ref{thm:existence}), there exists a $\chi$-psh function $\varphi'_1$ such that
    $$\langle\chi_{\varphi'_1}\rangle^n=\sum_{k=0}^{n-1}c_k\langle\chi_{\varphi'_1}^k\wedge\omega^{n-k}\rangle+f_1\omega^n.$$
    By switching $\varphi_1$ and $\varphi_2$, the same arguments provide a smooth function $f_2$ and a $\chi$-psh function $\varphi'_2$ with the same properties. Let $\varphi_{1,2}:=\sup\{\varphi_1,\varphi_2\}$ and $\varphi'_{1,2}:=\sup\{\varphi_1',\varphi_2'\}$. Note that we know $\varphi_{1,2}$ is also a solution of the gMA equation, by Propositions \ref{prop:equivalence}, \ref{prop:viscpp} and \ref{prop:superineq}.
    We first claim that
    \begin{equation}\label{ineqJcomp}
    \begin{split}
        &\int_{\{(1-\varepsilon)\varphi_1+\varepsilon\varphi_{1,2}'<(1-\varepsilon)\varphi_2+\varepsilon\varphi_1'\}}
        \left\langle\chi_{(1-\varepsilon)\varphi_2+\varepsilon\varphi_1'}\wedge T^{n-1}_\omega(\chi_{\varphi_i})\right\rangle\\
        \le &\int_{\{(1-\varepsilon)\varphi_1+\varepsilon\varphi_{1,2}'<(1-\varepsilon)\varphi_2+\varepsilon\varphi_1'\}}
        \left\langle\chi_{(1-\varepsilon)\varphi_1+\varepsilon\varphi_{1,2}'}\wedge T^{n-1}_\omega(\chi_{\varphi_i})\right\rangle.
    \end{split}
    \end{equation}
    Indeed, for any $\delta>0$, we have
    \begin{equation*}
        \begin{split}
            &\int_X \left\langle\chi_{\sup\{(1-\varepsilon)\varphi_2+\varepsilon\varphi'_{1,2},(1-\varepsilon)\varphi_1+\varepsilon\varphi'_1\}}\wedge T^{n-1}_\omega(\chi_{\varphi_i})\right\rangle\\
            \ge&\int_{\{(1-\varepsilon)\varphi_1+\varepsilon\varphi_{1,2}'<(1-\varepsilon)\varphi_2+\varepsilon\varphi_1'-\delta\}}\left\langle\chi_{(1-\varepsilon)\varphi_2+\varepsilon\varphi_1'}\wedge T^{n-1}_\omega(\chi_{\varphi_i})\right\rangle\\
            &+\int_{\{(1-\varepsilon)\varphi_1+\varepsilon\varphi_{1,2}'>(1-\varepsilon)\varphi_2+\varepsilon\varphi_1'-\delta\}}\left\langle\chi_{(1-\varepsilon)\varphi_1+\varepsilon\varphi_{1,2}'}\wedge T^{n-1}_\omega(\chi_{\varphi_i})\right\rangle\\
            =&\int_{\{(1-\varepsilon)\varphi_1+\varepsilon\varphi_{1,2}'<(1-\varepsilon)\varphi_2+\varepsilon\varphi_1'-\delta\}}\left\langle\chi_{(1-\varepsilon)\varphi_2+\varepsilon\varphi_1'}\wedge T^{n-1}_\omega(\chi_{\varphi_i})\right\rangle\\
            &+\int_{X\setminus{\{(1-\varepsilon)\varphi_1+\varepsilon\varphi_{1,2}'\le(1-\varepsilon)\varphi_2+\varepsilon\varphi_1'-\delta\}}}\left\langle\chi_{(1-\varepsilon)\varphi_1+\varepsilon\varphi_{1,2}'}\wedge T^{n-1}_\omega(\chi_{\varphi_i})\right\rangle\\
            \ge&\int_{\{(1-\varepsilon)\varphi_1+\varepsilon\varphi_{1,2}'<(1-\varepsilon)\varphi_2+\varepsilon\varphi_1'-\delta\}}\left\langle\chi_{(1-\varepsilon)\varphi_2+\varepsilon\varphi_1'}\wedge T^{n-1}_\omega(\chi_{\varphi_i})\right\rangle\\
            &+\int_X\left\langle\chi_{(1-\varepsilon)\varphi_1+\varepsilon\varphi_{1,2}'}\wedge T^{n-1}_\omega(\chi_{\varphi_i})\right\rangle\\
            &-\int_{\{(1-\varepsilon)\varphi_1+\varepsilon\varphi_{1,2}'<(1-\varepsilon)\varphi_2+\varepsilon\varphi_1'\}}\left\langle\chi_{(1-\varepsilon)\varphi_1+\varepsilon\varphi_{1,2}'}\wedge T^{n-1}_\omega(\chi_{\varphi_i})\right\rangle
        \end{split}
    \end{equation*}
    By letting $\delta$ approach zero, we get
    \begin{equation*}
        \begin{split}
            &\int_{\{(1-\varepsilon)\varphi_1+\varepsilon\varphi_{1,2}'<(1-\varepsilon)\varphi_2+\varepsilon\varphi_1'\}}\left\langle\chi_{(1-\varepsilon)\varphi_2+\varepsilon\varphi_1'}\wedge T^{n-1}_\omega(\chi_{\varphi_i})\right\rangle\\
        \le &\int_{\{(1-\varepsilon)\varphi_1+\varepsilon\varphi_{1,2}'<(1-\varepsilon)\varphi_2+\varepsilon\varphi_1'\}}\left\langle\chi_{(1-\varepsilon)\varphi_1+\varepsilon\varphi_{1,2}'}\wedge T^{n-1}_\omega(\chi_{\varphi_i})\right\rangle\\
        +&\int_X \left\langle\chi_{\sup\{(1-\varepsilon)\varphi_2+\varepsilon\varphi'_{1,2},(1-\varepsilon)\varphi_1+\varepsilon\varphi'_1\}}\wedge T^{n-1}_\omega(\chi_{\varphi_i})\right\rangle
        -\int_X\left\langle\chi_{(1-\varepsilon)\varphi_1+\varepsilon\varphi_{1,2}'}\wedge T^{n-1}_\omega(\chi_{\varphi_i})\right\rangle
        \end{split}
    \end{equation*}
    On the other hand, we have
    \begin{equation*}
        \begin{split}
            &\int_X \left\langle\chi_{\sup\{(1-\varepsilon)\varphi_2+\varepsilon\varphi'_{1,2},(1-\varepsilon)\varphi_1+\varepsilon\varphi'_1\}}\wedge T^{n-1}_\omega(\chi_{\varphi_i})\right\rangle\\
            \le&\int_X \left\langle\chi_{(1-\varepsilon)\varphi_{1,2}+\varepsilon\varphi'_{1,2}}\wedge T^{n-1}_\omega(\chi_{\varphi_i})\right\rangle
            =\int_X \left\langle\chi_{(1-\varepsilon)\varphi_2+\varepsilon\varphi'_{1,2}}\wedge T^{n-1}_\omega(\chi_{\varphi_i})\right\rangle.
        \end{split}
    \end{equation*}
    Here, the inequality follows from Lemma \ref{lem:solineq} and the equality follows from Lemma \ref{lem:uni} and the fact $\varphi_{1,2}$ is a solution of the gMA equation \eqref{eq:gMA}. Thus, we have proved \eqref{ineqJcomp}. Since $\varphi_1,\varphi_2$ are solutions of the gMA equation, by Lemma \ref{lem:uni}, we have
    $$\left\langle\chi_{\varphi_1}\wedge T^{n-1}_\omega(\chi_{\varphi_i})\right\rangle=\left\langle\chi_{\varphi_2}\wedge  T^{n-1}_\omega(\chi_{\varphi_i})\right\rangle.$$
    By applying this to \eqref{ineqJcomp} and letting $\varepsilon$ approach zero, we get
    \begin{equation*}
        \begin{split}
            \int_{\{\varphi_1<\varphi_2\}}\left\langle\chi_{\varphi_1'}\wedge T^{n-1}_\omega(\chi_{\varphi_i})\right\rangle
            \le\int_{\{\varphi_1<\varphi_2\}}\left\langle\chi_{\varphi_{1,2}'}\wedge T^{n-1}_\omega(\chi_{\varphi_i})\right\rangle.
        \end{split}
    \end{equation*}
    Combining this inequality with Lemma \ref{lem:BMineq} below, we obtain
    \begin{equation*}
        \begin{aligned}
            b+\int_{\{\varphi_1<\varphi_2\}}\big\langle\chi_{\varphi_i}\wedge T^{n-1}_\omega(\chi_{\varphi_i})\big\rangle
            \le&\int_{\{\varphi_1<\varphi_2\}}\left\langle\chi_{\varphi_1'}\wedge T^{n-1}_\omega(\chi_{\varphi_i})\right\rangle
            \\
            \le&\int_{\{\varphi_1<\varphi_2\}}\left\langle\chi_{\varphi_{1,2}'}\wedge T^{n-1}_\omega(\chi_{\varphi_i})\right\rangle
        \end{aligned}
    \end{equation*}
    for some $b>0$.
    
    By switching $\varphi_1$ and $\varphi_2$, the same argument yields
    $$b+\int_{\{\varphi_2<\varphi_1\}}\big\langle\chi_{\varphi_i}\wedge T^{n-1}_\omega(\chi_{\varphi_i})\big\rangle\le \int_{\{\varphi_2<\varphi_1\}}\left\langle\chi_{\varphi_{1,2}'}\wedge T^{n-1}_\omega(\chi_{\varphi_i})\right\rangle.$$
    By adding these inequalities together, we obtain
    $$2b+\int_X\big\langle\chi_{\varphi_i}\wedge T^{n-1}_\omega(\chi_{\varphi_i})\big\rangle\le \int_X\left\langle\chi_{\varphi_{1,2}'}\wedge T^{n-1}_\omega(\chi_{\varphi_i})\right\rangle\le\int_X\big\langle\chi_{\varphi_i}\wedge T^{n-1}_\omega(\chi_{\varphi_i})\big\rangle,$$
    where the last inequality follows from Lemma \ref{lem:solineq}. This is a contradiction.
\end{proof}

\begin{lem}\label{lem:BMineq}
    We have
    $$b+\int_{\{\varphi_1<\varphi_2\}}\big\langle\chi_{\varphi_i}\wedge T^{n-1}_\omega(\chi_{\varphi_i})\big\rangle\le \int_{\{\varphi_1<\varphi_2\}}\left\langle\chi_{\varphi_1'}\wedge T^{n-1}_\omega(\chi_{\varphi_i})\right\rangle$$
    for some $b>0$.
\end{lem}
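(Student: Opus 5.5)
We need to prove that on $\{\varphi_1<\varphi_2\}$ the measure $\langle\chi_{\varphi_1'}\wedge T^{n-1}_\omega(\chi_{\varphi_i})\rangle$ dominates $\langle\chi_{\varphi_i}\wedge T^{n-1}_\omega(\chi_{\varphi_i})\rangle$ with a definite gain $b>0$. The source of the gain is that $\varphi_1'$ solves the gMA equation with right-hand side $c_0+f_1$, where $f_1>\varepsilon$ on $U\supset\{\varphi_1<\varphi_2\}$, whereas $\varphi_i$ solves it with $c_0$. The plan is to prove a pointwise, mixed ``Brunn--Minkowski/Gårding'' type inequality
$$\big\langle\chi_{\varphi_1'}\wedge T^{n-1}_\omega(\chi_{\varphi_i})\big\rangle\ \ge\ \big\langle\chi_{\varphi_i}\wedge T^{n-1}_\omega(\chi_{\varphi_i})\big\rangle+ c\,f_1\,\omega^n \quad\text{on } U,$$
with $c>0$ uniform. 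This is the replacement for the log-concavity of the nonpluripolar product (BEGZ, Prop.~1.11). Once this holds, we integrate over $\{\varphi_1<\varphi_2\}$ and take $b:=c\,\varepsilon\,\omega^n(\{\varphi_1<\varphi_2\})$, which is positive by \eqref{eq:lebesguemass}.

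\textbf{Smooth, pointwise case.} Let $A,B$ be positive Hermitian matrices (relative to $\omega$) with $B\in\bar\Gamma$, and set $F(M):=S_n(M)-\sum_k c_k\binom{n}{k}^{-1}S_k(M)$, the operator whose vanishing against $c_0$ encodes the equation. The quantity $\chi_A\wedge T^{n-1}(\chi_B)$ corresponds to the derivative $DF(B)[A]$, while $\chi_B\wedge T^{n-1}(\chi_B)$ corresponds to $DF(B)[B]$. Convexity of $Q$ on $\bar\Gamma$, equivalently concavity of an appropriate function of $F$ along the cone (Proposition~\ref{prop:properties}), together with $F(A)\ge F(B)+f_1$, should give $DF(B)[A-B]\ge F(A)-F(B)+(\text{nonnegative})\ge f_1$ after normalization. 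I would verify this by writing $F(A)-F(B)=\int_0^1 DF(B+t(A-B))[A-B]\,dt$ and using monotonicity of $DF$ along the segment, which is the concavity statement. Some care is needed here, since $F$ itself is not concave and the convexity of $Q=\sum c_kS_k/S_n$ must be converted. I would use the identity $F=S_n(1-Q+c_0/S_n)$ together with the lower bound $S_n\ge c$ from \eqref{eq:masspositivity}, \eqref{eq:masspositivity'} to absorb constants.

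\textbf{Passing to the weak setting.} This is the main obstacle, and it is where Lemma~\ref{lem:app} \ref{item:appQ} enters. Locally on a coordinate ball, replace $\varphi_i$ and $\varphi_1'$ by their truncations $\sup\{\cdot,av-C\}$, which remain viscosity subsolutions by Propositions~\ref{prop:equivalence} and~\ref{prop:viscpp}. Approximate them by decreasing smooth psh sequences satisfying $P\le d$ and $Q\le d$. For $\varphi_1'$, we need an approximation from the correct side, so that the supersolution inequality $T^n\ge f_1\omega^n$ is preserved in the limit. This is obtained from the subsolution property, since $T^n(\chi_{\varphi_1'})=0$ with the data $c_0+f_1$ (lower semicontinuous) means $Q_{c_0+f_1}\le_v1$. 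The pointwise inequality then applies to the smooth approximants with an error of order $d-1$. We pass to the limit using Bedford--Taylor continuity for decreasing bounded sequences, tested against nonnegative continuous functions supported in $U$. We then let $d\to1$, remove the truncation through plurifine locality of nonpluripolar products (as in Lemma~\ref{lem:visctopp}), and finally pass from $U$ to $\{\varphi_1<\varphi_2\}$ using that $\{\varphi_1=\varphi_2\}$ carries no mass for the relevant measures, by \eqref{eq:mass}.
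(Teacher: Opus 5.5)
Your overall architecture matches the paper's: the gain comes from $f_1>\varepsilon$ on $U$, convexity of $Q_{c_0,\omega}$ replaces log-concavity, and the proof runs through local truncation, Lemma \ref{lem:app} and plurifine locality. But the step everything rests on is not established. That step is the pointwise bound $\langle\chi_{\varphi_1'}\wedge T^{n-1}_\omega(\chi_{\varphi_i})\rangle\ge\langle\chi_{\varphi_i}\wedge T^{n-1}_\omega(\chi_{\varphi_i})\rangle+cf_1\omega^n$ with uniform $c$.

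\emph{The smooth case.} Your mechanism fails here. $F$ is not concave along segments, so $DF(B)[A-B]\ge F(A)-F(B)$ is false. For $F=S_n$ and $A=tB$ with $t>1$, the left side is $n(t-1)S_n(B)$ and the right side is $(t^n-1)S_n(B)$, which is larger.

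What convexity of $Q_{c_0}$ at a solution $B$ actually gives is $(A-B)\wedge T^{n-1}_\omega(B)\ge f_1\,\omega^n\cdot B^n/A^n$. The lower bound \eqref{eq:masspositivity'} controls $B^n$, not $A^n$. As soon as some $c_k>0$, the equation does not bound the density of $\chi^n_{\varphi_1'}$ with respect to $\omega^n$, since a single eigenvalue can blow up. So this gain is not comparable to $f_1\omega^n$.

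A uniform smooth bound can be recovered another way. Use convexity of $Q_{c'}$ for a constant $c'<c_0$ still in the range of Proposition \ref{prop:properties}. This gives $DF(B)[A-B]\ge f_1/r+(c_0-c')\bigl(r^{-1}-1+\operatorname{tr}(B^{-1}A)-n\bigr)$ with $r=S_n(A)/S_n(B)$. Then $\operatorname{tr}(B^{-1}A)\ge nr^{1/n}$ handles large $r$. Nothing of this kind is in your sketch.

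\emph{The transfer to weak solutions.} This is the more serious problem: it does not work as you describe. Lemma \ref{lem:app} only produces approximants $u^C_a$ of $\varphi_i$ with $Q_{c_0,\omega}(\delb u^C_a)\le d$, i.e.\ approximate \emph{sub}solutions. Any inequality of your type uses the equation of $\varphi_i$ from the other side ($T^n_\omega\le 0$).

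Applied to $u^C_a$, it therefore does not incur an error of order $d-1$. It incurs the defect $T^n_\omega(\delb u^C_a)$. This defect is only bounded below by $-(d-1)(\delb u^C_a)^n$ and is not pointwise small. It tends to zero only weakly, and only on $\{u>av-C\}$, via Bedford--Taylor convergence, plurifine locality and $T^n_\omega(\chi_{\varphi_i})=0$.

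What is needed is an inequality for the approximants in which this defect enters linearly with bounded coefficient; \eqref{eq:convexmass} is designed for this. There the gain is $\varepsilon\,\omega^n(\delb u^C_a)^n/(\delb u'^C_b)^n$. This is nonlinear in two approximating sequences and has no usable weak limit, so testing against continuous functions on $U$ cannot produce a fixed lower measure.

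The paper therefore gives up pointwise control:
\begin{enumerate}
\item It picks a closed set $F'\subset\{\varphi_1<\varphi_2\}$ of positive Lebesgue measure, via quasicontinuity.
\item It uses $(\delb u^C_a)^n\ge c\,\omega^n$ and Cauchy--Schwarz, $\int_{F'}\frac{\omega^n}{(\delb u'^C_b)^n}\,\omega^n\ge\bigl(\int_{F'}\omega^n\bigr)^2/\int_{F'}(\delb u'^C_b)^n$.
\item It passes to the limit by upper semicontinuity of mass on closed sets.
\item It bounds the denominator by the total mass $\int_X\langle\chi^n_{\varphi_1'}\rangle\le\operatorname{vol}([\chi])$, using Proposition 1.20 of BEGZ.
\end{enumerate}
This yields only the integrated gain $b=\varepsilon c\,\omega^n(F')^2/\operatorname{vol}([\chi])$, which is all the lemma claims. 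Your proposal has no replacement for this step, so as written it does not prove the lemma.
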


\begin{proof}
    Let F be a closed subset defined by $F:=\cup_i\{\varphi_1\le\varphi_2-c\}\cap (\overline{B'^i}\setminus E^i)$, where $E^i$ is the set that appeared in the arguments below \eqref{eq:f_1} and $\{B'^i\}$ is a finite number of coordinate balls that are slightly smaller than $B^i$ appearing in the arguments below \eqref{eq:f_1} but still cover $X$. We have $F\subset \{\varphi_1<\varphi_2\}$. Also, since $\omega^n(\{\varphi_1\le\varphi_2-c\})\to\omega^n(\{\varphi_1<\varphi_2\})$ as $c\to0$, using $\omega^n(E^i)<\delta$ as before, we have $\omega^n(F)>0$ for some $c>0$.
    Take a sequence of open subsets $U_j$ such that $U_j\subset\subset U_{j+1}$ and $U\cap\{\omega>0\}=\cup_j U_j$. Cover $U_j$ by a finite number of coordinate balls $\{B^{j,\ell}_r\}_\ell$ such that $B^{j,\ell}_{3r}\subset U_{j+1}$. Since we have $F\subset \{\varphi_1<\varphi_2\}\subset U$ and $\omega^n(F)>0$, there exists a ball $B^{j,\ell_0}_{r}$ such that $\omega^n(F\cap \bar{B}^{j,\ell_0}_r)>0$. In the following, we simply denote such a ball $B^{j,\ell_0}_r$ by $B_r$.
    Denote by $u$ and $u'$ upper semicontinuous potentials of $\chi_{\varphi_i}$ and $\chi_{\varphi_1'}$ on $B_{3r}$. Define $u^C:=\sup\{u,av-C\}$ and $u'^C:=\sup\{u',av'-C\}$, and denote by $u^C_a, u'^C_b$ the smooth approximations of $u^C$ with respect to $Q_{c_0,\omega}$ and $u'^C$ with respect to $Q_{c_0+\varepsilon,\omega}$ on $B_{2r}$ obtained by Lemma \ref{lem:app}. 
    We assume that a constant $C$ is so large that a closed subset $F'$, defined by $$F':=F\cap \bar{B}_r\cap\{u\ge av-C+1\}\cap\{u'\ge av'-C+1\}\setminus E,$$ satisfies $\omega^n(F')>0$, where $E$ is an open subset with a small capacity outside which $u$ and $u'$ are continuous. By \cite[Theorem 6.10]{BT}, we can find such a constant $C$.
    Define $u^C_{a,b,s}:=(1-s)u^C_a+s u'^C_b$. By the convexity of $Q_{c_0,\omega}$, for any $a$ and $b$, we have
    \begin{align*}
        \frac{d}{ds}\Bigg\vert_{s=0} Q_{c_0,\omega}(\delb u_{a,b,s}^C)\le Q_{c_0,\omega}(\delb u'^C_b)-Q_{c_0,\omega}(\delb u^C_a) \quad \text{in}\  B_{2r}.
    \end{align*}
    Since we have
    \begin{align*}
        Q_{c_0,\omega}(\delb u'^C_b)+f_1\frac{\omega^n}{(\delb u'^C_b)^n}=Q_{c_0+f_1,\omega}(\delb u'^C_b)\le d,
    \end{align*}
    The first term of the right hand side can be estimated as
    \begin{equation*}
        Q_{c_0,\omega}(\delb u'^C_b)\le d-\frac{\varepsilon\omega^n}{\left(\delb u'^C_b\right)^n} \text{ in } B_{2r}.
    \end{equation*}
    Since we have
    $$\frac{d}{ds}\Bigg\vert_{s=0} Q_{c_0,\omega}(\delb u^C_{a,b,s})=-\frac{(\delb u'^C_b-\delb u^C_a)\wedge T^{n-1}_\omega(\delb u^C_a)}{\big(\delb u^C_a\big)^n},$$
    we obtain
    \begin{equation}\label{eq:convexmass}
        \begin{aligned}
            &\left(\delb u'^C_b-\delb u^C_a\right)\wedge T^{n-1}_\omega(\delb u^C_a)\\
            \ge &\left(Q_{c_0,\omega}(\delb u^C_a)-Q_{c_0,\omega}(\delb u'^C_b)\right)(\delb u^C_a)^n \\ 
            \ge&-T^n_\omega(\delb u^C_a)-(d-1)(\delb u^C_a)^n+\varepsilon\frac{\omega^n}{(\delb u'^C_b)^n}(\delb u^C_a)^n\quad \text{in $B_{2r}$}.
        \end{aligned}
    \end{equation}
    Note that since $P_\omega(\delb u^C_a)\le1$, we have $(\delb u^C_a)^n\ge c\omega^n$ by \eqref{eq:masspositivity'}. 
    Hence, we see that 
    \begin{equation}\label{eq:perturbmass}
        \begin{aligned}
            &\int_{F'}\varepsilon\frac{\omega^n}{(\delb u'^C_b)^n}(\delb u^C_a)^n
        \ge\varepsilon c \int_{F'}\frac{\omega^n}{(\delb u'^C_b)^n}\omega^n\\
        \ge&\varepsilon c \left(\int_{F'}\omega^n\right)^2\Bigg/\left(\int_{F'}(\delb u'^C_b)^n\right),
        \end{aligned}
    \end{equation}
    where the second inequality follows from the Cauchy-Schwarz inequality. Since the weak convergence has the upper semicontinuity for the mass of closed sets, by letting $a\to\infty$ and $b\to\infty$ in \eqref{eq:convexmass} and using \eqref{eq:perturbmass} and \cite[Theorem 2.1]{BT}, we get
    \begin{equation*}
        \begin{aligned}
            &\int_{F'}\left(\delb u'^C-\delb u^C\right)\wedge T^{n-1}_\omega(\delb u^C)+T^n_\omega(\delb u^C)+(d-1)(\delb u^C)^n\\
            \ge&\liminf_{a,b\to\infty}\varepsilon c \left(\int_{F'}\omega^n\right)^2\Bigg/\left(\int_{F'}(\delb u'^C_b)^n\right)\ge\varepsilon c \left(\int_{F'}\omega^n\right)^2\Bigg/\left(\int_{F'}(\delb u'^C)^n\right)
        \end{aligned}
    \end{equation*}
    Since $F'\subset\{u\ge av-C+1\}\subset \{u>av-C\}$ and the same for $u'$, by the nonpluripolar locality, the above inequality can be rewritten as 
    \begin{equation}\label{eq:desiredineq}
        \begin{aligned}
            &\int_{F'}\Big\langle\left(\chi_{\varphi'_1}-\chi_{\varphi_i}\right)\wedge T^{n-1}_\omega(\chi_{\varphi_i})\Big\rangle+(d-1)\langle\chi_{\varphi_i}\rangle^n
            \ge\varepsilon c \left(\int_{F'}\omega^n\right)^2\Bigg/\left(\int_{F'}\big\langle\chi_{\varphi'_1}^n\big\rangle\right)\\
            \ge&\varepsilon c \left(\int_{F'}\omega^n\right)^2\Bigg/\left(\int_X\big\langle\chi_{\varphi'_1}^n\big\rangle\right)\ge\varepsilon c \left(\omega^n(F')\right)^2/\operatorname{vol}([\chi]^n)>0,
        \end{aligned}
    \end{equation}
    where the second to last inequality follows from \cite[Proposition 1.20]{BEGZ}. On the other hand, by the same arguments as above on any coordinate balls with estimating the last term in the last line in \eqref{eq:convexmass} by zero from below and letting $a,b\to\infty$ and then $C\to\infty$ on $\{u>av-C\}\cap\{u'>av'-C\}\cap B_{2r}$, we see that
    \begin{equation*}
        \Big\langle(\chi_{\varphi_1'}-\chi_{\varphi_i})\wedge T^{n-1}_{\omega}(\chi_{\varphi_i})\Big\rangle\ge -(d-1)\langle\chi_{\varphi_i}^n\rangle \ \text{ in } U\cap\{\omega>0\}.
    \end{equation*}
    Integrating this inequality in $(\{\varphi_1<\varphi_2\}\setminus F')\cap\{\omega>0\}$ and adding \eqref{eq:desiredineq}, we obtain
    \begin{equation*}
        \int_{\{\varphi_1<\varphi_2\}}\Big\langle\chi_{\varphi_1'}\wedge T^{n-1}_\omega(\chi_{\varphi_i})\Big\rangle\ge\int_{\{\varphi_1<\varphi_2\}}\Big\langle\chi_{\varphi
        _i}\wedge T^{n-1}_\omega(\chi_{\varphi_i})\Big\rangle-(d-1)\langle\chi_{\varphi_i}^n\rangle+b,
    \end{equation*}
    where $b:=\varepsilon c (\omega^n(F'))^2/\operatorname{vol}([\chi^n])$. Taking $d\to1$, we conclude the desired inequality.
\end{proof}

\subsection{Weak convergence of the mixed Hessian flow}\label{subsec:mixedHessianflow}
In this subsection, we prove Theorem \ref{thm:Hessflow}. As explained in Section \ref{sec:intro}, we follow the same lines as \cite{Mur}. Recall that the mixed Hessian flow is given by
\begin{equation}\label{eq:mixedHessianflowre}
    \begin{cases}
        \dot{\varphi_t}=1-\sum_{k=0}^{n-1}c_k\frac{\chi_t^k\wedge\omega^{n-k}}{\chi_t^n},\\
        \varphi_t|_{t=0}=\varphi_0.
    \end{cases}
\end{equation}

We start with the following lemma, which is a generalization of the case of the $J$-equation by \cite[Lemma 1]{Hashimoto}.

\begin{lem}\label{lem:F}
    Let $F_{k,\chi,\omega}$ be the operator defined by
    \begin{align*}
        F_{k,\chi,\omega}(\varphi)=&-\frac{\delb \varphi\wedge k\chi^{k-1}\wedge\omega^{n-k}}{\chi^n}+\frac{\chi^k\wedge\omega^{n-k}}{\chi^n}n\frac{\delb\varphi\wedge\chi^{n-1}}{\chi^n}\\
        &+\left(\partial\frac{\chi^k\wedge\omega^{n-k}}{\chi^n},\bar{\partial}\varphi\right)_{\chi}.
    \end{align*}
    Then, we have 
    $$\int_X\varphi F_{k,\chi,\omega}(\varphi)\ge0$$
    for any $\varphi\in C^\infty(X)$.
\end{lem}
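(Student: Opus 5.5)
The plan is to compute $\int_X\varphi\,F_{k,\chi,\omega}(\varphi)\,\chi^n$ (I read the integral as taken against $\chi^n$, the natural measure making $F_{k,\chi,\omega}$ symmetric). I would integrate by parts term by term, as in \cite[Lemma 1]{Hashimoto}, until only quadratic expressions in $\partial\varphi$ remain. Then I would check pointwise nonnegativity by simultaneously diagonalizing $\chi$ and $\omega$.

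Write $f:=\chi^k\wedge\omega^{n-k}/\chi^n$ and $\Delta_\chi\varphi:=n\,\delb\varphi\wedge\chi^{n-1}/\chi^n$.

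\begin{enumerate}[label={\upshape(\arabic*)}]
\item \emph{First term.} Since $\chi$ and $\omega$ are closed,
\[
-\int_X\varphi\,\delb\varphi\wedge k\chi^{k-1}\wedge\omega^{n-k}=k\int_X\sqrt{-1}\partial\varphi\wedge\bar\partial\varphi\wedge\chi^{k-1}\wedge\omega^{n-k},
\]
which is $\ge0$.

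\item \emph{Second term.} It equals $\int_X\varphi f\Delta_\chi\varphi\,\chi^n=-\int_X f|\partial\varphi|_\chi^2\chi^n-\int_X\varphi\,(\partial f,\bar\partial\varphi)_\chi\chi^n$. The cross term should cancel against the third term $\int_X\varphi(\partial f,\bar\partial\varphi)_\chi\chi^n$, after taking real parts; this is the role of that gradient term in $F_{k,\chi,\omega}$.

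\item \emph{Reduction.} After the cancellation, the claim becomes
\[
\int_X\Big(k\sqrt{-1}\partial\varphi\wedge\bar\partial\varphi\wedge\chi^{k-1}\wedge\omega^{n-k}-f\,|\partial\varphi|_\chi^2\,\chi^n\Big)\ \text{ has the correct sign.}
\]
At a point I would take $\chi=I$ and $\omega=\operatorname{diag}(\mu_j)$ with $\mu_j>0$. Both terms then become quadratic forms $\sum_j|v_j|^2(\cdots)$ in $v=\partial\varphi$, with coefficients built from $S_{n-k;j}(\mu)$ and $S_{n-k}(\mu)$. The comparison should reduce to an elementary symmetric-function inequality. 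The case $k=n$, where $f=1$ and the expression vanishes identically, is a useful check of the normalizations.
\end{enumerate}

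I expect step (3) to be the main obstacle, and I am not yet confident of its sign. A naive count gives coefficients $k!(n-k)!\,S_{n-k;j}(\mu)$ against $k!(n-k)!\,S_{n-k}(\mu)$, and $S_{n-k;j}\le S_{n-k}$ goes the wrong way. So I would first recheck the sign and normalization conventions of $(\cdot,\cdot)_\chi$ and of the real part in step (2). It is possible that the cross term is not cancelled but only reduced to $\tfrac12$ of itself. If so, I would instead rewrite it as $\tfrac12\int_X(\partial f,\bar\partial\varphi^2)_\chi\chi^n=-\tfrac12\int_X f\Delta_\chi(\varphi^2)\chi^n$. This trades it for $-\int_X f(|\partial\varphi|^2_\chi+\varphi\Delta_\chi\varphi)\chi^n$, which changes the final balance between the two quadratic terms. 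I would then redo the eigenvalue comparison with the corrected coefficients, where the gap $S_{n-k}(\mu)-S_{n-k;j}(\mu)=\mu_jS_{n-k-1;j}(\mu)\ge0$ should supply the required positivity.
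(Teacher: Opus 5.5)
Steps (1)--(2) are exactly the paper's integration by parts, and your ``naive count'' in step (3) is correct. The problem is the last paragraph, which tries to rescue a sign that cannot be rescued, so as written the proposal proves nothing. Use the convention $(\partial f,\bar\partial\varphi)_\chi\,\chi^n=n\sqrt{-1}\partial f\wedge\bar\partial\varphi\wedge\chi^{n-1}$. This is the convention forced by the computation in Lemma~\ref{lem:convexalongflow}, where $\partial\dot\varphi_t=-\sum_kc_k\,\partial(\chi_t^k\wedge\omega^{n-k}/\chi_t^n)$. With it, the third term of $F_{k,\chi,\omega}$ cancels your cross term exactly, not by half. There is no real-part issue either: your own identity in step (2) shows that $\int_X\varphi(\partial f,\bar\partial\varphi)_\chi\chi^n$ is real. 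What remains is
\begin{equation*}
\int_X\varphi\,F_{k,\chi,\omega}(\varphi)\,\chi^n=\int_X\sqrt{-1}\partial\varphi\wedge\bar\partial\varphi\wedge\big(k\chi^{k-1}\wedge\omega^{n-k}-nf\,\chi^{n-1}\big).
\end{equation*}
At a point where $\chi=I$ and $\omega=\operatorname{diag}(\mu_j)$, the integrand equals $-k!\,(n-k)!\sum_j|\partial_j\varphi|^2\mu_jS_{n-k-1;j}(\mu)\,dV$, with $dV=\prod_j\sqrt{-1}dz^j\wedge d\bar z^j$. So the difference $S_{n-k}(\mu)-S_{n-k;j}(\mu)=\mu_jS_{n-k-1;j}(\mu)$ enters with a minus sign and gives nonpositivity, not positivity. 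In fact the inequality ``$\ge0$'' is false for $k<n$. Take $\omega=\chi$: then $f\equiv1$ and $F_{k,\chi,\chi}=\frac{n-k}{n}\Delta_\chi$. Hence $\int_X\varphi F_{k,\chi,\chi}(\varphi)\chi^n=-\frac{n-k}{n}\int_X|\partial\varphi|^2_\chi\chi^n<0$ for every nonconstant $\varphi$. The choice of measure does not matter here, since $\omega^n=\chi^n$.

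This is also what the paper's proof actually shows. It ends with the same $(n-1,n-1)$-form, with the factor $n$ dropped and $\omega^n$ written for $\chi^n$, and notes that the form is negative. That is correct: paired with any $\beta\ge0$, the form gives $\chi^n$ times the derivative of $f$ in the direction $\beta$. Here $f=\binom{n}{k}^{-1}S_k(\lambda)/S_n(\lambda)$ is decreasing in the eigenvalues $\lambda$ of $\omega^{-1}\chi$. This yields $\le0$, and $\le0$ is precisely what is used later. Lemma~\ref{lem:convexalongflow} computes $\frac{d^2}{dt^2}\mathcal{J}(\varphi_t)=-2\sum_kc_k\int_X\dot\varphi_t\,F_{k,\chi_t,\omega}(\dot\varphi_t)\,\chi_t^n$, so convexity requires $\int_X\varphi F_{k,\chi,\omega}(\varphi)\chi^n\le0$. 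The ``$\ge$'' in the statement is a sign slip. Do not search for a normalization that flips your step (3); trust it and conclude $\le0$. Drop the halved-cross-term fallback, since it relies on a cancellation failure that does not occur.
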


\begin{proof}
For $\varphi,\psi\in C^\infty(X)$, by integration by parts, we can compute as
\begin{align*}
& \int_X {\varphi \, F_{k,\chi,\omega} (\psi)} \omega^{n} \\
=&-\int_X \varphi \left({\sqrt{-1} \partial \bar{\partial} \psi} \right)\wedge k\chi^{k-1} \wedge \omega^{n-k} 
+\int_X n \, \varphi \frac{\chi^k\wedge\omega^{n-k}}{\chi^n} \left( \sqrt{-1} \partial \bar{\partial} \psi \right) \wedge \chi^{n-1} \\
& \qquad + \int_X n \, \varphi \, \left( \sqrt{-1} \partial \frac{\chi^k\wedge\omega^{n-k}}{\chi^k} \wedge \bar{\partial}\psi \right) \wedge \chi^{n-1} \\
= & \int_X \left( \sqrt{-1} \partial \phi \wedge \bar{\partial} \psi \right) \wedge \left(k\chi^{k-1} \wedge \omega^{n-k} -\frac{\chi^k\wedge\omega^{n-k}}{\chi^n} \chi^{n-1}\right).
\end{align*}
Since $S_k(\lambda)/S_n(\lambda)$ increases in $\{\lambda\in\mathbb{R}^n\mid \lambda_i>0\}$, we see that the $(n-1,n-1)$-form in the last line is negative. Hence, the claim follows.
\end{proof}

Define the functional $\mathcal{J}$ by
\begin{equation*}
    \mathcal{J}(0)=0,\quad d\mathcal{J}(\varphi)(\psi):=\int_X\psi\ \left(\sum_{k=0}^{n-1}c_k\frac{\chi_{\varphi}^k\wedge\omega^{n-k}}{\chi_{\varphi}^n}-1\right)\chi_{\varphi}^n.
\end{equation*}

\begin{lem}\label{lem:convexalongflow}
    The functional $\mathcal{J}$ is convex along the mixed Hessian flow \eqref{eq:mixedHessianflowre}.
\end{lem}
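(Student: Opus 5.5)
Convexity along the flow means that $t\mapsto\mathcal{J}(\varphi_t)$ has nonnegative second derivative, where $\varphi_t$ solves \eqref{eq:mixedHessianflowre}. Write
$$G(\varphi):=\sum_{k=0}^{n-1}c_k\frac{\chi_\varphi^k\wedge\omega^{n-k}}{\chi_\varphi^n},$$
so that the flow reads $\dot\varphi_t=1-G(\varphi_t)$, and the definition of $\mathcal{J}$ gives
$$\frac{d}{dt}\mathcal{J}(\varphi_t)=\int_X\dot\varphi_t\,(G(\varphi_t)-1)\,\chi_t^n=-\int_X\dot\varphi_t^2\,\chi_t^n .$$
The strategy is to differentiate this once more in $t$ and show that the result is $\ge0$, using Lemma \ref{lem:F}.

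First, rewrite the integrand without the density. Since $(G-1)\chi_t^n=\sum_k c_k\chi_t^k\wedge\omega^{n-k}-\chi_t^n$, we have
$$\frac{d}{dt}\mathcal{J}=\int_X\dot\varphi_t\Big(\sum_k c_k\chi_t^k\wedge\omega^{n-k}-\chi_t^n\Big).$$
Differentiating in $t$ gives two terms. The first is $\int_X\ddot\varphi_t\,(G-1)\chi_t^n=-\int_X\ddot\varphi_t\,\dot\varphi_t\,\chi_t^n$. The second is
$$\int_X\dot\varphi_t\Big(\sum_k c_k\,k\,\delb\dot\varphi_t\wedge\chi_t^{k-1}\wedge\omega^{n-k}-n\,\delb\dot\varphi_t\wedge\chi_t^{n-1}\Big).$$
Next, compute $\ddot\varphi_t=-\frac{d}{dt}G(\varphi_t)$. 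Differentiating each quotient $\chi_t^k\wedge\omega^{n-k}/\chi_t^n$ in the direction $\dot\varphi_t$ gives
$$\frac{d}{dt}\,\frac{\chi_t^k\wedge\omega^{n-k}}{\chi_t^n}=\frac{k\,\delb\dot\varphi_t\wedge\chi_t^{k-1}\wedge\omega^{n-k}}{\chi_t^n}-\frac{\chi_t^k\wedge\omega^{n-k}}{\chi_t^n}\cdot\frac{n\,\delb\dot\varphi_t\wedge\chi_t^{n-1}}{\chi_t^n},$$
which is the negative of the first two terms of $F_{k,\chi_t,\omega}(\dot\varphi_t)$. The $k=0$ term, with $c_0$ a constant, is included in the same formula, and the $k=0$ part of the first sum vanishes.

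Now substitute and combine. After integrating by parts, the gradient term in $F_{k,\chi_t,\omega}$ (the $(\partial(\cdot),\bar\partial\varphi)_\chi$ term) should absorb the difference between pairing $\dot\varphi_t$ against $\delb\dot\varphi_t$ wedge the forms $\chi_t^{n-1}$ weighted by $G$, and pairing against the derivative of $G$. This is exactly the identity from the proof of Lemma \ref{lem:F}. The expected conclusion is
$$\frac{d^2}{dt^2}\mathcal{J}(\varphi_t)=\sum_k c_k\int_X\dot\varphi_t\,F_{k,\chi_t,\omega}(\dot\varphi_t)\,\chi_t^n\cdot(\text{normalization})\ \ge0,$$
with nonnegativity coming from Lemma \ref{lem:F} and $c_k\ge0$. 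An equivalent route is to show directly that
$$\frac{d^2}{dt^2}\mathcal{J}=\sum_k c_k\int_X\sqrt{-1}\,\partial\dot\varphi_t\wedge\bar\partial\dot\varphi_t\wedge\Big(\frac{\chi_t^k\wedge\omega^{n-k}}{\chi_t^n}\,n\chi_t^{n-1}-k\chi_t^{k-1}\wedge\omega^{n-k}\Big)+\text{(remaining terms)}.$$
Here the bracketed form is positive by the monotonicity of $S_k/S_n$, which is the same pointwise fact used in Lemma \ref{lem:F}.

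The main obstacle is the bookkeeping that makes all the non-sign-definite terms cancel. There is a mismatch in measures: $\mathcal{J}$ is tested against $\chi_t^n$, while Lemma \ref{lem:F} integrates against $\omega^n$ (its statement and proof disagree slightly on this). I would first try to make the cancellation work by carrying everything out in the fixed measure $\chi_t^n$ and integrating by parts only once. If a residual term such as $\int_X\dot\varphi_t^2\,\frac{d}{dt}\chi_t^n$ appears, I would check that it combines with $-\int_X\ddot\varphi_t\,\dot\varphi_t\,\chi_t^n$ into a symmetric quadratic form in $\dot\varphi_t$ of the type in Lemma \ref{lem:F}.
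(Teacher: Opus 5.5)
Your outline follows the paper's route: differentiate $\frac{d}{dt}\mathcal J(\varphi_t)=-\int_X\dot\varphi_t^2\chi_t^n$, insert $\ddot\varphi_t=-\frac{d}{dt}G$, and finish with the integration by parts behind Lemma \ref{lem:F}. Your formula for $\frac{d}{dt}\sigma_k$, with $\sigma_k:=\chi_t^k\wedge\omega^{n-k}/\chi_t^n$, is correct. But the only non-routine step is the one you leave as ``should absorb'' and ``remaining terms'', and the mechanism you name for it does not work. The identity in the proof of Lemma \ref{lem:F} is an integration by parts valid for arbitrary functions, so it cannot create the gradient term $(\partial\sigma_k,\bar\partial\dot\varphi_t)_{\chi_t}$ of $F_k$. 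That term comes from the cubic term produced by differentiating the measure. It appears only after a \emph{second} use of the flow equation, in spatially differentiated form; this is exactly where the gradient term enters in the paper's proof. Explicitly,
\begin{equation*}
\frac{d^2}{dt^2}\mathcal J(\varphi_t)=-2\int_X\dot\varphi_t\,\ddot\varphi_t\,\chi_t^n-n\int_X\dot\varphi_t^2\,\delb\dot\varphi_t\wedge\chi_t^{n-1},
\end{equation*}
and the last term equals $2n\int_X\dot\varphi_t\,\sqrt{-1}\partial\dot\varphi_t\wedge\bar\partial\dot\varphi_t\wedge\chi_t^{n-1}$. Since the $c_k$ are constants, $\dot\varphi_t=1-\sum_kc_k\sigma_k$ gives $\partial\dot\varphi_t=-\sum_kc_k\partial\sigma_k$. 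Substituting this for one factor turns the cubic term into $-2\sum_kc_k\int_X\dot\varphi_t\,(\partial\sigma_k,\bar\partial\dot\varphi_t)_{\chi_t}\chi_t^n$. Together with your expression for $\ddot\varphi_t$, this gives $\frac{d^2}{dt^2}\mathcal J(\varphi_t)=-2\sum_kc_k\int_X\dot\varphi_t\,F_{k,\chi_t,\omega}(\dot\varphi_t)\,\chi_t^n$. The integration by parts of Lemma \ref{lem:F}, done against $\chi_t^n$ as you rightly suspected, then yields
\begin{equation*}
\frac{d^2}{dt^2}\mathcal J(\varphi_t)=2\sum_{k=0}^{n-1}c_k\int_X\sqrt{-1}\partial\dot\varphi_t\wedge\bar\partial\dot\varphi_t\wedge\Big(n\sigma_k\chi_t^{n-1}-k\chi_t^{k-1}\wedge\omega^{n-k}\Big)\ge0 .
\end{equation*}
So your ``remaining terms'' vanish identically and the constant is $2$. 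In your density-free variant the same issue shows up as a weight mismatch: weight $1$ on $n\delb\dot\varphi_t\wedge\chi_t^{n-1}$ versus weight $\sigma_k$ coming from $\ddot\varphi_t$. It is resolved only by writing $1=\dot\varphi_t+\sum_kc_k\sigma_k$ and cancelling the resulting cubic term against the one from integrating $\sigma_k$ by parts, again via $\partial\dot\varphi_t=-\sum_kc_k\partial\sigma_k$.

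This is not mere bookkeeping. Along a linear path $\varphi+s\psi$, the second derivative of $\mathcal J$ is $\int_X\sqrt{-1}\partial\psi\wedge\bar\partial\psi\wedge\big(n\chi_\varphi^{n-1}-\sum_kc_kk\chi_\varphi^{k-1}\wedge\omega^{n-k}\big)$. Its sign is exactly the $\mathcal C$-subsolution condition, which is not available along the flow in the boundary case. Convexity along the flow comes from its gradient-flow structure, i.e.\ from $\dot\varphi_t=1-G(\varphi_t)$ itself. Finally, pin down your ``normalization'': it is $-2$, so what you need is $\int_X\dot\varphi_tF_{k,\chi_t,\omega}(\dot\varphi_t)\chi_t^n\le0$. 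That is what the integration by parts actually gives, because $S_k/S_n=S_{n-k}(\lambda_1^{-1},\dots,\lambda_n^{-1})$ is decreasing in each eigenvalue; this is the same fact that makes your bracketed form positive. Citing the inequality of Lemma \ref{lem:F} literally as displayed would give the opposite sign.
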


\begin{proof}
    We can compute as
    \begin{align*}
        \frac{d^2}{dt^2}\mathcal{J}(\varphi_t)
        &= - \frac{d}{dt} \int_X \dot{\varphi}_t^2 \, \chi_t^n
        = - \int_X 2 \, \dot{\varphi}_t \, \ddot{\varphi_t} \, \chi^n_t 
          - \int_X \dot{\varphi}_t^2 \, n \, \delb \dot{\varphi_t}  \wedge \chi^{n-1}_t \\
        &= - \int_X 2 \, \dot{\varphi}_t \, \ddot{\varphi_t} \, \chi^n_t 
           + \int_X 2 \, \dot{\varphi}_t \, n \, \sqrt{-1} \partial \dot{\varphi}_t\wedge \bar{\partial} \dot{\varphi}_t \wedge \chi^{n-1}_t \\
        &= - \int_X 2\, \dot{\varphi}_t \, \sum_{k=0}^{n-1}c_k\left(-\frac{\delb \dot{\varphi_t}\wedge k\chi_t^{k-1}\wedge\omega^{n-k}}{\chi^n_t}+n\frac{\delb\dot{\varphi_t}\wedge\chi_t^{n-1}}{\chi_t^n}\right)\chi^n_t\\
        &\qquad- \int_X 2 \, \dot{\varphi}_t \, n \, \sqrt{-1} \partial \Lambda_{\chi_t} \omega \wedge \bar{\partial} \dot{\varphi}_t \wedge \chi^{n-1}_t \\
        &= -2 \sum_{k=0}^n c_k\int_X \dot{\varphi}_t \, F_{k,\chi_t,\omega}(\dot{\varphi}_t) \, \chi^n_t.
    \end{align*}
    Therefore, the claim follows from Lemma \ref{lem:F} and $c_k\ge0$ for $k\ge0$.
\end{proof}

\begin{lem}\label{lem:bound}
    For the mixed Hessian flow \eqref{eq:mixedHessianflowre}, there exists a constant $C$ such that
    $\vert\dot{\varphi_t}\vert\le C$.
\end{lem}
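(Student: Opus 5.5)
We want a uniform bound on $\dot\varphi_t$ along the mixed Hessian flow \eqref{eq:mixedHessianflowre}. The approach is the parabolic maximum principle applied to $\dot\varphi_t$ itself: it satisfies a linear parabolic equation with no zeroth-order term.

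Write $\dot\varphi_t = 1 - G(\chi_t)$, where
\[
G(\chi_t)=\sum_{k=0}^{n-1} c_k\frac{\chi_t^k\wedge\omega^{n-k}}{\chi_t^n},
\]
and $\chi$, $\omega$, $c_k$ are all time independent. Differentiating in $t$ gives
\[
\partial_t\dot\varphi_t=-\frac{d}{ds}\Big|_{s=0}G(\chi_t+s\delb\dot\varphi_t)=:L_t(\dot\varphi_t),
\]
with
\[
L_t(f)=\sum_{k=0}^{n-1}c_k\left(n\frac{\chi_t^k\wedge\omega^{n-k}}{\chi_t^n}\frac{\delb f\wedge\chi_t^{n-1}}{\chi_t^n}-\frac{k\,\delb f\wedge\chi_t^{k-1}\wedge\omega^{n-k}}{\chi_t^n}\right).
\]
This is the first-order part of $F_{k,\chi_t,\omega}$ in Lemma \ref{lem:F}, without the gradient term.

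The key point is that $L_t$ is elliptic, i.e. degenerate-elliptic in the maximum principle sense. Since $c_k\ge0$ and $\chi_t>0$, the function $\lambda\mapsto S_k(\lambda)/S_n(\lambda)=\sum_{|I|=n-k}\prod_{i\in I}\lambda_i^{-1}$ is strictly decreasing in each $\lambda_i>0$. Hence $G$ is a decreasing function of $\chi_t$ on the positive cone, and $-dG$ is a positive operator on Hermitian matrices. This is exactly the monotonicity used in the proof of Lemma \ref{lem:F} and in Proposition \ref{prop:properties} \ref{item:monotonicity}. Concretely, at a spatial maximum point of $\dot\varphi_t$ we have $\delb\dot\varphi_t\le0$. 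Positivity of the linearization then gives $L_t(\dot\varphi_t)\le0$ there. Symmetrically, at a spatial minimum we get $L_t(\dot\varphi_t)\ge0$.

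Next we apply the maximum principle on $X\times[0,T]$. Take $M(t)=\max_X\dot\varphi_t$; it is Lipschitz, and by Hamilton's trick $M'(t)\le0$ almost everywhere. Likewise $\min_X\dot\varphi_t$ is nondecreasing. Smoothness of the flow on $[0,\infty)$ is guaranteed by \cite[Proposition 13]{CS17}, which makes these computations legitimate. We conclude
\[
\min_X\dot\varphi_0\le\dot\varphi_t\le\max_X\dot\varphi_0
\]
for all $t$. Alternatively, one could apply the maximum principle to $\dot\varphi_t\pm\epsilon t$ to avoid Hamilton's lemma.

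Finally, $\dot\varphi_0=1-G(\chi+\delb\varphi_0)$ is a smooth function on the compact manifold $X$. This uses that the initial datum is smooth with $\chi_0>0$, which is implicit in the short-time theory. So $C:=\sup_X|\dot\varphi_0|<\infty$ works, and the bound is independent of $t$.

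The only real obstacle is checking the sign of the linearization cleanly, in particular the $k=0$ term $c_0\,\omega^n/\chi_t^n$. It also works there: it is decreasing in $\chi_t$ since $c_0\ge0$ by assumption. Note also that $\dot\varphi_t\le1$ holds trivially, because $G\ge0$.
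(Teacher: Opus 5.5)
Your proposal is correct and follows the same route as the paper. The paper likewise differentiates \eqref{eq:mixedHessianflowre} in $t$ and observes that the $(n-1,n-1)$-form $k\chi_t^{k-1}\wedge\omega^{n-k}-\frac{\chi_t^k\wedge\omega^{n-k}}{\chi_t^n}\,n\chi_t^{n-1}$ is negative, which is the monotonicity of $S_k/S_n$ you use; it then concludes by the maximum principle, and you merely make explicit the bound $\min_X\dot\varphi_0\le\dot\varphi_t\le\max_X\dot\varphi_0$ and the Hamilton-trick details that the paper calls ``the standard maximum principle.''
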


\begin{proof}
    By differentiating \eqref{eq:mixedHessianflowre} in $t$, we obtain
    $$\ddot{\varphi_t}=-\sum_{k=0}^{n-1}c_k\frac{\delb\dot{\varphi_t}\wedge \left(k\chi_t^{k-1}\wedge\omega^{n-k}-\frac{\chi_t^k\wedge\omega^{n-k}}{\chi_t^n}n\chi_t^{n-1}\right)}{\chi_t^n}.$$
    Since the $(n-1,n-1)$-form in the numerator on the right hand side is negative as noted in the proof of Lemma \ref{lem:F}, the standard maximum principle implies the desired result.
\end{proof}

\begin{lem}\label{lem:conv}
    Let $(X,\omega)$ be an $n$-dimensional compact Kähler manifold and $\chi$ a Kähler form. Suppose that we have
    $$\int_V\frac{n!}{p!}[\chi]^p-\sum_{k=1}^{n-1}c_k\frac{k!}{(k-n+p)!}[\chi]^{k-n+p}\wedge[\omega]^{n-k}\ge0$$
    for any $p$-dimensional subvariety $V$, where $p=1,2,\dots,{n-1}$. Then, the mixed Hessian flow $\chi_t$ satisfies $\Vert Q_{c_0,\omega}(\chi_t)-1 \Vert_{L^2(\omega)}\to0$ as $t\to\infty$.
\end{lem}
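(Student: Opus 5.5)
Following \cite{Mur} (cf. the Hashimoto argument for the $J$-flow), I will combine convexity of $\mathcal{J}$ along the flow (Lemma \ref{lem:convexalongflow}) with a lower bound on $\mathcal{J}$. Along \eqref{eq:mixedHessianflowre} we have $\dot\varphi_t = 1-Q_{c_0,\omega}(\chi_t)$ pointwise, and by definition
\[
\frac{d}{dt}\mathcal{J}(\varphi_t)=-\int_X\dot\varphi_t^2\,\chi_t^n .
\]
So $\mathcal{J}(\varphi_t)$ is nonincreasing, and by Lemma \ref{lem:convexalongflow} the quantity $\int_X\dot\varphi_t^2\chi_t^n$ is nonincreasing in $t$. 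If $\mathcal{J}(\varphi_t)$ is bounded below, then $\int_0^\infty\int_X\dot\varphi_t^2\chi_t^n\,dt<\infty$. Monotonicity then forces $\int_X\dot\varphi_t^2\chi_t^n\to0$.

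The main obstacle is the lower bound on $\mathcal{J}$ in the boundary case, where no smooth solution exists. I would first write $\mathcal{J}$ explicitly as a sum of Aubin--Yau type energies, $\mathcal{J}(\varphi)=\sum_k \frac{c_k}{k+1}\sum_{j\le k}\int\varphi\,\chi_\varphi^j\wedge\chi^{k-j}\wedge\omega^{n-k}-\frac{1}{n+1}\sum_{j\le n}\int\varphi\,\chi_\varphi^j\wedge\chi^{n-j}$, with $c_0$ constant. I then use Assumption \ref{asmp:bdryMA} as realized by the hypothesis: perturb to $\chi_\epsilon$ (or to $c_{k,\epsilon}\searrow c_k$ with the numerical positivity strict), so that \cite{FM,Datar-Pingali} gives smooth solutions $\psi_\epsilon$. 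The corresponding functionals $\mathcal{J}_\epsilon$ are convex along geodesics, or along linear paths via the convexity of $Q$, and are minimized at $\psi_\epsilon$. Hence $\mathcal{J}_\epsilon(\varphi)\ge\mathcal{J}_\epsilon(\psi_\epsilon)$.

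The delicate point is to show that $\mathcal{J}_\epsilon(\psi_\epsilon)$ stays bounded below uniformly and that $\mathcal{J}_\epsilon-\mathcal{J}$ is controlled along the flow. For the latter I would use Lemma \ref{lem:bound}: it gives $|\varphi_t-\varphi_0|\le Ct$, which is too weak by itself, so I would normalize $\sup\varphi_t$ and use the standard bound $\int|\varphi|\omega^n\le C$ for normalized quasi-psh functions. The mixed terms of $\mathcal{J}_\epsilon-\mathcal{J}$ are linear in $\epsilon$ times integrals of $\varphi$ against positive mixed forms. These are bounded in terms of $\sup\varphi-I(\varphi)$-type energies, which I expect can be absorbed by the $-\frac{1}{n+1}$ Monge--Ampère energy part for $\epsilon$ small. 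Alternatively, following \cite{Mur}, I would test the convexity inequality of $\mathcal{J}_\epsilon$ between $\varphi_t$ and $\psi_\epsilon$ directly and let $\epsilon\to0$. If this absorption fails, I would fall back on the pointwise bound $|\dot\varphi_t|\le C$ together with $\frac{d}{dt}\mathcal{J}\le0$ and a sup-normalization of the flow. The reason is that adding constants shifts $\mathcal{J}$ by a multiple of the vanishing topological quantity $\int_X(\sum c_k\chi^k\wedge\omega^{n-k}-\chi^n)=0$, so $\mathcal{J}$ is invariant under constants.

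Finally I would convert the $L^2(\chi_t^n)$ decay into $L^2(\omega^n)$ decay. Since $\dot\varphi_t$ is uniformly bounded by Lemma \ref{lem:bound}, the convergence of $\int\dot\varphi_t^2\chi_t^n\to0$ needs a lower bound $\chi_t^n\ge c\,\omega^n$. For $c_k$ not all zero this follows from $\dot\varphi_t\le C$, i.e.\ $Q\ge 1-C$; for this I would use the $\mathcal{C}$-subsolution-type preservation along the flow from \cite{CS17}, namely that $P_\omega(\chi_t)$ stays bounded, together with the Newton--Maclaurin argument of \eqref{eq:masspositivity}. Then $\|Q_{c_0,\omega}(\chi_t)-1\|_{L^2(\omega)}^2\le c^{-1}\int\dot\varphi_t^2\chi_t^n\to0$.
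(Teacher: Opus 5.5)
\textbf{There is a genuine gap.} It sits exactly at the step you flag as the main obstacle.

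Your argument needs $\mathcal{J}(\varphi_t)$ to be bounded below uniformly in $t$, and none of the routes you sketch delivers this. Perturbing the class to $[\chi_\epsilon]$ does not help, because $\mathcal{J}_\epsilon$ then lives on a different space of potentials from the flow. So take a perturbation that keeps $[\chi]$ and the topological constraint, e.g.\ $c_0\mapsto c_0+\epsilon$ and $1\mapsto 1+a_\epsilon$ with $a_\epsilon=\epsilon\int_X\omega^n/\int_X\chi^n$. Then
\[
\mathcal{J}_\epsilon(\varphi)-\mathcal{J}(\varphi)=\epsilon\int_X\varphi\,\omega^n-a_\epsilon I(\varphi).
\]
For $\sup\varphi=0$ this is, up to $O(\epsilon)$, $\epsilon$ times the energy $-I(\varphi)$, and you have no bound on that energy along the flow. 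Absorbing this term into $\mathcal{J}$ amounts to coercivity of $\mathcal{J}$, which is precisely what is unavailable in the boundary case. (Using coercivity of $\mathcal{J}_\epsilon$ instead would need its coercivity constant to dominate $C\epsilon$, and nothing gives you that.) Your fallback via $|\dot\varphi_t|\le C$, $\frac{d}{dt}\mathcal{J}\le 0$ and invariance under constants only yields $\mathcal{J}(\varphi_t)\ge\mathcal{J}(\varphi_0)-Ct$. That does not make $\int_0^\infty\int_X\dot\varphi_t^2\chi_t^n\,dt$ finite.

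\textbf{The missing idea: no lower bound on $\mathcal{J}$ is needed.} By Lemma \ref{lem:convexalongflow}, $\frac{d}{dt}\mathcal{J}(\varphi_t)=-\int_X\dot\varphi_t^2\chi_t^n$ is nondecreasing and bounded. Hence $\lim_{t\to\infty}\mathcal{J}(\varphi_t)/t=-\lim_{t\to\infty}\int_X\dot\varphi_t^2\chi_t^n$, and it suffices to prove $\mathcal{J}(\varphi_t)\ge -C\epsilon t-C_\epsilon$ for every $\epsilon>0$. To get this, compare the two functionals along the flow path itself rather than at the point $\varphi_t$:
\[
\mathcal{J}_\epsilon(\varphi_t)-\mathcal{J}_\epsilon(\varphi_0)=\mathcal{J}(\varphi_t)-\mathcal{J}(\varphi_0)+\epsilon\int_0^t\int_X\dot\varphi_s\,\omega^n\,ds-a_\epsilon\int_0^t\int_X\dot\varphi_s\,\chi_s^n\,ds.
\]
\begin{itemize}
\item The $a_\epsilon$-term vanishes, since $\int_X\dot\varphi_s\chi_s^n=0$ by the topological constraint.
\item The $\epsilon$-term is at most $C\epsilon t$ by Lemma \ref{lem:bound}.
\item $\mathcal{J}_\epsilon(\varphi_t)\ge\mathcal{J}_\epsilon(\varphi_\epsilon)$, where $\varphi_\epsilon$ is the smooth solution of the perturbed equation. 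It exists because the rescaled conditions $\int_V(1+a_\epsilon)\frac{n!}{p!}[\chi]^p-\sum_k c_k\frac{k!}{(k-n+p)!}[\chi]^{k-n+p}\wedge[\omega]^{n-k}>0$ are strict.
\end{itemize}
Letting $\epsilon\to0$ gives the claim; this is the paper's argument. The ingredients you list (Lemma \ref{lem:bound}, the perturbed minimizer, a difference linear in $\epsilon$) are the right ones. They yield an $o(t)$ bound, not the $O(1)$ bound you were aiming for.

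\textbf{A sign slip in your last step.} The bound $\chi_t^n\ge c\,\omega^n$ comes from $\dot\varphi_t\ge -C$, i.e.\ $Q_{c_0,\omega}(\chi_t)\le 1+C$. It follows directly from Maclaurin's inequality applied to the reciprocals of the eigenvalues of $\omega^{-1}\chi_t$. The bound $\dot\varphi_t\le C$ gives nothing here, and no preserved bound on $P_\omega(\chi_t)$ is needed.
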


\begin{proof}
    First, note that the convexity proved in Lemma \ref{lem:conv} above implies that
    \begin{align*}
        \lim_{t\to\infty}\frac{\mathcal{J}(\varphi_t)}{t}=\lim_{t\to\infty}\frac{d}{dt}\mathcal{J}(\varphi_t)=-\lim_{t\to\infty}\Vert Q_{c_0,\omega}(\chi_t)-1\Vert^2_{L^2(\chi_t)}\le0.
    \end{align*}
    Denote the left hand side by $\mu$. We prove $\mu=0$. If this is true, we obtain the desired result. Indeed, by the bound of $\vert\dot{\varphi_t}\vert$ proved in Lemma \ref{lem:bound} above and the proof of \eqref{eq:masspositivity}, we have $\chi^n_t\ge c\omega^n$ for some $c>0$ uniformly in $t$. Thus, the desired result follows.
    
    For $\varepsilon>0$, define $\mathcal{J}_\varepsilon$ by
    $$\mathcal{J}_{\varepsilon}(\varphi):=\int_0^1\int_X\varphi\left(\sum_{k=1}^{n-1}c_k\frac{\chi_{t\varphi}^k\wedge\omega^{n-k}}{\chi_{t\varphi}^n}+\varepsilon\frac{\omega^n}{\chi_{t\varphi}^n}-1-a_{\varepsilon}\right)\chi_{t\varphi}^n\ dt,$$
    where $a_\varepsilon:=(\varepsilon\int_X\omega^n)/(\int_X\chi^n)>0$.
    Then, the critical point $\varphi_\varepsilon$ of $\mathcal{J}_{\varepsilon}$ satisfies
    \begin{equation}\label{eq:perturbgMA}
        \sum_{k=1}^{n-1}c_k\frac{\chi_{\varphi_\varepsilon}^k\wedge\omega^{n-k}}{\chi_{\varphi_\varepsilon}^n}+\varepsilon\frac{\omega^n}{\chi_{\varphi_\varepsilon}^n}=1+a_{\varepsilon}
    \end{equation}
    By assumption, we have
    $$\int_V(1+a_\varepsilon)\frac{n!}{p!}[\chi]^p-\sum_{k=1}^{n-1}c_k\frac{k!}{(k-n+p)!}[\chi]^{k-n+p}\wedge[\omega]^{n-k}>0$$
    for any $p$-dimensional subvariety $V$, where $p=1,2,\dots,{n-1}$.
    Therefore, by \cite{FM} (\cite{Datar-Pingali} if $X$ is projective), we obtain the solution of \eqref{eq:perturbgMA} with $I(\varphi_\varepsilon)=0$, where the functional $I$ is the Monge-Ampère energy, defined by
    $$I(0)=0, \quad dI(\varphi)(\psi)=\int_X\psi\,\chi_\varphi^n.$$
    By the proof of \cite[Theorem 3]{PT} or \cite[Theorem 7.5]{Sun15}, the flow
    \begin{equation}\label{eq:perturbmixedHessianflow}
    \begin{cases}
        \dot{\varphi_t}=1+a_\varepsilon-\sum_{k=0}^{n-1}c_k\frac{\chi_t^k\wedge\omega^{n-k}}{\chi_t^n}-\varepsilon\frac{\omega^n}{\chi^n_t},\\
        \varphi_t|_{t=0}=\varphi_0.
    \end{cases}
\end{equation}
    converges smoothly to $\varphi_\varepsilon+I(\varphi_0)$. Here, remark that the constant comes from the fact that $I$ is constant along the flow \eqref{eq:perturbmixedHessianflow}, since we have
    $dI(\varphi_t)(\dot{\varphi_t})=\int_X\dot{\varphi_t}\chi_t^n=0$
    by the definition of $a_\varepsilon$.
    Since \eqref{eq:perturbmixedHessianflow} is a negative gradient flow of $\mathcal{J}_{\varepsilon}$, we obtain $\mathcal{J}_{\varepsilon}\ge\mathcal{J}_{\varepsilon}(\varphi_\varepsilon).$ 
    On the other hand, we have
    \begin{equation*}
        \begin{aligned}
            \mathcal{J}_{\varepsilon}(\varphi_t)-\mathcal{J}_{\varepsilon}(\varphi_0)
            =&\int_0^t\int_X\dot{\varphi_t}\left(\sum_{k=1}^{n-1}c_k\frac{\chi_t^k\wedge\omega^{n-k}}{\chi_t^n}+\varepsilon\frac{\omega^n}{\chi_t^n}-1-a_{\varepsilon}\right)\chi_t^n\ dt,\\
            =&\mathcal{J}(\varphi_t)-\mathcal{J}(\varphi_0)+\int_0^t\int_X\dot{\varphi}_t\, \varepsilon\, \omega^n \,dt\\
            \le&\mathcal{J}(\varphi_t)-\mathcal{J}(\varphi_0)+C\varepsilon t,
        \end{aligned}
    \end{equation*}
    where the second equality follows from the fact that $I$ is constant along the flow \eqref{eq:mixedHessianflowre}, and the inequality follows from the bound of $\dot{\varphi_t}$ proved in Lemma \ref{lem:bound}. Therefore, we obtain
    $$\mathcal{J}(\varphi_t)\ge -C\varepsilon t-C_\varepsilon$$
    for some constants $C$ and $C_\varepsilon$.
    Dividing by $t$ and letting $t\to\infty$, we obtain
    $$\lim_{t\to\infty}\frac{\mathcal{J}(\varphi_t)}{t}\ge -C\varepsilon.$$
    Since $\varepsilon$ was arbitrary, we conclude that
    $$0\ge\mu:=-\lim_{t\to\infty}\Vert Q_{\omega}(\chi_t)-1\Vert^2_{L^2(\chi_t)}=\lim_{t\to\infty}\frac{\mathcal{J}(\varphi_t)}{t}\ge0,$$
    which is the desired claim.
\end{proof}

\begin{proof}[Proof of Theorem \ref{thm:Hessflow}]
    Take a subsequence $\{t_i\}$ of times such that $\chi_i:=\chi_{t_i}$ converges to $\chi_\infty$ in the sense of currents. Once one proves $\Vert Q_{c_0,\omega}(\chi_t)-1\Vert_{L^2(\omega)}\to0$ as $t\to\infty$, the same arguments as in \cite[Proposition 2.8]{Mur} yield $Q_{c_0,\omega}(\chi_\infty)\le_v1$. Here, we record it for the readers' convenience.
    With the same notations and the same arguments as \eqref{eq:Qconv}, noting that $c_0\ge0$ is constant by assumption, we have
    \begin{equation*}
        \begin{aligned}
            Q_{c_0,\omega_0}(\delb u_{i,\delta})
            &=Q_{c_0,I_n}\left(\omega_0^{-1}(z)\int_{\mathbb{C}^n}\delb u_i(z-\delta w)\rho(w)dw\right)\\
            &=Q_{c_0,I_n}\left(\int_{\mathbb{C}^n}(\omega_0^{-1}\chi_i)(z-\delta w)\rho(w)dw\right)\\
            &\le\int_{\mathbb{C}^n} \left(Q_{c_0,\omega}(\chi_i)(z-\delta w)-1\right)\rho(w)dw+1\\
            &\le\left(\int_{B_1} \left\vert Q_{c_0,\omega}(\chi_i)(z-\delta w)-1\right\vert^2dw\right)^{1/2}\Vert \rho\Vert_2+1\\
            &\le C_\delta \Vert Q_{c_0,\omega}(\chi_i)-1\Vert_{L^2(\omega)}+1,
        \end{aligned}
    \end{equation*}
    where the first inequality follows since $Q_{c_0,I_n}$ with $c_0\ge0$ is convex and increases in the set of positive Hermitian metrics. Hence, letting $i\to\infty, \ \delta\to0, \ \omega_0\to\omega$ in this order, by $\Vert Q_{c_0,\omega}(\chi_i)-1\Vert_{L^2(\omega)}\to0$, we conclude that $Q_{c_0,\omega}(\chi_\infty)\le_v1$. 
    
    Since $Q_{c_0,\omega}$ with $c_0\ge0$ increases and upper tests of psh functions are psh as in the proof of \cite[Proposition 1.3]{EGZ}, we see that $Q_{c_0,\omega}(\chi_\infty)\le_v1$ implies $P_\omega(\chi_\infty)\le_v1$. 
    Then, Lemma \ref{lem:visctopp} and Proposition \ref{prop:superineq} imply that $\chi_\infty$ is a solution of the gMA equation \eqref{eq:gMA}. The uniqueness of Theorem \ref{thm:main} implies that we do not need to take a subsequence and $\chi_t\to\chi_\infty$ as $t\to\infty$ in the sense of currents.
    \end{proof}

\section{The supercritical deformed Hermitian-Yang-Mills equation}\label{sec:dHYM}
In this section, we consider the dHYM equation using the same strategy. 

\subsection{Preliminaries}
For a Hermitian matrix $A\in\operatorname{Herm}(\mathbb{C}^n)$ and $\ell=1,\dots,{n-1}$, define
\begin{equation}\label{eq:thetaop}
    \tilde{\theta}^\ell(A):=\max_{i_1,\dots,i_\ell}\sum_{j\neq i_1,\dots,i_\ell} \operatorname{arccot}\lambda_j,\quad \theta(A):=\sum_j\operatorname{arccot}\lambda_j,
\end{equation}
where $\lambda_j$'s are eigenvalues of $A$. We particularly denote $\tilde{\theta}^1$ by $\tilde{\theta}.$
Fix constants $\theta,\Theta$ such that $0<\theta\le\Theta<\pi$ and define
\begin{equation}\label{eq:thetaset}
    \begin{aligned}
        &{\Gamma}_{\theta,\Theta}:=\{A\in\operatorname{Herm}(\mathbb{C}^n)\mid 0<\tilde{\theta}(A)<\theta,\quad 0<\theta(A)<\Theta\}.    \end{aligned}
\end{equation}
Denote its closure by $\bar{\Gamma}_{\theta,\Theta}$.
For $A\in\bar{\Gamma}_{\Theta,\Theta}$, we define
\begin{align*}
    P^\ell_{I_n}(A)= - \cot\tilde{\theta}^\ell(A),\quad Q_{c_{0,0},I_n}(A) = -\cot\theta(A)+\frac{c_{0,0}}{\operatorname{Im}\Pi_j(\lambda_j+\sqrt{-1})}.
\end{align*}
We particularly denote $P^1_{I_n}$ by $P_{I_n}$ and $Q_{0,I_n}$ by $Q_{I_n}$. Note that $\tilde{\theta}(A)<\theta$ is equivalent to
$P_{I_n}(A)<-\cot\theta$ and that $\theta(A)<\theta$ is equivalent to $Q_{I_n}(A)<-\cot\theta.$

\begin{prop}\label{prop:dhymproperties}
    Let $0<\theta\le\Theta<\pi$. Then, there exists a constant $\varepsilon_{3.1}>0$, depending on $\theta$, such that if a constant $c_{0,0}$ satisfies $c_{0,0}>-\varepsilon_{3.1}$, the following hold: 
    \begin{enumerate}[label={\upshape(\roman*)}]
        \item\label{item:dhymmonotonicity} The functions $\theta, \ \tilde{\theta},\ P^\ell_{I_n}$, and $Q_{c_{0,0},I_n}$ decrease.
        \item\label{item:dhympconv} $P^\ell_{I_n}$ is convex. 
        \item\label{item:dhymsublevel} The sets $\bar{\Gamma}_{\theta,\Theta}$ and $\{Q_{c_{0,0},I_n}\le -\cot\theta\}\cap\bar{\Gamma}_{\theta,\Theta}$ are convex.
        \item\label{item:dhymImest} There exists a constant $C_{3.1}$ depending on $\Theta$ such that for any $A\in\bar{\Gamma}_{\theta,\Theta}$, we have
        $$\frac{1}{\operatorname{Im}(\lambda_i+\sqrt{-1)^n}}\le\frac{1}{C_{3.1}},$$
        where $\lambda_i$ denotes the eigenvalues of $A$.
    \end{enumerate}
\end{prop}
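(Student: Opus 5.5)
Most items of Proposition \ref{prop:dhymproperties} reduce to known facts about the angle operator $\theta(A)=\sum_j\operatorname{arccot}\lambda_j$. I will take them in order, working with the eigenvalue vector $\lambda$ of $A$ and using that $\operatorname{arccot}$ is strictly decreasing with values in $(0,\pi)$.

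\textbf{Item \ref{item:dhymmonotonicity} (monotonicity).} If $A\ge B$, then by Weyl's inequalities the ordered eigenvalues satisfy $\lambda_j(A)\ge\lambda_j(B)$. Hence $\theta(A)\le\theta(B)$. The same holds for every partial sum $\sum_{j\neq i_1,\dots,i_\ell}\operatorname{arccot}\lambda_j$: the maximum over $\ell$ omitted indices is attained by omitting the $\ell$ largest $\operatorname{arccot}$'s, i.e.\ the smallest eigenvalues, and is still a monotone function of the ordered eigenvalues. So $\tilde\theta^\ell$ decreases.
\begin{itemize}
\item $P^\ell_{I_n}=-\cot\tilde\theta^\ell$ then decreases, because $-\cot$ is increasing on $(0,\pi)$.
\item For $Q_{c_{0,0},I_n}$, write $\operatorname{Im}\prod_j(\lambda_j+\sqrt{-1})=r\sin(\pi-\theta(A))=r\sin\theta(A)$, where $r=\prod_j(1+\lambda_j^2)^{1/2}$. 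Then
\[Q_{c_{0,0},I_n}=-\cot\theta(A)+c_{0,0}\,/\,(r\sin\theta(A)).\]
For $c_{0,0}\ge0$ both terms decrease in $\bar\Gamma_{\theta,\Theta}$. For slightly negative $c_{0,0}$, differentiate in each $\lambda_k$. The derivative of $-\cot\theta$ is $-\csc^2\theta/(1+\lambda_k^2)$, which has size at least $c(\Theta)/(1+\lambda_k^2)$. The perturbation term contributes $|c_{0,0}|\cdot O\big(1/((1+\lambda_k^2)\,r\sin^2\theta)\big)$.
\item The point is that $r\sin\theta(A)$ is bounded below on $\bar\Gamma_{\theta,\Theta}$; this is exactly item \ref{item:dhymImest}. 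So choosing $\varepsilon_{3.1}$ small makes the $\cot$ term dominate.
\end{itemize}

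\textbf{Item \ref{item:dhymImest} (lower bound).} On $\bar\Gamma_{\theta,\Theta}$ we have $\tilde\theta\le\theta<\pi$. This forces every $\operatorname{arccot}\lambda_j\le\theta(A)-\tilde\theta\cdot 0$, i.e.\ each partial sum over $n-1$ indices is at most $\theta<\pi$. In particular at most one eigenvalue is nonpositive, and the sum of the others is controlled.

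The standard argument (Collins–Jacob–Yau) gives the following: if $\theta(A)\le\Theta<\pi$ and $n\ge2$, then $\lambda_j\ge\cot(\Theta)$-type bounds fail, so instead I split into two cases.
\begin{itemize}
\item If $\theta(A)\ge\delta$, then $\sin\theta(A)\ge\min(\sin\delta,\sin\Theta)$ and $r\ge1$.
\item If $\theta(A)$ is small, then all $\lambda_j$ are large, and
\[r\sin\theta\approx \prod_j\lambda_j\cdot\sum_j 1/\lambda_j\ \ge\ \prod_{j\ne k}\lambda_j,\]
which is bounded below once all $\lambda_j\ge\cot(\delta)\ge1$.
\end{itemize}
Combining the cases gives the constant $C_{3.1}$ depending on $\Theta$. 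I expect this case analysis, together with uniformity near the boundary where $\tilde\theta=\theta$, to be the main technical obstacle.

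\textbf{Item \ref{item:dhympconv} (convexity of $P^\ell$).} I will use the known fact that when $\tilde\theta^\ell(A)<\pi$, the function $-\cot$ of a partial angle sum over $n-\ell$ eigenvalues is convex. For $n-\ell\ge2$ this comes from the concavity of level sets and the supercritical convexity result of Collins–Jacob–Yau (equivalently \cite[Lemma 2.2]{Datar-Pingali}-type arguments). The case $n-\ell=1$ is $-\cot(\operatorname{arccot}\lambda)=-\lambda$, which is linear. A maximum of convex functions, taken over choices of index sets realized as $\max$ over coordinate subspaces via a Schur–Horn or min–max representation, is convex.

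\textbf{Item \ref{item:dhymsublevel} (convex sublevel sets).} Convexity of $\bar\Gamma_{\theta,\Theta}$ follows because $\{\tilde\theta\le\theta\}=\{P_{I_n}\le-\cot\theta\}$ is a sublevel set of the convex function $P_{I_n}$, and $\{\theta(A)\le\Theta\}$ is convex by Yuan's theorem for supercritical phase. For $\{Q_{c_{0,0},I_n}\le-\cot\theta\}$ with $c_{0,0}\ge0$, I will rewrite the inequality as
\[\operatorname{Re}\prod_j(\lambda_j+\sqrt{-1})-\cot\theta\,\operatorname{Im}\prod_j(\lambda_j+\sqrt{-1})\ \ge\ -c_{0,0}\,\cdots.\]
This is the twisted dHYM operator, and convexity of its sublevel sets under the $\tilde\theta<\theta$ constraint is \cite{GChen} (\cite[Proposition 5.6]{GChen} for the general case). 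For small negative $c_{0,0}$ I will argue by perturbation, using the lower bound of item \ref{item:dhymImest} to keep the level set inside the region where the Hessian of $-\cot\theta$ is uniformly controlled.
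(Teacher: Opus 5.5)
\textbf{Items (i) and (iv).} These are correct, and more elementary than the paper, which simply cites [GChen, Lemma 5.6 (3), (1)]. In (i) the comparison is cleaner than you state. Let $r_k,\theta_k$ be the modulus and argument of $\prod_{j\neq k}(\lambda_j+\sqrt{-1})$. Then
\[
\partial_{\lambda_k}Q_{c_{0,0},I_n}=\frac{\csc^2\theta(A)}{1+\lambda_k^2}\left(-1-c_{0,0}\frac{\sin\theta_k}{r_k}\right),
\]
and $r_k\ge1$, so any $\varepsilon_{3.1}\le 1$ works and (iv) is not needed there. One small slip: $\tilde\theta^\ell$ is attained by omitting the $\ell$ \emph{largest} eigenvalues, not the smallest. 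This does not affect monotonicity.

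\textbf{Main gap: item (iii) for $c_{0,0}<0$.} This is the only place where $\varepsilon_{3.1}$ really matters, and ``argue by perturbation'' is not a proof as written. Convexity of a sublevel set is not stable under small changes of the defining function unless you have a uniform \emph{relative} second-order estimate. Here the region is unbounded and the level hypersurfaces flatten at infinity. Already for $n=2$ the boundary is the hyperbola $(\lambda_1-\cot\theta)(\lambda_2-\cot\theta)=\csc^2\theta+c_{0,0}$, and the Hessian of $-\cot\theta(A)$ tends to zero there. A lower bound on $\operatorname{Im}\prod_j(\lambda_j+\sqrt{-1})$ gives no such relative control, so that estimate is the whole difficulty.

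The paper instead uses a criterion that is open under shifting the constant term:
\begin{itemize}
\item It rewrites the set as $\{\operatorname{Re}(\lambda+\sqrt{-1})^n-\cot\theta\operatorname{Im}(\lambda+\sqrt{-1})^n\ge c_{0,0}\}\cap\bar\Gamma_{\theta,\Theta}$.
\item By [Lina, Lemma 4.2], $f(x)=\operatorname{Re}(x+\sqrt{-1})^n-\cot\theta\operatorname{Im}(x+\sqrt{-1})^n$ is strictly right-Noetherian.
\item This persists for $f-c_{0,0}$ with $c_{0,0}>-\varepsilon_{3.1}$: all derivatives are unchanged and $f'>0$ on $[x_0,\infty)$. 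Then [Lina, Theorem 1.1] gives convexity at the eigenvalue level.
\item It passes to matrices via Ky Fan, $\lambda(tA+(1-t)B)\prec t\lambda(A)+(1-t)\lambda(B)$, together with Schur-convexity of the symmetric indicator of the complement.
\end{itemize}
Your proposal also omits this eigenvalue-to-matrix transfer for the $Q$-sublevel set. The paper treats all $c_{0,0}>-\varepsilon_{3.1}$, including $c_{0,0}\ge0$, this way rather than through [GChen].

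\textbf{Lesser gap: item (ii).} Convex sublevel sets (Yuan, Collins--Jacob--Yau) give only quasi-convexity of $-\cot\theta$, not convexity. Also, \cite[Lemma 2.2]{Datar-Pingali} concerns the gMA operator, not the angle operator. The paper cites \cite[Proposition 2.2]{CLT}.

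For a self-contained proof, induct on the number of angles. Use
\[
-\cot(\operatorname{arccot}x+\operatorname{arccot}b)=\frac{1+b^2}{x+b}-b .
\]
This is jointly convex and decreasing in $x$ on $\{x+b>0\}$, which is exactly the condition that the two angles sum to less than $\pi$. Compose it with the concave function $x=\cot(\text{previous partial sum})$.

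Your max-over-compressions lift to matrices is fine. So is your deduction of convexity of $\bar\Gamma_{\theta,\Theta}$ from (ii) plus supercritical convexity of $\{\theta(A)\le\Theta\}$, once (ii) is established; the paper cites [GChen, Lemma 5.6 (10)] for this.
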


\begin{proof}
    The first three of \ref{item:dhymmonotonicity} are obvious, while the last part is proved in \cite[Lemma 5.6 (3)]{GChen}. \ref{item:dhympconv} is proved in \cite[Proposition 2.2]{CLT}. The convexity of $\bar{\Gamma}_{\theta,\Theta}$ is proved in \cite[Lemma 5.6 (10)]{GChen}. For the convexity of the second item in \ref{item:dhymsublevel}, 
    note that the sublevel set can be rewritten as
    \begin{equation}\label{eq:sublevelset}
        \begin{aligned}
            &\{\lambda\in\mathbb{R}^n\mid Q_{c_{0,0},I_n}(\lambda)\le c\}\cap\bar{\Gamma}_{\theta,\Theta}\\
        =&\{\lambda\in\mathbb{R}^n\mid \operatorname{Re}(\lambda+\sqrt{-1})^n+c\operatorname{Im}(\lambda+\sqrt{-1})^n-c_{0,0}\ge0\}\cap\bar{\Gamma}_{\theta,\Theta}
        \end{aligned}
    \end{equation}
    where, we denote by the same notations $Q_{c_{0,0},I_n}$ and $\bar{\Gamma}_{\theta,\Theta}$ the operator and the subset in $\mathbb{R}^n$ defined in a trivial way from \eqref{eq:thetaop} and \eqref{eq:thetaset}. The convexity of the first term of the right-hand side of \eqref{eq:sublevelset} follows from \cite[Theorem 1.1]{Lina} if $c_{0,0}\ge-\varepsilon_{3.1}$ for some $\varepsilon_{3.1}>0$ depending on $\theta$. Indeed, by \cite[Lemma 4.2]{Lina}, if $c_{0,0}=0$, then the corresponding polynomial
    $$f(x):=\operatorname{Re}(x+\sqrt{-1})^n-\cot\theta\operatorname{Im}(x+\sqrt{-1})^n$$
    is strictly right-Noetherian. Since changing the zero-th order term $c_{0,0}$ does not affect any derivatives of $f(x)$ and the 1-st derivative of $f(x)$ is positive in $[x_0,\infty)$, where $x_0$ is the largest real root of $f(x)$, we can see that the polynomial after perturbing $c_{0,0}>-\epsilon_{3.1}$ is still strictly right-Noetherian. Hence, \cite[Theorem 1.1]{Lina} is applied. Therefore, \eqref{eq:sublevelset} is convex.
    Finally, to see that the same convexity occurs in the matrix level, recall the Ky-Fan inequality 
    $\lambda(tA+(1-t)B)\prec t\lambda(A)+(1-t)\lambda(B)$.
    Since the function $\mathbbm{1}_{\{Q_{c_{0,0},I_n}(\lambda)>-\cot\theta\}}$ is convex and symmetric, it is Schur-convex, that is, 
    $$\mathbbm{1}_{\{Q_{c_{0,0},I_n}(\lambda)>-\cot\theta\}}(\lambda)\le\mathbbm{1}_{\{Q_{c_{0,0},I_n}(\lambda)>-\cot\theta\}}(\lambda') \text{ if }\lambda\prec\lambda'.$$
    Hence, the desired result follows. 
    \ref{item:dhymImest} is proved in \cite[Lemma 5.6 (1)]{GChen}.
\end{proof}

For an $n$-dimensional Kähler manifold $(X,\omega)$ and a closed real $(1,1)$-form $\alpha$, we say $\alpha\in{\Gamma}_{\omega,\theta,\Theta}$ (resp. $\alpha\in\bar{\Gamma}_{\omega,\theta,\Theta}$)  if $\omega^{-1}\alpha\in{\Gamma}_{\theta,\Theta}$ (resp. $\omega^{-1}\alpha\in\bar{\Gamma}_{\theta,\Theta}$) at any point. We also define
$\theta_\omega,\ \tilde{\theta}_\omega,\ P^\ell_\omega$ and $Q_{c_0,\omega}$ in the same way as \eqref{eq:PQ}. We particularly denote $P^1_\omega$ by $P_\omega$ and $Q_{0,\omega}$ by $Q_\omega$. The twisted dHYM equation is given by
\begin{equation}\label{eq:tdHYMop}
    Q_{c_0,\omega}(\alpha)=-\cot\theta.
\end{equation}
A closed real $(1,1)$-form $\alpha$ is called a supercritical $\mathcal{C}$-subsolution to the twisted dHYM equation if $P_\omega(\alpha)<-\cot\theta$. Here, remark that we implicitly infer that $\alpha\in\bar{\Gamma}_{\omega,\Theta,\Theta}$ since we define $P_\omega$ on $\bar{\Gamma}_{\omega,\Theta,\Theta}$.
As in Definition \ref{defn:vsub}, we define a degenerate supercritical $\mathcal{C}$-subsolution and a viscosity subsolution as follows:

\begin{defn}\label{defn:dhymvsub}
    Let $(X,\omega)$ be a K{\"a}hler manifold. 
    We say that a closed real $(1,1)$-current $\alpha$ is a degenerate supercritical $\mathcal{C}$-subsolution (resp. a viscosity subsolution), denoted by $P_\omega(\alpha)\le_v-\cot\theta$ (resp. $P_\omega(\alpha)\le_v-\cot\theta$ and $Q_{c_0,\omega}(\alpha)\le_v-\cot\theta$), if for any point $x\in X$ and any coordinate neighborhood $U$ of $x$, we have  
    \begin{align*}
        &P_\omega(\delb q)(x)\le -\cot\theta, \\
        &\bigg( \text{resp. } P_\omega(\delb q)(x)\le -\cot\theta \text{ and } Q_{c_0,\omega}(\delb q)\le-\cot\theta,\bigg)
    \end{align*}
    for any upper test $q$ of $\varphi$ at $x \in U$, where $\varphi$ is an upper semicontinuous local potential of $\alpha$. Here, remark that we implicitly infer that $\delb q\in\bar{\Gamma}_{\omega,\Theta,\Theta}$ for any upper test $q$ since $P_\omega$ and $Q_{c_0,\omega}$ are defined on $\bar{\Gamma}_{\omega,\Theta,\Theta}$. 
\end{defn}

\begin{remark}\label{rem:dhymvisc}
    \begin{enumerate}[label={\upshape(\roman*)}]
        \item A smooth closed real $(1,1)$-form $\alpha$ is called a solution of the twisted dHYM equation if
        \begin{equation*}
            Q_{c_0,\omega}(\alpha)=-\cot\theta,\ P_\omega(\alpha)<-\cot\theta.
        \end{equation*}
        In this sense, a solution is a viscosity subsolution by Proposition \ref{prop:dhymproperties} \ref{item:dhymmonotonicity}. Here, note that any upper test $q$ satisfies $\delb q\in\bar{\Gamma}_{\omega,\Theta,\Theta}$.
        \item We will see that a closed real $(1,1)$-current $\alpha$ is a degenerate supercritical $\mathcal{C}$-subsolution if and only if $\alpha\in\bar{\Gamma}_{\omega,\theta,\Theta}$, where the definition of $\alpha\in\bar{\Gamma}_{\omega,\theta,\Theta}$ for a current $\alpha$ will be given in Definition \ref{def:dhymgammabar}. These are also equivalent to the condition $T^p_\omega(\alpha)\ge0$ for $p=1,\dots,n-1$, where $(p,p)$-current $T^p_\omega$ will be defined in Definition \ref{def:dhymllcurrent}. 
        \item\label{item:dhymmonovisc} Since $0<\operatorname{arccot}x<\pi$, if $P_\omega(\alpha)\le_v1$, then $P^\ell_\omega(\alpha)\le_v1$ for any $\ell=1,\dots,n-1$.
    \end{enumerate}
\end{remark}

\subsection{Existence}
In this subsection, we prove the existence part of Theorem \ref{thm:maindHYM} by the same strategy in subsection \ref{sec:existence}. More precisely, we prove the following theorem on the existence of a weak solution of the twisted dHYM equation, which is needed to prove the uniqueness later in subsection \ref{subsec:dhymuniqueness} as in subsection \ref{subsec:uniqueness}.

\begin{thm}\label{thm:tdHYM}
    Assume Setup \ref{setup2} and Assumption \ref{asmp:bdrydHYM}. Let $\Theta$ be a constant such that $0<\theta<\Theta<\pi$. Let $c_0$ be a bounded lower semicontinuous function such that 
    \begin{equation}
        c_0>-\varepsilon_{3.2} \ \text{and} \  \int_X c_0\omega^n=0,
    \end{equation} 
    where $\varepsilon_{3.2}$ depends on $\varepsilon_{3.1}$ as in Proposition \ref{prop:dhymproperties} and $\Theta$ as in \cite[Lemma 5.6 (5)]{GChen}. Then, there exists a unique quasi-psh function $\psi$ with $\sup\psi=0$ such that
    \begin{equation}\label{eq:tdHYM}
        \begin{cases}
            \mathrm{Re}\langle(\alpha_\psi+\sqrt{-1}\omega)^n\rangle-\cot\theta\mathrm{Im}\langle(\alpha_\psi+\sqrt{-1}\omega)^n\rangle-c_0\omega^n=0,\\
            \alpha_\psi\in\bar{\Gamma}_{\omega,\theta,\Theta},
        \end{cases}
    \end{equation}
    where $\langle\cdot\rangle$ denotes the nonpluripolar product and the definition of $\bar{\Gamma}_{\omega,\theta,\Theta}$ is given in Definition \ref{def:dhymgammabar}.
\end{thm}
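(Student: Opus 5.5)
The plan follows the three-step strategy of Section \ref{sec:gMA}, transplanted to the dHYM operators $P^\ell_\omega$, $Q_{c_0,\omega}$ of \eqref{eq:thetaop}. By Assumption \ref{asmp:bdrydHYM}, \cite[Theorem 1.3]{CLT} and \cite{Linb}, we obtain smooth solutions $\psi_i$ of the twisted dHYM equation $Q_{c_{0,i},\omega_i}(\alpha_{i,\psi_i})=-\cot\theta_i$ with $P_{\omega_i}(\alpha_{i,\psi_i})<-\cot\theta_i$ and $\sup\psi_i=0$. Here $c_{0,i}$ is a smooth approximation of $c_0$ from below, chosen so that $c_{0,i}>-\varepsilon_{3.2}$, $c_{0,i}\to c_0$, and the total integral is compatible with the cohomological constant for $([\alpha_i],[\omega_i],\theta_i)$. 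The twist must be small compared with $\varepsilon_{3.2}$ (via \cite[Lemma 5.6 (5)]{GChen}) so that $Q_{c_{0,i}}$ keeps convex sublevel sets by Proposition \ref{prop:dhymproperties}. One difference from the gMA case is that $\alpha$ need not be positive, so $\alpha_{i,\psi_i}$ is only bounded below by $-\cot(\Theta')\omega_i$ or similar. Since $\alpha_{i,\psi_i}\in\Gamma_{\omega_i,\theta_i,\Theta}$ forces each eigenvalue to exceed a fixed $-C\omega$, the $\psi_i$ are uniformly quasi-psh and we can extract an $L^1$-limit $\psi_\infty$ with $\sup\psi_\infty=0$.

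\textbf{Step 1 (viscosity subsolution).} We repeat the proof of Proposition \ref{prop:vsub}. We mollify local potentials $u_i$ with respect to a constant-coefficient $\omega_0\le\omega$. The monotonicity (Proposition \ref{prop:dhymproperties} \ref{item:dhymmonotonicity}) handles $\omega_0\le\omega_i$ and $c_{0,0}\le c_{0,i}$. Convexity of $\bar\Gamma_{\theta_i,\Theta}$ and of the sublevel set of $Q_{c_{0,0}}$ (\ref{item:dhymsublevel}) then gives $P_{\omega_0}(\delb u_{i,\delta})\le-\cot\theta_i$ and $Q_{c_{0,0},\omega_0}(\delb u_{i,\delta})\le-\cot\theta_i$. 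Passing $i\to\infty$, $\delta\to0$, and $\omega_0\to\omega$ uses Proposition \ref{propCIL} twice, with $\theta_i\searrow\theta$ giving the limit bound $-\cot\theta$. Lower semicontinuity of $c_0$ finally yields $Q_{c_0,\omega}(\alpha_{\psi_\infty})\le_v-\cot\theta$. We then convert to pluripotential inequalities as in Lemma \ref{lem:visctopp}. We define $T^p_\omega$ via the expansion of $\operatorname{Re}-\cot\theta\operatorname{Im}$ of $(\alpha_\psi+\sqrt{-1}\omega)^n$ restricted to degree $p$, following Definition \ref{def:dhymllcurrent}. Using the sup-convolution argument of Proposition \ref{prop:equivalence}, plurifine locality, and the fact that $X\setminus\{\omega>0\}$ is pluripolar, we get $T^p_\omega(\alpha_{\psi_\infty})\ge0$ for $p<n$ and $T^n_\omega(\alpha_{\psi_\infty})-c_0\omega^n\ge0$. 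The truncation $\sup\{u,av-C\}$ requires $a\omega$ to be a subsolution, which holds for $a$ large since $\operatorname{arccot}$ tends to $0$.

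\textbf{Step 2 (mass inequality).} We prove the analogue of Proposition \ref{prop:superineq}: $\int_X T^n_\omega(\alpha_\varphi)\le\int_X c_0\omega^n$ whenever $P_\omega(\alpha_\varphi)\le_v-\cot\theta$. We interpolate $\varphi_{i,s}=(1-s)\varphi+s\psi_i$. Convexity of $\bar\Gamma$ keeps $T^{n-1}$ positive along the path, so the derivative is a pairing with $T^{n-1}$. By the analogue of Lemma \ref{lem:ppineq} it is nonnegative. The endpoint integral is topological and converges to the Setup \ref{setup2} constant. Combining with Step 1 gives equality, hence existence.

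\textbf{Step 3 (uniqueness).} This follows Subsection \ref{subsec:uniqueness} verbatim: the local approximation Lemma \ref{lem:app} (using \cite[Section 4]{GChen}'s regularized maximum for dHYM), the comparison Lemma \ref{lem:comp}, the equivalence Proposition \ref{prop:viscpp}, Lemmas \ref{lem:uni} and \ref{lem:solineq}, and Dinew's argument with perturbed twists $c_0+f_1$. For the last we need existence with lower semicontinuous twist, which is exactly what Steps 1–2 give, with $\varepsilon_{3.2}$ leaving room for $f_1$. The convexity of the operator $Q$ enters Lemma \ref{lem:BMineq}. For dHYM this is the convexity of $-\cot\theta(A)$ on the supercritical cone, with the extra $c_0/\operatorname{Im}\prod(\lambda_j+\sqrt{-1})$ term controlled by Proposition \ref{prop:dhymproperties} \ref{item:dhymImest}; this replaces \eqref{eq:masspositivity}.

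\textbf{Main obstacle.} I expect the main obstacle to be this uniqueness step. The needed strict inequality in Lemma \ref{lem:BMineq} requires a convexity inequality $\frac{d}{ds}Q\le Q(B)-Q(A)$ for the twisted operator with non-constant $c_0$. This is known only in dimension $\ge4$, or via the untwisted convexity. I would first try to work with the untwisted $Q_\omega$, absorbing $f_1$ into the $\varepsilon\omega^n/\operatorname{Im}$ term using the lower bound \ref{item:dhymImest}, exactly as the $\varepsilon\omega^n/(\delb u'^C_b)^n$ term was used before.
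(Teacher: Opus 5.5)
\textbf{Step 1 does not go through as written.} Your existence scheme is the paper's: approximate twisted solutions $\psi_i$, an $L^1$-limit, convolution plus Proposition \ref{propCIL} to get a viscosity subsolution, conversion to $T^p_\omega\ge0$, and then the mass inequality forcing equality. The problem is that you say the monotonicity of Proposition \ref{prop:dhymproperties} \ref{item:dhymmonotonicity} handles replacing $\omega_i$ by a constant-coefficient $\omega_0\le\omega$. That monotonicity is in the matrix $A$. It becomes monotonicity in $\omega$ only for semipositive forms, because only then does $\omega_0\le\omega_i$ give $\omega_0^{-1}\alpha\ge\omega_i^{-1}\alpha$. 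Here all you know is $\alpha_{i,\psi_i}>\cot\theta_i\,\omega_i$, and for $\theta_i>\pi/2$ negative eigenvalues can occur. At a point where $\omega_0=(1-\eta)\omega_i$, an eigenvalue in $(\cot\theta_i,(1-\eta)\cot\theta_i)$ is pushed below $\cot\theta_i$, so $\tilde\theta_{\omega_0}(\alpha_{i,\psi_i})>\theta_i$. So the pointwise bound $P_{\omega_0}(\alpha_{i,\psi_i})\le-\cot\theta_i$ that you want to average by convexity is simply not available. The twist term $c_{0,0}\,\omega_0^n/\operatorname{Im}(\alpha+\sqrt{-1}\omega_0)^n$ is not monotone in $\omega_0$ either.

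The paper replaces monotonicity by the Collins--Lin perturbation estimate \eqref{eq:CL} from \cite[Lemma 5.1]{CL}, valid when $|\omega-\omega_0|_{\omega_0}<\sigma_0$. It adds \eqref{eq:c_0term} for the twist, which uses Proposition \ref{prop:dhymproperties} \ref{item:dhymImest}. This yields only $\tilde\theta_{\omega_0}\le\theta_i+\varepsilon$, $\theta_{\omega_0}\le\Theta+\varepsilon$ and $Q_{c_{0,0},\omega_0}\le-\cot\theta_i+\varepsilon$, with $\varepsilon\to0$ as $\omega_0\to\omega$. This is why Definition \ref{def:dhymgammabar} uses $\omega_0$ close to $\omega$ with an $\varepsilon$ of slack, not $\omega_0\le\omega$.

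\textbf{Step 2 has the same gap.} $\psi_i$ is a subsolution with respect to $\omega_i$, and you cannot pass to $\omega$ via $\omega\le\omega_i$ as in Proposition \ref{prop:superineq}. The paper again uses \eqref{eq:CL} to obtain \eqref{eq:phiis}, and then works with $T^{n-1}_{i,\omega}$ with $\cot\theta$ shifted by $\varepsilon$.

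\textbf{On uniqueness, the paper's proof of this theorem establishes existence only.} Its uniqueness argument in Subsection \ref{subsec:dhymuniqueness} is run for the untwisted equation \eqref{eq:dHYM}, where $c_0\equiv0$. In that setting your fallback is essentially what is done: only the untwisted convexity is used, and the perturbation $f_1$ enters as an extra positive term. For the twisted statement, your Step 3 does not go through verbatim, for two reasons. First, Lemma \ref{lem:BMineq} needs convexity of $Q_{c_0,\omega}$ itself, because the base twist $c_0$, not just $f_1$, appears in $Q_{c_0,\omega}(\delb u'^C_b)-Q_{c_0,\omega}(\delb u^C_a)$. That convexity is unknown in dimension $3$, as the remark after Theorem \ref{thm:maindHYM} says. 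Second, the regularization Lemma \ref{lem:dhymapp} \ref{item:dhymappQ} requires $c_0$ to be continuous, whereas here $c_0$ is only lower semicontinuous. You have correctly located the obstacle, but neither your plan nor the paper's proof of this theorem supplies uniqueness in this generality.
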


In Setup \ref{setup2} and Assumption \ref{asmp:bdrydHYM}, by \cite[Theorem 1.3]{CLT}, there exists a (smooth) solution $\psi_i'$ of the dHYM equation 
\begin{equation*}
    Q_{\omega_i}(\alpha_{i,\psi'_i})=-\cot\theta_i.
\end{equation*}
Then, since $\alpha_{i,\psi'_i}\in{\Gamma}_{\omega_i,\theta_i,\Theta}$, by the same continuity method as in the proof of \cite[Proposition 5.5]{GChen} with the a priori estimates estabilished in \cite[Theorem 1.2]{Linb}, 
we obtain the solution $\psi_i$ of the twisted dHYM equation
\begin{equation}\label{eq:appdhym}
    \begin{cases}
        \operatorname{Re}(\alpha_{i,\psi_i}+\sqrt{-1}\omega_i)^n-c_{0,i}\omega_i^n=\cot\theta_i\operatorname{Im}(\alpha_{i,\psi_i}+\sqrt{-1}\omega_i)^n,\\
        \alpha_{i,\psi_i}\in{\Gamma}_{\omega_i,\theta_i,\Theta},\\
        \sup \psi_i=0,
    \end{cases}
    \end{equation}
    where $c_{0,i}$ is a smooth function  such that
    \begin{equation*}
        c_{0,i}>-\varepsilon_{3.2}, \  \lim_{i\to\infty} c_{0,i}=c_0,\int_X c_{0,i}\omega_i^n=0.
    \end{equation*}
Since $\alpha_{i,\psi_i}\in{\Gamma}_{\omega,\theta_i,\Theta}$, we have $\alpha_{i,\psi_i}-\cot\theta_i\omega_i>0$. Hence, by taking a subsequence, we obtain an $L^1$-limit $\psi_\infty$ of $\psi_i$. Define $\alpha_\infty=\alpha+\delb \psi_\infty$.

\begin{prop}\label{prop:vsubdHYM}
    We have $P_\omega(\alpha_\infty)\le_v -\cot\theta$ and $Q_{c_0,\omega}(\alpha_\infty)\le_v -\cot\theta$ in $\{\omega>0\}$.
\end{prop}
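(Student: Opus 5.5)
We follow the proof of Proposition \ref{prop:vsub} almost verbatim, replacing the operators of the gMA equation by their dHYM counterparts from Proposition \ref{prop:dhymproperties}. Fix $p\in\{\omega>0\}$, a coordinate neighborhood $U_0\subset\{\omega>0\}$ of $p$, and a Kähler form $\omega_0$ with constant coefficients satisfying $\omega_0\le\omega\le\omega_i$ on $U_0$. Let $u_i=F_i+\psi_i$ be local potentials of $\alpha_{i,\psi_i}$. Since $\alpha_{i,\psi_i}-\cot\theta_i\,\omega_i>0$, the functions $u_i$ are quasi-psh with uniform control. We may therefore add the fixed smooth potential of $\cot\theta_1\,\omega_1$ (or of a fixed multiple of $\omega_{\mathrm{Euc}}$) so that everything is psh. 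Then mollification and the $L^1$ limit behave as in the gMA case: $u_{i,\delta}\to u_{\infty,\delta}$ locally uniformly, and $u_{\infty,\delta}$ decreases to $u_\infty$ as $\delta\to0$ (after the same shift).

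\textbf{Step 1: the $P$-bound.} We have $\omega_i^{-1}\alpha_{i,\psi_i}\in\Gamma_{\theta_i,\Theta}$. Because $\omega_0\le\omega_i$ and $\omega_0^{-1}\alpha\ge\omega_i^{-1}\alpha$ in the appropriate sense, monotonicity (Proposition \ref{prop:dhymproperties} \ref{item:dhymmonotonicity}) gives $\omega_0^{-1}\alpha_{i,\psi_i}\in\bar\Gamma_{\theta_i,\Theta}$. The comparison $\omega_0\le\omega_i$ should be handled exactly as in the gMA proof, by the eigenvalue comparison $\tilde\theta(\omega_0^{-1}\alpha)\le\tilde\theta(\omega_i^{-1}\alpha)$. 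Since $\omega_0$ has constant coefficients, $\omega_0^{-1}\delb u_{i,\delta}$ is an average of matrices in $\bar\Gamma_{\theta_i,\Theta}$. That set is convex by \ref{item:dhymsublevel}, so $P_{\omega_0}(\delb u_{i,\delta})\le-\cot\theta_i$ and $\delb u_{i,\delta}\in\bar\Gamma_{\omega_0,\Theta,\Theta}$. We then pass $i\to\infty$ via Proposition \ref{propCIL}. This uses the continuity of $P_{I_n}$ on $\bar\Gamma_{\Theta,\Theta}$, closedness of that set, and $\theta_i\searrow\theta$, which together give $P_{\omega_0}(\delb u_{\infty,\delta})\le_v-\cot\theta$. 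Next we let $\delta\to0$ by the decreasing-sequence argument, and finally let $\omega_0\to\omega$ at $p$.

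\textbf{Step 2: the $Q$-bound.} Choose a constant $c_{0,0}$ with $-\varepsilon_{3.1}<c_{0,0}\le c_{0,i}$ on $U_0$ for large $i$. The $\omega_0^n$-term can be compared using $\omega_0^n\le\omega_i^n$ together with the sign of $c_{0,0}$. Here Proposition \ref{prop:dhymproperties} \ref{item:dhymImest} controls $1/\operatorname{Im}\prod(\lambda_j+\sqrt{-1})$, which makes the small negative part of $c_{0,0}$ harmless; this is where the dependence of $\varepsilon_{3.2}$ on $\Theta$ enters. With this comparison we obtain $Q_{c_{0,0},I_n}(\omega_0^{-1}\alpha_{i,\psi_i})\le Q_{c_{0,i},I_n}(\omega_i^{-1}\alpha_{i,\psi_i})=-\cot\theta_i$ pointwise, using the monotonicity of $Q$ from \ref{item:dhymmonotonicity}. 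Convexity of $\{Q_{c_{0,0},I_n}\le-\cot\theta_i\}\cap\bar\Gamma_{\theta_i,\Theta}$ from \ref{item:dhymsublevel} then passes this bound to the convolutions. We then repeat the limiting procedure of Step 1 to get $Q_{c_{0,0},\omega}(\alpha_\infty)\le_v-\cot\theta$ at $p$. Finally, letting $U_0$ shrink, lower semicontinuity of $c_0$ gives $\liminf c_{0,i}\ge c_0(p)$, which upgrades $c_{0,0}$ to $c_0(p)$.

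\textbf{Main obstacle.} The delicate point is the comparison step replacing $\omega_i$ by $\omega_0$ inside $Q$. Unlike the gMA case, $Q_{c_{0,0},I_n}$ carries the term $c_{0,0}/\operatorname{Im}\prod(\lambda_j+\sqrt{-1})$, whose behaviour under rescaling of $\omega$ is not obviously monotone when $c_{0,0}<0$. We would attack this by writing the inequality in its polynomial form \eqref{eq:sublevelset}, $\operatorname{Re}(\alpha+\sqrt{-1}\omega)^n-\cot\theta\operatorname{Im}(\alpha+\sqrt{-1}\omega)^n-c_0\omega^n\ge0$. We would then use the monotonicity of the dHYM operator in $\omega$ on the supercritical cone as in \cite[Lemma 5.6]{GChen}, with the estimate \ref{item:dhymImest} absorbing the perturbation. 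A second, smaller issue is checking that the limits remain in $\bar\Gamma_{\omega,\Theta,\Theta}$, where $P$ and $Q$ are defined. This follows since $\tilde\theta$ and $\theta$ are continuous and the constraints are closed.
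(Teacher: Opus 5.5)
\textbf{There is a genuine gap.} It sits in the comparison from $\omega_i$ to the constant-coefficient form $\omega_0$ in Step 1, not only in the $c_{0,0}$-term you flag.

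You assert that $\omega_0\le\omega_i$ gives $\tilde\theta(\omega_0^{-1}\alpha)\le\tilde\theta(\omega_i^{-1}\alpha)$ ``exactly as in the gMA proof''. In the gMA case this works only because $\chi_{i,\psi_i}>0$, so the generalized eigenvalues $\min\max\,\chi(v,v)/\omega(v,v)$ go up when $\omega$ goes down. Here $\alpha_{i,\psi_i}$ is only bounded below by $\cot\theta_i\,\omega_i$, which is negative when $\theta_i>\pi/2$. In a direction with $\alpha(v,v)<0$, shrinking $\omega$ makes the ratio more negative and therefore \emph{increases} its $\operatorname{arccot}$.

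For instance, take $n=3$, $\omega_i=I$, $\alpha=\mathrm{diag}(10,10,-1)$. Then $\tilde\theta\approx2.456$ and $\theta\approx2.556$, so $\omega_i^{-1}\alpha\in\Gamma_{2.5,\,2.6}$. For $\omega_0=\tfrac12\omega_i$ one gets $\tilde\theta\approx2.728$ and $\theta\approx2.778$, so $\omega_0^{-1}\alpha\notin\bar{\Gamma}_{2.5,\,2.6}$. For $\omega_0=(1-s)\omega_i$, both phases increase for every small $s>0$.

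Consequently, neither $\omega_0^{-1}\alpha_{i,\psi_i}\in\bar{\Gamma}_{\theta_i,\Theta}$ nor the exact bound $Q_{c_{0,0},I_n}(\omega_0^{-1}\alpha_{i,\psi_i})\le-\cot\theta_i$ follows. The monotonicity in Proposition \ref{prop:dhymproperties} \ref{item:dhymmonotonicity} is monotonicity in $\alpha$, not in $\omega$, so it cannot supply these bounds. The ``monotonicity of the dHYM operator in $\omega$ on the supercritical cone'' you propose to invoke in your last paragraph does not hold.

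The missing ingredient is a continuity of the phases in $\omega$ that is uniform in $\alpha$, namely \eqref{eq:CL} from \cite[Lemma 5.1]{CL}. It says that if $|\omega-\omega_0|_{\omega_0}<\sigma_0(n,\varepsilon,\Theta)$, then $\theta_{\omega_0}(\alpha)\le\theta_\omega(\alpha)+\varepsilon$ and $\tilde\theta_{\omega_0}(\alpha)\le\tilde\theta_\omega(\alpha)+\varepsilon$. The independence from the size of $\alpha$ is essential, since $\alpha_{i,\psi_i}$ has no uniform bounds.

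The paper therefore proceeds as follows:
\begin{enumerate}
\item It takes $\omega_0$ \emph{close to} $\omega_i$ for all large $i$, rather than below it.
\item It controls the $c_{0,0}$-term by the separate estimate \eqref{eq:c_0term}; this is where Proposition \ref{prop:dhymproperties} \ref{item:dhymImest} enters.
\item It obtains the pointwise bounds \eqref{eq:deltamono} with an $\varepsilon$-loss: $\tilde\theta_{\omega_0}\le\theta_i+\varepsilon$, $\theta_{\omega_0}\le\Theta+\varepsilon$, and $Q_{c_{0,0},\omega_0}\le-\cot\theta_i+\varepsilon$.
\item Convexity transfers these bounds to $\delb u_{i,\delta}$, and $\varepsilon$ is removed only at the end, together with $\omega_0(p)\to\omega(p)$.
\end{enumerate}

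If you replace your exact comparison by this $\varepsilon$-approximate one, the rest of your outline agrees with the paper. That includes the $A|z|^2$ shift for quasi-psh potentials, Proposition \ref{propCIL} in $i$ and then in $\delta$, and the lower semicontinuity upgrade from $c_{0,0}$ to $c_0(p)$.
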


\begin{proof}
    We only point out the differences from the proof of Proposition \ref{prop:vsub}. To do the convolution arguments as in the proof of Proposition \ref{prop:vsub}, recall that, by \cite[Lemma 5.1]{CL}, for any sufficiently small $\varepsilon>0$, there exists $\sigma_0(n,\varepsilon,\Theta,c_0)$ such that the following holds: let $\alpha$ and $\omega$ be smooth closed real $(1,1)$-forms, where $\omega$ is Kähler. Suppose $0<\theta_\omega(\alpha)\le\Theta<\pi$ and $0<\tilde{\theta}_\omega(\alpha)\le\Theta<\pi$.
    If $\omega_0$ is a Kähler form with constant coefficients on a coordinate neighborhood satisfying $\vert\omega-\omega_0\vert_{\omega_0}<\sigma_0$, then
    \begin{equation}\label{eq:CL}
        \theta_{\omega_0}(\alpha)\le \theta_\omega(\alpha)+\varepsilon,\quad \tilde{\theta}_{\omega_0}(\alpha)\le \tilde{\theta}_\omega(\alpha)+\varepsilon.
    \end{equation}
    Moreover, we have
    \begin{equation}\label{eq:c_0term}
        \begin{aligned}
        &\left\vert\frac{c_0\omega_0^n}{\operatorname{Im}(\alpha+\sqrt{-1}\omega_0)^n}-\frac{c_0\omega^n}{\operatorname{Im}(\alpha+\sqrt{-1}\omega)^n}\right\vert\\
        \le&\vert c_0\vert \left(\left\vert\frac{\omega_0^n}{\operatorname{Im}(\alpha+\sqrt{-1}\omega_0)^n}-\frac{\omega_0^n}{\operatorname{Im}(\alpha+\sqrt{-1}\omega)^n}\right\vert+\frac{\vert\omega_0^n-\omega^n\vert}{\mathrm{Im}(\alpha+\sqrt{-1}\omega)^n}\right)\\
        \le&\vert c_0\vert \left(\frac{\omega_0^n}{\operatorname{Im}(\alpha+\sqrt{-1}\omega)^n}\left\vert\frac{\operatorname{Im}(\alpha+\sqrt{-1}\omega)^n}{\operatorname{Im}(\alpha+\sqrt{-1}\omega_0)^n}-1\right\vert+\frac{n\sigma_0}{(1-\sigma_0)^n}\frac{\omega^n}{\mathrm{Im}(\alpha+\sqrt{-1}\omega)^n}\right)\\
        \le&\frac{\vert c_0\vert}{(1-\sigma_0)^n C_{3.1}}\left(\left\vert\sum_{k=1}^n{n\choose k}\frac{\operatorname{Im}\big((\alpha+\sqrt{-1}\omega_0)^{n-k}\sqrt{-1}^k\big)\wedge(\omega-\omega_0)^k}{\operatorname{Im}(\alpha+\sqrt{-1}\omega_0)^n}\right\vert+n\sigma_0\right)\\
        \le&\varepsilon,
    \end{aligned}
    \end{equation}
    where the third inequality follows from Proposition \ref{prop:dhymproperties} \ref{item:dhymImest} and the last inequality follows from
    \begin{align*}
        \left\vert\frac{\operatorname{Im}\big((\alpha+\sqrt{-1}\omega_0)^{n-k}\sqrt{-1}^k\big)\wedge(\omega-\omega_0)^k}{\operatorname{Im}(\alpha+\sqrt{-1}\omega_0)^n}\right\vert\le C_{3.2}\vert(\omega-\omega_0)^k\vert_{\omega_0}\le\varepsilon'.
    \end{align*}
    for some constant $C_{3.2}$ and by taking $\sigma_0$ small enough. Here, the first inequality follows by the proof of \cite[Lemma 5.6 (1)]{GChen}, since $\theta_{\omega_0}(\alpha)\le\Theta+\varepsilon<\pi$ by \eqref{eq:CL}.
    Since $\alpha_{i,\psi_i}\in\Gamma_{\omega_i,\theta_i,\Theta}$, if we take a constant $c_{0,0}$ such that $c_{0,i}\ge c_{0,0}>-\varepsilon_{3.2}$ for any large $i$, and a local Kähler form $\omega_0$ with constant coefficients such that $\vert\omega_0-\omega_i\vert<\sigma_0$ for any large $i$, we can apply \eqref{eq:CL} and \eqref{eq:c_0term} to $\alpha_{i,\psi_i}$ and obtain 
    \begin{equation}\label{eq:deltamono}
        \begin{aligned}
            &{\theta}_{\omega_0}(\alpha_{i,\psi_i})(z-\delta w)\le {\theta}_{\omega_i}(\alpha_{i,\psi_i})(z-\delta w)+\varepsilon\le\Theta+\varepsilon,\\
            &\tilde{\theta}_{\omega_0}(\alpha_{i,\psi_i})(z-\delta w)\le \tilde{\theta}_{\omega_i}(\alpha_{i,\psi_i})(z-\delta w)+\varepsilon\le\theta_i+\varepsilon,\\
            &Q_{c_{0,0},\omega_0}(\alpha_{i,\psi_i})(z-\delta w)\le Q_{c_{0,0},\omega_i}(\alpha_{i,\psi_i})(z-\delta w)+\varepsilon\le-\cot\theta_i+\varepsilon.
        \end{aligned}
    \end{equation}
    By the convexity of $\bar{\Gamma}_{\theta,\Theta}$ and of the sublevel sets of $Q_{c_{0,0},I_n}$ as in Proposition \ref{prop:dhymproperties}, these inequalities imply $\delb u_{i,\delta}\in\bar{\Gamma}_{\omega_0,\theta_i,\Theta}$ and
    \begin{equation*}\label{eq:dhymQconv}
        \begin{split}
        Q_{c_{0,0},\omega_0}(\delb u_{i,\delta})(z)
        &=Q_{c_{0,0},I_n}\left(\omega^{-1}_0(z) \int_{\mathbb{C}^n}\delb u_{i}(z-\delta w)\rho(w)dw
        \right)\\
        &=Q_{c_{0,0},I_n}\left( \int_{\mathbb{C}^n}(\omega^{-1}_0 \alpha_{i,\psi_i})(z-\delta w)\rho(w)dw
        \right)\\
        &\le -\cot\theta_i+\varepsilon
        \end{split}
    \end{equation*}
for all $i$.
Noting that $\alpha_{i,\psi_i}\ge-\cot\theta_i\omega_i$ implies $\alpha_\infty\ge-\cot\theta\ \omega$, we see that $u_{\infty,\delta}$ satisfies the conditions in Proposition \ref{propCIL} as $\delta\to0$. 
Thus, as in the proof of \ref{prop:equivalence}, any upper test of $u_\infty$ can be approximated by the upper tests of $u_{i,\delta}$.
The rest of the proof is the same as that of Proposition \ref{prop:equivalence}.
\end{proof}

As in the previous section, with further arguments we can interpret conditions such as ``$P^\ell_\omega(\alpha_\varphi)\le_v-\cot
\theta$'' and ``$Q_{c_0,\omega}(\alpha_\varphi)\le_v-\cot\theta$'' to conditions involving convolution.

\begin{defn}\label{def:dhymgammabar}
    \begin{enumerate}[label={\upshape(\roman*)}]
        \item A closed real $(1,1)$-current $\alpha_\varphi$ is in $\bar{\Gamma}^\ell_{\omega,\theta,\Theta}$ if for any $\varepsilon>0$, for any open subset $U_0$ in any coordinate neighborhood, and for any Kähler form $\omega_0$ with constant coefficients in $U_0$ that is sufficiently close to $\omega$, we have
        \begin{equation}\label{eq:dhymconv}
            P^\ell_{\omega_0}(\delb u_\delta)\le-\cot\theta+\varepsilon, \quad Q_{\omega_0}(\delb u_\delta)\le-\cot\Theta+\varepsilon
        \end{equation}
        for any $\delta>0$, where $u$ is an upper semicontinuous local potential of $\alpha_\varphi$ and $u_\delta$ is its regularization via convolution. We particularly denote $\bar{\Gamma}^1_{\omega,\theta,\Theta}$ by $\bar{\Gamma}_{\omega,\theta,\Theta}$.
        \item A closed real $(1,1)$-current $\alpha_\varphi$ is in $\bar{\Gamma}_\omega'$ if $\alpha_\varphi\in\bar{\Gamma}_\omega$ and 
        \begin{align*}
            Q_{c_{0,0},\omega_0}(\delb u_\delta)\le -\cot\theta+\varepsilon, 
        \end{align*}
        for any constant $c_{0,0}$ such that $c_0\ge c_{0,0}>-\varepsilon_{3.2}$ on $U_0$, where the notations are the same as \eqref{eq:dhymconv}.
    \end{enumerate}
\end{defn}

\begin{prop}\label{prop:dhymequivalence}
    \begin{enumerate}[label={\upshape(\roman*)}]
        \item A closed real $(1,1)$-current $\alpha_\varphi$  satisfies $P^\ell_\omega(\alpha_\varphi)\le_v -\cot\theta$ and $Q_\omega(\alpha_\varphi)\le_v -\cot\Theta$ in $\{\omega>0\}$ if and only if $\alpha_\varphi\in\bar{\Gamma}^\ell_{\omega,\theta,\Theta}$ in $\{\omega>0\}$ and $\varphi$ is quasi-psh.
        \item Suppose that $P_\omega(\alpha_\varphi)\le_v -\cot\theta$ and $Q_\omega(\alpha_\varphi)\le_v -\cot\Theta$. Then, a closed real $(1,1)$-current $\alpha_\varphi$ satisfies $Q_{c_0,\omega}(\alpha_\varphi)\le_v-\cot\theta$ in $\{\omega>0\}$ if and only if $\alpha_\varphi\in\bar{\Gamma}'_\omega$ in $\{\omega>0\}$.
    \end{enumerate}
\end{prop}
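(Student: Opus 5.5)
The plan is to mirror the proof of Proposition \ref{prop:equivalence}, replacing the convexity and monotonicity inputs of Proposition \ref{prop:properties} by those of Proposition \ref{prop:dhymproperties}. The new ingredient is that $\omega_0$ can no longer be chosen with $\omega_0\le\omega$. Instead we use closeness $|\omega-\omega_0|_{\omega_0}<\sigma_0$ together with the Collins--Lin type estimate \eqref{eq:CL} and the bound \eqref{eq:c_0term}, which cost an $\varepsilon$. This is why Definition \ref{def:dhymgammabar} carries the slack $+\varepsilon$.

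\emph{The ``if'' direction.} Suppose $\alpha_\varphi\in\bar{\Gamma}^\ell_{\omega,\theta,\Theta}$ with $\varphi$ quasi-psh.
\begin{enumerate}[label={\upshape(\arabic*)}]
\item Locally $u$ is psh up to a smooth function, since $Q_\omega(\alpha_\varphi)\le-\cot\Theta$ forces $\alpha_\varphi\ge-\cot\Theta\,\omega$ in the limit. So the mollifications $u_\delta$ decrease to $u$, and Proposition \ref{propCIL} applies exactly as in the second half of the proof of Proposition \ref{prop:vsub}.
\item Any upper test $q$ of $u$ at $x$ is then approximated by upper tests $q_\delta$ of $u_\delta$. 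These satisfy $P^\ell_{\omega_0}(\delb q_\delta)\le-\cot\theta+\varepsilon$ and $Q_{\omega_0}(\delb q_\delta)\le-\cot\Theta+\varepsilon$.
\item Passing to the limit gives the same inequalities for $\delb q$ with respect to $\omega_0$. Letting $\omega_0\to\omega$ at $x$ and then $\varepsilon\to0$ uses continuity of $P^\ell_{I_n}$ and $Q_{I_n}$ on $\bar\Gamma_{\Theta,\Theta}$. Part (ii) is handled identically, with $Q_{c_{0,0},\omega_0}$ replacing $Q_{\omega_0}$ and then $c_{0,0}\nearrow c_0$.
\end{enumerate}

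\emph{The ``only if'' direction.} Follow \cite[Propositions 1.11, Theorem 1.9]{EGZ}, as in Proposition \ref{prop:equivalence}.
\begin{enumerate}[label={\upshape(\arabic*)}]
\item First note that the viscosity inequality $Q_\omega\le_v-\cot\Theta$ gives $\delb q\ge-\cot\Theta\,\omega$ for every upper test, since each eigenvalue's arccot is less than $\Theta<\pi$. Hence $\alpha_\varphi+\cot\Theta\,\omega\ge0$ in the viscosity sense, so $\varphi$ is quasi-psh by \cite[Proposition 1.3]{EGZ}.
\item Fix $\omega_0$ with constant coefficients, $\sigma_0$-close to $\omega$. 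By \eqref{eq:CL} every upper test satisfies $P^\ell_{\omega_0}(\delb q)\le-\cot\theta+\varepsilon$ and $Q_{\omega_0}(\delb q)\le-\cot\Theta+\varepsilon$, after adjusting $\varepsilon$ through the continuity of $\cot$. These are the viscosity inequalities for an operator with constant coefficients.
\item Replace $u$ by $u^C=\sup\{u,av-C\}$. Here $a$ is large enough that $a\omega_0$ lies strictly inside the relevant sublevel sets; this is possible because all arccot's tend to $0$ as $a\to\infty$. Then take the sup-convolution $u^{C,\varepsilon}$, which is a viscosity subsolution by \cite[Proposition 4.2]{Ishii}.
\item Apply Aleksandrov's theorem and mollify. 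The convexity of $\bar\Gamma_{\theta,\Theta}$, the convexity of $P^\ell_{I_n}$, and monotonicity (Proposition \ref{prop:dhymproperties}) give \eqref{eq:dhymconv} for $(u^{C,\varepsilon})_\delta$. Here the nonnegative singular part of the Hessian only helps, by monotonicity.
\item Pass $\varepsilon\to0$ and then $C\to\infty$ via Proposition \ref{propCIL}, using the decreasing sequences as before.
\end{enumerate}
For (ii), additionally use the convexity of $\{Q_{c_{0,0},I_n}\le-\cot\theta\}\cap\bar\Gamma_{\theta,\Theta}$ from Proposition \ref{prop:dhymproperties}\ref{item:dhymsublevel}. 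The twisted term is controlled by \eqref{eq:c_0term}, and $c_0\ge c_{0,0}$ combined with the monotonicity in $c_{0,0}$ handles the non-constant $c_0$.

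\emph{Main obstacle.} The hardest step is the pointwise-to-convolution passage in (4) of the ``only if'' direction. The function $Q$ is not convex, only its sublevel set within $\bar\Gamma_{\theta,\Theta}$ is, so Jensen must be applied to indicator functions of convex sets rather than to values. Concretely, from $\delb u^{C,\varepsilon}(y)\in K:=\{P^\ell\le-\cot\theta+\varepsilon\}\cap\{Q\le-\cot\Theta+\varepsilon\}$ a.e., we need the average $\int\delb u^{C,\varepsilon}(z-\delta w)\rho(w)\,dw$ plus a positive singular part to lie in $K$. This uses that $K$ is convex and stable under adding positive matrices. Convexity of $K$ for $\varepsilon>0$ is guaranteed since $\varepsilon$ merely shifts $\theta$ and $\Theta$ (replace them by $\theta+\varepsilon'$, $\Theta+\varepsilon'<\pi$). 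Stability under adding positive matrices follows from the monotonicity in Proposition \ref{prop:dhymproperties}\ref{item:dhymmonotonicity}.
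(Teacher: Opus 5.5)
Your proposal is correct and follows essentially the paper's route. For the ``if'' part you use quasi-plurisubharmonicity together with Proposition \ref{propCIL}. For the ``only if'' part you use the \cite{EGZ}/sup-convolution/Aleksandrov argument of Proposition \ref{prop:equivalence}, with the comparison $\omega_0\le\omega$ replaced by the Collins--Lin estimate \eqref{eq:CL} and \eqref{eq:c_0term}.

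The one difference is that the paper gets quasi-plurisubharmonicity from $P^{n-1}_\omega(\alpha_\varphi)\le_v-\cot\theta$, i.e.\ $\alpha_\varphi\ge\cot\theta\,\omega$, rather than from $Q_\omega\le_v-\cot\Theta$. Your variant also works, but watch the sign: $\operatorname{arccot}\lambda_j<\Theta$ gives $\lambda_j>\cot\Theta$, so the bound is $\alpha_\varphi-\cot\Theta\,\omega\ge0$, not $\alpha_\varphi+\cot\Theta\,\omega\ge0$. This is harmless, since any lower bound suffices for quasi-plurisubharmonicity.
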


\begin{proof}
     First, we prove the ``if'' parts. 
     Since $\varphi$ is quasi-psh, a sequence $\{u_\delta\}$, where $u_\delta$ is the regularization of an upper semicontinous local potential of $\alpha_\varphi$, satisfies the conditions in Proposition \ref{propCIL}. Hence, the arguments as before imply the desired conclusions.
     Next, we prove the ``only if'' parts. Note that the assumptions imply that $P^{n-1}_{\omega}(\alpha_\varphi)\le_v-\cot\theta$, which yields $\alpha_\varphi-\cot\theta\, \omega\ge0$ as a current (see, e.g., the proof of \cite[Proposition 1.3]{EGZ}). Hence, a function $u^{C,\varepsilon}$ is quasi-psh, where the notatinos are given in the same manner as in the proof of Proposition \ref{prop:equivalence}. Using \eqref{eq:deltamono} (and its variant for $\tilde{\theta}^\ell$, which can be proved in the same as \cite[Lemma 5.1]{CL}) and the fact that $u^{C,\varepsilon}$ is quasi-psh, we can make arguments similar to \eqref{eq:supconv}.
     The rest of the proof is the same as that of Proposition \ref{prop:equivalence}.
\end{proof}

Next, we claim that a viscosity subsolution is a pluripotential subsolution.

\begin{defn}\label{def:dhymllcurrent}
    For a closed real $(1,1)$-current $\alpha_\varphi\in\bar{\Gamma}_{\omega,\Theta,\Theta}$, we define $(p,p)$-current $T^p_\omega(\alpha_\varphi)$ by
    $$T^p_\omega(\alpha_\varphi)=\operatorname{Re}\langle(\alpha_\varphi+\sqrt{-1}\omega)^p\rangle-\cot\theta\operatorname{Im}\langle(\alpha_\varphi+\sqrt{-1}\omega)\rangle^p,$$
    where $\langle\cdot\rangle$ denotes the nonpluripolar product, which is well-defined since $\varphi$ is quasi-psh.
\end{defn}

\begin{lem}\label{lem:dhymvisctopp} Suppose $\alpha_\varphi\in\bar{\Gamma}_{\omega,\Theta,\Theta}$.
    \begin{enumerate}[label={\upshape(\roman*)}]
        \item\label{item:dhymvisctoppn-1} 
        If $P^\ell_{\omega}(\alpha_\varphi)\le_v -\cot\theta$ in $\{\omega>0\}$, then $T^p_\omega(\alpha_\varphi)\ge0$ in $X$ for all $p\le n-\ell$.
        \item\label{item:dhymvisctoppn} 
        Suppose $P_{\omega}(\alpha_\varphi)\le_v -\cot\theta$. If $Q_{c_0,\omega}(\alpha_\varphi)\le_v -\cot\theta$ in $\{\omega>0\}$, then $T^n_\omega(\alpha_\varphi)-c_0\omega^n\ge0$ in $X$. 
    \end{enumerate}
\end{lem}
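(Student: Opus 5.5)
The plan is to transplant the proof of Lemma \ref{lem:visctopp} to the dHYM setting, with quasi-psh functions in place of psh ones. The first step is to make the local potentials genuinely psh. Since $\alpha_\varphi\in\bar{\Gamma}_{\omega,\Theta,\Theta}$ and $P^{n-1}_\omega(\alpha_\varphi)\le_v-\cot\theta$ (Remark \ref{rem:dhymvisc} \ref{item:dhymmonovisc}), the current $\alpha_\varphi-\cot\theta\,\omega$ is positive, as in the proof of Proposition \ref{prop:dhymequivalence}. Fix a coordinate open set $U_0\subset\{\omega>0\}$ and a Kähler form $\omega_0$ with constant coefficients close to $\omega$. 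An upper semicontinuous local potential $u$ of $\alpha_\varphi$ then has the form $u=w+\cot\theta\,v_0$, where $w$ is psh and $v_0$ is a smooth local potential of $\omega_0$ (up to an error controlled by $|\omega-\omega_0|$). Truncate as before: $u^C:=\sup\{u,av-C\}$, where $v$ is a local potential of $\omega$ and $a$ is chosen so that $P^\ell_\omega(a\omega)<-\cot\theta$ and $Q_{c_0,\omega}(a\omega)<-\cot\theta$. This is possible because $\operatorname{arccot}$ tends to $0$ and $c_0$ is bounded. Since the supremum of viscosity subsolutions is again a viscosity subsolution, $u^C$ is a bounded viscosity subsolution, and it is quasi-psh.

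Next, apply Proposition \ref{prop:dhymequivalence}, i.e.\ the sup-convolution argument of Proposition \ref{prop:equivalence} combined with the perturbation estimates \eqref{eq:CL} and \eqref{eq:c_0term}. This gives smooth functions $(u^C)_\delta$ on $(U_0)_\delta$ with
\begin{equation*}
P^\ell_{\omega_0}(\delb (u^C)_\delta)\le-\cot\theta+\varepsilon,\qquad Q_{c_{0,0},\omega_0}(\delb (u^C)_\delta)\le-\cot\theta+\varepsilon,
\end{equation*}
where $c_{0,0}\le c_0$ is a constant on $U_0$. For smooth forms in $\bar{\Gamma}_{\Theta,\Theta}$, the condition $P^\ell\le-\cot\theta'$, i.e.\ $\tilde\theta^\ell\le\theta'$, is pointwise equivalent to
\begin{equation*}
\operatorname{Re}(\beta+\sqrt{-1}\omega_0)^{p}-\cot\theta'\operatorname{Im}(\beta+\sqrt{-1}\omega_0)^{p}\ge0 \quad\text{on all $p$-dimensional coordinate subspaces, for } p\le n-\ell.
\end{equation*}
This is the standard angle characterization in \cite{GChen,CLT}. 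Likewise the $Q$-condition is equivalent to $\operatorname{Re}(\cdot)^n-\cot\theta'\operatorname{Im}(\cdot)^n-c_{0,0}\omega_0^n\ge0$. So the smooth approximants satisfy the perturbed inequalities $T^p_{\omega_0,\theta_\varepsilon}(\delb(u^C)_\delta)\ge0$, where $\cot\theta_\varepsilon\to\cot\theta$.

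Then pass to the limit $\delta\to0$. Writing $(u^C)_\delta=w_\delta+\cot\theta\, v_0$ with $w_\delta$ psh, bounded and decreasing, each $T^p$ expands multilinearly into wedge products of $\delb w_\delta$ with smooth forms. Bedford--Taylor continuity \cite[Theorem 2.1]{BT} therefore gives $T^p_{\omega_0,\theta_\varepsilon}(\delb u^C)\ge0$. By plurifine locality of the nonpluripolar product,
\begin{equation*}
\mathbbm{1}_{\{u>av-C\}}T^p_{\omega_0}(\alpha_\varphi)\ge0,
\end{equation*}
and letting $C\to\infty$, then $\varepsilon\to0$, yields $T^p_{\omega_0}(\alpha_\varphi)\ge0$ on $U_0$. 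Approximating $\omega$ by piecewise constant-coefficient forms $\sum_j\mathbbm{1}_{U_{0,j}}\omega_{0,j}$, and using that nonpluripolar products of bounded-potential forms charge no pluripolar set (in particular not $X\setminus\{\omega>0\}$), we get the inequalities on all of $X$. This proves \ref{item:dhymvisctoppn-1}; part \ref{item:dhymvisctoppn} follows identically, with the extra term $c_{0,0}\omega_0^n\to c_0\omega^n$ handled by lower semicontinuity of $c_0$.

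The main obstacle is that $\alpha_\varphi$ is not positive, so the truncated potentials are only quasi-psh. One must check that the expansion of $T^p$ into mixed products of a psh part with smooth forms really has nonpluripolar limits compatible with Definition \ref{def:dhymllcurrent}. One must also check that the error in replacing $\omega$ by $\omega_0$ is uniform, using \eqref{eq:CL} and the lower bound in Proposition \ref{prop:dhymproperties} \ref{item:dhymImest}, so that it disappears as $\omega_0\to\omega$.
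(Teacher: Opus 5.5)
Your proposal is correct and follows essentially the same route as the paper. You obtain $P^\ell_{\omega_0}(\delb (u^C)_\delta)\le-\cot\theta+\varepsilon$ from Proposition \ref{prop:dhymequivalence}, use membership in $\bar{\Gamma}_{\Theta,\Theta}$ to read this as positivity of $T^{n-\ell}$ with $\cot\theta$ replaced by $\cot\theta-\varepsilon$, split the quasi-psh potential into a psh part plus a smooth part so that \cite[Theorem 2.1]{BT} applies as $\delta\to0$, and then run the plurifine-locality, $C\to\infty$ and pluripolar-complement steps of Lemma \ref{lem:visctopp}.

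One small point on the order of limits: Definition \ref{def:dhymgammabar} only grants the slack $\varepsilon$ for $\omega_0$ close enough to $\omega$, and how close depends on $\varepsilon$. So you cannot send $\varepsilon\to0$ with $\omega_0$ fixed to conclude $T^p_{\omega_0}(\alpha_\varphi)\ge0$. Instead, let $\varepsilon\to0$ and $\omega_0\to\omega$ jointly, which gives $T^p_\omega(\alpha_\varphi)\ge0$ on $\{\omega>0\}$ directly.
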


\begin{proof}
    We only point out the differences from the proof of Lemma \ref{lem:visctopp}.
    We prove \ref{item:dhymvisctoppn-1} here. The proof of \ref{item:dhymvisctoppn} is similar to that of \ref{item:dhymvisctoppn-1}, so we omit it. By Proposition \ref{prop:dhymequivalence}, we obtain $P^\ell_{\omega_0}(\delb (u^C)_\delta)\le-\cot\theta+\varepsilon$. Here, a function $u^C$ is defined as $u^C:=\sup \{u,av-C\},$ where $v$ is a local upper semicontinuous potential of $\omega_0$ and $a>0$ is large enough to satisfy $a\omega\in\bar{\Gamma}_{\omega,\theta,\Theta}$.
    Note that $\operatorname{Im}(\delb (u^C)_\delta+\sqrt{-1}\omega_0)^\ell\ge0$ since $\delb (u^C)_\delta\in\bar{\Gamma}_{\omega_0,\Theta,\Theta}$ as in the proof of Proposition \ref{prop:vsubdHYM}. Hence, the condition is equivalent to $T^{n-\ell}_{\varepsilon,\omega_0}(\delb (u^C)_\delta)\ge0$, where $T^{n-\ell}_{\varepsilon,\omega_0}$ is defined by replacing $\cot\theta$ with $\cot\theta-\varepsilon$ in Definition \ref{def:dhymllcurrent}. Since $(u^C)$ is quasi-psh, a sequence $\{(u^C+F)_\delta\}_\delta$ decreases as $\delta\to0$, where $F$ is a local potential of a Kähler form such that $\delb (u^C+F)\ge0$.
    Hence, we have
    $$T^{n-\ell}_{\varepsilon,\omega_0}(\delb ((u^C+F)_\delta-F_\delta))\to T^{n-\ell}_{\varepsilon,\omega_0}(\delb u^C)\ge0 \ \text{ as } \delta\to0,$$
    where the convergence follows from \cite[Theorem 2.1]{BT}. The rest of the proof is the same as that of Lemma \ref{lem:visctopp}. 
\end{proof}

Finally, we prove a mass-type inequality to see that a subsolution is a solution.

\begin{prop}\label{prop:dhymsuperineq}
    If a current $\alpha_\varphi$ satisfies $P_\omega(\alpha_\varphi)\le_v-\cot\theta$ in $\{\omega>0\}$, then we have
    $$\int_XT^n_\omega(\alpha_\varphi)-c_0\omega^n=\int_XT^n_\omega(\alpha_\varphi)\le0.$$
\end{prop}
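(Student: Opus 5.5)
The plan mirrors the proof of Proposition \ref{prop:superineq}. The first equality in the statement needs no work: it is just $\int_X c_0\omega^n=0$, the normalization of Theorem \ref{thm:tdHYM}. For the inequality, let $\psi_i$ be the smooth solutions of \eqref{eq:appdhym} and set $\varphi_{i,s}:=(1-s)\varphi+s\psi_i$ for $s\in[0,1]$.

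\textbf{Step 1: the interpolation stays subsolutions.} Since $\alpha_i\ge\alpha$, $\omega_i\ge\omega$ and $\theta_i\ge\theta$, upper tests of $\varphi$ with respect to $\alpha_i$ differ from those for $\alpha$ by adding a nonnegative form. By the monotonicity in Proposition \ref{prop:dhymproperties} \ref{item:dhymmonotonicity}, this gives $P_{\omega}(\alpha_{i,\varphi})\le_v-\cot\theta_i$. For $\omega$ replaced by $\omega_i$ I would use the local convolution/closeness argument around \eqref{eq:CL} with an $\varepsilon$-loss, and let $\varepsilon\to0$ at the end. The form $\alpha_{i,\psi_i}$ lies in $\Gamma_{\omega_i,\theta_i,\Theta}$. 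The convexity of $\bar{\Gamma}_{\theta,\Theta}$ and of the sublevel sets of $P$ (Proposition \ref{prop:dhymproperties} \ref{item:dhympconv}, \ref{item:dhymsublevel}) then shows that $\alpha_{i,\varphi_{i,s}}$ is a degenerate supercritical $\mathcal{C}$-subsolution for $\theta_i$, as in the argument of \cite[Theorem 5.8]{CC}. Lemma \ref{lem:dhymvisctopp} then gives $T^{n-1}_{i,\omega_i}(\alpha_{i,\varphi_{i,s}})\ge0$, where $T_i$ uses $\cot\theta_i$ and $\omega_i$.

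\textbf{Step 2: monotonicity of the mass along $s$.} By multilinearity of the nonpluripolar product,
\[
\frac{d}{ds}\int_X T^n_{i,\omega_i}(\alpha_{i,\varphi_{i,s}})=n\int_X\big\langle(\alpha_{i,\psi_i}-\alpha_{i,\varphi})\wedge T^{n-1}_{i,\omega_i}(\alpha_{i,\varphi_{i,s}})\big\rangle .
\]
This uses $\frac{d}{dz}\operatorname{Re}/\operatorname{Im}(z+\sqrt{-1})^n=n(\cdot)^{n-1}$, with the appropriate normalization of $T$. To show the right-hand side is nonnegative I would prove a dHYM analogue of Lemma \ref{lem:ppineq}, with the same proof following \cite{DDL}. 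Its inputs are: $\psi_i$ is smooth, hence bounded, and $\varphi\le\psi_i+C$; the positivity of $T^{n-1}$ from Step 1; the plurifine locality; and the monotonicity of total mass for less singular potentials. The only new point is that $\alpha_{i,\varphi}$ is not positive but satisfies $\alpha_{i,\varphi}+\cot\theta_i\,\omega_i\ge0$, so $\varphi$ is quasi-psh. The cutoffs $\sup\{\varphi,\psi_i-C\}$ are therefore still legitimate.

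\textbf{Step 3: limit.} Integrating in $s$ gives
\[
\int_X T^n_{i,\omega_i}(\alpha_{i,\varphi})\le\int_X T^n_{i,\omega_i}(\alpha_{i,\psi_i})=\int_X c_{0,i}\omega_i^n=0 .
\]
Passing to the limit $i\to\infty$, the left side converges to $\int_X T^n_\omega(\alpha_\varphi)$. This follows from multilinearity of the nonpluripolar product in the smooth parts, since $[\alpha_i]\to[\alpha]$, $\omega_i\searrow\omega$ and $\theta_i\to\theta$.

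The main obstacle is the non-positivity of $\alpha$. Both the Lemma \ref{lem:ppineq} analogue and the expansion of nonpluripolar products have to be carried out with the shifted positive current $\alpha_\varphi+\cot\theta_i\,\omega_i$. One must also check that the resulting binomial-type expansions keep the right sign structure, so that the positivity of $T^{n-1}$ really yields monotonicity. I would handle this by rewriting everything in terms of $\beta:=\alpha+\cot\theta_i\,\omega_i\ge0$ before applying the arguments of \cite{DDL}.
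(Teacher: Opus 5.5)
Your proposal is correct and is essentially the paper's proof. The ingredients match: the same interpolation $\varphi_{i,s}=(1-s)\varphi+s\psi_i$, the convexity argument giving positivity of $T^{n-1}$ via Lemma \ref{lem:dhymvisctopp}, the quasi-psh analogue of Lemma \ref{lem:ppineq}, and the limit $i\to\infty$ by multilinearity. The only difference is cosmetic: you place the $\varepsilon$-loss from \eqref{eq:CL} on $\varphi$ and work with $\omega_i$, whereas the paper keeps $\omega$ and transfers the smooth subsolution $\psi_i$ from $\omega_i$ to $\omega$. Note that with the shifted constant $\cot\theta_i-\varepsilon$ your endpoint $\int_X T^n_{i}(\alpha_{i,\psi_i})$ is $\int_X c_{0,i}\omega_i^n=0$ plus an $O(\varepsilon)$ cohomological term, not exactly $0$. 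This is harmless once $\varepsilon\to0$.
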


\begin{proof}
    We only point out the differences from the proof of Proposition \ref{prop:superineq}. We define $\varphi_{i,s}:=(1-s)\varphi+s\psi_i$, where $\psi_i$ is a solution of \eqref{eq:appdhym}. Then, we can see that for any sufficiently small $\varepsilon>0$, we have \begin{equation}\label{eq:phiis}
        P_\omega(\alpha_{i,\varphi_{i,s}})\le-\cot\theta_i+\varepsilon
    \end{equation} in $\{\omega>0\}$ for sufficiently large $i$. Indeed, since $\omega_i\to\omega$, using \eqref{eq:CL} we obtain $P_\omega(\alpha_{i,\psi_i})\le-\cot(\theta_i+\varepsilon)$. Also, since $\alpha_i\ge\alpha$ and $-\cot\theta\le-\cot\theta_i$ as in Assumption \ref{asmp:bdrydHYM}, we have $P_\omega(\alpha_{i,\varphi})\le_v-\cot\theta_i+\varepsilon$. Thus, by the convexity of $P$ as in Proposition \ref{prop:dhymproperties} \ref{item:dhympconv}, we see \eqref{eq:phiis}. By Lemma \ref{lem:dhymvisctopp}, we conclude $T^{n-1}_{i,\omega}(\alpha_{i,\varphi_{i,s}})$ is positive, where $T^p_{i,\omega}$ is defined by replacing $\cot\theta$ with $\cot\theta_i+\varepsilon$.
    The rest of the proof is the same as that of Proposition \ref{prop:superineq}, once we prove the lemma below, which corresponds to Lemma \ref{lem:ppineq}.
\end{proof}

\begin{lem}
    Let $\psi$ be a quasi-psh function such that $T^{n-1}_{i,\omega}(\alpha_{i,\psi})$ is positive. If quasi-psh functions $\varphi_1,\varphi_2$ satisfy $\varphi_1\le\varphi_2+C$ for some constant $C$, then we have 
    $$\int_X\big\langle\alpha_{i,\varphi_1}\wedge T^{n-1}_{i,\omega}(\alpha_{i,\psi})\big\rangle\le\int_X\big\langle\alpha_{i,\varphi_2}\wedge T^{n-1}_{i,\omega}(\alpha_{i,\psi})\big\rangle.$$
\end{lem}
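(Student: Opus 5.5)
The plan is to follow the proof of Lemma \ref{lem:ppineq} verbatim: reduce to bounded potentials via the locality of the nonpluripolar product, and then pass to the limit. The only new feature is that $\alpha_i$ is not semipositive, so $\varphi_1,\varphi_2,\psi$ are merely quasi-psh. I would absorb this by writing $\alpha_{i,\varphi}=(\alpha_i+A\omega_i)_{\varphi}-A\omega_i$ for a large constant $A$, which turns the potentials into $(\alpha_i+A\omega_i)$-psh functions. The term $\int_X \langle A\omega_i\wedge T^{n-1}_{i,\omega}(\alpha_{i,\psi})\rangle$ is common to both sides, so it can be cancelled. Since the nonpluripolar product is multilinear, it therefore suffices to prove the inequality with $\alpha_{i,\varphi_j}$ replaced by the positive currents $(\alpha_i+A\omega_i)_{\varphi_j}$. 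The wedge with $T^{n-1}_{i,\omega}(\alpha_{i,\psi})$ still makes sense: $T^{n-1}_{i,\omega}$ is a polynomial in $\langle\alpha_{i,\psi}^k\rangle$ and $\omega$, and each such term can be expanded via $(\alpha_i+A\omega_i)_\psi$.

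First, I set $\varphi_{1,2,k}:=\sup\{\varphi_1,\varphi_2-k\}$. Then $\varphi_{1,2,k}-\varphi_2$ is bounded, and the monotonicity-of-mass result \cite[Proposition 2.1]{DDL}, which applies to currents of the form $\langle T_1\wedge\cdots\rangle$ with the remaining factor fixed, gives
\[\int_X\langle\alpha_{i,\varphi_{1,2,k}}\wedge T^{n-1}_{i,\omega}(\alpha_{i,\psi})\rangle=\int_X\langle\alpha_{i,\varphi_2}\wedge T^{n-1}_{i,\omega}(\alpha_{i,\psi})\rangle.\]
This step needs the total masses of the relevant mixed products to be independent of the potential within the same singularity type. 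That follows from DDL's result applied term by term in the polynomial expansion.

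Next, I truncate against the smooth reference $\psi_i$, using the notation $\varphi^C:=\sup\{\varphi,\psi_i-C\}$ and the cutoffs $f^{C,\varepsilon}$ exactly as in Lemma \ref{lem:ppineq}. By plurifine locality, on $\{\varphi_{1,2,k}>\psi_i-C\}\cap\{\psi>\psi_i-C\}$ the measure agrees with the one built from the bounded functions $\varphi^C_{1,2,k}$ and $\psi^C$. As $k\to\infty$, $\varphi^C_{1,2,k}\searrow\varphi_1^C$, with all functions bounded and quasi-psh. The Bedford--Taylor continuity along decreasing sequences \cite[Theorem 2.1]{BT}, applied locally after adding a local Kähler potential as in Lemma \ref{lem:dhymvisctopp}, then gives weak convergence of $f_k^{C,\varepsilon}\langle\cdots\rangle$ to $f^{C,\varepsilon}\langle\alpha_{i,\varphi_1^C}\wedge T^{n-1}_{i,\omega}(\alpha_{i,\psi^C})\rangle$. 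Here I also use that the $f$'s are quasi-continuous and uniformly bounded.

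The key step, which I expect to be the main obstacle, is the lower semicontinuity inequality
\[\liminf_k \int f_k^{C,\varepsilon}\langle\alpha_{i,\varphi_{1,2,k}^C}\wedge T^{n-1}_{i,\omega}(\alpha_{i,\psi^C})\rangle\ge\int f^{C,\varepsilon}\langle\cdots\rangle.\]
This inequality, together with dropping the region where $f\le1$, requires $T^{n-1}_{i,\omega}(\alpha_{i,\psi^C})$ to be positive, and $\alpha_{i,\varphi}$ need not be positive. I would handle both points as follows.

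Positivity of $T^{n-1}_{i,\omega}(\alpha_{i,\psi^C})$ comes from the viscosity picture. Both $\psi$ and $\psi_i$ satisfy $P_\omega\le_v-\cot\theta_i+\varepsilon$: for $\psi$ this is from Proposition \ref{prop:dhymequivalence} together with the assumption, and for $\psi_i$ from the computation leading to \eqref{eq:phiis}. The maximum of two viscosity subsolutions is again one, so $\psi^C$ satisfies the same bound, and Lemma \ref{lem:dhymvisctopp} then gives the positivity.

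For the non-positivity of $\alpha_{i,\varphi}$, I split it as $(\alpha_i+A\omega_i)_{\varphi}-A\omega_i$ as above. The $A\omega_i$-part converges exactly, because it does not depend on $\varphi$. On the positive part, the wedge with a positive current yields positive measures, to which Portmanteau-type lower semicontinuity (as for quasi-continuous $f$ in \cite{DDL}) applies.

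Finally, letting $\varepsilon\to0$ and $C\to\infty$ by monotone convergence recovers $\int_X\langle\alpha_{i,\varphi_1}\wedge T^{n-1}_{i,\omega}(\alpha_{i,\psi})\rangle$, which gives the stated inequality.
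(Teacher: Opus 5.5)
Your proposal is correct and is essentially the paper's own argument: the paper proves this lemma by saying that the proof of Lemma~\ref{lem:ppineq} goes through verbatim once one notes that $\psi,\varphi_1,\varphi_2$ are quasi-psh, and your shift by $A\omega_i$ together with your viscosity justification of the positivity of $T^{n-1}_{i,\omega}(\alpha_{i,\psi^C})$ (the maximum of two subsolutions is a subsolution, then apply Lemma~\ref{lem:dhymvisctopp}) only makes explicit what the paper leaves implicit. One small correction: turning the pluripotential hypothesis on $\psi$ into the viscosity inequality is the content of Proposition~\ref{prop:dhymviscpp}, which requires $T^p_{i,\omega}\ge0$ for all $p\le n-1$ and not only $p=n-1$, rather than Proposition~\ref{prop:dhymequivalence}; in the only place the lemma is used, $\psi=\varphi_{i,s}$, the viscosity inequality \eqref{eq:phiis} is available directly.
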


\begin{proof}
    The proof is completely the same as that of Lemma \ref{lem:ppineq} noting that $\psi,\varphi_1,\varphi_2$ are quasi-psh.
\end{proof}

We summarize the proof here.
\begin{proof}[Proof of Theorem \ref{thm:tdHYM}]
    By Setup \ref{setup2} and Assumption \ref{asmp:bdrydHYM}, we have the solutions $\psi_i$ of approximate twisted dHYM equations \eqref{eq:appdhym}. By taking a subsequence, we obtain a weak limit $\psi_\infty$ of $\{\psi_i\}$. By Proposition \ref{prop:vsubdHYM} and Lemma \ref{lem:dhymvisctopp}, a current $\alpha_\infty:=\alpha+\delb\psi_\infty$ satisfies $P_\omega(\alpha_\infty)\le_v-\cot\theta$, $Q_\omega(\alpha_\infty)\le_v-\cot\Theta$ in $\{\omega>0\}$ and $T^n_{\omega}(\alpha_\infty)-c_0\omega^n\ge0$ in $X$. The former two are equivalent to $\alpha_\infty\in\bar{\Gamma}_{\omega,\theta,\Theta}$ by Proposition \ref{prop:dhymequivalence}.
    By Proposition \ref{prop:dhymsuperineq}, the total mass of $T^n_\omega(\alpha_\infty)-c_0\omega^n$ equals zero. Thus, we obtain $T^n_\omega(\alpha_\infty)-c_0\omega^n=0$ and $\alpha_\infty\in\bar
    \Gamma_{\omega,\theta,\Theta}$, which are \eqref{eq:tdHYM}.
\end{proof}

\subsection{Uniqueness}\label{subsec:dhymuniqueness}

In this subsection, by the same strategy as in subsection \ref{subsec:uniqueness}, we prove the uniqueness part of Theorem \ref{thm:maindHYM}.

\begin{lem}\label{lem:dhymapp}
    Let $\Omega$ and $\Omega'$ be bounded domains in $\mathbb{C}^n$ such that $\Omega'\subset\subset\Omega$, and $\omega$ be a Kähler form in $\Omega$. \begin{enumerate}[label={\upshape(\roman*)}]
        \item\label{item:dhymappP} Suppose that $\varphi$ is a bounded function in $\Omega$ such that $P^\ell_\omega(\delb\varphi)\le_v-\cot\theta$ in $\Omega$. Then, for any $d>-\cot\theta$, there exist a sequence of smooth functions $\{\varphi_a\}_a$ and smooth psh functions $\{f_a\}$ in $\Omega'$ such that $\varphi_a+f_a$ is psh and decreases to $\varphi+f$, where $f$ is a smooth function in $\Omega'$, and that $P^\ell_\omega(\delb \varphi_a)\le d$ in $\Omega'$.
        \item\label{item:dhymappQ} Suppose that $\varphi$ is a bounded psh function in $\Omega$ such that $P_{\omega}(\delb \varphi)\le_v-\cot\theta$ and $Q_{c_0,\omega}(\delb \varphi)\le_v-\cot\theta$ in $\Omega$, where $c_0>-\varepsilon_{3.2}$ is a continuous function. Then, for any $d>-\cot\theta$, there exist a sequence of smooth functions $\{\varphi_a\}_a$ and smooth psh functions $\{f_a\}$ in $\Omega'$ such that $\varphi_a+f_a$ is psh and decreases to $\varphi+f$, where $f$ is a smooth function in $\Omega'$, and that $P_\omega(\delb \varphi_a)\le d$ and $Q_{c_0,\omega}(\delb \varphi_a)\le_v d$ in $\Omega'$.
    \end{enumerate}
\end{lem}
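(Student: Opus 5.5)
The plan is to reduce to the setting of Lemma \ref{lem:app}, with two changes: potentials are now only quasi-psh, and the constant-coefficient comparison uses the angle estimate \eqref{eq:CL} instead of the monotonicity under $\omega_0\le\omega\le(1+\varepsilon)\omega_0$. First, $P^\ell_\omega(\delb\varphi)\le_v-\cot\theta$ implies $P^{n-1}_\omega(\delb\varphi)\le_v-\cot\theta$ by Remark \ref{rem:dhymvisc} \ref{item:dhymmonovisc}. As in the proof of Proposition \ref{prop:dhymequivalence}, this gives $\delb\varphi-\cot\theta\,\omega\ge0$ as a current. Hence, choosing a smooth strictly psh $f$ on $\Omega$ with $\delb f\ge(|\cot\theta|+1)\omega$, the function $\varphi+f$ is a bounded psh function. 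This is the source of the auxiliary functions $f_a,f$ in the statement.

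Next, I follow the proof of Lemma \ref{lem:app} \ref{item:appP}. Fix $\varepsilon>0$ so small that $-\cot(\theta+2\varepsilon)<d$, and take $\sigma_0=\sigma_0(n,\varepsilon,\Theta,c_0)$ from \cite[Lemma 5.1]{CL}. Cover $\Omega'$ by finitely many balls $B^i_{2r}$ with constant-coefficient Kähler forms $\omega^i_0$ satisfying $|\omega-\omega^i_0|_{\omega^i_0}<\sigma_0$ on $B^i_{2r}$. By Proposition \ref{prop:dhymequivalence} (with $\Theta$ fixed), the convolutions $(\varphi+\eta(|z|^2+C)-\eta|z-x_i|^2)_\delta$ on $B^i_{2r}$ satisfy $P^\ell_{\omega^i_0}\le-\cot\theta+\varepsilon$ and $\theta_{\omega_0^i}\le\Theta+\varepsilon$. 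Here $\eta\searrow0$ plays the role of the earlier $\varepsilon$ in the perturbation $\psi_\varepsilon$. Applying \eqref{eq:CL} back from $\omega_0^i$ to $\omega$ costs another $\varepsilon$ in the angles, so each local piece satisfies $P^\ell_\omega\le -\cot(\theta+2\varepsilon)<d$.

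I then glue the pieces by the regularized maximum as in \cite[p561--562]{GChen}, which is the same step used in \cite[Proposition 4.1]{Datar-Pingali}. The regularized maximum of functions is, at each point, a convex combination of their Hessians plus a nonnegative term coming from the convexity of the max. So the sublevel condition is preserved by the convexity of $\bar{\Gamma}_{\theta,\Theta}$ and of the sublevel sets of $P^\ell$ (Proposition \ref{prop:dhymproperties} \ref{item:dhympconv}, \ref{item:dhymsublevel}) together with the monotonicity in Proposition \ref{prop:dhymproperties} \ref{item:dhymmonotonicity}. The decreasing property is inherited because the pieces with $f$ added, $(\varphi+f)_\delta+(\text{smooth})$, decrease in $\delta$. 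Concretely, I write the glued function as $\varphi_a$ and set $f_a:=f_{\delta_a}$ (or $f$ plus the $\eta_a$-correction), so that $\varphi_a+f_a$ is psh and decreasing to $\varphi+f$. The remaining bookkeeping is to track the index-dependent corrections $\eta(|z|^2+C-|z-x_i|^2)$, which are absorbed exactly as in Lemma \ref{lem:app} by taking $C\ge4r^2$.

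For \ref{item:dhymappQ}, I additionally refine the cover so that each ball carries a constant $c^i_{0,0}$ with $c^i_{0,0}\le c_0\le c^i_{0,0}+\varepsilon'$, as in \eqref{eq:c_00^i}. Using \eqref{eq:c_0term} in place of \eqref{eq:masspositivity}, the change $\omega^i_0\to\omega$ and $c^i_{0,0}\to c_0$ perturbs $Q$ by at most $\varepsilon$. The key input is Proposition \ref{prop:dhymproperties} \ref{item:dhymImest}, which bounds $\operatorname{Im}(\lambda+\sqrt{-1})^n$ from below on $\bar{\Gamma}_{\theta,\Theta}$ and replaces the mass lower bound used in the gMA case. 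Gluing then uses the convexity of $\{Q_{c_{0,0},I_n}\le c\}\cap\bar{\Gamma}_{\theta,\Theta}$.

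I expect the main obstacle to be this $Q$ step. The threshold $c$ must be allowed to be $d$ rather than $-\cot\theta$, yet Proposition \ref{prop:dhymproperties} \ref{item:dhymsublevel} is stated only for level $-\cot\theta$. I would handle this by viewing level $d=-\cot\theta'$ with $\theta'$ slightly larger than $\theta$ and checking that $\varepsilon_{3.1}$ can be chosen uniformly for $\theta'$ near $\theta$. This works because the strict right-Noetherian property of \cite{Lina} is open in the parameter.
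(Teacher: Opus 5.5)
Your proposal is correct and is essentially the paper's proof. The shared steps are: shift by a smooth psh function (the paper's $A|z|^2$) using $\delb\varphi\ge\cot\theta\,\omega$; control the local convolutions of $\psi_\varepsilon-\varepsilon|z-x_i|^2$ via Proposition \ref{prop:dhymequivalence} and \eqref{eq:CL}; glue by the regularized maximum as in \cite{GChen}; and for \ref{item:dhymappQ} refine the cover by constants $c^i_{0,0}$ and use Proposition \ref{prop:dhymproperties} \ref{item:dhymImest} together with \eqref{eq:c_0term}.

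Your deviations are improvements rather than gaps. Applying \eqref{eq:CL} to each piece before gluing is cleaner. Your additive bound $c^i_{0,0}\le c_0\le c^i_{0,0}+\varepsilon'$ also works where $c_0<0$, unlike the paper's multiplicative one. Your level-$d$ convexity worry is in fact moot: in a single chart all pieces equal $\varphi_\delta$ plus pluriharmonic functions. So the glued function has Hessian $\delb\varphi_\delta$ plus a semipositive form, and only monotonicity is needed.
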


\begin{proof}
    First, we consider \ref{item:dhymappP}. As in the previous section, we prove this using the simple version of the gluing argument in 
    \cite[Section 4 and p596--p597]{GChen}, based on \cite{BK}. Fix $\varepsilon_0>0$ such that $-\cot\theta+\varepsilon_0< d$. For any $\varepsilon>0$, define
    $$\psi_\varepsilon:=\varphi+\varepsilon(\vert z\vert^2+C),$$
    where $C$ is a large constant defined later. Then, we see that $(-\varepsilon\omega_\mathrm{Euc})_{\psi_\varepsilon}\in\bar{\Gamma}^\ell_{\omega,\theta,\Theta}$ by Proposition \ref{prop:dhymequivalence}, where $\omega_\mathrm{Euc}:=\sum\sqrt{-1}dz^i\wedge d\bar{z}^i$. We cover $\Omega'$ with a finite number of coordinate balls $B^i_r$ centered at $x_i$ of radius $r\ll1$ (with respect to $\omega_\mathrm{Euc}$) so small that we have 
    \begin{equation}\label{eq:dhymconstomega}
        \omega_0^i-\sigma\le\omega\le\omega^i_0+\sigma \ \text{on} \ B^i_{2r}:=B_{2r}(x_i),
    \end{equation}
    where $\omega_0^i$ is Kähler form on $B^i_{2r}$ with constant coefficients. Here, the constant $\sigma$ is smaller than $\sigma_0(n,\varepsilon_1,\Theta,c_0)$, which is given above \eqref{eq:CL}, where $\varepsilon_1$ is a constant such that $-\cot\theta+\varepsilon_0+\varepsilon_1\le d$. Note that we have $\delb (\varphi+A\vert z\vert^2)\ge0$ by Proposition \ref{prop:dhymequivalence}.  
    Adapting the arguments of \cite[Proposition 4.3 (3)]{GChen}, for $\delta>0$ small enough depending on $A$, we obtain a sequence of smooth functions $\{\Psi_{\varepsilon,\delta}\}_\delta$, given by the regularized maximum of 
    \begin{equation}\label{eq:dhympsidelta}
        (\psi_\varepsilon-\varepsilon\vert z-x_i\vert^2)_\delta=(\psi_\varepsilon+A\vert z\vert^2-\varepsilon\vert z-x_i\vert^2)_\delta-(A\vert z\vert^2)_\delta
    \end{equation}
    running $i$, where $(\cdot)_\delta$ denotes regularization by convolution as before. 
    The functions $\Psi_{\varepsilon,\delta}$ satisfy
    $$P^\ell_{\omega_0}(\delb\Psi_{\varepsilon,\delta})\le -\cot\theta+\varepsilon_0.$$
    By the definition of the regularized maximum, we see that a sequence
    $\Psi_{\varepsilon,\delta}+(A\vert z\vert)_\delta$ decreases.
    Finally, by \eqref{eq:CL}, we have $P^\ell_\omega(\delb\Psi_{\varepsilon,\delta})\le-\cot\theta+\varepsilon_0+\varepsilon_1\le d$.
    
     The same argument yields \ref{item:dhymappQ}. Indeed, we first take a finer cover of $\Omega'$ so that on each $B^i_{2r}$ we have a constant $c_{0,0}^i$ such that
     \begin{equation}\label{eq:dhymc_00^i}
         -\varepsilon_{3.2}<c_{0,0}^i\le c_0\le (1+\varepsilon')c_{0,0}^i
     \end{equation}
     in addition to \eqref{eq:dhymconstomega}, where $\varepsilon'$ will be chosen later. Then, denoting by $\psi^i_{\varepsilon,\delta}$ the function in \eqref{eq:dhympsidelta}, from \eqref{eq:dhymc_00^i}, we obtain the inequality
     \begin{equation}
         \begin{aligned}
             Q_{\omega_0^i}(\delb \psi^i_{\varepsilon,\delta})+\frac{c_0}{1+\varepsilon'}\frac{(\omega_0^i)^n}{\mathrm{Im}(\delb\psi^i_{\varepsilon,\delta}+\sqrt{-1}\omega_0^i)^n}\le-\cot\theta+\varepsilon_0.
         \end{aligned}
     \end{equation}
     Then, by Proposition \ref{prop:dhymproperties} \ref{item:dhymImest}, \eqref{eq:CL}, \eqref{eq:c_0term}, and \eqref{eq:dhymconstomega}, we see that $$Q_{c_0,\omega}(\delb\psi_{\varepsilon,\delta}^i)\le-\cot\theta+\varepsilon_0+\varepsilon_1\le d.$$
     By the arguments in \cite[p561--p562]{GChen} with the convexity of the sublevel sets and the monotonicity of $Q_{c_0,\omega}$, we obtain the desired result.
\end{proof}

Using this approximation, we obtain an inequality analogous to Lemma \ref{lem:comp}.

\begin{lem}\label{lem:dhymcomp}
    Let $\Omega$ and $\Omega'$ be bounded domains in $\mathbb{C}^n$ such that $\Omega'\subset\subset\Omega$, and $\omega$ be a Kähler form in $\Omega$. Suppose that $u_0,u_1$ are bounded functions on $\Omega$ such that $P^\ell_\omega(\delb u_0)\le_v-\cot\theta$ and $P^\ell_\omega(\delb u_1)\le_v -\cot\theta$ on $\Omega$ and $\liminf_{z\to\partial\Omega'}(u_1-u_0)\ge0$. Then, we have
    \begin{equation}
        \int_{\{u_1<u_0\}\cap\Omega'}\omega^{n-p}\wedge T^p_\omega(\delb u_0)\le\int_{\{u_1<u_0\}\cap\Omega'}\omega^{n-p}\wedge T^p_\omega(\delb u_1),
    \end{equation}
    where $p:=n-\ell+1$. 
\end{lem}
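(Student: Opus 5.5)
We will mirror the proof of Lemma \ref{lem:comp}, based on \cite[Theorem 4.1]{BT}, using Lemma \ref{lem:dhymapp} \ref{item:dhymappP} for smoothing. Put $u_s:=(1-s)u_0+su_1$. Since $T^p_\omega$ is a polynomial in $\delb u$, differentiating in $s$ gives
\begin{equation*}
\frac{d}{ds}\int_{\{u_1<u_0\}}\omega^{n-p}\wedge T^p_\omega(\delb u_s)=p\int_{\{u_1<u_0\}}\omega^{n-p}\wedge\delb(u_1-u_0)\wedge T^{p-1}_\omega(\delb u_s),
\end{equation*}
because $\frac{d}{dt}\operatorname{Re}/\operatorname{Im}(\alpha+t\beta+\sqrt{-1}\omega)^p=p\,\beta\wedge\operatorname{Re}/\operatorname{Im}(\alpha+\sqrt{-1}\omega)^{p-1}$. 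It therefore suffices to prove the analogue of \eqref{eq:lincomp'}. The $u_s$ remain admissible: by the convexity of $P^\ell_{I_n}$ (Proposition \ref{prop:dhymproperties} \ref{item:dhympconv}) and the standard viscosity convex-combination argument (\cite[Theorem 5.8]{CC}), $P^\ell_\omega(\delb u_s)\le_v-\cot\theta$. Lemma \ref{lem:dhymvisctopp} then gives $T^{p-1}_\omega(\delb u_s)\ge0$, since $p-1=n-\ell$.

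To justify the integrations by parts, apply Lemma \ref{lem:dhymapp} \ref{item:dhymappP} to $u_0$ and $u_1$ with some $d>-\cot\theta$. This gives smooth $u_{0,a},u_{1,b}$ with $P^\ell_\omega(\delb u_{\cdot})\le d$, and psh corrections $f_a$, $f_b$ such that $u_{0,a}+f_a$ and $u_{1,b}+f_b$ decrease to $u_0+f$ and $u_1+f$. The smooth function $f$ is the same for both, because the construction in Lemma \ref{lem:dhymapp} depends only on the cover and the constant $A$, which we fix in common. As in Lemma \ref{lem:comp}, replace $u_1$ by $u_1+2\delta$ to obtain strict inequality near $\partial\Omega'$ and work on $\Omega''\subset\subset\Omega'$. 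For $u_{s,a,b}:=(1-s)u_{0,a}+su_{1,b}$, convexity gives $P^\ell_\omega(\delb u_{s,a,b})\le d$. With $d=-\cot\theta'$ for some $\theta'>\theta$ close to $\theta$, this says $T^{p-1}_{\theta',\omega}(\delb u_{s,a,b})\ge0$, where $T_{\theta'}$ denotes $T$ with $\cot\theta$ replaced by $\cot\theta'$, exactly as in the proof of Lemma \ref{lem:dhymvisctopp}. Then \cite[Proposition 3.1]{BT'} applies, with $\theta$ there replaced by the smooth semipositive $(n-1,n-1)$-form $\omega^{n-p}\wedge T^{p-1}_{\theta',\omega}(\delb u_{s,a,b})$, and yields the smooth comparison inequality on $\{u_{1,b}<u_{0,a}\}$.

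Next we let $a,b\to\infty$ and then $\theta'\to\theta$. The difference $T_{\theta'}-T_\theta$ is $(\cot\theta-\cot\theta')\operatorname{Im}(\cdots)^{p-1}$, and its integral is bounded uniformly by Chern--Levine--Nirenberg type estimates, so it vanishes in the limit. The difficulty in passing to the limit is that $u_{0,a}$ and $u_{1,b}$ are not psh. We handle this by writing $\delb u_{0,a}=\delb(u_{0,a}+f_a)-\delb f_a$ and expanding every mixed product multilinearly into wedge products of the decreasing bounded psh functions $u_{\cdot}+f_{\cdot}$ and the smooth psh $f_{\cdot}$, with $f_a\to f$ smoothly, together with $\omega$. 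Each term then converges by \cite[Theorem 2.1]{BT}. The sets $\{u_{1,b}<u_{0,a}\}$ are handled with the quasicontinuity and capacity argument of \cite[Theorem 4.1]{BT}, using the capacity bound recorded at the end of the proof of Lemma \ref{lem:comp}, applied to the psh pieces. Because the $f$'s cancel in the differences, the sets $\{u_{1,b}+f_b<u_{0,a}+f_a\}$ approximate $\{u_1<u_0\}$ correctly.

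The main obstacle is this limiting step. Unlike Lemma \ref{lem:comp}, the approximants are only quasi-psh, and the form $T^{p-1}$ has mixed-sign coefficients (real and imaginary parts), so it is not directly a positive combination of psh products. We will control it through the positivity coming from $P^\ell_\omega\le d$, together with the boundedness of each multilinear term in capacity. Integrating the derivative inequality over $s\in[0,1]$ and letting $\delta\to0$ and $\Omega''\to\Omega'$ then gives the claim.
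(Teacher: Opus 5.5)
Your proposal follows the paper's proof essentially step for step. Both reduce to the analogue of \eqref{eq:lincomp'} via $u_s$ and smooth with Lemma \ref{lem:dhymapp}. Both use the positivity of $\omega^{n-p}\wedge T^{p-1}$, with $\cot\theta$ replaced by $-d$, to run \cite[Proposition 3.1]{BT'}. Both then let $a,b\to\infty$ using the splitting $\delb u_{0,a}=\delb(u_{0,a}+f_a)-\delb f_a$ and capacity bounds, and finally let $d\to-\cot\theta$.

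The only detail the paper makes explicit that you skip is this: before invoking \cite[Proposition 3.1]{BT'}, it adds $A\omega$ to the factors $\delb u_{0,a}$ and $\delb u_{1,b}$ so that both become positive. This is harmless, since it leaves the set $\{u_{1,b}<u_{0,a}\}$ unchanged and adds the same finite term to both sides.
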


\begin{proof}
    We use the same proof as in Lemma \ref{lem:comp}. 
    The only difference is that $u_i$ is not psh but quasi-psh. However, to prove inequality \eqref{eq:lincomp} with the same argument, it is sufficient to add $A\omega$ to the second term $\delb u_{0,a}$ on the left-hand side and to $\delb u_{1,b}$ on the right-hand side so that both become positive. Then, we let $a,b\to\infty$ and then let $d\to-\cot\theta$ to obtain inequality \eqref{eq:lincomp'}. Here, when applying the convergence argument of \cite[Theorem 4.1]{BT}, remark that we have
    \begin{align*}
        \omega^k\wedge(\delb u_{0,a})^{n-k}
        &=\omega^k\wedge(\delb (u_{0,a}+f_a)-\delb f_a)^{n-k},
    \end{align*}
    where $\{f_a\}$ is a sequence as in Lemma \ref{lem:dhymapp}. Thus, integration of $\omega^k\wedge(\delb u_{0,a})^{n-k}$ on a set $E$ can be estimated by $\mathrm{cap}(E)$. 
\end{proof}

\begin{cor}\label{cor:dhymcomp}
    Let $\Omega$ and $\Omega'$ be bounded domains in $\mathbb{C}^n$ such that $\Omega'\subset\subset\Omega$, and $\omega$ be a Kähler form in $\Omega$. Suppose that $u_0,u_1$ are bounded functions in $\Omega$ such that 
    \begin{equation*}
        \begin{cases}
            P^{\ell}_\omega(\delb u_i)\le_v1 \ \text{ in }\Omega \ \text{ for } i=1,2,\\
            T^p_\omega(\delb u_1)\le T^p_\omega(\delb u_0) \ \text{ in }\Omega',\\
            \liminf_{z\to\partial\Omega'} (u_1-u_0)\ge 0,
        \end{cases}
    \end{equation*}
    where $p=n-\ell+1$. Then, we have $u_0\le u_1$ in $\Omega'$.
\end{cor}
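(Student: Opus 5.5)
The plan is to transplant the proof of Corollary \ref{cor:comp}, the Bedford--Taylor argument of \cite[Corollary 4.4]{BT}, replacing Lemma \ref{lem:comp} by Lemma \ref{lem:dhymcomp}. The hypothesis ``$P^\ell_\omega(\delb u_i)\le_v1$'' in the statement is to be read as $P^\ell_\omega(\delb u_i)\le_v-\cot\theta$ (with $\delb u_i\in\bar{\Gamma}_{\omega,\Theta,\Theta}$), as in Lemma \ref{lem:dhymcomp}.

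Fix a bounded strictly psh function $\psi\le0$ on $\Omega$, for instance $\psi=|z|^2-R$. Set $S:=\{u_1<u_0+\varepsilon\psi\}\cap\Omega'$ for small $\varepsilon>0$. The first point is that $u_0+\varepsilon\psi$ is still an admissible competitor: $P^\ell_\omega(\delb(u_0+\varepsilon\psi))\le_v-\cot\theta$. This holds because adding a psh function increases every upper test Hessian, and $P^\ell_{I_n}$ is decreasing by Proposition \ref{prop:dhymproperties} \ref{item:dhymmonotonicity}. The boundary condition also survives, since $\liminf_{\partial\Omega'}(u_1-u_0-\varepsilon\psi)\ge0$. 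Lemma \ref{lem:dhymcomp} therefore gives
$$\int_S\omega^{n-p}\wedge T^p_\omega(\delb(u_0+\varepsilon\psi))\le\int_S\omega^{n-p}\wedge T^p_\omega(\delb u_1)\le\int_S\omega^{n-p}\wedge T^p_\omega(\delb u_0),$$
where the last inequality is the hypothesis $T^p_\omega(\delb u_1)\le T^p_\omega(\delb u_0)$ on $\Omega'$.

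Next, expand the left-hand side. Write $\alpha:=\delb u_0+\sqrt{-1}\omega$. From $T^p=\operatorname{Re}\langle\alpha^p\rangle-\cot\theta\operatorname{Im}\langle\alpha^p\rangle$ and the binomial formula (multilinearity of the nonpluripolar product for bounded functions),
$$T^p_\omega(\delb(u_0+\varepsilon\psi))=\sum_{j=0}^{p}\binom{p}{j}\varepsilon^j(\delb\psi)^j\wedge T^{p-j}_\omega(\delb u_0).$$
Here $T^0=1$; the convention for $T^0$ should be checked, but the top term is in any case a positive multiple of $\varepsilon^p(\delb\psi)^p$. By Lemma \ref{lem:dhymvisctopp} \ref{item:dhymvisctoppn-1}, $T^{p-j}_\omega(\delb u_0)\ge0$ for $1\le j\le p$, since $p-j\le n-\ell$. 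Because $\delb\psi>0$, every intermediate term is nonnegative after wedging with $\omega^{n-p}$. Dropping them yields
$$\varepsilon^p\int_S\omega^{n-p}\wedge(\delb\psi)^p\le0,$$
so $S$ has Lebesgue measure zero.

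To conclude $S=\emptyset$: the $u_i$ are quasi-psh here, not psh. Add $A|z|^2$ to both $u_0+\varepsilon\psi$ and $u_1$ to make them psh (the bound $\delb u_i\ge\cot\theta\,\omega$ comes from Proposition \ref{prop:dhymequivalence}). The set $S$ is unchanged. Now the sub-mean value property and upper semicontinuity apply: if $z\in S$, then $u_1(z)<u_0(z)+\varepsilon\psi(z)$, and since $u_0+\varepsilon\psi+A|z|^2$ equals the limit of its ball averages, the set where $u_0+\varepsilon\psi>u_1$ would have positive measure near $z$. Hence $S=\emptyset$, and letting $\varepsilon\to0$ gives $u_0\le u_1$.

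The main obstacle I expect is the expansion step, in particular the positivity of the mixed terms. The currents $T^{p-j}_\omega(\delb u_0)$ involve $\operatorname{Re}$ and $\operatorname{Im}$ parts, which are not individually positive. So I must check carefully that the binomial coefficients match the normalization in Definition \ref{def:dhymllcurrent}, so that each mixed term really is $(\delb\psi)^j\wedge T^{p-j}$ and no sign-indefinite remainder appears.
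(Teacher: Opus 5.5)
Your proposal is correct and follows the paper's proof. The paper reruns the argument of Corollary \ref{cor:comp} with Lemma \ref{lem:dhymcomp} in place of Lemma \ref{lem:comp}, and observes that because $u_0,u_1$ are quasi-subharmonic, a nonempty $S$ has positive Lebesgue measure. The obstacle you flag does not arise: $\delb\psi$ is a real form, so $\operatorname{Re}$ and $\operatorname{Im}$ commute with wedging by it, and the expansion $T^p_\omega(\delb(u_0+\varepsilon\psi))=\sum_{j=0}^p\binom{p}{j}\varepsilon^j(\delb\psi)^j\wedge T^{p-j}_\omega(\delb u_0)$ holds exactly, with $T^0_\omega=1$.
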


\begin{proof}
    The proof is completely the same as that of Corollary \ref{cor:comp}, once one notifies that the set $S$ as in the proof has a positive Lebesgue measure in the situation since $u_0$ and $u_1$ are quasi-subharmonic.
\end{proof}

\begin{prop}\label{prop:dhymviscpp}
    Suppose $\alpha_\varphi\in\bar{\Gamma}_{\omega,\Theta,\Theta}$.
    \begin{enumerate}[label={\upshape(\roman*)}]
        \item\label{item:dhymposin-1} 
        $P^\ell_{\omega}(\alpha_\varphi)\le_v 1$ in $\{\omega>0\}$ if and only if $T^p_\omega(\alpha_\varphi)\ge0$ in $X$ for all $p\le n-\ell$.
        \item\label{item:dhymposin} 
        Suppose $P_{\omega}(\alpha_\varphi)\le_v 1$. Then, $Q_{c_0,\omega}(\alpha_\varphi)\le_v 1$ in $\{\omega>0\}$ if and only if $T^n_\omega(\alpha_\varphi)-c_0\omega^n\ge0$ in $X$.
    \end{enumerate}
\end{prop}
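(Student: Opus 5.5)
The plan is to follow the proof of Proposition \ref{prop:viscpp} line by line, substituting the dHYM ingredients for the gMA ones. Throughout, ``$\le_v 1$'' is read as ``$\le_v -\cot\theta$'', which is the intended normalization. The ``only if'' directions are exactly Lemma \ref{lem:dhymvisctopp}, so only the converse implications need proof. We give the argument for \ref{item:dhymposin-1}; \ref{item:dhymposin} is identical, with $T^n_\omega-c_0\omega^n$ and $Q_{c_0,\omega}$ in place of $T^p_\omega$ and $P^\ell_\omega$, and Lemma \ref{lem:dhymapp} \ref{item:dhymappQ} in place of \ref{item:dhymappP}.

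Assume $T^p_\omega(\alpha_\varphi)\ge0$ for all $p\le n-\ell$. We argue by downward induction on $\ell$.

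\textbf{Base case $\ell=n-1$.} The condition $T^1_\omega(\alpha_\varphi)\ge0$ reads $\alpha_\varphi-\cot\theta\,\omega\ge0$ as a current. By the argument in the proof of \cite[Proposition 1.3]{EGZ}, every upper test $q$ then satisfies $\delb q-\cot\theta\,\omega\ge0$ at the touching point. This says that every eigenvalue of $\omega^{-1}\delb q$ is at least $\cot\theta$, i.e.\ $\operatorname{arccot}\lambda_j\le\theta$. That is precisely $P^{n-1}_\omega(\delb q)\le-\cot\theta$.

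\textbf{Inductive step.} Assume $P^{\ell-1}_\omega(\alpha_\varphi)\le_v-\cot\theta$, and suppose $P^\ell_\omega(\alpha_\varphi)\le_v-\cot\theta$ fails. Then there is a point $p\in\{\omega>0\}$ and an upper test $q$ of the local potential $u$ at $p$ with $P^\ell_\omega(\delb q)(p)>-\cot\theta$. By continuity, and since $P^\ell_{I_n}$ is decreasing (Proposition \ref{prop:dhymproperties} \ref{item:dhymmonotonicity}), we may choose small $\varepsilon,r$ so that $q_\varepsilon:=q+\varepsilon|z|^2-\varepsilon r^2/2$ still satisfies $P^\ell_\omega(\delb q_\varepsilon)>-\cot\theta$ on $B=B_r(p)$. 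In terms of currents this gives $T^{p'}_\omega(\delb q_\varepsilon)<0$ pointwise in the relevant sense for $p'=n-\ell+1$. Since the hypothesis gives $T^{p'}_\omega(\alpha_\varphi)\ge0$, we get $T^{p'}_\omega(\delb u^C)=T^{p'}_\omega(\delb u)\ge T^{p'}_\omega(\delb q_\varepsilon)$ on $\{u>av-C\}$. Here we use plurifine locality of the nonpluripolar product, with $u^C:=\sup\{u,av-C\}$, $v$ a local potential of $\omega$, and $a$ chosen so that $a\omega\in\bar\Gamma_{\omega,\theta,\Theta}$.

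Next we choose $\varepsilon'$ and $C$ exactly as in \eqref{eq:inB}--\eqref{eq:bdry} and set $S:=\{q_\varepsilon<u^C+\varepsilon'v\}\ni p$. We then apply Lemma \ref{lem:dhymcomp} on $S$ to the pair $q_\varepsilon$ and $u^C+\varepsilon'v$. Both functions satisfy $P^{\ell-1}_\omega\le_v-\cot\theta$:
\begin{itemize}
\item[] for $u^C$, by the inductive hypothesis together with the fact that a supremum of viscosity subsolutions is a viscosity subsolution;
\item[] for $q_\varepsilon$, by shrinking $\varepsilon$ if needed, since $q$ is an upper test for a function with $P^{\ell-1}_\omega\le_v-\cot\theta$ and $P^{\ell-1}\le P^\ell$ bounds are preserved under the small perturbation.
\end{itemize}
We then expand $T^{p'}_\omega(\delb(u^C+\varepsilon'v))$ multilinearly, as in the proof of Corollary \ref{cor:comp}. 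This produces the term $\varepsilon'^{p'}/p'!\int_S\omega^{n}$. The lower-order terms are nonnegative, because $T^{j}_\omega(\delb u^C)\ge0$ for $j<p'$ by Lemma \ref{lem:dhymvisctopp} and the inductive hypothesis. Thus $S$ has Lebesgue measure zero. Since $u^C$ and $q_\varepsilon$ are quasi-subharmonic, $S$ is then empty (the remark in Corollary \ref{cor:dhymcomp}), contradicting $p\in S$.

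\textbf{Main obstacle.} The delicate point is the positivity of the mixed terms in the multilinear expansion. For dHYM, $T^{j}_\omega(\delb u^C)\wedge(\delb v)^{p'-j}$ involves $\operatorname{Re}$ and $\operatorname{Im}$ parts, and $u^C$ is only quasi-psh, so positivity is not automatic. We would handle it by writing $\delb(u^C+\varepsilon'v)$ as the form $\alpha'$ shifted by $\varepsilon'\omega$. We would then use that $T^{p'}_\omega$ is increasing along $\omega$-directions on $\bar\Gamma_{\omega,\Theta,\Theta}$; this follows from the monotonicity in Proposition \ref{prop:dhymproperties}, applied to the smooth approximants of Lemma \ref{lem:dhymapp} before passing to the limit via \cite[Theorem 2.1]{BT}. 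That argument yields a strictly positive increment comparable to $\varepsilon'\omega^n$ on $S$, using the lower bound \ref{item:dhymImest}.
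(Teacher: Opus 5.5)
Your proposal follows the paper's proof, which simply repeats that of Proposition \ref{prop:viscpp}: the ``only if'' part is Lemma \ref{lem:dhymvisctopp}, the base case is $T^1_\omega(\alpha_\varphi)=\alpha_\varphi-\cot\theta\,\omega\ge0$, and the inductive step applies Lemma \ref{lem:dhymcomp} to $q_\varepsilon$ and $u^C+\varepsilon'v$ and then expands multilinearly as in Corollary \ref{cor:comp}. For the indices to match $p'=n-\ell+1$, the step should read ``assume $P^{\ell}_\omega\le_v-\cot\theta$, show $P^{\ell-1}_\omega\le_v-\cot\theta$''; this is a relabeling slip that the paper's text shares.

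Your ``main obstacle'' is not one. Since $\frac{d}{dt}T^{p}_\omega(\alpha+t\omega)=p\,\omega\wedge T^{p-1}_\omega(\alpha+t\omega)$, the expansion is $\sum_{j=0}^{p'}\binom{p'}{j}\varepsilon'^{j}\omega^{j}\wedge T^{p'-j}_\omega(\delb u^C)$, and its terms with $1\le j\le p'-1$ are positive currents wedged with smooth positive forms. So the argument you already gave, using the top term $\varepsilon'^{p'}\omega^{n}$ on $S$, suffices. The detour through Proposition \ref{prop:dhymproperties} \ref{item:dhymImest} is unnecessary, since that result only bounds the top-degree imaginary part.
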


\begin{proof}
    The proof is completely the same as that of Proposition \ref{prop:viscpp}.
\end{proof}

Now, we prove two more lemmas as before, to prove the uniqueness.

\begin{lem}
    Suppose that $\varphi_1$ and $\varphi_2$ are solutions of \eqref{eq:dHYM}. Then, for $i=1,2$, we have 
    $$\big\langle\alpha_{\varphi_1}\wedge T^{n-1}_\omega(\alpha_{\varphi_i})\big\rangle=\big\langle\alpha_{\varphi_2}\wedge T^{n-1}_\omega(\alpha_{\varphi_i})\big\rangle$$
\end{lem}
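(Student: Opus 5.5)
The plan is to mirror the proof of Lemma \ref{lem:uni}, now using the dHYM tools from this section.

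\emph{Step 1: subsolution properties.} Since $\varphi_1,\varphi_2$ solve \eqref{eq:dHYM}, we have $\alpha_{\varphi_j}\in\bar{\Gamma}_{\omega,\theta,\Theta}$. By Proposition \ref{prop:dhymequivalence}, this gives $P_\omega(\alpha_{\varphi_j})\le_v-\cot\theta$ and $Q_\omega(\alpha_{\varphi_j})\le_v-\cot\Theta$ in $\{\omega>0\}$. Moreover $T^n_\omega(\alpha_{\varphi_j})=0$, so Proposition \ref{prop:dhymviscpp}\ref{item:dhymposin}, with $c_0=0$, gives $Q_\omega(\alpha_{\varphi_j})\le_v-\cot\theta$.

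\emph{Step 2: the segment stays in the right class.} Put $\varphi_s:=(1-s)\varphi_1+s\varphi_2$ for $s\in[0,1]$. We use three convexity facts from Proposition \ref{prop:dhymproperties}: convexity of $\bar{\Gamma}_{\theta,\Theta}$, convexity of $P$, and convexity of the sublevel set $\{Q\le-\cot\theta\}\cap\bar{\Gamma}_{\theta,\Theta}$. Following the viscosity convex-combination argument as in the proof of \cite[Theorem 5.8]{CC}, these yield
\[
P_\omega(\alpha_{\varphi_s})\le_v-\cot\theta,\qquad Q_\omega(\alpha_{\varphi_s})\le_v-\cot\theta,\qquad \alpha_{\varphi_s}\in\bar{\Gamma}_{\omega,\Theta,\Theta}.
\]
Lemma \ref{lem:dhymvisctopp}\ref{item:dhymvisctoppn} then gives $T^n_\omega(\alpha_{\varphi_s})\ge0$. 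Proposition \ref{prop:dhymsuperineq} gives $\int_X T^n_\omega(\alpha_{\varphi_s})\le0$. Hence $T^n_\omega(\alpha_{\varphi_s})=0$ as a measure for every $s\in[0,1]$.

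\emph{Step 3: differentiate.} By multilinearity of the nonpluripolar product, $s\mapsto T^n_\omega(\alpha_{\varphi_s})$ is a polynomial in $s$ with measure coefficients. Its derivative is
\[
\frac{d}{ds}T^n_\omega(\alpha_{\varphi_s})=n\big\langle(\alpha_{\varphi_2}-\alpha_{\varphi_1})\wedge T^{n-1}_\omega(\alpha_{\varphi_s})\big\rangle,
\]
using $\frac{d}{dz}(z+\sqrt{-1}\omega)^n=n(z+\sqrt{-1}\omega)^{n-1}$ and taking real and imaginary parts. Since this polynomial vanishes identically on $[0,1]$, its derivative vanishes at $s=0$ and at $s=1$. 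This gives the claimed identity for $i=1$ and $i=2$ respectively.

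\emph{Main obstacle.} The delicate point is justifying the differentiation. The nonpluripolar product is multilinear only for functions in a common singularity type, and the potentials here are only quasi-psh, not psh. I would handle this by adding $A\omega$ for some $A$ large enough to make all potentials psh, and by using locality on the plurifine open sets $\{\varphi_j>-C\}$, as in the proofs of Lemmas \ref{lem:visctopp} and \ref{lem:ppineq}. On those sets the product is a genuine polynomial in $s$, and letting $C\to\infty$ gives the identity globally.

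A secondary point needs checking. Lemma \ref{lem:dhymvisctopp} requires $\alpha_{\varphi_s}\in\bar{\Gamma}_{\omega,\Theta,\Theta}$, and this must hold uniformly in $s$. It does, by the convexity of $\bar{\Gamma}_{\theta,\Theta}$ together with the upper bound $Q_\omega\le-\cot\Theta$, which is preserved along the segment by the convexity of that sublevel set.
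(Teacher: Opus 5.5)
Your proposal is correct and matches the paper's argument, which says only that the proof is the same as that of Lemma \ref{lem:uni}: convexity along $\varphi_s=(1-s)\varphi_1+s\varphi_2$ preserves the viscosity subsolution conditions, Lemma \ref{lem:dhymvisctopp} and Proposition \ref{prop:dhymsuperineq} then force $T^n_\omega(\alpha_{\varphi_s})=0$, and differentiating at $s=0$ and $s=1$ gives the two identities. Your extra care about the differentiation is welcome, though the nonpluripolar product is multilinear without any common-singularity-type hypothesis (see \cite[Section 1]{BEGZ}); that general fact is proved by exactly the plurifine-locality argument you sketch.
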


\begin{proof}
    The proof is completely the same as that of Lemma \ref{lem:uni}.
\end{proof}

\begin{lem}
    Let $\alpha_\infty:=\alpha+\delb\varphi_\infty$ be a solution of \eqref{eq:dHYM}. Then, for any function $\varphi$ such that $P_\omega(\alpha_\varphi)\le_v-\cot\theta$, we have
    $$\int_X\big\langle\alpha_\varphi\wedge T^{n-1}_\omega(\alpha_\infty)\big\rangle\le\int_X\big\langle\alpha_\infty\wedge T^{n-1}_\omega(\alpha_\infty
    ).$$
\end{lem}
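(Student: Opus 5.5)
The plan is to copy the proof of Lemma \ref{lem:solineq} verbatim in the dHYM setting. We use the linear path $\varphi_s:=(1-s)\varphi_\infty+s\varphi$ for $s\in[0,1]$ and differentiate the total mass $\int_X T^n_\omega(\alpha_{\varphi_s})$ at $s=0$.

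\textbf{Step 1: the path stays in the admissible class.} Since $\alpha_\infty$ is a solution of \eqref{eq:dHYM}, it lies in $\bar{\Gamma}_{\omega,\theta,\Theta}$. By Proposition \ref{prop:dhymequivalence} this gives $P_\omega(\alpha_\infty)\le_v-\cot\theta$, and by hypothesis also $P_\omega(\alpha_\varphi)\le_v-\cot\theta$. We use the convexity of $P_{I_n}$ (Proposition \ref{prop:dhymproperties} \ref{item:dhympconv}) and the convexity of $\bar\Gamma_{\theta,\Theta}$ (Proposition \ref{prop:dhymproperties} \ref{item:dhymsublevel}). Convex combinations of viscosity subsolutions are then again viscosity subsolutions (as in the proof of \cite[Theorem 5.8]{CC}, used in Lemma \ref{lem:uni}). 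Hence $P_\omega(\alpha_{\varphi_s})\le_v-\cot\theta$ for all $s\in[0,1]$. Lemma \ref{lem:dhymvisctopp} then gives $T^{n-1}_\omega(\alpha_{\varphi_s})\ge0$.

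\textbf{Step 2: the sign of the derivative.} Proposition \ref{prop:dhymsuperineq} gives $\int_X T^n_\omega(\alpha_{\varphi_s})\le0$ for every $s$. At $s=0$ we have $T^n_\omega(\alpha_\infty)=0$ by \eqref{eq:dHYM}. So $s\mapsto\int_X T^n_\omega(\alpha_{\varphi_s})$ attains its maximum $0$ at $s=0$, and its right derivative there is nonpositive.

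\textbf{Step 3: computing the derivative.} The nonpluripolar product is multilinear, so $\int_X T^n_\omega(\alpha_{\varphi_s})$ is a polynomial in $s$. Its derivative at $s=0$ is $\int_X\langle(\alpha_\varphi-\alpha_\infty)\wedge T^{n-1}_\omega(\alpha_\infty)\rangle$, using the identity $\frac{d}{dA}\big[\operatorname{Re}(A+\sqrt{-1}\omega)^n-\cot\theta\operatorname{Im}(A+\sqrt{-1}\omega)^n\big]=n\,T^{n-1}$. The factor $n$, and any normalisation of $T^{n-1}$, are positive and irrelevant for the sign. Combining with Step 2 gives the inequality in the statement.

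\textbf{Main obstacle.} The delicate point is justifying multilinearity and expansion in $s$ for nonpluripolar products of the quasi-psh (not psh) functions $\varphi_s$, since $\alpha_{\varphi_s}$ need not be positive. I would handle this as in Lemma \ref{lem:dhymvisctopp}. Because $\alpha_{\varphi_s}\ge-\cot\theta\,\omega$, which follows from $P^{n-1}_\omega\le_v-\cot\theta$, we can add $A\omega$ to get positive currents. We then expand binomially in $\omega$, which is a smooth semipositive form, and apply multilinearity of nonpluripolar products of positive currents. The mixed terms are finite-mass currents, so the $s$-polynomial has well-defined finite coefficients and the derivative computation is legitimate.
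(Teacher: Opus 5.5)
Your proposal is correct and is essentially the paper's proof, which just repeats the argument of Lemma \ref{lem:solineq}. It differentiates $s\mapsto\int_X T^n_\omega(\alpha_{\varphi_s})$ at $s=0$ along $\varphi_s=(1-s)\varphi_\infty+s\varphi$, and gets the sign from $T^n_\omega(\alpha_\infty)=0$ together with Proposition \ref{prop:dhymsuperineq} applied to the convex combination. One small slip: $P^{n-1}_\omega(\alpha_{\varphi_s})\le_v-\cot\theta$ forces every eigenvalue to be at least $\cot\theta$, so it gives $\alpha_{\varphi_s}\ge\cot\theta\,\omega$ rather than $-\cot\theta\,\omega$; any lower bound by a multiple of $\omega$ is all your quasi-psh expansion needs, so this does not affect the argument.
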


\begin{proof}
    The proof is completely the same as that of Lemma \ref{lem:solineq}.
\end{proof}

\begin{proof}[Proof of the uniqueness part of Theorem \ref{thm:maindHYM}]
    We only point out the differences from the proof of Theorem \ref{thm:main}. Suppose that $\varphi_1$ and $\varphi_2$ are solutions of the dHYM equation \eqref{eq:dHYM}. We fix $i=1,2$.
    First, note that by the proof of \cite[Lemma 5.6 (1)]{GChen}, there exists a constant $c>0$ such that for a smooth $(1,1)$-form $\alpha\in\bar{\Gamma}_{\omega,\theta,\Theta}$, we have
    $$\omega\wedge\mathrm{Im}(\alpha+\sqrt{-1}\omega)^{n-1}\ge c\omega^n.$$
    Therefore, we have
    \begin{align*}
        &\big\langle(\alpha_{\varphi_i}-\cot\theta\omega)\wedge T^{n-1}_\omega(\alpha_{\varphi_i})\big\rangle\\
        =&\big\langle(\alpha_{\varphi_i}-\cot\theta\omega)\wedge \big(\mathrm{Re}(\alpha_{\varphi_i}+\sqrt{-1}\omega)^{n-1}-\cot\theta\mathrm{Im}(\alpha_{\varphi_i}+\sqrt{-1}\omega)^{n-1}\big)\big\rangle\\
        =&\big\langle\alpha_{\varphi_i}\wedge\mathrm{Re}(\alpha_{\varphi_i}+\sqrt{-1}\omega)^{n-1}-\cot\theta\mathrm{Im}(\alpha_{\varphi_i}+\sqrt{-1}\omega)^n+\cot^2\theta\omega\wedge\mathrm{Im}(\alpha_{\varphi_i}+\sqrt{-1}\omega)^{n-1}\big\rangle\\
        =&(\cot^2\theta+1)\omega\wedge\mathrm{Im}\big\langle(\alpha_\varphi+\sqrt{-1}\omega)^{n-1}\big\rangle\ge c\omega^n.
    \end{align*}
    After this observation, the rest of the proof is the same as in the previous section, since $\varphi_1$ and $\varphi_2$ are quasi-psh.
\end{proof}

\subsection{Weak convergence of the deformed Hermitian-Yang-Mills flow}

In this subsection, we prove Theorem \ref{thm:dHYMflow} by the same proof as in subsection \ref{subsec:mixedHessianflow}. Recall that the $\mathcal{J}$-functional in the context of the dHYM equation is defined by
\begin{equation*}
    \mathcal{J}(0)=0,\quad d\mathcal{J}(\varphi)(\psi)=\int_X\psi\ \mathrm{Im}\Big(e^{-\sqrt{-1}\theta}(\alpha_\varphi+\sqrt{-1}\omega)^n\Big).
\end{equation*}
The following lemma corresponds to Lemma \ref{lem:convexalongflow} and is proved in \cite[Lemma 3.1]{Mur}. 

\begin{lem}\label{lem:dhymconvexalongflow}
    The $\mathcal{J}$-functional is convex along the dHYM flow.
\end{lem}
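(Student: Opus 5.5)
We prove convexity by differentiating $\mathcal{J}$ twice along the dHYM flow $\dot\varphi_t=\cot\theta_\omega(\alpha_t)-\cot\theta$, as in the proof of Lemma \ref{lem:convexalongflow}. Write $Z_t:=(\alpha_t+\sqrt{-1}\omega)^n$. For $\alpha_t$ in the supercritical cone we have $Z_t=r_t e^{\sqrt{-1}\theta_\omega(\alpha_t)}\omega^n$ with $r_t>0$, so
$$\mathrm{Im}\big(e^{-\sqrt{-1}\theta}Z_t\big)=\sin(\theta_\omega(\alpha_t)-\theta)\, r_t\,\omega^n .$$
In particular $\frac{d}{dt}\mathcal{J}(\varphi_t)=\int_X\dot\varphi_t\,\mathrm{Im}(e^{-\sqrt{-1}\theta}Z_t)$. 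Since $\dot\varphi_t=\cot\theta_\omega(\alpha_t)-\cot\theta=-\sin(\theta_\omega(\alpha_t)-\theta)/(\sin\theta_\omega(\alpha_t)\sin\theta)$, this gives
$$\frac{d}{dt}\mathcal{J}(\varphi_t)=-\int_X \dot\varphi_t^2\,\sin\theta\,\sin\theta_\omega(\alpha_t)\, r_t\,\omega^n=-\sin\theta\int_X\dot\varphi_t^2\,\mathrm{Im}Z_t .$$
This is the analogue of $-\int\dot\varphi_t^2\chi_t^n$. By \cite[Lemma 3.2]{FYZ} we have $\sin\theta_\omega(\alpha_t)>0$, so $\mathrm{Im}Z_t$ is a positive volume form.

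Next we differentiate once more. We have $\frac{d}{dt}\mathrm{Im}Z_t=n\,\delb\dot\varphi_t\wedge\mathrm{Im}(\alpha_t+\sqrt{-1}\omega)^{n-1}$, and $\ddot\varphi_t$ is the linearization of $\cot\theta_\omega$. This linearization is $-\frac{1}{\sin^2\theta_\omega}\,\Delta_{\eta_t}\dot\varphi_t$, where $\Delta_{\eta_t}$ is the Laplacian of the metric $\eta_t:=\alpha_t+\omega\,(\alpha_t)^{-1}\omega$, which is positive in the relevant cone. In eigenvalue coordinates it is the trace with weights $1/(1+\lambda_j^2)$. So
$$\frac{d^2}{dt^2}\mathcal{J}=-\sin\theta\int_X\Big(2\dot\varphi_t\ddot\varphi_t\,\mathrm{Im}Z_t+\dot\varphi_t^2\,n\,\delb\dot\varphi_t\wedge\mathrm{Im}(\alpha_t+\sqrt{-1}\omega)^{n-1}\Big).$$
For the second term we integrate by parts. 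The key identity, which plays the role of Lemma \ref{lem:F}, is that the form $n\,\mathrm{Im}(\alpha_t+\sqrt{-1}\omega)^{n-1}$ is comparable to the operator appearing in $\ddot\varphi_t\,\mathrm{Im}Z_t$. We verify it pointwise in coordinates where $\omega=I$ and $\alpha_t=\mathrm{diag}(\lambda_j)$. There $\mathrm{Im}\prod_{k\ne j}(\lambda_k+\sqrt{-1})=r_t\sin(\theta_\omega-\operatorname{arccot}\lambda_j)/\sqrt{1+\lambda_j^2}$ and $\mathrm{Im}Z_t=r_t\sin\theta_\omega$.

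Combining the two terms, we expect
$$\frac{d^2}{dt^2}\mathcal{J}=2\sin\theta\int_X\sqrt{-1}\partial\dot\varphi_t\wedge\bar\partial\dot\varphi_t\wedge \Xi_t\ \ge0 .$$
Here $\Xi_t$ is a combination with coefficients $\dot\varphi_t$-independent times positive weights, namely $r_t\,\frac{\cos(\theta_\omega-\operatorname{arccot}\lambda_j)}{\sin\theta_\omega\sqrt{1+\lambda_j^2}}$-type terms. The cross term $\dot\varphi_t\,\partial\theta_\omega\wedge\bar\partial\dot\varphi_t$ cancels after integration by parts, exactly as in the $J$-equation computation of \cite[Lemma 1]{Hashimoto}.

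The main obstacle is this pointwise sign check: showing that the resulting $(n-1,n-1)$-form is positive. The cleanest route I would try first is to recognize $\mathcal{J}$ along the flow as a gradient flow of the Kempf--Ness-type functional whose Hessian is $\int\sqrt{-1}\partial u\wedge\bar\partial u\wedge\mathrm{Re}\big(e^{-\sqrt{-1}\theta}(\alpha+\sqrt{-1}\omega)^{n-1}\big)$, as in \cite{CY}-type computations. One then uses that $\theta_\omega(\alpha_t)-\operatorname{arccot}\lambda_j<\pi/2$ in the supercritical range to get the positivity. The supercriticality $\theta_\omega(\alpha_t)<\pi$ is preserved by \cite[Lemma 3.2]{FYZ}, and this is exactly what is needed.
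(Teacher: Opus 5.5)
\textbf{There is a genuine gap.} Your formula $\frac{d}{dt}\mathcal{J}(\varphi_t)=-\sin\theta\int_X\dot\varphi_t^2\,\mathrm{Im}Z_t$ and your expression for $\frac{d^2}{dt^2}\mathcal{J}$ are correct. The decisive step, however, is only announced (``we expect''), and the positivity mechanism you propose is wrong. Write $f:=\dot\varphi_t$, $\vartheta:=\theta_\omega(\alpha_t)$, $\theta_j:=\operatorname{arccot}\lambda_j$, and $\Phi:=n\sqrt{-1}\partial f\wedge\bar\partial f\wedge(\alpha_t+\sqrt{-1}\omega)^{n-1}$.

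\begin{itemize}
\item The coefficients you guess for $\Xi_t$, of type $r_t\cos(\vartheta-\theta_j)/(\sin\vartheta\sqrt{1+\lambda_j^2})$, are exactly those of $\mathrm{Re}\,\Phi/\sin\vartheta$. This form has no sign.
\item The inequality $\vartheta-\theta_j<\pi/2$ is false in the supercritical range. For $n=3$ and $\lambda=(-10,100,100)$ one has $\vartheta\approx3.06<\pi$ but $\vartheta-\theta_2\approx3.05$.
\item Likewise $\mathrm{Re}(e^{-\sqrt{-1}\theta}(\alpha+\sqrt{-1}\omega)^{n-1})$ is not positive in general: take $\theta$ near $\vartheta$ in the same example and look at $j=1$.
\item In any case, the Hessian that governs $\frac{d^2}{dt^2}\mathcal{J}$ along this flow is the one for the metric $\sin\theta\int_X\psi_1\psi_2\,\mathrm{Im}Z$. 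The dHYM flow is the downward gradient flow of $\mathcal{J}$ for that metric, not for another one.
\item There is also a sign slip in the linearization. The correct formula is $\ddot\varphi_t=+\sin^{-2}\vartheta\,\Delta_\eta f$ with $\eta=\omega+\alpha_t\omega^{-1}\alpha_t$, whose eigenvalues are $1+\lambda_j^2$. Your $\alpha_t+\omega\alpha_t^{-1}\omega$ is indefinite once some $\lambda_j<0$, which the supercritical cone allows.
\end{itemize}

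\textbf{The missing step.} Instead of linearizing $\cot\theta_\omega$, differentiate the flow equation in the form $f\,\mathrm{Im}Z_t=\mathrm{Re}Z_t-\cot\theta\,\mathrm{Im}Z_t$. This gives the pointwise identity
\[
\ddot\varphi_t\,\mathrm{Im}Z_t=\mathrm{Re}\dot Z_t-(f+\cot\theta)\,\mathrm{Im}\dot Z_t,\qquad \dot Z_t=n\,\delb f\wedge(\alpha_t+\sqrt{-1}\omega)^{n-1}.
\]
Now every term of $\frac{d^2}{dt^2}\mathcal{J}$ is a polynomial in $f$ times $\delb f$ wedged with a closed form, so you can integrate by parts. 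In your language, the cross term $f\,\partial\cot\vartheta\wedge\bar\partial f$ cancels the cubic term only because $\partial\cot\vartheta=\partial f$ along the flow. What survives is not your $\Xi_t$ but
\[
\frac{d^2}{dt^2}\mathcal{J}(\varphi_t)=2\sin\theta\int_X\big(\mathrm{Re}\,\Phi-\cot\vartheta\,\mathrm{Im}\,\Phi\big)=2\sin\theta\int_X\frac{r_t}{\sin\vartheta}\sum_j\frac{|\partial_jf|^2}{1+\lambda_j^2}\,\omega^n\ \ge0 .
\]
The second equality holds pointwise in diagonal coordinates, because $\cos(\vartheta-\theta_j)-\cot\vartheta\sin(\vartheta-\theta_j)=\sin\theta_j/\sin\vartheta$ and $\sin\theta_j=(1+\lambda_j^2)^{-1/2}$. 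So the $-\cot\vartheta\,\mathrm{Im}\,\Phi$ piece you omitted is exactly what makes the integrand positive. The only inputs are $\sin\theta>0$ and $0<\vartheta<\pi$ (from Lemma 3.2 of FYZ); no condition like $\vartheta-\theta_j<\pi/2$ enters.

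The paper does not carry out this computation itself; it cites Lemma 3.1 of Murakami's earlier paper for the statement.
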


 The following corresponds to Lemma \ref{lem:conv}.
 
\begin{lem}\label{lem:dhymconv}
    Let $(X,\omega)$ be an $n$-dimensional compact Kähler manifold and $\alpha$ be a closed real $(1,1)$-form. Assume Setup \ref{setup2} and Assumption \ref{asmp:bdrydHYM} with $\omega_i=\omega$ and $\alpha_i=\alpha$ for any $i$. Then, the dHYM flow satisfies $\Vert Q_\omega(\alpha_t)+\cot\theta\Vert_{L^2(\omega)}\to0$ as $t\to\infty$.
\end{lem}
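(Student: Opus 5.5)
We follow the proof of Lemma \ref{lem:conv} line by line, with the dHYM data replacing the gMA data. Along the dHYM flow $\dot\varphi_t=\cot\theta_\omega(\alpha_t)-\cot\theta$, the first variation of $\mathcal{J}$ is
\[
\frac{d}{dt}\mathcal{J}(\varphi_t)=\int_X\dot\varphi_t\,\mathrm{Im}\big(e^{-\sqrt{-1}\theta}(\alpha_t+\sqrt{-1}\omega)^n\big).
\]
Since $\mathrm{Im}(e^{-\sqrt{-1}\theta}(\alpha_t+\sqrt{-1}\omega)^n)$ equals $-\sin\theta\,\mathrm{Im}(\alpha_t+\sqrt{-1}\omega)^n(\cot\theta_\omega(\alpha_t)-\cot\theta)$ up to the sign convention, this derivative is $-c\|\dot\varphi_t\|^2_{L^2(\mathrm{Im}(\alpha_t+\sqrt{-1}\omega)^n)}\le0$. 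By Lemma \ref{lem:dhymconvexalongflow} the map $t\mapsto\mathcal{J}(\varphi_t)$ is convex. Hence $\mu:=\lim_{t\to\infty}\mathcal{J}(\varphi_t)/t$ exists and equals $-\lim_t c\|\dot\varphi_t\|^2_{L^2(\mathrm{Im}(\alpha_t+\sqrt{-1}\omega)^n)}\le0$.

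The estimate $0<\inf\theta_\omega(\alpha_0)\le\theta_\omega(\alpha_t)\le\sup\theta_\omega(\alpha_0)<\pi$ of \cite[Lemma 3.2]{FYZ} gives three consequences.
\begin{enumerate}
\item $|\dot\varphi_t|\le C$ uniformly in $t$; this replaces Lemma \ref{lem:bound}.
\item By Proposition \ref{prop:dhymproperties} \ref{item:dhymImest}, $\mathrm{Im}(\alpha_t+\sqrt{-1}\omega)^n\ge C_{3.1}\omega^n$ uniformly.
\item Note that $Q_\omega(\alpha_t)+\cot\theta=\dot\varphi_t$ up to sign. Together with (2), the weighted $L^2$ norm therefore controls $\|Q_\omega(\alpha_t)+\cot\theta\|_{L^2(\omega)}$.
\end{enumerate}
So it suffices to prove $\mu=0$.

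To show $\mu\ge0$, we perturb as in Lemma \ref{lem:conv}. For $\varepsilon>0$ we add a small twist. Let $\mathcal{J}_\varepsilon$ be the functional whose critical points solve the twisted equation $Q_{c_0,\omega}(\alpha_\varphi)=-\cot\theta_\varepsilon$. Here we take $c_0$ a small constant, or we shift the phase $\theta$ to $\theta_i$ using Assumption \ref{asmp:bdrydHYM}. The cleanest choice is to use $\theta_i\searrow\theta$ directly: stability of $(X,[\alpha],[\omega])$ with phase $\theta_i$ gives, by \cite[Theorem 1.3]{CLT} and \cite{Linb}, a smooth solution of the twisted dHYM equation with constant twist $c_{0,i}$. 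The twist is normalized so that the topological constraint holds with phase $\theta$, and $c_{0,i}\to0$ (as in the construction of \eqref{eq:appdhym}, via the continuity method of \cite[Proposition 5.5]{GChen}). The corresponding twisted dHYM flow is the negative gradient flow of $\mathcal{J}_{i}$. By the known convergence of the dHYM flow (and its twisted version) when a supercritical $\mathcal{C}$-subsolution exists \cite{FYZ}, $\mathcal{J}_i$ is bounded below by its value at the solution. Then
\[
\mathcal{J}_i(\varphi_t)-\mathcal{J}_i(\varphi_0)=\mathcal{J}(\varphi_t)-\mathcal{J}(\varphi_0)+\int_0^t\!\!\int_X\dot\varphi_s\,c_{0,i}\,\omega^n\,ds,
\]
where the difference terms are controlled by $|c_{0,i}|$. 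The last term is bounded by $C|c_{0,i}|t$ by the bound on $\dot\varphi_t$. Hence $\mathcal{J}(\varphi_t)\ge -C|c_{0,i}|t-C_i$, which gives $\mu\ge -C|c_{0,i}|$. Letting $i\to\infty$ yields $\mu\ge0$.

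The main obstacle is this perturbation step. We must check that the twisted functional $\mathcal{J}_i$ differs from $\mathcal{J}$ only by a term linear in $c_{0,i}$ along the flow, i.e.\ that $\int_X\dot\varphi_t\,\omega^n$-type corrections (and any normalizing constant, analogous to $a_\varepsilon$ and the invariance of the energy $I$ in Lemma \ref{lem:conv}) are uniformly bounded. We must also check that the twisted dHYM flow actually converges, so that $\mathcal{J}_i$ is bounded below on the orbit. If convergence of the twisted flow is not available, we would instead use that $\mathcal{J}_i$ is convex along geodesic-type paths and attains its minimum at the smooth twisted solution. It would then suffice to prove only the lower bound $\mathcal{J}_i\ge\mathcal{J}_i(\psi_i)$ along $\varphi_t$.
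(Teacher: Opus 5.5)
Your proposal is correct and follows the same route as the paper. The ingredients match: convexity of $\mathcal{J}$ along the flow, and the angle bounds of \cite[Lemma 3.2]{FYZ}, which give $|\dot\varphi_t|\le C$ and $\mathrm{Im}(\alpha_t+\sqrt{-1}\omega)^n\ge c\,\omega^n$. Then a comparison with a twisted functional whose critical point is the twisted solution supplied by Assumption \ref{asmp:bdrydHYM}, \cite{Linb} and \cite[Proposition 5.5]{GChen} yields $\mu\ge -C\varepsilon$. The step you flag as the obstacle is exactly the one the paper delegates to \cite[Proposition 3.3]{Mur}, just as Lemma \ref{lem:conv} delegates it to \cite{PT} and \cite{Sun15}. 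That step is the lower bound of the twisted functional on the orbit via convergence of the twisted flow, together with the identity $\int_X\dot\varphi_t\,\mathrm{Im}(\alpha_t+\sqrt{-1}\omega)^n=0$ from Setup \ref{setup2}.
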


\begin{proof}[Proof of Lemma \ref{lem:dhymconv}]
     The strategy is completely the same as that of Lemma \ref{lem:conv}. In the two-dimensional case, the lemma is proved in \cite[Proposition 3.3]{Mur}. In our high-dimensional case, for any $\varepsilon>0$, by \cite{Linb} and \cite[Proposition 5.5]{GChen}, Assumption \ref{asmp:bdrydHYM} implies the existence of a solution of the twisted dHYM equation:
\begin{equation*}
    \begin{cases}
        \mathrm{Re}(\alpha+\sqrt{-1}\omega)^n=\cot\theta\mathrm{Im}(\alpha+\sqrt{-1}\omega)^n+\varepsilon\omega^n,\\
        \alpha\in\Gamma_{\omega,\theta,\Theta}.
    \end{cases}
\end{equation*}
Then, by $0<\inf\theta_\omega(\alpha_0)\le\theta_\omega(\alpha_t)\le\sup\theta_\omega(\alpha_0)<\pi$ as in \cite[Lemma 3.2]{FYZ} and Lemma \ref{lem:dhymconvexalongflow}, the same proof as in \cite[Proposition 3.3]{Mur} implies the desired result.
\end{proof}

\begin{proof}[Proof of Theorem \ref{thm:dHYMflow}]
    As in the proof of Theorem \ref{thm:Hessflow}, once one establishes Lemma \ref{lem:dhymconv}, the arguments as in \eqref{eq:dhymQconv} yield $Q_\omega(\alpha_\infty)\le_v-\cot\theta$, where $\alpha_\infty$ is a subsequential limit of the dHYM flow. Then, Lemma \ref{lem:dhymvisctopp} and Proposition \ref{prop:dhymsuperineq} imply that $\alpha_\infty$ is a solution of the dHYM equation \eqref{eq:dHYM}. The uniqueness of Theorem \ref{thm:maindHYM} implies that we do not need to take a subsequence and $\alpha_t\to\alpha_\infty$ as $t\to\infty$ in the sense of currents.
\end{proof}

\end{document}